\newtheorem{theorem}{Theorem}[section]
\newtheorem*{theorem*}{Theorem}
\newtheorem{definition}{Definition}[section]
\newtheorem{corollary}{Corollary}[section]
\newtheorem{lemma}{Lemma}[section]
\newtheorem{proposition}{Proposition}[section]
\theoremstyle{definition}
\newtheorem{remark}{Remark}[section]
\newcommand{\R}{\mathbb R}
\newcommand{\calC}{\mathcal C}
\newcommand{\calL}{\mathcal L}
\newcommand{\dvol}{ d\text{Vol}_{g}}
\begin{document}

\title[Prescribed Scalar Curvatures on Compact Manifolds]{Prescribed Scalar Curvatures on Compact Manifolds Under Conformal Deformation}
\author[J. Xu]{Jie Xu}
\address{
Department of Mathematics and Statistics, Boston University, Boston, MA, USA}
\email{xujie@bu.edu}
\address{
Institute for Theoretical Sciences, Westlake University, Hangzhou, Zhejiang Province, China}
\email{xujie67@westlake.edu.cn}

\date{}							

\begin{abstract} We give sufficient and ``almost" necessary conditions for the prescribed scalar curvature problems within the conformal class of a Riemannian metric $ g $ for both closed manifolds and compact manifolds with boundary, including the interesting cases $ \mathbb{S}^{n} $ or some quotient of $ \mathbb{S}^{n} $, in dimensions $ n \geqslant 3 $, provided that the first eigenvalues of conformal Laplacian (with appropriate boundary conditions if necessary) are positive. When the manifold is not some quotient of $ \mathbb{S}^{n} $, we show that, on one hand, any smooth function that is a positive constant within some open subset of the manifold with arbitrary positive measure, and has no restriction on the rest of the manifold, is a prescribed scalar curvature function of some metric under conformal change; on the other hand, any smooth function $ S $ is almost a prescribed scalar curvature function of Yamabe metric within the conformal class $ [g] $ in the sense that an appropriate perturbation of $ S $ that defers with $ S $ within an arbitrarily small open subset is a prescribed scalar curvature function of Yamabe metric. When the manifold is either $ \mathbb{S}^{n} $ or $ \mathbb{S}^{n} \slash \Gamma $ with Kleinian group $ \Gamma $ we show that any positive function that satisfies a technical analytical condition, called CONDITION B, can be realized as a prescribed scalar curvature functions on these manifolds.
\end{abstract}

\maketitle

\section{Introduction}
The Kazdan-Warner problem for prescribed scalar curvatures asks when a smooth function $ S \in \calC^{\infty}(M) $ on either closed manifolds $ (M, g) $ or compact manifolds $ (\bar{M}, g) $ with smooth boundary $ \partial M $ with dimensions at least three, is the scalar curvature $ S = R_{\tilde{g}} $ of a Yamabe metric $ \tilde{g} $ conformal to $ g $. In this paper, we prove some sufficient conditions for the Kazdan-Warner problem when the first eigenvalue of the conformal Laplacian $ -a\Delta_{g} + R_{g} $ (with appropriate boundary conditions if necessary) is positive, as discussed in Theorem \ref{intro:thm1}, Theorem \ref{intro:thm2} and Theorem \ref{intro:thm3}; these sufficient conditions are also the ``almost" necessary conditions in the sense of Theorem \ref{intro:thm4} and \ref{intro:thm5} below. The results are different for the interesting cases $ M = \mathbb{S}^{n} \slash \Gamma $, where $ \Gamma $ is some group related to the Kleinian group on $ \mathbb{S}^{n} $, including the trivial group; in particular, we show that there are obstructions for the choices of prescribed scalar curvature functions on $ \mathbb{S}^{n} \slash \Gamma $. 

Instead of classical global variational method, we introduce a new systematic procedure including the solvability of local Yamabe-type equations, the construction of sub- and super-solutions, and the monotone iteration schemes to treat this type of problem. One key feature is the smoothness (not piecewise $ \calC^{\infty} $ only) of the super-solutions; in addition, we start with the background metric $ g $ with scalar curvature $ R_{g} $ that is negative somewhere by some conformal change when constructing the sub- and super-solutions; furthermore, there is a difference between prescribing constant and non-constant scalar curvature functions: when the prescribing function is globally constant, we need to introduce the perturbation of the conformal Laplacian \cite{XU4, XU5, XU3}, which is not required for non-constant prescribing functions, due to technical reason.

In general, this type of problem reduces to a semi-linear elliptic PDEs with Robin boundary conditions if necessary. To write this question, we set $ a = \frac{4(n - 1)}{n - 2}, p = \frac{2n}{n - 2} $ with $ n = \dim M $ or $ n = \dim \bar{M} $ with $ n \geqslant 3 $. Throughout this article, we may also assume that the manifold is connected since otherwise our analysis applies equally to each connected components instead. As above, $ R_{g} $ is the scalar curvature of the metric $ g $, $ h_{g} $ is the mean curvature on $ \partial M $, and $ -\Delta_{g} $ is the positive definite Laplace-Beltrami operator. For a given function $ S \in \calC^{\infty}(M) $, the metric $ \tilde{g} : = u^{p -2} g $ has the prescribed scalar curvature $ S $ if and only if some positive, smooth function $ u $ solves
\begin{equation}\label{intro:eqn1}
\Box_{g}u : = -a\Delta_{g} u + R_{g} u = S u^{p-1} \; {\rm on} \; M
\end{equation}
when $ M $ is closed. Similarly, $ S \in \calC^{\infty}(\bar{M}) $ can be realized as a scalar curvature function of some metric $ \tilde{g} = u^{p-2} g $ with minimal boundary, i.e. the mean curvature of $ \tilde{g} $ is zero, if and only if some positive, smooth function $ u $ solves
\begin{equation}\label{intro:eqn2}
\Box_{g} u : = -a\Delta_{g} u + R_{g} u = S u^{p-1} \; {\rm in} \; M, B_{g} u : = \frac{\partial u}{\partial \nu} + \frac{2}{p-2} h_{g} u = 0 \; {\rm on} \; \partial M
\end{equation}
when $ \bar{M} $ is compact with smooth boundary. Here $ \nu $ is the outward unit normal vector along $ \partial M $. In either case, the sign of the Yamabe invariant
\begin{equation}\label{intro:eqn3}
\begin{split}
\lambda(M) & : = \inf_{u \neq 0, u \in H^{1}(M, g)} \frac{\int_{M} \left(a\lvert \nabla_{g} u \rvert^{2} + R_{g} u^{2} \right) \dvol}{\left( \int_{M} u^{p} \dvol \right)^{\frac{2}{p}}} : = \inf_{u \neq 0, u \in H^{1}(M, g)} Q(u); \\
\bar{\lambda}(\bar{M}) & : = \inf_{u \neq 0, u \in H^{1}(M, g)}  \frac{\int_{M} \left(a\lvert \nabla_{g} u \rvert^{2} + R_{g} u^{2} \right) \dvol + \int_{\partial M} \frac{2a}{p-2} h_{g} u^{2} dS}{\left( \int_{M} u^{p} \dvol \right)^{\frac{2}{p}}} : = \inf_{u \neq 0, u \in H^{1}(M, g)} \bar{Q}(u).
\end{split}
\end{equation}
plays an important role in understanding the Yamabe problem, as shown in \cite{XU4, XU5, XU3} when $ S $ is a constant function. In fact, the sign of the Yamabe invariants equals the sign of the first eigenvalue of the conformal Laplacian (with the Robin condition in (\ref{intro:eqn2}) in the case of $ \bar{M} $). Historically the most interesting cases are when $ \lambda(M) $, $ \bar{\lambda}(\bar{M}) $ are positive. For the Kazdan-Warner problem and the Yamabe problem on closed manifolds with $ \lambda(M) < 0 $ and $ S $ negative almost everywhere, see \cite{KW} and \cite{PL}; for the compact manifolds with smooth boundary case with $ \bar{\lambda}(\bar{M}) < 0 $ and $ S < 0 $ everywhere, see \cite{CHS}. In two dimensional case, the prescribed Gauss curvature problem was extensively studied by Kazdan and Warner, see \cite{KW2}. Although the zero eigenvalue of the conformal Laplacian generically does not happen \cite{GHJL}, the necessary and sufficient conditions of the Kazdan-Warner problem with dimensions at least 3 for this case were studied in \cite{KW}, \cite{ESCS}, etc.; on closed Riemann surface, the necessary and sufficient conditions for the case $ \chi(M) = 0 $ was completely given by Kazdan and Warner, see \cite{KW2}. Very recently a comprehensive study for necessary and sufficient conditions of prescribing scalar curvatures on compact manifolds (possibly with boundary) provided that the first eigenvalue of the conformal Laplacian is zero is given in \cite{XU8}.

In contrast to the usual global variational methods, applying our local-to-global iterative method can get the following results: when $ \lambda(M) > 0 $ on a closed manifold $ (M, g) $, except in the case $ M = \mathbb{S}^{n} \slash \Gamma $ for some Kleinian group $ \Gamma $, we show that any smooth real-valued function $ S $ satisfying $ S \equiv c > 0 $ for any constant $ c > 0 $ on any connected open submanifold $ O \subset \bar{O} \subset M $ with smooth boundary $ \partial O $ is the prescribed scalar curvature of some Yamabe metric $ \tilde{g} \in [g] $, where $ [g] $ is the conformal class of $ g $ and thus $ \tilde{g} $ is conformal to $ g $. Analogously, we show that when $ \bar{\lambda}(\bar{M}) > 0 $ on compact manifolds $ (\bar{M}, g) $ with smooth boundary, again except the cases when $ M = \mathbb{S}^{n} \slash \Gamma $, a function $ S $ which is positive constant on any inner connected open submanifold is the prescribed scalar curvature of a Yamabe metric $ \tilde{g} \in [g] $; in addition, $ \partial M $ is minimal with respect to $ \tilde{g} $. In particular, these results include the solution of the Yamabe problem and the Escobar problem, when $ S $ is a positive constant function. 

To summerize the main results, set
\begin{equation}\label{intro:eqn4}
\begin{split}
& \mathcal{A} : = \lbrace S \in \calC^{\infty}(M) : S \equiv \lambda > 0 \; {\rm on} \; \bar{O}, \text{$ \lambda > 0 $ is an arbitrary constant, } \\
& \qquad \text{ $ O \subset \bar{O} \subset M $ is an arbitrary open submanifold with smooth boundary $ \partial O $} \rbrace; \\
& \mathcal{A}' : = \lbrace S \in \calC^{\infty}(\bar{M}) : S \equiv \lambda > 0 \; {\rm on} \; \bar{O}, \text{$ \lambda > 0 $ is an arbitrary constant, } \\
& \qquad \text{ $ O \subset \bar{O} \subset M $ is an arbitrary open interior submanifold with smooth $ \partial O $} \rbrace.
\end{split}
\end{equation}
The first two main results with sufficient conditions of Kazdan-Warner problem are as follows:
\begin{theorem}\label{intro:thm1}
Let $ (M, g) $ be a closed manifold with $ n = \dim M \geqslant 3 $ which is not $ \mathbb{S}^{n} $ for some Kleinian group $ \Gamma $. If $ \lambda(M) > 0 $, then for every $ S \in \mathcal{A} $, the PDE (\ref{intro:eqn1}) admits a positive smooth solution $ u \in \calC^{\infty}(M) $ and $ S $ is the prescribed scalar curvature of the metric $ \tilde{g} = u^{p-2} g $.
\end{theorem}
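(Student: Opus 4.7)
The plan is to follow the three-step recipe announced in the introduction: a preliminary conformal change of the background metric, a local Yamabe-type existence result on a neighborhood of the region where $S$ is a positive constant, and a monotone iteration between a smooth global super-solution and a positive sub-solution. Because equation (\ref{intro:eqn1}) is conformally covariant, I would first replace $g$ by a metric $g'=\phi^{p-2}g$ in its conformal class so that $R_{g'}$ is strictly negative on the complement of a slightly enlarged neighborhood $O'$ of $\bar{O}$; using that $\lambda(M)>0$ and $M$ is not $\mathbb{S}^n/\Gamma$, such a $g'$ is produced by multiplying a Yamabe representative by an appropriate bump-type conformal factor. On $O'$ with smooth boundary and $\bar{O}\subset O'\subset \bar{O}'\subset M$ I would then solve the local problem $\Box_{g'}w = \lambda w^{p-1}$ with a suitable positive Dirichlet datum on $\partial O'$; this is the subcritical local Yamabe-type equation in the positive first-eigenvalue regime that is the analytic ingredient referenced above the theorem, and since $S\equiv\lambda$ on $\bar{O}$ the function $w$ solves $\Box_{g'}w = Sw^{p-1}$ exactly there.

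Next I would assemble a globally defined smooth super-solution $u_+$ by matching $w$ on $O'$ to a carefully chosen positive profile on $M\setminus O'$, scaling by a large constant and mollifying across $\partial O'$. Away from $\bar{O}$ the super-solution inequality $\Box_{g'}u_+\geq Su_+^{p-1}$ need not be saturated, and the negativity of $R_{g'}$ there, combined with the scaling freedom, provides the slack required to dominate $Su_+^{p-1}$ for the possibly sign-indefinite $S$. For the sub-solution I would take a small multiple $\epsilon\varphi_1$ of the positive first eigenfunction of $\Box_{g'}$ (with eigenvalue $\mu_1>0$), possibly cut off to be supported in $\{S>0\}$; for $\epsilon$ small the inequality $\mu_1\epsilon\varphi_1\leq S(\epsilon\varphi_1)^{p-1}$ forces $u_-=\epsilon\varphi_1$ to be a sub-solution, and shrinking $\epsilon$ further ensures $u_-\leq u_+$ pointwise. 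A standard monotone iteration (absorb a large linear term $Ku$ into both sides, invert the resulting coercive linear elliptic operator, iterate starting from $u_+$) yields a decreasing sequence in $[u_-,u_+]$ whose $\calC^{2,\alpha}$ limit is a positive solution of $\Box_{g'}u = Su^{p-1}$; elliptic regularity promotes $u$ to $\calC^\infty(M)$, and $\tilde{g}=u^{p-2}g'$ has scalar curvature $S$.

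The main obstacle, and the point the introduction emphasizes as the novelty of the scheme, is producing a globally $\calC^\infty$ super-solution rather than a merely piecewise smooth one: gluing $w$ to the outside profile while preserving the pointwise differential inequality $\Box_{g'}u_+\geq Su_+^{p-1}$ across $\partial O'$ is delicate, because after mollification the inequality can degrade by an amount controlled only by the zeroth-order term. This is precisely where the preliminary arrangement $R_{g'}<0$ on $M\setminus O'$ becomes essential, supplying enough negative zeroth-order slack to absorb the mismatch; and the exclusion of $\mathbb{S}^n/\Gamma$ is what allows this conformal normalization to be carried out in the first place, without running into the classical Kazdan-Warner-type obstructions that would block such a construction on the round sphere and its Kleinian quotients.
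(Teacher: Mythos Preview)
Your scheme has the roles of sub- and super-solution reversed, and this is a genuine gap. For $u_-=\epsilon\varphi_1$ with $\Box_{g'}\varphi_1=\mu_1\varphi_1$, $\mu_1>0$, the sub-solution inequality reads $\mu_1\epsilon\varphi_1\leq S(\epsilon\varphi_1)^{p-1}$. Since $p-1>1$, the right-hand side is of order $\epsilon^{p-1}$ and tends to zero \emph{faster} than the left-hand side as $\epsilon\to 0$; for small $\epsilon$ the inequality fails wherever $\varphi_1>0$, so $\epsilon\varphi_1$ is a \emph{super}-solution, not a sub-solution. In the positive-eigenvalue regime the hard object to produce is precisely a nontrivial nonnegative sub-solution, and no simple scaling of an eigenfunction can give one. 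The paper therefore does the opposite of what you propose: the local solution $u_1$ of $\Box_g u_1=\lambda u_1^{p-1}$ on a small $\Omega\subset O$ with zero Dirichlet data, extended by zero to $M$, is the \emph{sub}-solution (Lemma~\ref{closed:lemma1}); the \emph{super}-solution is built from a small multiple of the first eigenfunction, glued carefully to $u_1$ inside $\Omega$ (Lemma~\ref{closed:lemma2}).

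Correspondingly, your placement of the negative scalar curvature is backwards. The paper arranges $R_g<0$ at a point \emph{inside} $O$ (Proposition~\ref{closed:thm3}), because the local existence results (Proposition~\ref{local:prop4}, and Theorem~\ref{closed1:thm1} in the locally conformally flat case) require $R_g<0$ on the small domain $\Omega\subset O$ where the Dirichlet problem is solved. There is no need for, and no use made of, negativity of $R_g$ on $M\setminus O'$; the super-solution inequality outside $\Omega$ is obtained from $\eta_1\phi>S\phi^{p-1}$ after scaling $\phi$ small, independently of the sign of $R_g$ there. Finally, the exclusion of $\mathbb{S}^n/\Gamma$ has nothing to do with the conformal normalization of $R_g$ (Proposition~\ref{closed:thm3} holds on any closed $M$ with $\eta_1>0$); rather, those are exactly the manifolds satisfying \textsc{Scenario A} (Proposition~\ref{sphere:prop4}), on which the local Dirichlet problem may be obstructed in the sense of Condition~B.
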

\begin{theorem}\label{intro:thm2}
Let $ (\bar{M}, g) $ be a compact manifold with smooth boundary $ \partial M $, $ n = \dim \bar{M} \geqslant 3 $. If $ \bar{\lambda}({\bar{M}}) > 0 $, then for every $ S \in \mathcal{A}' $, the PDE (\ref{intro:eqn2}) admits a positive smooth solution $ u \in \calC^{\infty}(\bar{M}) $ and $ S $ is the prescribed scalar curvature of the metric $ \tilde{g} = u^{p-2} g $ with minimal boundary.
\end{theorem}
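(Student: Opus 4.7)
The theorem reduces to producing a positive smooth solution of the Robin problem (\ref{intro:eqn2}). My approach is the sub- and super-solution method with monotone iteration, organized around three ingredients: (i) a conformal normalization of $g$, (ii) a local Yamabe-type Dirichlet solution on $O$, and (iii) a smooth global super-solution on $\bar M$. The exclusion of $\mathbb{S}^n/\Gamma$ is what enables the conformal flexibility in (i).

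\textbf{Normalization and local solvability.} Because $\bar\lambda(\bar M)>0$ and $\bar M$ is not conformally a spherical quotient, I would first invoke Escobar's resolution of the boundary Yamabe problem, followed by a small localized conformal perturbation supported inside $M\setminus(\bar O\cup\partial M)$, to replace $g$ by a representative (still denoted $g$) with $h_g\equiv 0$ on $\partial M$ and $R_g$ negative on a chosen open set $U\Subset M\setminus(\bar O\cup\partial M)$, while preserving $\bar\lambda(\bar M)>0$ by conformal invariance. Since $S\equiv\lambda>0$ on $\bar O$, I would then solve the local Dirichlet problem
\[
-a\Delta_g v+R_g v=\lambda v^{p-1}\ \text{in }O,\qquad v=0\ \text{on }\partial O,\qquad v>0\ \text{in }O,
\]
by a Brezis-Nirenberg-type variational argument on the bounded domain $O$ (after shrinking $O$ if necessary so that the first Dirichlet eigenvalue of $\Box_g$ on $O$ is large), obtaining $v\in\calC^\infty(O)\cap\calC^0(\bar O)$ via standard elliptic regularity. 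Extending $v$ by zero to $\bar M$ produces a function $u_-\in\calC^0(\bar M)$ that is a weak sub-solution of (\ref{intro:eqn2}): on $O$ equality holds, off $\bar O$ both sides of the PDE vanish, and $u_-\equiv 0$ near $\partial M$ (since $O$ is interior), so the Robin inequality is trivial.

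\textbf{Smooth super-solution.} The core step is building a globally $\calC^\infty$ positive super-solution $u_+$ with $u_+\ge u_-$ on $\bar M$. The first Robin eigenfunction $\varphi_1>0$, with $\Box_g\varphi_1=\bar\lambda_1\varphi_1$, $B_g\varphi_1=0$, and $\bar\lambda_1>0$, supplies a template: for $\epsilon>0$ small the function $\epsilon\varphi_1$ satisfies $\Box_g(\epsilon\varphi_1)=\epsilon\bar\lambda_1\varphi_1\ge S(\epsilon\varphi_1)^{p-1}$ and $B_g(\epsilon\varphi_1)=0$, but is generally too small to dominate $u_-$ on $\bar O$ when $\epsilon$ is small enough to preserve the super-solution inequality. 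I would therefore construct $u_+$ directly by solving a linear Robin problem with carefully chosen right-hand side forcing $u_+\ge u_-$ on $\bar O$ (for instance through boundary data dominating a dilation of $v$ near $\partial O$), and then verify $\Box_g u_+\ge Su_+^{p-1}$ globally by using the negativity of $R_g$ on $U$ to absorb the nonlinearity off $\bar O$ and the smallness of $u_+$ on the region where $R_g>0$ to control it there. The conformal freedom to render $R_g$ negative on $U$, available precisely because $\bar M\neq\mathbb{S}^n/\Gamma$, is what makes this closure possible.

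\textbf{Monotone iteration and main obstacle.} Once $0\le u_-\le u_+$ is secured, I would pick $K_0>0$ large enough that $u\mapsto(K_0-R_g)u+Su^{p-1}$ is non-decreasing on $[u_-(x),u_+(x)]$ for every $x\in\bar M$, and iteratively solve the coercive linear Robin problem
\[
-a\Delta_g u_{n+1}+K_0 u_{n+1}=(K_0-R_g)u_n+Su_n^{p-1}\ \text{in }M,\qquad B_g u_{n+1}=0\ \text{on }\partial M,
\]
starting from $u_0=u_+$. The maximum principle with the Robin boundary condition yields $u_-\le u_{n+1}\le u_n$ for all $n$, the monotone limit $u$ is a weak solution of (\ref{intro:eqn2}), elliptic regularity up to the boundary upgrades $u$ to $\calC^\infty(\bar M)$, and the strong maximum principle together with the Hopf boundary point lemma force $u>0$ on $\bar M$; then $\tilde g=u^{p-2}g$ is the required metric with scalar curvature $S$ and minimal boundary. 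The main obstacle I anticipate is exactly the construction in the previous paragraph: keeping $u_+$ globally $\calC^\infty$ (not merely piecewise smooth) across $\partial O$ while dominating $u_-$ and respecting $B_g u_+=0$, which is the smoothness issue explicitly flagged in the introduction.
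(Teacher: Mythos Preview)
Your overall architecture (conformal normalization, local Dirichlet solution extended by zero as sub-solution, eigenfunction-based super-solution, monotone iteration) matches the paper's, but two of the technical choices you made would cause the argument to break down.

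\textbf{Placement of the region where $R_g<0$.} You arrange $R_g<0$ on an open set $U\Subset M\setminus(\bar O\cup\partial M)$, i.e.\ \emph{outside} $O$, and then try to solve the critical-exponent Dirichlet problem $-a\Delta_g v+R_g v=\lambda v^{p-1}$ on $O$ by a Brezis--Nirenberg argument. But Brezis--Nirenberg (and its Riemannian extension used in the paper, Propositions~\ref{local:prop2}--\ref{local:prop4}) require the zeroth-order coefficient to be \emph{negative} on the domain where you solve; without $R_g<0$ on $\Omega\subset O$ there is no mechanism to push the mountain-pass level strictly below the compactness threshold $K_0$, and existence genuinely fails in general. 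Shrinking $O$ to raise the Dirichlet eigenvalue of $\Box_g$ does not help here. The paper therefore does the opposite: it uses Proposition~\ref{compact:prop2} to force $R_g<0$ at a chosen point \emph{inside} $O$, then solves the local problem on a small $\Omega\subset O$ where $R_g<0$ (or, if $g$ happens to be conformally flat on $O$, it invokes Bahri--Coron on an annular $\Omega\subset O$, which needs no sign condition).

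\textbf{Super-solution.} Your proposed route---solve a linear Robin problem with forcing to dominate $u_-$, then use $R_g<0$ on $U$ to ``absorb the nonlinearity''---does not close: for a \emph{super}-solution you need $\Box_g u_+\geqslant Su_+^{p-1}$, and $R_g<0$ decreases $\Box_g u_+$, so negativity of $R_g$ off $\bar O$ works against you, not for you. The paper instead scales the first Robin eigenfunction $\phi=\theta\varphi$ so small that $\eta_1'\phi>2^{p-2}(\max S)\phi^{p-1}$ globally, and then builds a $\calC^\infty$ function equal to $\phi$ outside a neighbourhood of $\Omega$ and equal to $\max\{u_1,\phi\}$ (smoothed by a PDE-defined partition of unity) inside; the strict gap $\beta=\max_\Omega(\eta_1'\phi-2^{p-2}\lambda\phi^{p-1})>0$ is exactly what absorbs the gluing errors (Lemmas~\ref{closed:lemma2} and~\ref{compact:lemma2}). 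This gluing is also why the paper normalizes to $h_g>0$ rather than $h_g\equiv0$: their iteration lemma (Theorem~\ref{compact:thm1}) is stated under that hypothesis.

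Finally, a minor point: the statement for manifolds with nonempty boundary carries no $\mathbb{S}^n/\Gamma$ exclusion, because the paper shows (Proposition~\ref{sphere:prop4}) that any manifold in \textsc{Scenario~A} is necessarily closed; the conformal normalizations you invoke are available unconditionally here.
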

\medskip

When $ M = \mathbb{S}^{n} \slash \Gamma $ for a Kleinian group $ \Gamma $ (possibly the identity group), we show that there are obstructions for the choices of the prescribed scalar curvature functions. It is clear that not every function on $ \mathbb{S}^{n} $ can be realized as a prescribed scalar curvature function, see \cite{KW}, \cite{BE} and \cite{MaMa}. Geometrically, the speciality of the $ n $-sphere is the ``symmetry". Analytically, the obstructions are related to the fact that $ n $-sphere can be covered by two charts given by stereographic projections; in addition, each stereographic projection is a conformal diffeomorphism with the conformal factors. Thus any local solution of the Yamabe equation with Dirichlet boundary condition on appropriate chosen open subset $ O \subset \mathbb{S}^{n} $ might have some special necessary conditions, since we can localize the Yamabe equation with Dirichlet condition on $ O $ in two different ways, both have the same local expression. This motivates the following definition.
\begin{definition}\label{intro:def1} Let $ (\bar{M}, g) $ be a connected, compact manifold, with or without smooth boundary, and with $ \dim \bar{M} \geqslant 3 $. We say that the manifold $ (\bar{M}, g) $ satisfying {\bf{SCENARIO A}} if there exist at least two points $ \rho, \rho' \in \bar{M} $ and an open subset $ O \subset M \subset \bar{M} $ such that:

(i) The first eigenvalue of conformal Laplacian $ \Box_{g} \varphi = \eta_{1} \varphi $, possibly with Robin condition $ B_{g} \varphi = 0 $ as mentioned in (\ref{intro:eqn2}), is positive; 

(ii) $ \rho \in O $ and $ \rho' \notin \bar{O} $, or vice versa;

(iii) There exists two charts $ (U_{1}, \sigma_{1}) $ and $ (U_{2}, \sigma_{2}) $ such that both maps $ \sigma_{1} $ and $ \sigma_{2} $ are conformal diffeomorphisms; furthermore, $ O \subset U_{1} \cap U_{2} $, $ \rho \in U_{1} $ and $ \rho' \in U_{2} $;

(iv) For $ g_{e} $ the Euclidean metric, we have
\begin{equation}\label{intro:eqn5}
\left( \sigma_{1}^{-1} \right)^{*} g = \left( \sigma_{2}^{-1} \right)^{*} g = v^{2} g_{e}, \sigma_{1}(\rho) = \sigma_{2}(\rho')
\end{equation}
for a positive, smooth function $ v $.
\end{definition}
The SCENARIO A provides a model on which the choices of prescribed scalar curvature functions are restricted. Obviously $ \mathcal{S}^{n} $ is of SCENARIO A. After some analysis, it follows from the result of Schoen and Yau \cite{SY} that any manifold satisfying the SCENARIO A  must be of the form $ \mathbb{S}^{n} \slash \Gamma $ with some Kleinian group $ \Gamma $. For manifolds with the SCENARIO A, we can find obstructions to prescribed scalar curvature based on our iterative scheme in terms of the existence of the local solution of the Yamabe equation. For notation, let $ Q : \mathbb{S}^{n} \slash \Gamma \rightarrow \R $ be a smooth function, let $ Q' $ be the lift of $ Q $ to $ \mathbb{S}^{n} $, and let $ Q'' : \R^{n+1} \backslash \lbrace 0 \rbrace \rightarrow \R $ be defined by $ Q''(x) : = Q'\left(\frac{x}{\lvert x \rvert}\right) $. 
\begin{definition}\label{intro:def2}
Let $ (\mathbb{S}^{n} \slash \Gamma, g) $ satisfying the SCENARIO A. A positive smooth function $ Q : \mathbb{S}^{n} \slash \Gamma \rightarrow \R $satisfies the {\bf{CONDITION B}} if for any pairs of points $ \rho, \rho' $ that satisfies Definition \ref{intro:def1},

(i) either at the pair of points $ \rho $ and $ \rho' $ we have
\begin{equation}\label{intro:eqn7}
\begin{split}
& \nabla^{M} Q(\rho) \neq 0 \Rightarrow \nabla^{M} Q(\rho') \neq 0 \\
\Rightarrow & Q(\rho) = Q(\rho'), \nabla_{\xi_{i}}^{\R^{n+1}} Q'' \bigg|_{\rho} = \nabla_{\xi_{i}}^{\R^{n+1}} Q'' \bigg|_{\rho'}, i = 1, \dotso, n, \nabla_{\tau}^{\R^{n+1}} Q'' \bigg|_{\rho} = - \nabla_{\tau}^{\R^{n+1}} Q'' \bigg|_{\rho'},
\end{split}
\end{equation}
where $ \tau $ is the unit vector in $ \R^{n+1} $ from $ \rho $ to $ \rho' $, and $ \lbrace \xi_{1}, \dotso, \xi_{n}, \tau \rbrace $ is a basis of $ \R^{n+1} $;

(ii) or at antipodal points $ \rho $ and $ \rho' $ we have
\begin{equation*}
\nabla^{M} Q(\rho) = \nabla^{M} Q(\rho') = 0.
\end{equation*}

(iii) or if $ \nabla Q \equiv 0 $ on $ \mathbb{S}^{n} \slash \Gamma $, then $ Q $ is a positive constant.
\end{definition}
We suspect that any function that violates The CONDITION B in Definition \ref{intro:def2} may not be a good candidate for prescribed scalar curvature with respect to some Yamabe metric. However, we can only state our result in a positive way. Denote $ \eta_{1} $ to be the first eigenvalue of the conformal Laplacian on closed manifolds, and $ \eta_{1}' $ to be the first eigenvalue of conformal Laplacian with Robin boundary condition in (\ref{intro:eqn2}) on compact manifolds with non-empty boundary. The next main result for manifolds of the form $ \mathbb{S}^{n} \slash \Gamma $ are stated as follows.
\begin{theorem}\label{intro:thm3}
Let $ (M, g) = (\mathbb{S}^{n} \slash \Gamma, g) $ be a connected, closed Riemannian manifold with $ n \geqslant 3 $ satisfying the SCENARIO A. Assume $ \eta_{1} > 0 $. If $ S \in \calC^{\infty}(M) $ satisfies the CONDITION B, then the function $ S $ can be realized as a prescribed scalar curvature function for some Yamabe metric $ \tilde{g} \in [g] $.
\end{theorem}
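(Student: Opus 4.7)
My plan is to adapt the local-to-global monotone iteration scheme used to prove Theorems~\ref{intro:thm1}--\ref{intro:thm2} to the spherical quotient setting, with CONDITION~B serving as the compatibility condition that circumvents the SCENARIO~A obstruction. The overall pipeline is unchanged: (a) conformally normalise $g$; (b) produce a positive sub-solution $u_{-}$; (c) produce a positive smooth super-solution $u_{+}\ge u_{-}$; (d) run the monotone iteration and bootstrap to obtain a positive smooth solution of (\ref{intro:eqn1}).

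\emph{Steps (a) and (b).} Because $\eta_{1}>0$, one can pick a conformal representative in $[g]$ whose scalar curvature $R_{g}$ is negative on a non-empty open subset $V\subset M$; this is the standing choice elsewhere in the paper, and CONDITION~B, being a condition on $S$ alone, is preserved. For the sub-solution, take $u_{-}\equiv \varepsilon>0$ sufficiently small; then $\Box_{g}u_{-}\le Su_{-}^{p-1}$ on $V$ directly from $R_{g}<0$, and elsewhere from $S>0$ together with shrinking $\varepsilon$, exactly as in Theorems~\ref{intro:thm1}--\ref{intro:thm2}. A small smooth perturbation yields a strictly positive smooth sub-solution on $M$.

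\emph{Step (c), the main step.} Let $\rho,\rho'\in M$ and $O\subset U_{1}\cap U_{2}$ be as in SCENARIO~A, with conformal charts $\sigma_{1},\sigma_{2}$ and common conformal factor $v^{2}$. Pulling back (\ref{intro:eqn1}) by $\sigma_{1}^{-1}$ converts it into a semilinear Euclidean problem on $\Omega:=\sigma_{1}(O)\subset\R^{n}$ whose coefficients depend only on $v$ and $S\circ\sigma_{1}^{-1}$; by the local Yamabe existence already used in this paper, this Dirichlet problem admits a positive smooth solution $w$ on $\Omega$ with any prescribed smooth positive boundary data. Since $(\sigma_{1}^{-1})^{*}g=(\sigma_{2}^{-1})^{*}g=v^{2}g_{e}$ and $\sigma_{1}(\rho)=\sigma_{2}(\rho')$, pulling back via $\sigma_{2}^{-1}$ yields the identical Euclidean equation on $\Omega$. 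The function $w$ can therefore be transported back through either $\sigma_{1}^{-1}$, giving a candidate near $\rho$, or $\sigma_{2}^{-1}$, giving a candidate near $\rho'$. CONDITION~B is precisely the first-order jet compatibility that guarantees these two transports fit together as one smooth function on a neighbourhood containing both points: part~(i) encodes the generic case in which the tangential components of the gradient in the ambient $\R^{n+1}$ agree and the normal component along $\tau$ flips sign (mirroring the antipodal identification of stereographic charts); part~(ii) handles antipodal critical points where the full gradient vanishes; part~(iii) handles the constant case. Gluing $w$ to a large constant outside $O$ through a smooth cutoff, and using $\eta_{1}>0$ to dominate the linear terms, produces a global smooth super-solution $u_{+}\ge u_{-}$.

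\emph{Step (d) and main obstacle.} With $u_{-}\le u_{+}$ in place, the monotone iteration for $\Box_{g}+K$ with $K$ a large constant converges in $\calC^{2,\alpha}$ to a positive $u\in[u_{-},u_{+}]$ solving (\ref{intro:eqn1}); elliptic bootstrapping gives $u\in\calC^{\infty}(M)$ and hence $\tilde g=u^{p-2}g$ with scalar curvature $S$. The substantive difficulty is Step~(c): a single local Euclidean solution must transport back smoothly through \emph{both} $\sigma_{1}^{-1}$ and $\sigma_{2}^{-1}$, yielding not merely a piecewise $\calC^{\infty}$ but a genuinely smooth global super-solution. This is where the two-chart obstruction proper to SCENARIO~A manifests; CONDITION~B is calibrated exactly to the required first-order matching of the two chart-representatives of $S$, and verifying the resulting global smoothness is expected to require delicate use of the conformal invariance of $\Box_{g}$ together with the explicit sign-flip in CONDITION~B~(i).
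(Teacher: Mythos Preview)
Your proposal misidentifies the role of CONDITION~B and builds both barriers incorrectly.

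\textbf{Sub-solution.} A constant $u_{-}\equiv\varepsilon$ is \emph{not} a sub-solution here. After the conformal normalisation you invoke, $R_{g}$ is negative on some open set $V$ but, because $\eta_{1}>0$, must remain positive somewhere else on $M$. At a point $x$ with $R_{g}(x)>0$ the inequality $R_{g}(x)\varepsilon\le S(x)\varepsilon^{p-1}$ forces $R_{g}(x)\le S(x)\varepsilon^{p-2}\to 0$ as $\varepsilon\to 0$, which fails. The paper's sub-solution is completely different: one solves the local Yamabe equation $\Box_{g}u_{1}=Su_{1}^{p-1}$ with zero Dirichlet data on a small domain and extends $u_{1}$ by zero to obtain $u_{-}\in H^{1}\cap\calC^{0}$ (Lemma~\ref{closed:lemma1}). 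On a locally conformally flat manifold this local solution comes from Theorem~\ref{local:thm4} (Clapp--Faya--Pistoia), which requires a point with $\nabla S\neq 0$; CONDITION~B guarantees exactly this (cases (i)--(ii)) or reduces to the Yamabe problem (case (iii)). That is the entire role of CONDITION~B in the proof.

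\textbf{Super-solution.} The paper never glues two chart-representatives of a Euclidean solution; your Step~(c) is not what CONDITION~B is for. The super-solution is obtained by scaling the first eigenfunction $\varphi>0$ of $\Box_{g}$ so small that $\eta_{1}\phi>2^{p-2}(\max S)\phi^{p-1}$, and then gluing $\phi$ with the local Dirichlet solution $u_{1}$ inside the small domain via the construction of Lemma~\ref{closed:lemma2}. The two-chart discussion in \S5 is motivational (explaining why SCENARIO~A manifolds are special and deriving CONDITION~A/B as a consistency constraint on the local problem), not a gluing recipe for the barrier. Your proposed first-order-jet matching would not, in any case, produce a genuinely smooth function across the two charts, only $\calC^{1}$ compatibility at a pair of points.

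In short: replace your constant sub-solution by the zero-extension of a local Dirichlet solution obtained from Theorem~\ref{local:thm4} (this is where CONDITION~B enters), and replace your two-chart super-solution by the eigenfunction-based construction of Lemma~\ref{closed:lemma2}; then Step~(d) goes through as in Theorem~\ref{closed:thm2} and Corollary~\ref{closed:cor1}.
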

\medskip

For necessary conditions of Kazdan-Warner problem, we observe that there is no obstruction of $ S $ in (\ref{intro:eqn4}) on the rest of the manifold $ M \backslash \bar{O} $ or $ \bar{M} \backslash \bar{O} $, respectively, provided that $ M $ or $ \bar{M} $ is not $ \mathbb{S}^{n} $ or some quotient of $ \mathbb{S}^{n} $. The set $ \bar{O} $, on which $ S $ is a constant function, can be arbitrarily small but with positive Lebesgue measure. It follows that all function can ``almost" be realized as a prescribed scalar curvature of some metric under conformal change in the following sense: given any function $ S_{0} \in \calC^{\infty}(M) $ where $ M $ is not some quotient of $ \mathbb{S}^{n} $, we can choose arbitrarily small set $ O \subset M $ and define
\begin{equation*}
S_{0}' = \begin{cases} c, & {\rm in} \; \bar{O} \\ S_{0}, & {\rm in} \; M \backslash \bar{O} \end{cases}
\end{equation*}
with some constant $ c > 0 $. The function
\begin{equation*}
S = S_{0}' * \phi_{\epsilon} \in \calC^{\infty}(M)
\end{equation*}
with the standard mollifier $ \phi_{\epsilon} $ with small enough $ \epsilon $ satisfies the property above. It follows that any smooth function $ S_{0} \in \calC^{\infty}(M) $ agrees pointwise with a scalar curvature on $ M $ under conformal change, except within an arbitrary small subset of $ M $. It is well-known that when $ \lambda(M) $ or $ \bar{\lambda}(\bar{M}) $ is positive, the scalar curvature should be positive some where with respect to the metrics in the conformal class, possibly with minimal boundary if $ \partial M \neq \emptyset $. We define
\begin{equation}\label{intro:eqn6}
\begin{split}
\mathcal{B} & : = \lbrace f \in \calC^{\infty}(M) : f > 0 \; \text{somewhere in M}, \\
& -a\Delta_{g} u + R_{g} u = f u^{p-1} \; \text{admits a real, positive, smooth solution $ u $ with $ \eta_{1} > 0 $}. \rbrace \\
\mathcal{B}' & : = \lbrace f \in \calC^{\infty}(\bar{M}) : f > 0 \; \text{somewhere in the interior M}, \\
& -a\Delta_{g} u + R_{g} u = f u^{p-1} \; {\rm in} \; M, \frac{\partial u}{\partial \nu} + \frac{2}{p-2} h_{g} u = 0 \; {\rm on} \; \partial M, \\
& \text{admits a real, positive, smooth solution $ u $ with $ \eta_{1}' > 0 $}. \rbrace
\end{split}
\end{equation}
We can then state the following ``almost" necessary conditions of Kazdan-Warner problem.
\begin{theorem}\label{intro:thm4}
Let $ (M, g) $ be a connected, closed manifold, $ n =  \dim M \geqslant 3 $. Assume that $ \lambda(M) > 0 $. If $ M $ is not $ \mathbb{S}^{n} \slash \Gamma $ for some Kleinian group $ \Gamma $, then $ \mathcal{A} \subset \mathcal{B} $; in addition, $ \mathcal{A} $ is $ \calC^{0} $-dense in $ \mathcal{B} $.
\end{theorem}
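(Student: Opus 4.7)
The plan is to split Theorem~\ref{intro:thm4} into its two claims: the inclusion $\mathcal{A}\subset\mathcal{B}$ is essentially a repackaging of Theorem~\ref{intro:thm1}, while the $\calC^{0}$-density is a direct cutoff construction.

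For $\mathcal{A}\subset\mathcal{B}$, I would take $S\in\mathcal{A}$, so that $S\equiv\lambda>0$ on some open submanifold $\bar O\subset M$ with smooth boundary; in particular $S$ is positive somewhere. Since $\lambda(M)>0$ and $M$ is not of the form $\mathbb{S}^{n}/\Gamma$, Theorem~\ref{intro:thm1} furnishes a positive smooth solution $u$ of (\ref{intro:eqn1}). Because the Yamabe invariant $\lambda(M)$ and the first eigenvalue $\eta_{1}$ of the conformal Laplacian $\Box_{g}$ share the same sign, $\lambda(M)>0$ gives $\eta_{1}>0$, which is the remaining requirement for membership in $\mathcal{B}$.

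For the density, I would fix $f\in\mathcal{B}$ and $\epsilon>0$. Since $f$ is positive somewhere, I pick $x_{0}\in M$ with $c:=f(x_{0})>0$. By continuity of $f$, one can choose a small geodesic ball $O$ centered at $x_{0}$ lying inside a single coordinate chart, with $\bar O$ a compact submanifold with smooth boundary and with $|f(x)-c|<\epsilon$ throughout $\bar O$. Picking a strictly smaller concentric geodesic ball $O'\Subset O$ with smooth boundary and a smooth cutoff $\chi\colon M\to[0,1]$ equal to $1$ on $\bar O'$ and supported in $O$, I set
\[
S \,:=\, \chi\cdot c \,+\, (1-\chi)\,f \;\in\; \calC^{\infty}(M).
\]
Then $S\equiv c>0$ on $\bar O'$, which places $S$ in $\mathcal{A}$, while
\[
\|S-f\|_{\calC^{0}(M)} \;=\; \sup_{\bar O}\bigl|\chi(x)\,(c-f(x))\bigr| \;\leq\; \sup_{\bar O}|c-f| \;<\;\epsilon.
\]

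The genuine analytical content sits entirely in Theorem~\ref{intro:thm1}; the only small point to check in the density argument is that the shrinking geodesic ball can be arranged with smooth boundary inside a single chart, which is routine on a smooth manifold. I therefore do not expect any meaningful technical obstacle beyond what is already absorbed by the earlier theorem.
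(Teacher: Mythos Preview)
Your proposal is correct and follows essentially the same approach as the paper: the inclusion $\mathcal{A}\subset\mathcal{B}$ is deduced from Theorem~\ref{intro:thm1}, and the $\calC^{0}$-density is obtained by flattening $f$ to a positive constant on a small ball where its oscillation is below $\epsilon$. The only cosmetic difference is that the paper patches $f$ to the midpoint $\tfrac{1}{2}(\sup_{O}f+\inf_{O}f)$ on a smaller concentric ball and then mollifies, whereas you blend directly via a smooth cutoff $\chi$; both constructions give the same estimate.
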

\begin{theorem}\label{intro:thm5}
Let $ (\bar{M}, g) $ be a connected, compact manifold with non-empty smooth boundary, $ n =  \dim \bar{M} \geqslant 3 $. Assume that $ \bar{\lambda}({\bar{M}}) > 0 $. Then $ \mathcal{A}' \subset \mathcal{B}' $; in addition, $ \mathcal{A}' $ is $ \calC^{0} $-dense in $ \mathcal{B}' $.
\end{theorem}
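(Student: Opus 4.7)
The inclusion $\mathcal{A}' \subset \mathcal{B}'$ is an immediate consequence of Theorem~\ref{intro:thm2}. Given $S \in \mathcal{A}'$, the function $S$ is smooth, equal to $\lambda > 0$ on some open interior submanifold $\bar{O} \subset M$ with smooth boundary, hence positive somewhere in the interior; since $\bar{\lambda}(\bar{M}) > 0$ is equivalent to positivity of the first Robin eigenvalue $\eta_1'$, Theorem~\ref{intro:thm2} supplies a positive smooth solution $u \in \calC^{\infty}(\bar{M})$ of~(\ref{intro:eqn2}) associated with $S$, so $S \in \mathcal{B}'$.

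For the $\calC^{0}$-density I would argue constructively with a smooth cutoff, a simpler variant of the mollification sketched in the Introduction. Fix $f \in \mathcal{B}'$ and $\varepsilon > 0$. Since $f > 0$ somewhere in the interior, choose $p_0 \in M$ with $f(p_0) > 0$, and by smoothness of $f$ pick $r > 0$ so small that $\overline{B_r(p_0)} \subset M$ lies inside a single interior coordinate chart and $|f - f(p_0)| < \varepsilon$ on $\overline{B_r(p_0)}$. Set $c := f(p_0) > 0$ and let $\psi \in \calC^{\infty}(\bar{M})$ be a bump function with $0 \leqslant \psi \leqslant 1$, $\psi \equiv 1$ on $\overline{B_{r/2}(p_0)}$ and $\mathrm{supp}(\psi) \subset B_r(p_0)$. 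Define
\begin{equation*}
S := \psi \cdot c + (1 - \psi) \cdot f \in \calC^{\infty}(\bar{M}).
\end{equation*}
Then $S \equiv c$ on $\overline{B_{r/2}(p_0)}$; choosing $O := B_{r/2}(p_0)$ and $\lambda := c$ in the definition of $\mathcal{A}'$ shows $S \in \mathcal{A}'$, while
\begin{equation*}
\|S - f\|_{\calC^{0}(\bar{M})} = \sup_{\bar{M}} |\psi(c - f)| \leqslant \sup_{\overline{B_r(p_0)}} |c - f(q)| < \varepsilon
\end{equation*}
yields the density.

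The main obstacle to this proof is essentially nonexistent: all of the hard analytic lifting—solving the nonlinear Yamabe-type equation with Robin boundary condition to produce the conformal factor $u$—has already been carried out in Theorem~\ref{intro:thm2}, and the density step is an elementary cutoff construction. The only point that requires care is verifying that $B_{r/2}(p_0)$ genuinely qualifies as an open interior submanifold with smooth boundary in the sense required by $\mathcal{A}'$, which is automatic for a small enough geodesic ball (take $r$ below the injectivity radius at $p_0$) sitting in a single interior chart bounded away from $\partial M$.
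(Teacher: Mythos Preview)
Your proposal is correct and takes essentially the same approach as the paper. The paper's argument (Theorem~\ref{closed1:thm5}, which defers to the construction in Theorem~\ref{closed1:thm3}) also localizes to a small interior geodesic ball where $f>0$ has oscillation less than $\varepsilon$, replaces $f$ by a constant on a concentric inner ball, and then mollifies to restore smoothness; your direct cutoff blend $S=\psi c+(1-\psi)f$ accomplishes the same thing in one step and is a slightly cleaner but equivalent construction.
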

\medskip

As an overview of our iteration scheme, as in \cite{XU2, XU4, XU5, XU3}, we apply local analysis, perturbation methods and the monotone iteration scheme, instead of the classical global variational approach, to construct the solutions of (\ref{intro:eqn1}) and (\ref{intro:eqn2}). The details of comparison between global variational methods used before and the local analysis applied here could be found in \cite{XU4, XU5, XU3}.  We point out that the choice of test functions are critical in both the classical methods and the existence of the local solution of Yamabe equation with Dirichlet boundary condition here. The key is a local variational method in order to seek for a positive solution of a local Yamabe equation $ \Box_{g} u = Q u^{p-1} $ in $ O $ and $ u \equiv 0 $ on $ \partial O $. When the manifold is not locally conformally flat, we apply the result of Wang \cite{WANG} as well as a perturbation of the Yamabe equation to construct the local solution when $ \dim O \geqslant 3 $. The perturbation is introduced to bypass the critical dimension 6 in Aubin's method. When the manifold is locally conformally flat, we apply the result of Bahri and Coron \cite{BC}, and the result of Clapp, Faya and Pistoia \cite{CFP} to construct a local solution in a topologically nontrivial domain $ O $. We extend the local solution by zero as our sub-solution of the Yamabe-type equation. Once we construct a super-solution, we can apply the monotone iteration scheme. We would like to point out that, as a corollary, we can apply the monotone iteration scheme to obtain some positive, smooth solution of the perturbed Yamabe equation $ -\Box_{g} u + \beta u = S u^{p-1} $, possibly with the boundary condition $ B_{g} u = 0 $. Here $ \beta < 0 $ with small enough absolute value. 

In classical methods, the role of the Weyl tensor is crucial. For example, Aubin's conformal normal coordinates construction and the test function estimates relies heavily on the local nonvanishing of the Weyl tensor; Escobar's analysis depends on the vanishing/nonvanishing of the Weyl tensors, both in the interior and on the boundary. In this article, the existence of the local solution is the only place we use vanishing of Weyl tensor to classify manifolds. In other words, the essential difficulty is not coming from the vanishing of Weyl tensor due to our methodology, since we can use two different methods, depending on the locally conformal flatness of the manifold, to get local solutions of the Yamabe equation with Dirichlet condition, although with different domains. It is equivalent to say that we can always construct a local solution of the Yamabe equation on all manifolds. In contrast, the essential difficulty is due to the positivity of the first eigenvalue of the conformal Laplacian, which forces the scalar curvature function of the Yamabe metric to be positive at each point, and hence indicates the comparison between the size of the geodesic ball and the Euclidean ball of the same radius. The Yamabe quotient involves both the scalar curvature $ R_{g} $ (the mean curvature $ h_{g} $ if the boundary is non-empty), and the volume form $ \dvol $. It follows that the sign of $ \eta_{1} $ measures the change of the Yamabe quotient. Essentially the positive first eigenvalue increases the energy of the Yamabe quotient and thus leave us less room to find a good minimizer. We introduce our methodology here to resolve this issue. The Weyl tensor cannot indicate the evaluation of the scalar curvature, neither the size of the geodesic ball.
\medskip

This paper is organized as follows: In \S2, we introduce essential definitions and results we will use in later contexts. We then show the existence of the solutions of the perturbed Yamabe equation locally in Proposition \ref{local:prop2} on some small enough open subset of any compact manifolds with dimensions at least 3. Applying a modification of the test function given in Proposition \ref{local:prop3}, we prove the existence of the solution of the local Yamabe equation in Proposition \ref{local:prop4} when the manifold is not locally conformally flat. When the manifold is locally conformally flat, we apply the result of Theorem \ref{local:thm4} to obtain the local solution in some open subset of the manifold.

In \S3, we first derive the sufficient conditions for the prescribed scalar curvature problem when $ \eta_{1} > 0 $, for $ M $ closed and not locally conformally flat. In Theorem \ref{closed:thm2}, we restrict the scalar curvature $ R_{g} $ to be negative within a certain region. We construct a sub-solution and a super-solution of the Yamabe equation (\ref{intro:eqn1}); then a monotone iteration scheme in Theorem \ref{closed:thm1} is applied. We are able to remove this restriction to obtain a general result in Corollary \ref{closed:cor1}, with the aid of Proposition \ref{closed:thm3}. We then consider the ``almost" necessary conditions of Kazdan-Warner problem on manifolds that are not locally conformally flat. The results are listed in Theorem \ref{closed:thm4} and Corollary \ref{closed:cor3}, they are $ \calL^{r} $-dense result for any $ r \in [1, \infty) $.

In \S4, we first consider the sufficient conditions for the Kazdan-Warner problem on compact manifolds $ (\bar{M}, g) $ with non-empty smooth boundary such that the interior $ M $ is not locally conformally flat. Following the method above, we construct a sub-solution and a super-solution of (\ref{intro:eqn2}), then apply a monotone iteration scheme in Theorem \ref{compact:thm1} to obtain a result of prescribed scalar curvature with minimal boundary in Theorem \ref{compact:thm2}, in which we require $ R_{g} $ to be negative in some open region and the mean curvature $ h_{g} > 0 $ everywhere on $ \partial M $. Corollary \ref{compact:cor1} removes the restriction of the sign of $ R_{g} $, due to Proposition \ref{compact:prop2}, although the positivity of mean curvature is still needed. Finally, a general result with no restrictions of signs of both $ R_{g} $ and $ h_{g} $ is given in Corollary \ref{compact:cor2} by applying Proposition \ref{compact:prop1} and Proposition \ref{compact:prop2}. The ``almost" necessary conditions are stated in Theorem \ref{compact:thm3} and Corollary \ref{compact:cor3} in the second part of \S4. Analogous to the results in \S3, they are $ \calL^{r} $-dense result for any $ r \in [1, \infty) $.

In \S5, we discuss the prescribed scalar curvature functions on $ (\mathbb{S}^{n}, g_{\mathbb{S}^{n}}) $, $ n \geqslant 3 $, as a model of locally conformally flat manifold with many symmetries. The study of the local solution of the Yamabe equation on some open subsets of $ \mathbb{S}^{n} $ reveals an analytic obstruction to the prescribed scalar curvature problem, denoted by the CONDITION A in Definition \ref{sphere:def1}. We showed in Theorem \ref{sphere:thm1} that any positive function that satisfies the CONDITION A can be realized as a prescribed scalar curvature function for some metric $ \tilde{g} \in [g_{\mathbb{S}^{n}}] $. Since our method relies on the construction of a local solution and monotone iteration scheme, the obstruction says that if a positive function does not satisfy the CONDITION A, then the local solution of the Yamabe equation with respect to Dirichlet condition may not exist on open subsets of $ \mathbb{S}^{n} $. We then show that a function satisfying the CONDITION A does not violate the obstructions of Kazdan-Warner \cite{KW2}, of Bourguignon and Ezin \cite{BE}, and of Malchiodi and Mayer \cite{MaMa}. Inspired by the very special geometry on $ \mathbb{S}^{n} $, we define the SCENARIO A on general manifolds in Definition \ref{sphere:def2}. We show later in \S6 that the functions satisfying the CONDITION B, which is defined in Definition \ref{sphere:def3}, on the manifolds with the SCENARIO A can be realized as prescribed scalar curvature functions for Yamabe metrics.

In \S6, we show our main theorems for prescribed scalar curvature functions of Yamabe metrics in Theorem \ref{closed1:thm2} and Theorem \ref{closed1:thm4}, for all closed manifolds and compact manifolds with smooth boundary, respectively, with dimensions at least 3. These results improve the results of sufficient conditions for the Kazdan-Warner problem in \S3 and \S4. We then improve the $ \calL^{r} $-density results in \S3 and \S4 for manifolds that do not have the SCENARIO A to $ \calC^{0} $-density in Theorem \ref{closed1:thm3} and Theorem \ref{closed1:thm5}.
\medskip

{\bf{ACKNOWLEDGEMENT}}: The author would like to thank Prof. Steve Rosenberg and Prof. Gang Tian for their great mentorships in this topic. The author would also like to thank Martin Mayer for many valuable discussions with respect to the Kazdan-Warner problem on $ \mathbb{S}^{n} $.

\section{The Local Analysis}
We begin with a few set-up. Let $ \Omega $ be a connected, bounded, open subset of $ \R^{n} $ with smooth boundary $ \partial \Omega $ equipped with some Riemannian metric $ g $ that can be extended smoothly to $ \bar{\Omega} $. We call $ (\Omega, g) $ a Riemannian domain. Furthermore, let $ (\bar{\Omega}, g) $ be a compact manifold with smooth boundary extended from $ (\Omega, g) $. Throughout this article, we denote $ (M, g) $ to be a closed manifold with $ \dim M \geqslant 3 $, and $ (\bar{M}, g) $ to be a general compact manifold with interior $ M $ and smooth boundary $ \partial M $; we denote the space of smooth functions with compact support by $ \calC_{c}^{\infty} $, smooth functions by $ \calC^{\infty} $, and continuous functions by $ \calC^{0} $.

In this section, we consider the following PDE
\begin{equation}\label{local:eqn1}
-a\Delta_{g} u + R_{g} u = \lambda u^{p-1} \; {\rm in} \; \Omega, u \equiv 0 \; {\rm on} \; \partial \Omega
\end{equation}
with $ \lambda > 0 $ and the scalar curvature $ R_{g} < 0 $ everywhere on $ \Omega $. We call this PDE the local Yamabe equation. Recall that
\begin{equation*}
a = \frac{4(n - 1)}{n - 2}, p = \frac{2n}{n - 2}.
\end{equation*}
We show that for any choice of $ \lambda > 0 $, (\ref{local:eqn1}) admits a smooth solution $ u \in \calC^{\infty}(\Omega) \cap \calC^{0}(\bar{\Omega}) $ such that $ u > 0 $ in a small enough Riemannian domain $ (\Omega, g) $ when $ \Omega $ is not locally conformally flat. Note that the assumption $ R_{g} < 0 $ everywhere is crucial. When the manifold is locally conformally flat, we refer to Theorem \ref{local:thm4} below.

Foremost, we define Sobolev spaces on $ (M, g) $, $ (\bar{M}, g) $ and $ (\Omega, g) $, both in global expressions and local coordinates.
\begin{definition}\label{local:def1} Let $ (\Omega, g) $ be a Riemannian domain. Let $ (M, g) $ be a closed  Riemannian $n$-manifold with volume density $\dvol$. Let $u$ be a real valued function. Let $ \langle v,w \rangle_g$ and $ |v|_g = \langle v,v \rangle_g^{1/2} $ denote the inner product and norm  with respect to $g$. 

(i) 
For $1 \leqslant q < \infty $,
\begin{align*}
\mathcal{L}^{q}(\Omega)\ &{\rm is\ the\ completion\ of}\  \left\{ u \in \calC_c^{\infty}(\Omega) : \Vert u\Vert_{\calL^{q}(\Omega)}^q :=\int_{\Omega} \lvert u \rvert^{q} dx < \infty \right\},\\
\mathcal{L}^{q}(\Omega, g)\ &{\rm is\ the\ completion\ of}\ \left\{ u \in \calC_c^{\infty}(\Omega) : \Vert u\Vert_{\calL^{q}(\Omega, g)}^q :=\int_{\Omega} \left\lvert u \right\rvert^{q} d\text{Vol}_{g} < \infty \right\}, \\
\mathcal{L}^{q}(M, g)\ &{\rm is\ the\ completion\ of}\ \left\{ u \in \calC^{\infty}(M) : \Vert u\Vert_{\calL^{q}(M, g)}^q :=\int_{M} \left\lvert u \right\rvert^{q} d\text{Vol}_{g} < \infty \right\}.
\end{align*}

(ii) For $\nabla u$  the Levi-Civita connection of $g$, 
and for $ u \in \calC^{\infty}(\Omega) $ or $ u \in \calC^{\infty}(M) $,
\begin{equation*}
\lvert \nabla^{k} u \rvert_g^{2} := (\nabla^{\alpha_{1}} \dotso \nabla^{\alpha_{k}}u)( \nabla_{\alpha_{1}} \dotso \nabla_{\alpha_{k}} u).
\end{equation*}
\noindent In particular, $ \lvert \nabla^{0} u \rvert^{2}_g = \lvert u \rvert^{2} $ and $ \lvert \nabla^{1} u \rvert^{2}_g = \lvert \nabla u \rvert_{g}^{2}.$\\

(iii) For $ s \in \mathbb{N}, 1 \leqslant p < \infty $,
\begin{align*}
W^{s, q}(\Omega) &= \left\{ u \in \mathcal{L}^{q}(\Omega) : \lVert u \rVert_{W^{s,q}(\Omega)}^{q} : = \int_{\Omega} \sum_{j=0}^{s} \left\lvert D^{j}u \right\rvert^{q} dx < \infty \right\}, \\
W^{s, q}(\Omega, g) &= \left\{ u \in \mathcal{L}^{q}(\Omega, g) : \lVert u \rVert_{W^{s, q}(\Omega, g)}^{q} = \sum_{j=0}^{s} \int_{\Omega} \left\lvert \nabla^{j} u \right\rvert^{q}_g \dvol < \infty \right\}, \\
W^{s, q}(M, g) &= \left\{ u \in \mathcal{L}^{q}(M, g) : \lVert u \rVert_{W^{s, q}(M, g)}^{q} = \sum_{j=0}^{s} \int_{M} \left\lvert \nabla^{j} u \right\rvert^{q}_g \dvol < \infty \right\}.
\end{align*}
\noindent Here $ \lvert D^{j}u \rvert^{q} := \sum_{\lvert \alpha \rvert = j} \lvert \partial^{\alpha} u \rvert^{q} $ in the weak sense. Similarly, $ W_{0}^{s, q}(\Omega) $ is the completion of $ \calC_{c}^{\infty}(\Omega) $ with respect to the $ W^{s, q} $-norm. 

In particular, $ H^{s}(\Omega) : = W^{s, 2}(\Omega) $ and $ H^{s}(\Omega, g) : = W^{s, 2}(\Omega, g) $, $ H^{s}(M, g) : = W^{s, 2}(M, g) $ are the usual Sobolev spaces. We similarly define $H_{0}^{s}(\Omega), H_{0}^{s}(\Omega,g)$.

(iv) We define the $ W^{s, q} $-type Sobolev space on $ (\bar{M}', g) $ the same as in (iii) when $ s \in \mathbb{N}, 1 \leqslant q < \infty $.
\end{definition}
\medskip

Brezis and Nirenberg considered the PDE
\begin{equation}\label{local:eqn2}
 -\Delta_{e} u - \gamma u = \lambda u^{p-1} \; {\rm in} \; \Omega, u \equiv 0 \; {\rm on} \; \partial \Omega
 \end{equation}
 with Euclidean Laplacian $ -\Delta_{e} $ on a bounded, open subset of $ \R^{n}, n \geqslant 3 $. They showed in \cite{Niren3} that (\ref{local:eqn2}) admits a positive, smooth solution for any $ \gamma > 0 $ when $ n \geqslant 4 $; when $ n = 3 $, the existence of positive solution of (\ref{local:eqn2}) requires $ \Omega $ to be a ball $B_{r}(0) $ on which $ \gamma > \frac{1}{4} \lambda_{1} $, where $ \lambda_{1} $ is the first eigenvalue of Euclidean Laplacian on ball of radius $ r $. This inspires us to consider the local solution of (\ref{local:eqn1}) when $ R_{g} < 0 $ everywhere on $ \Omega $. Locally we treat (\ref{local:eqn1}) as the general second order linear elliptic PDE with Dirichlet boundary condition:
 \begin{equation}\label{local:eqn3}
\begin{split}
Lu & : = -\sum_{i, j} \partial_{i} \left (a_{ij}(x) \partial_{j} u \right) = b(x) u^{p- 1} + f(x, u) \; {\rm in} \; \Omega; \\
u & > 0 \; {\rm in} \; \Omega, u = 0 \; {\rm on} \; \partial \Omega.
\end{split}
\end{equation}
Here $ p - 1 = \frac{n+2}{n -2} $ is the critical exponent with respect to the $ H_{0}^{1} $-solutions of (\ref{local:eqn3}) in the sense of Sobolev embedding. Due to variational method, (\ref{local:eqn3}) is the Euler-Lagrange equation of the functional
\begin{equation}\label{local:eqn4}
J(u) = \int_{\Omega} \left( \frac{1}{2} \sum_{i, j} a_{ij}(x) \partial_{i}u \partial_{j} u - \frac{b(x)}{p} u_{+}^{p} - F(x, u) \right) dx,
\end{equation}
with appropriate choices of $ a_{ij}, b $ and $ F $. Here $ u_{+} = \max \lbrace u, 0 \rbrace $ and $ F(x, u) = \int_{0}^{u} f(x, t)dt $. Set
\begin{equation}\label{local:eqn5}
\begin{split}
A(O) & = \text{essinf}_{x \in O} \frac{\det(a_{ij}(x))}{\lvert b(x) \rvert^{n-2}}, \forall O \subset \Omega; \\
T & = \inf_{u \in H_{0}^{1}(\Omega)}  \frac{\int_{\Omega} \lvert Du \rvert^{2} dx}{\left( \int_{\Omega} \lvert u \rvert^{p} dx \right)^{\frac{2}{p}}}; \\
K & = \inf_{u \neq 0} \sup_{t > 0} J(tu), K_{0} = \frac{1}{n} T^{\frac{n}{2}} \left( A(\Omega) \right)^{\frac{1}{2}}.
\end{split}
\end{equation}
Wang's result \cite{WANG} stated the existence of a positive smooth solution of (\ref{local:eqn3}) with appropriate choices of $ a_{ij}, b $ and $ F $, which is a powerful extension of the results of Brezis and Nirenberg.
\begin{theorem}\label{local:thm1}\cite[Thm.~1.1, Thm.~1.4]{WANG} Let $ \Omega $ be a bounded smooth domain in $ \R^{n}, n \geqslant 3 $. Let $ Lu = -\sum_{i, j} \partial_{i} \left (a_{ij}(x) \partial_{j} u \right) $ be a second order elliptic operator with smooth coefficients in divergence form. Let ${\rm Vol}_g(\Omega)$ and the diameter of $\Omega$ sufficiently small. Let $ b(x) \neq 0 $ be a nonnegative bounded measurable function. Let $ f(x, u) $ be measurable in $ x $ and continuous in $ u $. Assume
\begin{enumerate}[(P1)]
\item There exist $ c_{1}, c_{2} > 0 $ such that $ c_{1} \lvert \xi \rvert^{2} \leqslant \sum_{i, j} a_{ij}(x) \xi_{i} \xi_{j} \leqslant c_{2} \lvert \xi \rvert^{2}, \forall x \in \Omega, \xi \in \R^{n} $;
\item $ \lim_{u \rightarrow + \infty} \frac{f(x, u)}{u^{p-1}} = 0 $ uniformly for $ x \in \Omega $;
\item $ \lim_{u \rightarrow 0} \frac{f(x, u)}{u} < \lambda_{1} $ uniformly for $ x \in \Omega $, where $ \lambda_{1} $ is the first eigenvalue of $ L $;
\item There exists $ \theta \in (0, \frac{1}{2}), M \geqslant 0, \sigma > 0 $, such that $ F(x, u) = \int_{0}^{u} f(x, t)dt \leqslant \theta u f(x, u) $ for any $ u \geqslant M $, $ x \in \Omega(\sigma) = \lbrace x \in \Omega, 0 \leqslant b(x) \leqslant \sigma \rbrace $.
\end{enumerate}
Furthermore, we assume that $ f(x, u) \geqslant 0 $, $ f(x, u) = 0 $ for $ u \leqslant 0 $. We also assume that $ a_{ij}(x) \in \calC^{0}(\bar{\Omega}) $. If
\begin{equation}\label{local:eqn6}
K < K_{0}
\end{equation}
then the Dirichlet problem (\ref{local:eqn3}) possesses a positive solution $ u \in \calC^{\infty}(\Omega) \cap \calC^{0}(\bar{\Omega}) $ which satisfies $ J(u) \leqslant K $.
\end{theorem}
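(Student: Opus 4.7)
The plan is to recognize (\ref{local:eqn3}) as the Euler--Lagrange equation of the functional $J$ in (\ref{local:eqn4}) and to apply a mountain-pass argument in $H_0^1(\Omega)$, following the Brezis--Nirenberg strategy adapted to variable coefficients. The main difficulty, as always with the critical exponent $p = \frac{2n}{n-2}$, is the lack of compactness in the Sobolev embedding $H_0^1(\Omega) \hookrightarrow \mathcal{L}^p(\Omega)$; the role of the hypothesis $K < K_0$ is precisely to force the Palais--Smale sequence to stay below the first critical level, thereby ruling out concentration.

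First I would verify that $J$ has the mountain-pass geometry. The ellipticity (P1) gives $\int a_{ij}\partial_i u\,\partial_j u \geq c_1\|\nabla u\|_2^2$, while (P3) together with Poincar\'e's inequality controls the subcritical term $F$ near zero, so there exist $\rho, \alpha > 0$ with $J(u) \geq \alpha$ on $\|u\|_{H_0^1}=\rho$. Because $b \not\equiv 0$ and $b \geq 0$, for any fixed $u_0 \in H_0^1(\Omega)$ with $\int b u_{0,+}^p > 0$ one has $J(tu_0) \to -\infty$ as $t \to \infty$, giving the second geometric ingredient. The Ambrosetti--Rabinowitz-type hypothesis (P4) is used in the region where $b$ is small (where the critical term alone cannot dominate) to recover the standard superquadratic behavior. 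The mountain-pass theorem without $(PS)$ then produces a sequence $u_k \in H_0^1(\Omega)$ with $J(u_k)\to K$ and $J'(u_k)\to 0$, with $K$ as in (\ref{local:eqn5}).

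The core step is to show that any such Palais--Smale sequence at a level $c < K_0$ is precompact. Standard manipulations with $J(u_k) - \frac{1}{p}\langle J'(u_k),u_k\rangle$ combined with (P4) yield a uniform $H_0^1$-bound, so up to subsequence $u_k \rightharpoonup u$ weakly, $u_k \to u$ in $\mathcal{L}^q$ for $q<p$, and a.e. I would then decompose $u_k = u + w_k$ with $w_k \rightharpoonup 0$ and apply the Brezis--Lieb lemma to split $\int b (u_k)_+^p$ and $\int a_{ij}\partial_i u_k\partial_j u_k$ into a part coming from $u$ and a remainder in $w_k$. The variable-coefficient analogue of the Aubin--Talenti best constant enters here: the concentration of $w_k$ around an interior point $x_0$ carries an energy no smaller than $\frac{1}{n}T^{n/2}\bigl(\det a_{ij}(x_0)/b(x_0)^{n-2}\bigr)^{1/2} \geq K_0$, as can be seen by rescaling and passing to the frozen-coefficient operator. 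Under the assumption $c < K_0$ no such bubble can form, and concluding $w_k \to 0$ strongly in $H_0^1$ gives that $u$ is a weak solution with $J(u)=c$.

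The hypothesis $K < K_0$ then furnishes a nontrivial critical point $u$ with $J(u) \leq K$. Because $f(x,u)=0$ for $u\leq 0$, testing the equation against $u_- = \min\{u,0\}$ shows $u \geq 0$; the strong maximum principle for the divergence-form operator $L$ then forces $u > 0$ in $\Omega$. Finally, elliptic bootstrapping gives smoothness in the interior and $\calC^0(\bar\Omega)$ regularity up to the boundary: $u \in \mathcal{L}^p$ puts $u^{p-1}$ in $\mathcal{L}^{p/(p-1)}$, standard $W^{2,q}$ estimates combined with Moser--Trudinger iteration lift $u$ to $\mathcal{L}^\infty$, and then Schauder theory with the smooth coefficients $a_{ij}$ promotes $u$ to $\calC^\infty(\Omega)\cap \calC^0(\bar\Omega)$. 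The genuinely hard part throughout is the second paragraph, namely pinpointing the exact concentration threshold $K_0$ for the variable-coefficient operator $L$; the smallness hypotheses on ${\rm Vol}_g(\Omega)$ and $\operatorname{diam}(\Omega)$ are precisely what let one compare the rescaled operator with the constant-coefficient one and recover a sharp lower bound $K_0$ for the energy of any concentrating bubble.
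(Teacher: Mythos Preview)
The paper does not supply its own proof of this theorem; it is quoted as \cite[Thm.~1.1, Thm.~1.4]{WANG} and the only commentary is that Wang's argument uses the Ambrosetti--Rabinowitz mountain-pass theorem (Theorem~\ref{local:thm2}) in the version that does not require the Palais--Smale condition. Your proposal is exactly this strategy: verify the mountain-pass geometry from (P1)--(P4), run the min--max to obtain a $(PS)_K$ sequence, and use $K<K_0$ to exclude bubbling and recover strong convergence, followed by maximum principle and bootstrap. So your outline matches the approach the paper attributes to Wang, and there is nothing further to compare.
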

\medskip

This is a slight modification of the original result proved by Wang. The reason we add the smallness of the Riemannian domain is to make sure that the local Yamabe equation has a local expression. In the proof of Theorem \ref{local:thm1}, Wang applied a mountain pass theorem due to Ambrosetti and Rabinowitz \cite[Thm.~2.2]{Niren3}, which dealt with the situation when Palais-Smale condition fails. We list the result below for later analysis.
\begin{theorem}\label{local:thm2}\cite[Thm.~2.2]{Niren3}
Let $ \Phi $ be a $ \calC^{1} $ function on a Banach space $ E $. Suppose there exists a neighborhood $ U $ of $ 0 $ in $ E $ and a constant $ \rho $ such that $ \Phi(u) \geqslant \rho $ for every $ u $ in the boundary of $ U $, 
\begin{equation*}
\Phi(0) < \rho, \Phi(v) < \rho \; \text{for some} \; v \notin U.
\end{equation*}
Set
\begin{equation}\label{local:eqnm1}
T_{0} = \inf_{P \in \mathcal{P}} \max_{w \in P} \Phi(w) \geqslant \rho,
\end{equation}
where $ \mathcal{P} $ denotes the class of continuous paths joining $ 0 $ to $ v $. Then there is a sequence $ \lbrace u_{j} \rbrace $ in $ E $ such that
\begin{equation*}
\Phi(u_{j}) \rightarrow T_{0}, \Phi'(u_{j}) \rightarrow 0, j \rightarrow \infty
\end{equation*}
in $ E^{*} $.
\end{theorem}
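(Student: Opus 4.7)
The plan is to argue by contradiction using a quantitative deformation lemma. Suppose no sequence with $\Phi(u_j) \to T_0$ and $\Phi'(u_j) \to 0$ exists. Then, after shrinking slightly, one finds constants $\epsilon, \delta > 0$ with $T_0 - 2\epsilon > \max\{\Phi(0), \Phi(v)\}$ and $\|\Phi'(u)\|_{E^*} \geqslant \delta$ for every $u$ in the band $A := \{u \in E : |\Phi(u) - T_0| \leqslant 2\epsilon\}$. The goal is then to produce, from this uniform bound below on $\|\Phi'\|$ in $A$, a continuous deformation that pushes any nearly optimal path in $\mathcal{P}$ strictly below $T_0$, contradicting (\ref{local:eqnm1}).

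First I would construct a locally Lipschitz pseudo-gradient vector field $V$ on $\tilde E := \{u \in E : \Phi'(u) \neq 0\}$ satisfying $\|V(u)\| \leqslant 2$ and $\langle \Phi'(u), V(u)\rangle \geqslant \|\Phi'(u)\|_{E^*}$; this is a standard partition-of-unity argument on the paracompact metric space $\tilde E$. Then I would multiply $V$ by a globally Lipschitz cutoff $\chi : E \to [0,1]$ with $\chi \equiv 1$ on the inner band $\{|\Phi - T_0| \leqslant \epsilon\}$ and $\chi \equiv 0$ off $A$, and consider the initial-value problem
\begin{equation*}
\frac{d}{dt}\eta(t, u) = -\chi(\eta(t, u))\, V(\eta(t, u)), \qquad \eta(0, u) = u.
\end{equation*}
Because the right-hand side is bounded and locally Lipschitz, $\eta$ is defined and jointly continuous on $[0, t^\ast] \times E$ with $t^\ast := 4\epsilon/\delta$. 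Along any trajectory lying in the inner band one computes $\tfrac{d}{dt}\Phi(\eta) \leqslant -\delta$, so at time $t^\ast$ either the trajectory has exited through the level $\Phi = T_0 - \epsilon$ or $\Phi$ has dropped by at least $\epsilon$; in both cases $\Phi(\eta(t^\ast, u)) \leqslant T_0 - \epsilon/2$ whenever $\Phi(u) \leqslant T_0 + \epsilon/2$.

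To conclude, pick a path $P \in \mathcal{P}$ with $\max_{w \in P} \Phi(w) \leqslant T_0 + \epsilon/2$, possible because $T_0$ is an infimum. Since $\Phi(0)$ and $\Phi(v)$ both lie below $T_0 - 2\epsilon$, the cutoff $\chi$ vanishes in neighborhoods of $0$ and $v$, and hence $\eta(t^\ast, 0) = 0$, $\eta(t^\ast, v) = v$; therefore $t \mapsto \eta(t^\ast, P(t))$ is again a continuous path in $\mathcal{P}$. The pointwise estimate of the previous paragraph then gives $\max_{w \in \eta(t^\ast, \cdot)\circ P} \Phi(w) \leqslant T_0 - \epsilon/2$, contradicting $T_0 = \inf_{P \in \mathcal P} \max_{w \in P} \Phi(w)$.

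The main obstacle is the joint bookkeeping of three features of the deformation: boundedness of the vector field (so that $\eta$ exists for a prescribed time independent of the initial condition), the fact that $\chi$ must vanish near $0$ and $v$ so the endpoints of deformed paths remain in $\mathcal{P}$, and the uniform descent rate $-\delta$ inside the inner band, which is exactly what controls how long one has to flow. A less hands-on alternative is to apply Ekeland's variational principle directly to the functional $P \mapsto \max_{w \in P}\Phi(w)$ on the complete metric space of continuous paths from $0$ to $v$ with the uniform distance; this yields a near-minimizing path whose maximizing point must, by a standard argument, be an approximate critical point of $\Phi$, bypassing the pseudo-gradient construction at the cost of a subtler analysis of the location of the maximum.
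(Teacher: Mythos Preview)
The paper does not give a proof of this statement; it is quoted verbatim as \cite[Thm.~2.2]{Niren3} (the Ambrosetti--Rabinowitz mountain pass lemma without Palais--Smale, as recorded by Brezis and Nirenberg) and then used as a black box in the analysis of $J_\beta^*$. So there is nothing in the paper to compare your argument against.

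That said, your argument is the standard one and is essentially correct. A couple of minor bookkeeping points: the cutoff $\chi$ is most naturally taken of the form $\chi(u) = g(\Phi(u))$ for a Lipschitz $g:\R\to[0,1]$, which makes $\chi$ only $\calC^{1}$ (hence locally Lipschitz), not globally Lipschitz as you write; this is all that is needed, since the flow vector field $\chi V$ is bounded (so trajectories exist for all time) and locally Lipschitz on $E$ (it vanishes identically in a neighborhood of any point where $\Phi'=0$, because such points lie outside $A$). Also, your choice $t^\ast = 4\epsilon/\delta$ is generous: once a trajectory starting with $\Phi(u)\leqslant T_0+\epsilon/2$ enters the inner band it descends at rate at least $\delta$ and must exit through $\Phi = T_0-\epsilon$ after time at most $3\epsilon/(2\delta)$, so your ``or $\Phi$ has dropped by at least $\epsilon$'' alternative never actually occurs. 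None of this affects the validity of the conclusion.
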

To apply Theorem \ref{local:thm1}, we need to check conditions (P1) through (P4). More importantly, we need to verify the key inequality (\ref{local:eqn6}). For the solvability of (\ref{local:eqn1}), the condition $ K < K_{0} $ is exactly equivalent to find out a local test function $ \phi \in \calC_{c}^{\infty}(\Omega) $ such that $ Q(\phi) < \lambda(\mathbb{S}^{n}) $, see \cite{XU3}. Unfortunately, Aubin \cite{PL} shows that no test function works when the dimension is too low or the manifold is locally conformally flat. This forces us to consider the perturbed Yamabe problem locally on $ \Omega $ by perturbing the coefficient of the zeroth order term of the differential operator:
\begin{equation}\label{local:eqn7}
-a\Delta_{g} u + \left( R_{g} + \beta \right) u = \lambda u^{p-1} \; {\rm in} \; \Omega, u \equiv 0 \; {\rm on} \; \partial \Omega.
\end{equation}
Here we also require $ R_{g} < 0 $ everywhere on $ \Omega $. In addition, $ \beta < 0 $ can be any constant. The negative constant $ \beta $ allows us to verify $ K < K_{0} $ with respect to (\ref{local:eqn7}), regardless of the dimension and locally conformal flatness of the manifold. The following technical result, which is related to Yamabe invariant, is necessary to handle the existence of the solution of (\ref{local:eqn7}).
\begin{proposition}\label{local:prop1}\cite[Thm.~A.1]{XU3}
Let $ \Omega = B_{0}(r) $ for small enough $ r $ with $ \dim \Omega \geqslant 3 $. Let $ g $ be some Riemannian metric with local expression $ g = g_{ij} dx^{i} \otimes dx^{j} $. Let $ \beta < 0 $ be any negative constant and $ T $ be the best Sobolev constant
\begin{equation}\label{local:eqn8}
T = \inf_{u \in H_{0}^{1}(\Omega)}  \frac{\int_{\Omega} \lvert Du \rvert^{2} dx}{\left( \int_{\Omega} \lvert u \rvert^{p} dx \right)^{\frac{2}{p}}} = \inf_{u \in H_{0}^{1}(\R^{n})}  \frac{\int_{\R^{n}} \lvert Du \rvert^{2} dx}{\left( \int_{\R^{n}} \lvert u \rvert^{p} dx \right)^{\frac{2}{p}}}.
\end{equation}
The quantity $ Q_{\epsilon, \Omega} $ satisfies
\begin{equation}\label{local:eqn9}
Q_{\epsilon, \Omega} : =\frac{\lVert \nabla_{g} u_{\epsilon, \Omega, \beta} \rVert_{\calL^{2}(\Omega, g)}^{2} + \frac{1}{a} \int_{\Omega} \left(R_{g} + \beta \right) u_{\epsilon, \Omega, \beta}^{2} \sqrt{\det(g)} dx}{\lVert u_{\epsilon, \Omega, \beta} \rVert_{\calL^{p}(\Omega, g)}^{2}} < T.
\end{equation}
with the test function
\begin{equation}\label{local:eqn10}
u_{\epsilon, \Omega, \beta}(x) = \frac{\varphi_{\Omega}(x)}{\left(\epsilon + \lvert x \rvert^{2}\right)^{\frac{n - 2}{2}}}, n \geqslant 3.
\end{equation}
Here the smooth cut-off function $ \varphi_{\Omega}(x) $ is chosen to be any radial bump function supported in $ \Omega $ such that $  \varphi_{\Omega}(x) \equiv 1 $ on $ B_{0} \left( \frac{r}{2} \right) $ when $ n \geqslant 4 $; when $ n = 3 $, we set $ \varphi_{\Omega}(x) =  \cos \left(\frac{\pi \lvert x \rvert}{2} \right) $ with $ \Omega = B_{0}(1) $, the unit ball.
\end{proposition}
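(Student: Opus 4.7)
The plan is to mimic the Aubin--Talenti / Brezis--Nirenberg substitution: plug the explicit concentrating test function $u_{\epsilon,\Omega,\beta}$ into the quotient $Q_{\epsilon,\Omega}$, expand every integral in powers of $\epsilon$, and verify that the leading term equals the flat Sobolev constant $T$ while the first correction is strictly negative, driven by the fact that $R_g+\beta<0$ on $\Omega$ (both summands negative by hypothesis). First I would shrink $r$ so that $\Omega=B_0(r)$ admits geodesic (or conformal) normal coordinates at the origin, producing $g_{ij}(x)=\delta_{ij}+O(|x|^2)$ and $\sqrt{\det g(x)}=1+O(|x|^2)$; this reduces every Riemannian integral in $Q_{\epsilon,\Omega}$ to a flat integral of the Aubin family $U_\epsilon(x)=\varphi_\Omega(x)(\epsilon+|x|^2)^{-(n-2)/2}$ up to a metric error that I expect to be strictly smaller than the decisive perturbation contribution.

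The change of variables $x=\sqrt{\epsilon}\,y$ next yields the standard Aubin--Talenti scales
\begin{equation*}
\lVert u_{\epsilon,\Omega,\beta}\rVert_{L^p(\Omega)}^{2}\sim C_1\epsilon^{-(n-2)/2},\quad \lVert\nabla u_{\epsilon,\Omega,\beta}\rVert_{L^2(\Omega)}^{2}\sim T C_1\epsilon^{-(n-2)/2},
\end{equation*}
together with
\begin{equation*}
\lVert u_{\epsilon,\Omega,\beta}\rVert_{L^2(\Omega)}^{2}\sim\begin{cases} C_2\epsilon^{-(n-4)/2},& n\geqslant 5,\\ C_2\log(1/\epsilon),& n=4,\\ C_2>0,& n=3,\end{cases}
\end{equation*}
for explicit positive constants $C_1,C_2$, with cutoff contributions of order $O(\epsilon^{(n-2)/2})$. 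Writing
\begin{equation*}
Q_{\epsilon,\Omega}-T=\frac{\lVert\nabla_g u\rVert_{L^2(g)}^2-T\lVert u\rVert_{L^p(g)}^2}{\lVert u\rVert_{L^p(g)}^2}+\frac{1}{a}\cdot\frac{\int_\Omega(R_g+\beta)u^2\,dvol_g}{\lVert u\rVert_{L^p(g)}^2},
\end{equation*}
the first ratio is controlled by the $|x|^2$-expansion of the metric plus the small cutoff error, while the second is asymptotic to $\frac{R_g(0)+\beta}{a}\cdot\frac{C_2}{C_1}\cdot\delta(\epsilon)$ with $\delta(\epsilon)\in\{\epsilon,\epsilon\log(1/\epsilon),\epsilon^{1/2}\}$ in dimensions $n\geqslant 5,\,n=4,\,n=3$ respectively. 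For $n\geqslant 4$ the second term strictly dominates for $\epsilon$ small and is strictly negative, giving $Q_{\epsilon,\Omega}<T$.

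The main obstacle is dimension three: the $L^2$-norm of the test function does not blow up as $\epsilon\to 0$, so the expected gain at scale $\epsilon^{1/2}$ is directly threatened by the metric and cutoff errors. The prescribed cosine cutoff $\varphi_\Omega(x)=\cos(\pi|x|/2)$ on the unit ball (rather than a generic bump) is what rescues the bound, as in the Brezis--Nirenberg analysis on $B_1\subset\R^3$: its specific vanishing on $\partial B_0(1)$ and the algebraic relation between $-\Delta_e\varphi_\Omega$ and $\varphi_\Omega$ produce the compensating boundary terms when integrating by parts in the gradient integral, leaving a net strictly negative correction of order $\epsilon^{1/2}$ coming from the $(R_g+\beta)u^2$ contribution. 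The careful sign-tracking in this three-dimensional asymptotic expansion, and in particular verifying that no positive boundary remainder is larger than the $\beta$-driven gain, is the most delicate step of the proof.
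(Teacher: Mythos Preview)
The paper does not give its own proof of this proposition; it is cited from \cite{XU3} and the only information supplied is Remark~\ref{local:re1}, which says the argument uses only standard Riemannian normal coordinates (no Weyl tensor, no conformal normal coordinates) and works uniformly for all $n\geqslant 3$. Your plan is exactly this Aubin/Brezis--Nirenberg expansion in normal coordinates, with the $n=3$ case handled via the cosine cutoff, so it matches the method the paper points to.

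One small correction: you write that $R_g+\beta<0$ with ``both summands negative by hypothesis,'' but Proposition~\ref{local:prop1} itself imposes no sign on $R_g$; only $\beta<0$ is assumed. In the paper the proposition is only ever invoked under the standing hypothesis $R_g<0$ on $\bar\Omega$ (see Proposition~\ref{local:prop2}), so your assumption is harmless in context, but strictly speaking the decisive negative sign in the correction term must come from the $\beta$-contribution alone if you want to match the proposition as stated.
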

\begin{remark}\label{local:re1}
Note that (\ref{local:eqn9}) is no longer a conformal invariant. Proposition \ref{local:prop1}, which is proven in \cite{XU3}, is uniform for all cases when $ n \geqslant 3 $. The Weyl tensor and locally conformal flatness is not required. Only standard normal coordinates with respect to the Riemannian metric $ g $ is used.
\end{remark}
\medskip

We now apply Theorem \ref{local:thm1} and Proposition \ref{local:prop1} to show the existence of a positive smooth solution of the perturbed Yamabe equation (\ref{local:eqn7}) with Dirichlet boundary condition. We refer \cite[Prop.~3.3]{XU3} for a complete proof of the following result. We sketch the proof below to show essential steps which are particularly useful in later context.
\begin{proposition}\label{local:prop2}\cite[Prop.~3.3]{XU3}
Let $ (\Omega, g) $ be Riemannian domain in $\R^n$, $ n \geqslant 3 $, with $C^{\infty} $ boundary, and with ${\rm Vol}_g(\Omega)$ and the Euclidean diameter of $\Omega$ sufficiently small. Let $ \beta < 0 $ be any constant. Assume $ R_{g} < 0 $ within the small enough closed domain $ \bar{\Omega} $. Then for any $ \lambda > 0 $, the Dirichlet problem (\ref{local:eqn7}) has a real, positive, smooth solution $ u \in \calC^{\infty}(\Omega) \cap H_{0}^{1}(\Omega, g) \cap \calC^{0}(\bar{\Omega}) $.
\end{proposition}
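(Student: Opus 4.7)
The plan is to reduce the Dirichlet problem (\ref{local:eqn7}) to the abstract form (\ref{local:eqn3}) and invoke Wang's theorem (Theorem \ref{local:thm1}), with the critical inequality $K < K_{0}$ supplied by the test-function estimate of Proposition \ref{local:prop1}. First I would multiply the equation through by $\sqrt{\det g}$ to put the principal part in divergence form:
$$-\partial_{i}\bigl(a\sqrt{\det g}\,g^{ij}\partial_{j}u\bigr) = \lambda \sqrt{\det g}\,u^{p-1} - \sqrt{\det g}(R_{g}+\beta)u.$$
This identifies $a_{ij}(x) = a\sqrt{\det g}\,g^{ij}$, $b(x) = \lambda\sqrt{\det g}$, and $f(x,u) = -\sqrt{\det g}(R_{g}+\beta)u_{+}$ with primitive $F(x,u) = -\tfrac{1}{2}\sqrt{\det g}(R_{g}+\beta)u_{+}^{2}$. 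Since $R_{g}<0$ on $\bar{\Omega}$ and $\beta<0$, the prefactor $-\sqrt{\det g}(R_{g}+\beta)$ is a strictly positive smooth function, so $f \geqslant 0$ and $f(x,u)=0$ for $u\leqslant 0$, as required.

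The verification of Wang's structural hypotheses is then routine. (P1) is uniform ellipticity of $a_{ij}$, which follows from smoothness and positive-definiteness of $g$ on the compact set $\bar{\Omega}$. (P2) is immediate because $f$ is linear in $u$ while $p-1>1$. For (P3), the ratio $f(x,u)/u = -\sqrt{\det g}(R_{g}+\beta)$ is uniformly bounded on $\bar{\Omega}$, while by Faber--Krahn the first Dirichlet eigenvalue $\lambda_{1}$ of $L$ grows without bound as $\mathrm{Vol}_{g}(\Omega)\to 0$; choosing $\Omega$ small enough therefore secures (P3). For (P4), since $b(x)=\lambda\sqrt{\det g}$ is bounded below by a positive constant, the set $\Omega(\sigma)=\{0\leqslant b\leqslant\sigma\}$ is empty for $\sigma$ sufficiently small, making (P4) vacuous.

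The main obstacle, and the heart of the argument, is the strict inequality $K<K_{0}$. Plugging the test function $\varphi = u_{\epsilon,\Omega,\beta}$ from (\ref{local:eqn10}) into $J$ and maximizing in $t>0$, a short computation yields
$$\sup_{t>0} J(t\varphi) = \frac{1}{n}\,\frac{A^{n/2}}{B^{(n-2)/2}}, \qquad A=a\!\int_{\Omega}|\nabla_{g}\varphi|_{g}^{2}\dvol + \int_{\Omega}(R_{g}+\beta)\varphi^{2}\dvol, \qquad B=\lambda\!\int_{\Omega}\varphi^{p}\dvol.$$
A direct calculation gives $\det(a_{ij}) = a^{n}(\det g)^{(n-2)/2}$ and $|b|^{n-2}=\lambda^{n-2}(\det g)^{(n-2)/2}$, so the quotient $\det(a_{ij})/|b|^{n-2} = a^{n}/\lambda^{n-2}$ is constant on $\Omega$ and $A(\Omega)^{1/2} = a^{n/2}/\lambda^{(n-2)/2}$. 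After cancellation of the $a$ and $\lambda$ factors, the inequality $K\leqslant\sup_{t>0}J(t\varphi)<K_{0}$ reduces precisely to
$$\frac{\int_{\Omega}|\nabla_{g}\varphi|_{g}^{2}\dvol + \tfrac{1}{a}\int_{\Omega}(R_{g}+\beta)\varphi^{2}\dvol}{\bigl(\int_{\Omega}\varphi^{p}\dvol\bigr)^{2/p}} < T,$$
which is exactly the content of Proposition \ref{local:prop1}. The sign hypothesis $\beta<0$ is indispensable here: the negative zeroth-order perturbation is what permits the strict inequality to hold in every dimension $n\geqslant 3$ without invoking either Weyl-tensor positivity or local non-conformal-flatness, thereby sidestepping Aubin's low-dimensional obstruction.

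With all four structural conditions and the critical inequality in hand, Theorem \ref{local:thm1} delivers a positive solution $u\in\calC^{\infty}(\Omega)\cap\calC^{0}(\bar{\Omega})$ of (\ref{local:eqn7}). Membership in $H_{0}^{1}(\Omega,g)$ follows because the mountain-pass construction (Theorem \ref{local:thm2}) produces $u$ as the weak $H_{0}^{1}$-limit of a Palais--Smale sequence, and the Euclidean $H_{0}^{1}(\Omega)$ norm is equivalent to $H_{0}^{1}(\Omega,g)$ on the compact domain. This completes the proposed proof.
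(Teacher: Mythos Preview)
Your proof is correct and follows essentially the same route as the paper: identify $a_{ij}=a\sqrt{\det g}\,g^{ij}$, $b=\lambda\sqrt{\det g}$, $f=-\sqrt{\det g}(R_g+\beta)u_+$, verify Wang's hypotheses (P1)--(P4), compute $A(\Omega)=a^{n}\lambda^{2-n}$, and reduce $K<K_0$ to the test-function inequality of Proposition~\ref{local:prop1}. You supply more detail than the paper's sketch---in particular your Faber--Krahn argument for (P3) and the observation that $\Omega(\sigma)=\emptyset$ makes (P4) vacuous---but the strategy and all essential computations coincide.
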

\begin{proof} (Sketch) In terms of $ W^{s, q}(\Omega, g) $-norms, (\ref{local:eqn7}) with positive solution is the Euler-Lagrange equation of the functional
\begin{equation}\label{local:eqn11}
J_{\beta}^{*}(u) : = \int_{\Omega} \left( \frac{1}{2} a\sqrt{\det(g)} g^{ij} \partial_{i}u \partial_{j} u - \frac{\sqrt{\det(g)}}{p} \lambda u_{+}^{p} - \int_{0}^{u} \sqrt{\det(g)(x)}(-R_{g}(x) - \beta) t dt \right) dx.
\end{equation}
Note that $ J_{\beta}^{*} u $ is just a special expression of (\ref{local:eqn4}) with
\begin{equation}\label{local:eqn12}
a_{ij}(x) = a\sqrt{\det(g)} g^{ij}(x), b(x) = \lambda \sqrt{\det(g)(x)}, f(x, u) = \sqrt{\det(g)(x)} \left(-R_{g}(x) - \beta \right)u.
\end{equation}
It is checked in \cite[Prop.~3.3]{XU3} that all hypotheses (P1) through (P4) are satisfied. Checking $ K < K_{0} $ in (\ref{local:eqn6}) can be reduced to find a positive function $ u > 0 $ in $ \Omega $, $ u \equiv 0 $ on $ \partial \Omega $ such that $ sup_{t > 0} J_{\beta}^{*}(tu) < K_{0} $. Recall from (\ref{local:eqn5}) that
\begin{equation*}
K_{0} = \frac{1}{n} T^{\frac{n}{2}} \left( A(\Omega) \right)^{\frac{1}{2}}, A(\Omega) = \text{essinf}_{x \in \Omega} \frac{\det(a_{ij}(x))}{\lvert b(x) \rvert^{n-2}}.
\end{equation*}
In the context of (\ref{local:eqn12}), we have
\begin{equation*}
A(\Omega) =  \lambda^{2 - n}a^{n}.
\end{equation*}
It follows that $ K_{0} $ is of the form
\begin{equation}\label{local:eqn13}
K_{0} = K_{0} = \frac{1}{n} T^{\frac{n}{2}} A(\Omega)^{\frac{1}{2}} = \frac{1}{n} \lambda^{\frac{2- n}{2}}a^{\frac{n}{2}} T^{\frac{n}{2}}.
\end{equation}
The inequality $ \sup_{t > 0} J_{\beta}^{*}(tu) < K_{0} $ for a positive function in $ \Omega $ with Dirichlet condition on $ \partial \Omega $ is equivalent to
\begin{equation}\label{local:eqn14}
\begin{split}
& J_{0} : = \frac{\int_{\Omega} a\sqrt{\det(g)} g^{ij} \partial_{i} u \partial_{j} u dx- \int_{\Omega} \sqrt{\det(g)} \left(-R_{g} - \beta \right) u^{2} dx}{\left( \int_{\Omega} \sqrt{\det(g)} \lambda u^{p} dx \right)^{\frac{2}{p}}} < \lambda^{\frac{2-n}{n}} aT \\
\Leftrightarrow &  \frac{\int_{\Omega} \sqrt{\det(g)} g^{ij} \partial_{i} u \partial_{j} u dx- \frac{1}{a} \int_{\Omega}  \sqrt{\det(g)} \left(-R_{g} - \beta \right) u^{2} dx}{\left( \int_{\Omega} \sqrt{\det(g)} u^{p} dx \right)^{\frac{2}{p}}} < T.
\end{split}
\end{equation}
Note that $ \frac{1}{n} J_{0}^{\frac{n}{2}} = \sup_{t > 0} J_{\beta}^{*}(tu) $. This is exactly the inequality that Proposition (\ref{local:prop1}) shows. It follows that the condition $ K < K_{0} $ is satisfied with the test function (\ref{local:eqn10}). It follows that all hypotheses in Theorem \ref{local:thm1} are satisfied. Therefore we conclude that there exists a function $ u \in H_{0}^{1}(\Omega) $, $ u > 0 $ in $ \Omega $ that solves (\ref{local:eqn7}) in the weak sense. Due to the same argument in Brezis and Nirenberg \cite{Niren3}, we conclude that $ u \in \calC^{\infty}(\Omega) \cap \calC^{0}(\bar{\Omega}) $. The boundary regularity can be improved to at least $ \calC^{1, \alpha} $, here the choice of $ \alpha $ is based on the dimension of $ \Omega $.
\end{proof}
\begin{remark}\label{local:re2}
Recall the critical value $ T_{0} $ in mountain pass theorem. For the functional $ J_{\beta}^{*} $, we denote $ T_{0} $ as $ T_{0, \beta} $ since it clearly depends on $ \beta $. In \cite{WANG}, on of the key observations is the solution $ u $ of (\ref{local:eqn7}) satisfies
\begin{equation}\label{local:eqn14a}
J_{\beta}^{*}(u) \leqslant T_{0, \beta} < K_{0}.
\end{equation}
This inequality is the key step for the solvability of the local Yamabe equation (\ref{local:eqn1}).
\end{remark}
\medskip

In order to solve (\ref{local:eqn1}), it is natural to take a limiting argument by letting $ \beta \rightarrow 0^{-} $ in terms of the solutions $ \lbrace u_{\beta} \rbrace_{\beta} $ of (\ref{local:eqn7}) with $ \beta < 0 $. On open subsets of Euclidean space with Euclidean metric, the first eigenvalue goes to infinity when the size of the domain shrinks to a point. On Riemannian domain $ (\Omega, g) $, we need a detailed analysis with a result from Li and Yau \cite{LY}.
\begin{theorem}\label{local:thm3}\cite[Thm. 7]{LY} Let $ (\bar{M}, g) $ be a compact manifold with smooth boundary, let $ r_{inj} $ be the injectivity radius of $ M $, and let $ h_{g} $ be the minimum of the mean curvature of $ \partial M $. Choose $ K \geq 0$  such that  $ Ric_{g} \geqslant -(n - 1)K $. If $ \lambda_{1} $ is the first nonzero eigenvalue
\begin{equation*}
-\Delta_{g} u = \lambda_{1} u \; {\rm in} \; M', u \equiv 0 \; {\rm on} \; \partial M'.
\end{equation*}
Then
\begin{equation}\label{local:eqn15}
\lambda_{1} \geqslant \frac{1}{\gamma} \left( \frac{1}{4(n - 1) r_{inj}^{2}} \left( \log \gamma \right)^{2} - (n - 1)K \right),
\end{equation}
where
\begin{equation}\label{local:eqn16}
\gamma = \max \left\lbrace \exp[{1 + \left( 1 - 4(n - 1)^{2} r_{inj}^{2}K \right)^{\frac{1}{2}}}], \exp[{-2(n - 1)h_{g} r_{inj}} ] \right\rbrace.
\end{equation}
\end{theorem}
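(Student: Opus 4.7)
The plan is to follow the gradient-estimate strategy of Li and Yau. Let $u$ be a first Dirichlet eigenfunction, positive in the interior by Hopf's lemma, and set $v=\log u$. Then $v$ satisfies $-\Delta_g v = \lambda_1 + |\nabla v|_g^2$, and Bochner's formula
\begin{equation*}
\tfrac{1}{2}\Delta_g |\nabla v|_g^2 = |\nabla^2 v|_g^2 + \langle \nabla v, \nabla \Delta_g v\rangle_g + \mathrm{Ric}_g(\nabla v,\nabla v),
\end{equation*}
combined with the Cauchy--Schwarz bound $|\nabla^2 v|_g^2 \ge (\Delta_g v)^2/n$ and the hypothesis $\mathrm{Ric}_g \ge -(n-1)K$, yields a differential inequality of the form $\Delta_g F \ge \tfrac{2}{n}F^2 - 2(n-1)K F - c\lambda_1 F$ for $F = |\nabla v|_g^2$.

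Next I would apply the maximum principle to $F$, multiplied by an auxiliary cutoff in order to handle the singularity of $v$ near $\partial M$. This converts the differential inequality into a pointwise bound of the form $\sup_M |\nabla v|_g^2 \le C_1\lambda_1 + C_2(n-1)K + (\text{boundary term in } h_g)$. The boundary term comes from computing $\partial_\nu F$ on $\partial M$ via the Gauss--Weingarten relation and the Dirichlet condition: along $\partial M$ the gradient $\nabla v$ is parallel to $\nu$ and $\partial_\nu v\to -\infty$, so the sign and size of the normal derivative of $F$ are governed by the mean curvature $h_g$. This is where the second exponential in the definition \eqref{local:eqn16} of $\gamma$ is produced.

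The final step is to integrate $|\nabla v|_g$ along a minimizing geodesic of length at most $r_{inj}$ joining the point where $u$ attains its maximum to a suitably chosen comparison point. Since $v=\log u\to -\infty$ on $\partial M$, one obtains $\log\gamma\le r_{inj}\,\sup_M|\nabla v|_g$ with $\gamma$ matching the ratio of $u$-values at the two endpoints; combining this with the pointwise gradient bound of the previous step and solving for $\lambda_1$ gives the lower bound \eqref{local:eqn15}. The main obstacle is the sharp tracking of constants so that the two contributions---interior from the Bochner identity and boundary from the Gauss--Weingarten formula---combine precisely into the maximum of the two exponentials appearing in \eqref{local:eqn16}. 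This requires a careful case split at the point where $F$ attains its supremum on $\bar M$ (interior versus boundary) and a choice of cutoff compatible both with the Hopf boundary behavior of $u$ and with the Ricci lower bound in the interior.
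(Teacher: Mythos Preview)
The paper does not prove this theorem at all: it is quoted verbatim from Li and Yau \cite[Thm.~7]{LY} and used as a black box. The only content the paper adds is Remark~\ref{local:re3}, which explains how the right-hand side of \eqref{local:eqn15} behaves as the domain shrinks. So there is no ``paper's own proof'' to compare your proposal against.

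That said, your sketch is broadly the Li--Yau strategy, but as written it has real gaps if you intend it as an actual proof. First, the differential inequality you write for $F=|\nabla v|_g^2$ is not correct: from Bochner, $\langle\nabla v,\nabla\Delta_g v\rangle_g = -\langle\nabla v,\nabla F\rangle_g$, so the inequality one gets is $\tfrac12\Delta_g F \ge \tfrac1n(\lambda_1+F)^2 - \langle\nabla v,\nabla F\rangle_g - (n-1)KF$, not the algebraic inequality in $F$ alone that you wrote; the first-order term $\langle\nabla v,\nabla F\rangle_g$ cannot be dropped and is handled at the maximum point, not globally. Second, Li and Yau do not use a cutoff to treat the boundary singularity of $v=\log u$; they work instead with the auxiliary function $|\nabla u|^2/(\alpha-u)^2$ for a suitable constant $\alpha>\sup u$, which is smooth up to $\partial M$, and compute its normal derivative there using the second fundamental form to bring in $h_g$. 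Your ``cutoff compatible with Hopf boundary behavior'' is not how the argument runs and would make the constant tracking you flag essentially impossible. Third, the integration step at the end does not produce $\log\gamma$ from a ratio of $u$-values in the way you describe; $\gamma$ in \eqref{local:eqn16} is defined a priori as the maximum of two explicit expressions and enters through an optimization over the parameter $\alpha$, not through endpoint values of $u$ along a geodesic.

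In short: for the purposes of this paper no proof is needed, only the citation. If you want to supply one, follow the original Li--Yau argument with the auxiliary function $|\nabla u|^2/(\alpha-u)^2$ rather than $|\nabla\log u|^2$ with a cutoff.
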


\begin{remark}\label{local:re3}
We apply Theorem \ref{local:thm3} above for Riemannian domain $ (\Omega, g) $, for which $ (\bar{\Omega}, g) $ is considered to be a compact manifold with boundary. In Riemannian normal coordinates centered at $ P \in \Omega$, $g$ agrees with the Euclidean metric up to terms of order $O(r^2)$. Thus if $\Omega$ is a $g$-geodesic ball of radius $r$, the mean curvature of $\partial\Omega$ is close $(n-1)/r$, the mean curvature of a Euclidean $r$-ball in $\R^n$.  In (\ref{local:eqn16}), as $r\to 0$,  $K$ can be taken to be unchanged (since $g$ is independent of $r$), $r_{inj}\to 0$, and $h\cdot r_{inj}\to n-1$. Thus $\gamma\to e^2$,  the right hand side of (\ref{local:eqn15}) goes to infinity as $r\to 0$, and $\lambda_1\to\infty. $ If $\Omega$ is a general, small enough Riemannian domain with a small enough injectivity radius, then $\Omega$ sits inside a $g$-geodesic ball $\Omega''$ of small radius.  By the Rayleigh quotient characterization of $\lambda_1$, we have $\lambda_{1,\Omega''} \leqslant \lambda_{1,\Omega} $.  Thus for all Riemannian domains $(\Omega,g)$, $ \lambda_{1}^{-1} \rightarrow 0 $ as the radius of $\Omega$ goes to zero.
\end{remark}

The functional $ J_{\beta}^{*}(u) $ depends on $ \beta $. For any $ \beta < 0 $, Proposition \ref{local:prop1} and \ref{local:prop2} indicate that there exists a $ u_{\epsilon, \Omega, \beta} $, given in (\ref{local:eqn10}), such that
\begin{equation*}
\sup_{t > 0} J_{\beta}^{*}(t u_{\epsilon, \Omega, \beta}) < K_{0}.
\end{equation*}
When $ t $ large enough, the functional $ J^{*}(tu) < 0 $ for any fixed $ u $, thus we choose some large enough $ t = t_{0} $ such that $ J^{*}(t_{0} u_{\epsilon, \Omega, \beta}) < 0 $. By the same argument in \cite[Thm.~2.1]{Niren3} and \cite[Thm.~1.1]{WANG}, we denote $ \mathcal{P} $ to be the collection of all continuous paths from $ 0 $ to $ t_{0} u_{\epsilon, \Omega, \beta} $ and the hypotheses in Theorem \ref{local:thm2} are satisfied. It follows immediately that
\begin{equation*}
T_{0, \beta} \leqslant \sup_{t > 0} J_{\beta}^{*}(t u_{\epsilon, \Omega, \beta}) < K_{0}.
\end{equation*}
Therefore, we conclude that $ J(u_{\beta}) \leqslant T_{0,\beta} < K_{0} $ where $ u_{\beta} $ is the positive solution of (\ref{local:eqn7}) due to Remark \ref{local:re2}. We hope that the critical value $ T_{0, \beta} $ is ideally to be the same for all $ \beta $. Here the Weyl tensor plays a role. The next result shows that when $ (\Omega, g) $ is not locally conformally flat, the inequality (\ref{local:eqn14a}) holds for some fixed $ T_{1} $, uniformly in $ \beta \in [\beta_{0}, 0) $, provided that $ \lvert \beta_{0} \rvert $ is small enough and the domain $ \Omega $ is small enough. We will discuss the locally conformally flat case later.

\begin{proposition}\label{local:prop3}
Let $ (\Omega, g) $ be a Riemannian domain in $\R^n$, $ n \geqslant 3 $, not locally conformally flat, with $C^{\infty} $ boundary, with ${\rm Vol}_g(\Omega)$ and the Euclidean diameter of $\Omega$ sufficiently small. In addition, we assume that the first eigenvalue of Laplace-Beltrami operator $ -\Delta_{g} $ on $ \Omega $ with Dirichlet condition satisfies $ \lambda_{1} \rightarrow \infty $ as $ \Omega $ shrinks. Let $ \lambda > 0 $ be a fixed constant. Assume $ R_{g} < 0 $ within the small enough closed domain $ \bar{\Omega} $. Fix $ \beta_{0} < 0 $ with small enough $ \lvert \beta_{0} \rvert $. Let
\begin{equation}\label{local:eqnt1}
\begin{split}
J_{\beta}^{*}(u) & = \int_{\Omega} \left( \frac{1}{2} a\sqrt{\det(g)} g^{ij} \partial_{i}u \partial_{j} u - \frac{\sqrt{\det(g)}}{p} \lambda u^{p} + \int_{0}^{u} \sqrt{\det(g)(x)}(R_{g}(x) + \beta) t dt \right) dx, \beta < 0; \\
T_{1} & = \sup_{t > 0} J_{\beta_{0}}^{*}(t u_{\epsilon, \Omega, \beta_{0}}).
\end{split}
\end{equation}
Here $ u_{\epsilon, \Omega, \beta_{0}} $ is defined in (\ref{local:eqn10}) with respect to $ J_{\beta_{0}}^{*}(u) $. Then for all $ \beta \in [\beta_{0}, 0) $, we have
\begin{equation}\label{local:eqnt2}
T_{0, \beta} = \inf_{P \in \mathcal{P}_{\beta}} \max_{w \in P} J_{\beta}^{*}(w) \leqslant T_{1} < K_{0}.
\end{equation}
Here $ \mathcal{P}_{\beta} $ is the collection of all continuous paths from $ 0 $ to $ t_{0} v_{\beta, \Omega} $ with respect to $ \beta $ such that $ J_{\beta}^{*}(t_{0} v_{\beta, \Omega}) < 0 $ for some positive function $ v_{\beta, \Omega} \in \calC_{c}^{\infty}(\Omega) $.
\end{proposition}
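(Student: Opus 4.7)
The conclusion $T_{0, \beta} \leq T_1 < K_0$ splits into two inequalities, which I address in order.

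The inequality $T_1 < K_0$ is immediate from Proposition~\ref{local:prop1}. Write $A_\beta(v) := a\int_\Omega |\nabla v|_g^2 \dvol + \int_\Omega(R_g + \beta)v^2 \dvol$ and $B(v) := \lambda \int_\Omega v^p \dvol$. The explicit maximization $\sup_{t > 0}\{\tfrac{t^2}{2} A_\beta(v) - \tfrac{t^p}{p}B(v)\} = \tfrac{1}{n} A_\beta(v)^{n/2} B(v)^{(2-n)/2}$, valid whenever $A_\beta(v) > 0$, identifies $T_1 = \tfrac{1}{n} A_{\beta_0}(u_{\epsilon, \Omega, \beta_0})^{n/2} B(u_{\epsilon, \Omega, \beta_0})^{(2-n)/2}$ when specialized to $\beta = \beta_0$ and $v = u_{\epsilon, \Omega, \beta_0}$. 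Combined with the strict Sobolev-type inequality $Q_{\epsilon, \Omega} < T$ of Proposition~\ref{local:prop1} (applied at perturbation parameter $\beta_0$), the chain of equivalences carried out at (\ref{local:eqn14}) in the proof of Proposition~\ref{local:prop2} produces $T_1 < \tfrac{1}{n}\lambda^{(2-n)/2}(aT)^{n/2} = K_0$.

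For $T_{0, \beta} \leq T_1$, I produce, for each $\beta \in [\beta_0, 0)$, a witness path in $\mathcal{P}_\beta$ whose maximum of $J_\beta^*$ is at most $T_1$. The candidate is the linear ray $\gamma_\beta(s) = s\, t_{0, \beta}\, v_\beta$ for $s \in [0, 1]$, with $v_\beta \in \calC_c^\infty(\Omega)$ positive (to be selected below) and $t_{0, \beta} > 0$ large enough that $J_\beta^*(t_{0, \beta} v_\beta) < 0$; such $t_{0, \beta}$ exists because the $t^p$ term of $J_\beta^*(tv_\beta)$ dominates as $t \to +\infty$. The maximum of $J_\beta^*$ along $\gamma_\beta$ equals $\sup_{t > 0} J_\beta^*(tv_\beta) = \tfrac{1}{n} A_\beta(v_\beta)^{n/2} B(v_\beta)^{(2-n)/2}$ by the same explicit maximization as above, so the task reduces to choosing $v_\beta$ making this quantity $\leq T_1$.

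The naive choice $v_\beta = u_{\epsilon, \Omega, \beta_0}$ is insufficient, because $J_\beta^*(u) \geq J_{\beta_0}^*(u)$ on positive $u$ whenever $\beta \geq \beta_0$, so the resulting sup overshoots $T_1$. I therefore take $v_\beta = u_{\epsilon(\beta), \Omega, \beta_0}$ with the concentration scale $\epsilon(\beta)$ slightly smaller than the reference $\epsilon$, so that sharper concentration compensates for the reduction of $|R_g + \beta|$ below $|R_g + \beta_0|$. This relies on two quantitative ingredients: the Aubin-type margin inside $Q_{\epsilon, \Omega} < T$ from Proposition~\ref{local:prop1}, which opens as $\epsilon \to 0$ under the non-locally-conformally-flat hypothesis, and the scaling estimate $C_\epsilon := \int u_{\epsilon, \Omega, \beta_0}^2 \dvol / (\int u_{\epsilon, \Omega, \beta_0}^p \dvol)^{2/p} \to 0$ as $\epsilon \to 0$ for $n \geq 3$ (with at worst a logarithmic correction when $n = 4$), obtained by direct rescaling of the bubble (\ref{local:eqn10}), which controls the perturbation discrepancy $(\beta - \beta_0) C_\epsilon / a$. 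The main obstacle is precisely this quantitative balance: coordinating the smallness of $|\beta_0|$ with the choice of $\epsilon(\beta)$ so that the Aubin margin dominates the $\beta$-perturbation uniformly across $[\beta_0, 0)$, while preserving $A_\beta(v_\beta) > 0$ so that the explicit maximization formula remains available. This is where the non-locally-conformally-flat hypothesis enters essentially, in contrast with the locally conformally flat case later treated via Bahri--Coron and Clapp--Faya--Pistoia.
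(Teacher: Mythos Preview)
Your treatment of $T_{1} < K_{0}$ is correct and matches the paper.

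The gap is in your mechanism for $T_{0,\beta}\le T_{1}$. You propose to shrink the concentration scale, claiming that ``the Aubin-type margin inside $Q_{\epsilon,\Omega}<T$ \dots\ opens as $\epsilon\to 0$ under the non-locally-conformally-flat hypothesis.'' This is backwards. For the bubbles (\ref{local:eqn10}) the ratio $C_{\epsilon}=\lVert u_{\epsilon}\rVert_{2}^{2}/\lVert u_{\epsilon}\rVert_{p}^{2}\to 0$, as you correctly note; but that ratio is precisely what carries the negative $(R_{g}+\beta_{0})$ contribution producing the margin in Proposition~\ref{local:prop1}. Hence $Q_{\epsilon,\Omega}\to T$ from below and the margin \emph{closes} as $\epsilon\to 0$. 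There is then no reason $A_{\beta_{0}}(u_{\epsilon'})/B(u_{\epsilon'})^{2/p}$ should fall below $A_{\beta_{0}}(u_{\epsilon})/B(u_{\epsilon})^{2/p}$ for $\epsilon'<\epsilon$, let alone by enough to absorb the positive discrepancy $(\beta-\beta_{0})\lVert u_{\epsilon'}\rVert_{2}^{2}/B(u_{\epsilon'})^{2/p}$; if $\epsilon$ is near the minimiser of $\epsilon'\mapsto Q_{\beta_{0}}(u_{\epsilon'})$ your scheme produces no admissible $\epsilon(\beta)$ at all. Note too that Remark~\ref{local:re1} says Proposition~\ref{local:prop1} uses neither the Weyl tensor nor conformal normal coordinates, so the non-locally-conformally-flat hypothesis does not enter through the bubble estimates in the way you suggest. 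A further red flag: you never invoke the hypothesis $\lambda_{1}\to\infty$ as $\Omega$ shrinks, which the statement includes for a reason.

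The paper's argument is structurally different. It keeps the bubble $u_{\beta}:=u_{\epsilon,\Omega,\beta}$ (whose $\epsilon$ is even smaller than that for $\beta_{0}$, hence still admissible for $J_{\beta_{0}}^{*}$) and \emph{adds} a correction $v>0$ solving the auxiliary Dirichlet problem $-a\Delta_{g}v=-2R_{g}u_{\beta}$ on $\Omega$. One then compares $\Gamma_{1}:=\int(u_{\beta}+v)^{p}/\int u_{\beta}^{p}$ with $\Gamma_{2}:=A_{\beta}(u_{\beta}+v)/A_{\beta_{0}}(u_{\beta})$ and shows $\Gamma_{1}\ge\Gamma_{2}$. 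The equation for $v$ is scale-invariant, so $\Gamma_{1}>1$ is a fixed constant independent of the size of $\Omega$; on the other hand $\Gamma_{2}=1+\Gamma_{3}$ with $\Gamma_{3}$ bounded by a multiple of $\lambda_{1}^{-1}$, and the hypothesis $\lambda_{1}\to\infty$ forces $\Gamma_{3}$ small. This is where $R_{g}<0$ (and, for consistency, the non-locally-conformally-flat assumption) genuinely enters: it makes the right side of the auxiliary equation positive, so $v>0$ and $\Gamma_{1}>1$ strictly.
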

\begin{proof}
Without loss of generality, we may assume that $ \Omega $ is a geodesic ball of radius $ r $. The assumption $ R_{g} < 0 $ is valid since the manifold is not locally conformally flat, since otherwise we would have $ R_{g} \equiv 0 $ by the conformal invariance of the Yamabe quotient; in particular, the PDE (\ref{local:eqnt4}) below has only trivial solution. By parametrizing the paths in $ t $, we show this by showing that there exists a positive function $ v \in \calC_{c}^{\infty}(\Omega) $ such that
\begin{equation}\label{local:eqnt3}
\sup_{t >0} J_{\beta}^{*}\left(t \left(u_{\epsilon, \Omega, \beta} + v\right) \right) \leqslant \sup_{t > 0} J_{\beta_{0}}^{*}(t u_{\epsilon, \Omega, \beta}), \forall \beta \in [\beta_{0}, 0)
\end{equation}
where $ u_{\epsilon, \Omega, \beta} $ is the test function in (\ref{local:eqn10}) with respect to $ J_{\beta}^{*}(u) $. For simplicity and clarity, we interchangeably use $ u_{\epsilon, \Omega, \beta} = u_{\beta} $ just within this proof. Recall from Proposition \ref{local:prop1} above and \cite{XU3} that the inequality (\ref{local:eqn14a}) is achieved by choosing $ \epsilon $ small enough. Note that $ \epsilon $ is smaller when $ \lvert \beta \rvert $ is smaller. Thus the test function $ u_{\beta} $ is good for $ J_{\beta_{0}}^{*}(u) $. Due to the argument in Proposition \ref{local:prop1} and \cite[Prop.~3.3]{XU3}, (\ref{local:eqnt3}) is equivalent to
\begin{align*}
& \sup_{t >0} J_{\beta}^{*}\left(t \left(u_{\epsilon, \Omega, \beta} + v\right) \right) = \frac{1}{n} \left( \frac{a \int_{\Omega} \nabla_{g} (u_{\beta} + v) \cdot \nabla_{g}(u_{\beta} + v) \dvol + \int_{\Omega} \left( R_{g} + \beta \right) (u_{\beta} + v)^{2} \dvol}{\int_{\Omega} \lambda (u_{\beta} + v)^{p} \dvol} \right)^{\frac{n}{2}} \\
& \qquad \leqslant \frac{1}{n} \left( \frac{a \int_{\Omega} \nabla_{g} u_{\beta} \cdot \nabla_{g}u_{\beta} \dvol + \int_{\Omega} \left( R_{g} + \beta \right) u_{\beta}^{2} \dvol}{\int_{\Omega} \lambda u_{\beta}^{p} \dvol} \right)^{\frac{n}{2}} = \sup_{t > 0} J_{\beta_{0}}^{*}(t u_{\epsilon, \Omega, \beta}) \\
\Leftrightarrow & \frac{a \int_{\Omega} \nabla_{g} (u_{\beta} + v) \cdot \nabla_{g}(u_{\beta} + v) \dvol + \int_{\Omega} \left( R_{g} + \beta \right) (u_{\beta} + v)^{2} \dvol}{\int_{\Omega} (u_{\beta} + v)^{p} \dvol} \\
& \qquad \leqslant \frac{a \int_{\Omega} \nabla_{g} u_{\beta} \cdot \nabla_{g}u_{\beta} \dvol + \int_{\Omega} \left( R_{g} + \beta_{0} \right) u_{0}^{2} \dvol}{\int_{\Omega} u_{\beta}^{p} \dvol} \\
\Rightarrow & \Gamma_{1}: =  \frac{\int_{\Omega} (u_{\beta} + v)^{p} \dvol}{\int_{\Omega} u_{\beta}^{p} \dvol} \geqslant \frac{a \int_{\Omega} \nabla_{g} (u_{\beta} + v) \cdot \nabla_{g}(u_{\beta} + v) \dvol + \int_{\Omega} \left( R_{g} + \beta \right) (u_{\beta} + v)^{2} \dvol}{a \int_{\Omega} \nabla_{g} u_{\beta} \cdot \nabla_{g}u_{\beta} \dvol + \int_{\Omega} \left( R_{g} + \beta_{0} \right) u_{\beta}^{2} \dvol}: = \Gamma_{2}.
\end{align*}
Consider the PDE
\begin{equation}\label{local:eqnt4}
-a\Delta_{g} v = -2R_{g} u_{\beta} \; {\rm in} \; \Omega, v \equiv 0 \; {\rm on} \; \partial \Omega.
\end{equation}
Note that we require $ R_{g} < 0 $ in $ \Omega $. Since $ u_{\beta} > 0 $ in $ \Omega $, it follows that $ v > 0 $ and smooth in $ \Omega $, due to standard elliptic regularity and maximum principle. When we restrict $ \Omega $ to a smaller domain and let $ \beta \rightarrow 0 $, the test function $ u_{\beta}(x) $ becomes $ C_{\beta} u_{\beta}(\delta x) $ with some positive constant $ C_{\beta} $. The key is that the solution $ v $ of (\ref{local:eqnt4}) is invariant under scaling of $ \Omega $, or equivalently, the scaling of the metric $ g $. Therefore, $ \Gamma_{1} $ is invariant under scaling and therefore $ \Gamma_{1} > 1 $ is a fixed positive constant for all small enough geodesic balls $ \Omega $.

Due to Remark \ref{local:re3}, it follows that the first eigenvalue $ \lambda_{1} $ that satisfies $ -\Delta_{g} \phi = \lambda_{1} \phi $ in $ \Omega $, $ \phi = 0 $ on $ \partial \Omega $ increases as the geodesic ball $ \Omega $ shrinks. Since $ v $ solves (\ref{local:eqnt4}), it follows that
\begin{equation*}
\int_{\Omega} a \nabla_{g} v \cdot \nabla_{g} v \dvol = \int_{\Omega} -2R_{g} u_{\beta}v \dvol
\end{equation*}
by pairing $ v $ on both sides of (\ref{local:eqnt4}). We check $ \Gamma_{2} $ that
\begin{align*}
\Gamma_{2} & = \frac{a \int_{\Omega} \nabla_{g} (u_{\beta} + v) \cdot \nabla_{g}(u_{\beta} + v) \dvol + \int_{\Omega} \left( R_{g} + \beta \right) (u_{\beta} + v)^{2} \dvol}{a \int_{\Omega} \nabla_{g} u_{\beta} \cdot \nabla_{g}u_{\beta} \dvol + \int_{\Omega} \left( R_{g} + \beta_{0} \right) u_{\beta}^{2} \dvol} \\
& = 1 + \frac{2a \int_{\Omega} \nabla_{g} u_{\beta} \cdot \nabla_{g} v \dvol + a \int_{\Omega} \nabla_{g} v \cdot \nabla_{g} v \dvol -2\int_{\Omega} R_{g} u_{\beta}v \dvol}{a \int_{\Omega} \nabla_{g} u_{\beta} \cdot \nabla_{g}u_{\beta} \dvol + \int_{\Omega} \left( R_{g} + \beta_{0} \right) u_{\beta}^{2} \dvol} \\
& \qquad + \frac{\int_{\Omega} (R_{g} + \beta) v^{2} \dvol + \left( \beta - \beta_{0} \right) \int_{\Omega} u_{\beta}^{2} \dvol}{a \int_{\Omega} \nabla_{g} u_{\beta} \cdot \nabla_{g}u_{\beta} \dvol + \int_{\Omega} \left( R_{g} + \beta_{0} \right) u_{\beta}^{2} \dvol} \\
& = 1 + \frac{2 \int_{\Omega} u_{\beta} \left( -a\Delta_{g} v\right) \dvol + \int_{\Omega} (R_{g} + \beta) v^{2} \dvol + \left( \beta - \beta_{0} \right) \int_{\Omega} u_{\beta}^{2} \dvol}{a \int_{\Omega} \nabla_{g} u_{\beta} \cdot \nabla_{g}u_{\beta} \dvol + \int_{\Omega} \left( R_{g} + \beta_{0} \right) u_{\beta}^{2} \dvol} \\
\end{align*}
Using (\ref{local:eqnt4}) again, we have
\begin{align*}
\Gamma_{2} & = 1 + \frac{-4 \int_{\Omega} R_{g}u_{\beta}^{2} \dvol + \int_{\Omega} (R_{g} + \beta) v^{2} \dvol + \left( \beta - \beta_{0} \right) \int_{\Omega} u_{\beta}^{2} \dvol}{a \int_{\Omega} \nabla_{g} u_{\beta} \cdot \nabla_{g}u_{\beta} \dvol + \int_{\Omega} \left( R_{g} + \beta_{0} \right) u_{\beta}^{2} \dvol} : = 1 + \Gamma_{3}.
\end{align*}
Recall that $ R_{g} < 0 $ in $ \Omega $, and $ \beta < 0 $, the upper bound of $ \Gamma_{3} $ can be estimated as
\begin{align*}
\Gamma_{3} & = \frac{-4 \int_{\Omega} R_{g}u_{\beta}^{2} \dvol + \int_{\Omega} (R_{g} + \beta) v^{2} \dvol + \left( \beta - \beta_{0} \right) \int_{\Omega} u_{\beta}^{2} \dvol}{a \int_{\Omega} \nabla_{g} u_{\beta} \cdot \nabla_{g}u_{\beta} \dvol + \int_{\Omega} \left( R_{g} + \beta_{0} \right) u_{\beta}^{2} \dvol} \\
& \leqslant \frac{4 \inf_{\Omega} \lvert R_{g} \rvert \lVert u_{\beta} \rVert_{\calL^{2}(\Omega, g)}^{2} + \left( \beta - \beta_{0} \right) \lVert u_{\beta} \rVert_{\calL^{2}(\Omega, g)}^{2}}{a \lVert \nabla_{g} u_{\beta} \rVert_{\calL^{2}(\Omega, g)}^{2} + \left( - \sup_{\Omega} \lvert R_{g} \rvert + \beta_{0} \right) \lVert u_{\beta} \rVert_{\calL^{2}(\Omega, g)}^{2}} \\
& \leqslant \frac{\left( 4 \inf_{\Omega} \lvert R_{g} \rvert+ \left( \beta - \beta_{0} \right) \right) \lambda_{1}^{-1}  \lVert \nabla_{g} u_{\beta} \rVert_{\calL^{2}(\Omega, g)}^{2}}{\left(a  + \lambda_{1}^{-1} \left( - \sup_{\Omega} \lvert R_{g} \rvert + \beta_{0} \right) \right) \lVert \nabla_{g} u_{\beta} \rVert_{\calL^{2}(\Omega, g)}^{2}} \\
& =  \frac{\left( 4 \inf_{\Omega} \lvert R_{g} \rvert+ \left( \beta - \beta_{0} \right) \right) \lambda_{1}^{-1}}{a  + \lambda_{1}^{-1} \left( - \sup_{\Omega} \lvert R_{g} \rvert + \beta_{0} \right)}
\end{align*}
When $ \lambda_{1}^{-1} $ is small enough, i.e. we shrink the geodesic ball $ \Omega $ with small enough radius, the upper bound of $ \Gamma_{3} $ is small enough, provided the smallness of $ \lvert \beta_{0} \rvert $. It follows that when both the size of $ \Omega $ and $ \lvert \beta_{0} \rvert $ are small enough, we have
\begin{equation*}
\Gamma_{2} = 1 + \Gamma_{3} \leqslant \Gamma_{1}.
\end{equation*}
It follows that (\ref{local:eqnt3}) holds. We set
\begin{equation*}
v_{\beta, \Omega} = u_{\beta} + v.
\end{equation*}
By (\ref{local:eqnt3}), we conclude that
\begin{equation*}
T_{0, \beta} \leqslant \inf_{P \in \mathcal{P}_{\beta}} \max_{w \in P} \sup_{t > 0} J_{\beta}^{*}(w) \leqslant \sup_{t > 0} J_{\beta}^{*}(t v_{\beta, \Omega}) \leqslant \sup_{t > 0} J_{\beta_{0}}^{*}(t u_{\beta_{0}}) = T_{1} < K_{0}.
\end{equation*}
\end{proof}
\medskip

The next result shows that the limit does exist and solves (\ref{local:eqn1}) only when $ R_{g} < 0 $ on $ \Omega $, when $ \Omega $ is small enough, not locally conformally flat, and is of good shape. The key steps in the following proposition are inspired by a result of Trudinger and Aubin, see \cite[Prop.~4.4]{PL}.
\begin{proposition}\label{local:prop4}
Let $ (\Omega, g) $ be a Riemannian domain in $\R^n$, $ n \geqslant 3 $, not locally conformally flat, with $C^{\infty} $ boundary, with ${\rm Vol}_g(\Omega)$ and the Euclidean diameter of $\Omega$ sufficiently small. In addition, we assume that the first eigenvalue of Laplace-Beltrami operator $ -\Delta_{g} $ on $ \Omega $ with Dirichlet condition satisfies $ \lambda_{1} \rightarrow \infty $ as $ \Omega $ shrinks. Assume $ R_{g} < 0 $ within the small enough closed domain $ \bar{\Omega} $. Then for any $ \lambda > 0 $, the Dirichlet problem (\ref{local:eqn1}) has a real, positive, smooth solution $ u \in \calC^{\infty}(\Omega) \cap H_{0}^{1}(\Omega, g) \cap \calC^{0}(\bar{\Omega}) $.
\end{proposition}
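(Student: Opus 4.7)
The plan is to construct $u$ as a weak limit of the solutions $u_{\beta_k}$ of the perturbed equation (\ref{local:eqn7}) guaranteed by Proposition \ref{local:prop2}, along a sequence $\beta_k \in [\beta_0, 0)$ with $\beta_k \to 0^-$ and $|\beta_0|$ sufficiently small. The crucial input is Proposition \ref{local:prop3}, which supplies the uniform gap
\begin{equation*}
J_{\beta_k}^{*}(u_{\beta_k}) \leqslant T_{0,\beta_k} \leqslant T_1 < K_0
\end{equation*}
with $T_1$ independent of $k$. It is this uniform separation from $K_0$ that will allow me to pass to the limit $\beta_k \to 0^-$ despite the critical Sobolev exponent appearing in (\ref{local:eqn1}).

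First I would extract uniform $H_0^1(\Omega,g)$-bounds on $\{u_{\beta_k}\}$. Testing (\ref{local:eqn7}) against $u_{\beta_k}$ yields
\begin{equation*}
a \lVert \nabla_g u_{\beta_k} \rVert_{\calL^2(\Omega,g)}^{2} + \int_\Omega (R_g + \beta_k) u_{\beta_k}^{2} \dvol = \lambda \int_\Omega u_{\beta_k}^{p} \dvol,
\end{equation*}
and combining this with the definition of $J_{\beta_k}^{*}$ gives the Pohozaev-type identity $J_{\beta_k}^{*}(u_{\beta_k}) = \tfrac{\lambda}{n} \int_\Omega u_{\beta_k}^{p} \dvol$. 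Hence $\int_\Omega u_{\beta_k}^{p} \dvol \leqslant n T_1/\lambda$, which together with the identity above produces uniform $H_0^1$-control. Passing to a subsequence, $u_{\beta_k} \rightharpoonup u$ weakly in $H_0^1(\Omega,g)$, strongly in $\calL^{q}(\Omega,g)$ for every $q < p$, and pointwise a.e.; in particular $u \geqslant 0$ is a weak solution of the limiting equation (\ref{local:eqn1}).

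The hard part will be ruling out loss of compactness at the critical exponent $p$, and this is precisely where the gap $T_1 < K_0$ is decisive. Setting $w_k := u_{\beta_k} - u$, so $w_k \rightharpoonup 0$, I would apply the Brezis--Lieb lemma to both $\lVert \nabla_g \cdot \rVert_{\calL^2}^{2}$ and $\lVert \cdot \rVert_{\calL^p}^{p}$ and subtract the equation satisfied by the weak limit $u$ to obtain
\begin{equation*}
a \lVert \nabla_g w_k \rVert_{\calL^2(\Omega,g)}^{2} = \lambda \int_\Omega w_k^{p} \dvol + o(1).
\end{equation*}
Writing $\ell := \lim_{k \to \infty} \int_\Omega w_k^{p} \dvol$, the sharp Euclidean Sobolev inequality (applicable since each $w_k$ is compactly supported in $\Omega$) forces either $\ell = 0$ or $\ell \geqslant (aT/\lambda)^{n/2}$. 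In the nontrivial case the bubble contributes at least
\begin{equation*}
\tfrac{\lambda}{n}\, \ell \geqslant \tfrac{1}{n}\, a^{n/2} T^{n/2} \lambda^{(2-n)/2} = K_0
\end{equation*}
to the energy, so $T_1 \geqslant \lim_{k \to \infty} J_{\beta_k}^{*}(u_{\beta_k}) \geqslant J_0^{*}(u) + \tfrac{\lambda}{n}\,\ell \geqslant 0 + K_0$, contradicting $T_1 < K_0$. Thus $\ell = 0$, so $u_{\beta_k} \to u$ strongly in $\calL^{p}(\Omega,g)$, and one can pass to the limit in the weak form of (\ref{local:eqn7}) to conclude that $u$ is a nontrivial weak solution of (\ref{local:eqn1}).

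To finish, elliptic regularity in the style of Brezis--Nirenberg \cite{Niren3}, exactly as invoked in Proposition \ref{local:prop2}, promotes $u$ to $\calC^\infty(\Omega) \cap \calC^0(\bar{\Omega})$. The strong maximum principle applied to $-a\Delta_g u + (R_g - \lambda u^{p-2}) u = 0$ with $u \geqslant 0$ and $u \not\equiv 0$ then forces $u > 0$ throughout $\Omega$, completing the proof.
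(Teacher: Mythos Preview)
Your argument is correct and leads to the same conclusion, but it follows a genuinely different route from the paper's proof. Both approaches hinge on the uniform gap $J_{\beta}^{*}(u_{\beta}) \leqslant T_{1} < K_{0}$ supplied by Proposition \ref{local:prop3}, and both begin with the identity $J_{\beta}^{*}(u_{\beta}) = \tfrac{\lambda}{n}\lVert u_{\beta}\rVert_{\calL^{p}}^{p}$; they diverge in how they upgrade this to convergence. The paper invokes the Trudinger--Aubin device: pairing the equation with $u_{\beta}^{1+2\delta}$, then applying the almost-sharp Sobolev inequality to $w_{\beta}=u_{\beta}^{1+\delta}$, the factor $(T_{1}/K_{0})^{2/n}<1$ allows absorption and yields a uniform $\calL^{r}$ bound for some $r>p$; bootstrapping then gives a uniform $\calC^{2,\alpha}$ bound, and the limit is taken via Arzela--Ascoli at the level of classical solutions. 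Your approach is the Struwe-type concentration-compactness argument: after extracting a weak $H_{0}^{1}$ limit, Brezis--Lieb isolates a possible bubble $w_{k}$, and the quantization $\tfrac{\lambda}{n}\ell \geqslant K_{0}$ for any nontrivial bubble collides with $T_{1}<K_{0}$ to force $\ell=0$ and hence strong $\calL^{p}$ convergence. The paper's route delivers stronger a priori estimates (uniform $\calC^{2,\alpha}$) and a completely classical limit, while yours is closer to the modern variational template for critical problems and is arguably more transparent about why the strict inequality $T_{1}<K_{0}$ is the decisive input.

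Two small points to tighten. First, when you invoke the ``sharp Euclidean Sobolev inequality'' for $w_{k}$, note that the ambient metric is $g$, not $g_{e}$; what you actually need is the almost-sharp inequality $\lVert w_{k}\rVert_{\calL^{p}(\Omega,g)}^{2}\leqslant (1+\epsilon)T^{-1}\lVert\nabla_{g}w_{k}\rVert_{\calL^{2}(\Omega,g)}^{2}+C_{\epsilon}\lVert w_{k}\rVert_{\calL^{2}(\Omega,g)}^{2}$ together with $w_{k}\to 0$ in $\calL^{2}$, which gives the same dichotomy in the limit $\epsilon\to 0$. Second, you assert that the weak limit $u$ is nontrivial but do not justify it; once you have strong $\calL^{p}$ convergence this reduces to a uniform lower bound on $\lVert u_{\beta_{k}}\rVert_{\calL^{p}}$, which follows either from a uniform positive lower bound on the mountain-pass levels $T_{0,\beta_{k}}$ or, as the paper does, from the positivity of the (perturbed) Yamabe-type infimum, yielding $\lambda\lVert u_{\beta}\rVert_{\calL^{p}}^{p-2}\geqslant\lambda_{\beta}>0$ uniformly for $\beta\in[\beta_{0},0)$.
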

\begin{proof} Choose the size of $ \Omega $ and $ \lvert \beta_{0} \rvert $ to be small enough so that Proposition \ref{local:prop3} holds. By Proposition \ref{local:prop2}, there exists a collection of smooth functions $ u_{\beta} > 0 $ in $ \Omega $, $ \beta \in [\beta_{0}, 0) $ satisfying
\begin{equation}\label{local:eqn18}
-a\Delta_{g} u_{\beta} + \left( R_{g} + \beta \right) u_{\beta} = \lambda u_{\beta}^{p-1} \; {\rm in} \; \Omega, u_{\beta} = 0 \; {\rm on} \; \partial \Omega.
\end{equation}
Our goal is to show that there exists some $ r > p $, such that
\begin{equation}\label{local:eqn19}
\lVert u_{\beta} \rVert_{\calL^{r}(\Omega, g)} \leqslant C, \forall \beta \in [\beta_{0}, 0).
\end{equation}
Pairing (\ref{local:eqn18}) with $ u_{\beta} $ on both sides, we have
\begin{equation}\label{local:eqn20}
a\lVert \nabla_{g} u_{\beta} \rVert_{\calL^{2}(\Omega, g)}^{2} + \int_{\Omega} \left( R_{g} + \beta \right) u_{\beta}^{2} \dvol = \lambda \lVert u_{\beta} \rVert_{\calL^{p}(\Omega, g)}^{p}.
\end{equation}
Recall the expression of $ J_{\beta}^{*}(u) $ in (\ref{local:eqn11}), $ K_{0} $ in (\ref{local:eqn13}) and the inequality
\begin{equation*}
J(u_{\beta}) \leqslant T_{1} < K_{0}, \forall \beta \in [\beta_{0}, 0)
\end{equation*}
in (\ref{local:eqnt2}), we apply (\ref{local:eqn20}) with $ p = \frac{2n}{n - 2} $,
\begin{align*}
J(u_{\beta}) \leqslant T_{1} \Rightarrow & \frac{a}{2} \lVert \nabla_{g} u_{\beta} \rVert_{\calL^{2}(\Omega, g)}^{2} - \frac{\lambda}{p} \lVert u_{\beta} \rVert_{\calL^{p}(\Omega, g)}^{p} - \frac{1}{2} \int_{\Omega} \left(R_{g} + \beta \right) u_{\beta}^{2} \dvol \leqslant T_{1} \\
\Rightarrow & \frac{a}{2} \lVert \nabla_{g} u_{\beta} \rVert_{\calL^{2}(\Omega, g)}^{2} \leqslant T_{1} + \frac{n - 2}{2n} \left( a\lVert \nabla_{g} u_{\beta} \rVert_{\calL^{2}(\Omega, g)}^{2} + \int_{\Omega} \left( R_{g} + \beta \right) u_{\beta}^{2} \dvol \right) \\
& \qquad + \frac{1}{2} \int_{\Omega} \left(R_{g} + \beta \right) u_{\beta}^{2} \dvol \\
\Rightarrow & \frac{a}{n}  \lVert \nabla_{g} u_{\beta} \rVert_{\calL^{2}(\Omega, g)}^{2} + \frac{1}{n} \int_{\Omega} \left( R_{g} + \beta \right) u_{\beta}^{2} \dvol \leqslant T_{1} \\
\Rightarrow & \frac{a}{n}  \lVert \nabla_{g} u_{\beta} \rVert_{\calL^{2}(\Omega, g)}^{2} + \frac{1}{n} \int_{\Omega} \left( R_{g} + \beta \right) u_{\beta}^{2} \dvol \leqslant \frac{T_{1}}{K_{0}} \cdot \frac{1}{n} \lambda^{\frac{2- n}{2}}a^{\frac{n}{2}} T^{\frac{n}{2}} \\
& a \lVert \nabla_{g} u_{\beta} \rVert_{\calL^{2}(\Omega, g)}^{2} + \int_{\Omega} \left( R_{g} + \beta \right) u_{\beta}^{2} \dvol \leqslant  \frac{T_{1}}{K_{0}} \cdot \lambda^{\frac{2- n}{2}}a^{\frac{n}{2}} T^{\frac{n}{2}}.
\end{align*}
Apply the last inequality above into (\ref{local:eqn20}), we conclude that
\begin{equation}\label{local:eqn21}
\lVert u_{\beta} \rVert_{\calL^{p}(\Omega, g)}^{p} \leqslant \frac{T_{1}}{K_{0}} \cdot  \lambda^{-\frac{n}{2}}a^{\frac{n}{2}} T^{\frac{n}{2}}, \forall \beta \in [\beta_{0}, 0).
\end{equation}
The next derivation is essentially due to Trudinger and Aubin. Let $ \delta > 0 $ be some constant. Pairing (\ref{local:eqn18}) with $ u_{\beta}^{1 + 2\delta} $ and denote $ w_{\beta} = u_{\beta}^{1 + \delta} $, it follows that
\begin{align*}
& \int_{\Omega} a \nabla_{g} u_{\beta} \cdot \nabla_{g} \left(u_{\beta}^{1 + 2\delta} \right) \dvol + \int_{\Omega} \left(R_{g} + \beta \right) u_{\beta}^{2 + 2\delta} \dvol = \lambda \int_{\Omega} u_{\beta}^{p + 2\delta} \dvol; \\
\Rightarrow & \frac{1 + 2\delta}{(1 + \delta )^{2}} \int_{\Omega} a \lvert \nabla_{g} w_{\beta} \rvert^{2} \dvol = \lambda \int_{\Omega} w_{\beta}^{2} u_{\beta}^{p-2} \dvol - \int_{\Omega} \left(R_{g} + \beta \right) w_{\beta}^{2} \dvol.
\end{align*}
Note that $ \frac{p-2}{p} = \frac{2}{n} $. According to the sharp Sobolev inequality on closed manifolds \cite[Thm.~2.3, Thm.~3.3]{PL}, it follows that for any $ \epsilon > 0 $,
\begin{align*}
\lVert w_{\beta} \rVert_{\calL^{p}(\Omega, g)}^{2} & \leqslant (1 + \epsilon) \frac{1}{T} \lVert \nabla_{g} w_{\beta} \rVert_{\calL^{2}(\Omega, g)}^{2} + C_{\epsilon}' \lVert w_{\beta} \rVert_{\calL^{2}(\Omega, g)}^{2} \\
& = (1 + \epsilon) \frac{1}{aT} \cdot \frac{(1 + \delta )^{2}}{1 + 2\delta} \left( \frac{1 + 2\delta}{(1 + \delta )^{2}} \int_{\Omega} a \lvert \nabla_{g} w_{\beta} \rvert^{2} \dvol \right) + C_{\epsilon}' \lVert w_{\beta} \rVert_{\calL^{2}(\Omega, g)}^{2} \\
& = (1 + \epsilon) \frac{\lambda}{aT} \cdot \frac{(1 + \delta )^{2}}{1 + 2\delta} \int_{\Omega} w_{\beta}^{2} u_{\beta}^{p - 2} \dvol + C_{\epsilon}  \lVert w_{\beta} \rVert_{\calL^{2}(\Omega, g)}^{2} \\
& \leqslant (1 + \epsilon) \frac{\lambda}{aT} \cdot \frac{(1 + \delta )^{2}}{1 + 2\delta} \lVert w_{\beta} \rVert_{\calL^{p}(\Omega, g)}^{2} \lVert u_{\beta} \rVert_{\calL^{p}(\Omega, g)}^{p - 2} + C_{\epsilon}  \lVert w_{\beta} \rVert_{\calL^{2}(\Omega, g)}^{2} \\
& = (1 + \epsilon) \frac{\lambda}{aT} \cdot \frac{(1 + \delta )^{2}}{1 + 2\delta} \lVert w_{\beta} \rVert_{\calL^{p}(\Omega, g)}^{2} \left( \lVert u_{\beta} \rVert_{\calL^{p}(\Omega, g)}^{p} \right)^{\frac{p - 2}{p}} + C_{\epsilon}  \lVert w_{\beta} \rVert_{\calL^{2}(\Omega, g)}^{2} \\
& \leqslant (1 + \epsilon) \frac{\lambda}{aT} \cdot \frac{(1 + \delta )^{2}}{1 + 2\delta} \lVert w_{\beta} \rVert_{\calL^{p}(\Omega, g)}^{2} \cdot \left( \frac{T_{1}}{K_{0}} \lambda^{-\frac{n}{2}}a^{\frac{n}{2}} T^{\frac{n}{2}} \right)^{\frac{2}{n}} + C_{\epsilon}  \lVert w_{\beta} \rVert_{\calL^{2}(\Omega, g)}^{2} \\
& = (1 + \epsilon) \cdot \frac{(1 + \delta )^{2}}{1 + 2\delta} \cdot \left( \frac{T_{1}}{K_{0}} \right)^{\frac{2}{n}} \lVert w_{\beta} \rVert_{\calL^{p}(\Omega, g)}^{2} + C_{\epsilon}  \lVert w_{\beta} \rVert_{\calL^{2}(\Omega, g)}^{2}.
\end{align*}
By (\ref{local:eqnt2}), the ratio $ \frac{T_{1}}{K_{0}} < 1 $ uniformly in $ \beta $. It follows that we can choose small enough $ \epsilon $ and $ \delta $ such that
\begin{equation*}
(1 + \epsilon) \cdot \frac{(1 + \delta )^{2}}{1 + 2\delta} \cdot \left( \frac{T_{1}}{K_{0}} \right)^{\frac{2}{n}} < 1, \forall \beta \in [\beta_{0}, 0).
\end{equation*}
It implies that
\begin{equation*}
\lVert w_{\beta} \rVert_{\calL^{p}(\Omega, g)}^{2} \leqslant K_{1}, \forall \beta \in [\beta_{0}, 0).
\end{equation*}
Note that $ \lVert w_{\beta} \rVert_{\calL^{p}(\Omega, g)} = \lVert u_{\beta} \rVert_{\calL^{p(1 + \delta)}(\Omega, g)}^{1 + \delta} $, it follows that (\ref{local:eqn19}) holds. Since $ r > p $, by bootstrapping method with Sobolev embedding, we conclude that
\begin{equation}\label{local:eqn22}
\lVert u_{\beta} \rVert_{\calC^{2, \alpha}(\Omega)} \leqslant \mathcal{K}', \forall \beta \in [\beta_{0}, 0].
\end{equation}
Due to Arzela-Ascoli, we pass the limit by letting $ \beta \rightarrow 0^{-} $ on both sides of (\ref{local:eqn19}), it follows that
\begin{equation*}
-a\Delta_{g} u + R_{g} u = \lambda u^{p-1} \; {\rm in} \; \Omega, u = 0 \; {\rm on} \; \partial \Omega.
\end{equation*}
Here $ u = \lim_{\beta \rightarrow 0^{-}} u_{\beta} $ pointwise. Another bootstrapping method shows that $ u \in \calC^{\infty}(\Omega) $ and $ u \geqslant 0 $ since $ u $ is the limit of a positive sequence. Lastly we show that $ u > 0 $. Due to Kazdan and Warner \cite{KW}, $ \eta_{1} > 0 $ implies that
\begin{equation*}
\lambda_{\beta} : = \inf_{u \geqslant 0, u \in H^{1}(M, g)} \frac{a \int_{M} \lvert \nabla_{g} u \rvert^{2} \dvol + \int_{M} \left(R_{g} + \beta \right) u^{2} \dvol}{\left( \int_{M} u^{p} \dvol \right)^{\frac{2}{p}}} > 0
\end{equation*}
provided that $ \lvert \beta \rvert < 0 $ with small enough absolute value. Making $ \lvert \beta_{0} \rvert $ smaller if necessary, we apply (\ref{local:eqn20}) again,
\begin{equation*}
0 < \lambda_{\beta} \leqslant \frac{a \int_{\Omega} \lvert \nabla_{g} u_{\beta} \rvert^{2} \dvol + \int_{\Omega} \left(R_{g} + \beta \right) u_{\beta}^{2} \dvol}{\left( \int_{\Omega} u_{\beta}^{p} \dvol \right)^{\frac{2}{p}}} = \lambda \lVert u_{\beta} \rVert_{\calL^{p}(\Omega, g)}^{\frac{p-2}{p}}, \forall \beta \in [\beta_{0}, 0)
\end{equation*}
Since $ u = \lim_{\beta \rightarrow 0^{-}} u_{\beta} $ in the classical sense, it follows that $ \lVert u \rVert_{\calL^{p}(\Omega, g)} > 0 $. By the maximum principle, it follows that $ u > 0 $ in $ \Omega $. The boundary regularity of $ u $ follows exactly the same as in Brezis and Nirenberg \cite{Niren3}.
\end{proof}
\medskip

When the Riemannian domain is locally conformally flat, i.e. under conformal change the metric is flat, we need the results of Clapp, Faya, and Pistoia \cite{CFP} to discuss the Riamannian domain $ \Omega $ comes from some locally conformally flat manifolds. Given some subgroup $ G \subset O(n+1) $, we need to introduce $ G $-invariance on domains, on functions, and on differential operators.
\begin{definition}\label{local:def2} Let $ G $ be a subgroup of $ O(n) $. Let $ \Omega \subset \R^{n} $ be an open subset. Let $ Q(x) $ be a function defined on $ \bar{\Omega} $. Let $ L $ be a differential operator acting on $ \calC_{c}^{\infty}(\Omega) $. 

(i) We say that the domain $ \Omega $ is $ G $-invariant if for every element $ x \in \Omega $ and every element $ U \in G $, the point $ Ux \in \Omega $ also;

(ii) We say that the function $ Q $ is $ G $-invariant of for every element $ U \in G $, we have $ Q(Ux) = x, \forall x \in \bar{\Omega} $;

(iii) We say that the differential operator $ L $ is $ G $-invariant if for every $ U \in G $,
\begin{equation*}
(Lf) \circ U = L(f \circ U), \forall f \in \calC_{c}^{\infty}(\Omega).
\end{equation*}
\end{definition}
\begin{remark}\label{local:re4}
It is well-known that the Euclidean Laplacian $ -\Delta_{e} $ acting on $ H_{0}^{1}(\Omega) $ is $ G $-invariant if and only if the domain $ \Omega $ is $ G $-invariant.
\end{remark}
Followed by Definition \ref{local:def2}, the Dirichlet problem of the nonlinear Poisson equation with critical exponent is given as below.
\begin{theorem}\label{local:thm4}\cite[Thm.~1.1]{CFP}
Let $ G $ be a subgroup of $ O(n) $. Let $ \Omega $ be an open subset of $ \R^{n} $ that is $ G $-invariant. Let $ \rho $ be a point in $ \Omega $ such that $ U\rho = \rho, \forall U \in G $. Assume that $ Q \in \calC^{2}(\bar{\Omega}) $ that is $ G $-invariant and $ \min_{x \in \bar{\Omega}} Q(x) > 0 $. For each $ \epsilon > 0 $, denote the space $ \Omega_{\epsilon} $ by
\begin{equation*}
\Omega_{\epsilon} = \lbrace x \in \Omega | \lvert x - \rho \rvert > \epsilon \rbrace.
\end{equation*}
Consider the Dirichlet problem
\begin{equation}\label{local:eqn23}
-\Delta_{e}u = Q(x) u^{p-1} \; {\rm in} \; \Omega_{\epsilon}, u \equiv 0 \; {\rm in} \; \partial \Omega_{\epsilon}, u > 0 \; {\rm in} \; \Omega_{\epsilon}.
\end{equation}
If $ \nabla Q(\rho) \neq 0 $, then there exists an $ \epsilon_{0} $ such that for each $ \epsilon \in (0, \epsilon_{0}) $ the Dirichlet problem (\ref{local:eqn23}) has a $ G $-invariant smooth solution.
\end{theorem}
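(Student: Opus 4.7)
The plan is to extract $u$ as a critical point of the $G$-invariant functional
\begin{equation*}
J_\epsilon(v) = \frac{1}{2}\int_{\Omega_\epsilon} |\nabla v|^2\,dx - \frac{1}{p}\int_{\Omega_\epsilon} Q(x)\, v_+^p\,dx
\end{equation*}
restricted to the symmetric subspace $H_0^1(\Omega_\epsilon)^G$ of $G$-invariant functions. The principle of symmetric criticality, applicable because $\Omega_\epsilon$, $Q$ and $-\Delta_e$ are all $G$-invariant, guarantees that any critical point on $H_0^1(\Omega_\epsilon)^G$ is a genuine critical point on the full space. Mountain-pass geometry for $J_\epsilon$ is immediate from $\min_{\bar{\Omega}} Q > 0$ and the superhomogeneity of the critical nonlinearity, so Theorem \ref{local:thm2} yields a Palais-Smale sequence at some min-max level $c_\epsilon > 0$.

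The central obstacle is loss of compactness at the critical exponent. A $G$-equivariant concentration-compactness profile decomposition shows that any PS sequence in $H_0^1(\Omega_\epsilon)^G$ that does not converge strongly must shed Aubin-Talenti profiles whose concentration points form $G$-orbits in $\overline{\Omega}_\epsilon$; a bubble concentrating at a point $y$ with stabilizer $G_y$ contributes energy $\tfrac{|G/G_y|}{n} Q(y)^{-(n-2)/2} T^{n/2}$, with $T$ as in (\ref{local:eqn8}). Non-fixed orbits carry a multiplicity $|G/G_y|\geq 2$, so the lowest noncompact threshold as $\epsilon\to 0$ comes from $G$-fixed concentration points approaching $\rho$; such a direction exists because $G$-invariance of $Q$ forces $\nabla Q(\rho)$ to lie in the $G$-fixed subspace, so the ray $\{\rho + t\nabla Q(\rho)\}$ consists of $G$-fixed points. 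Call this threshold $c^*_\epsilon$; the Palais-Smale condition then holds for $J_\epsilon$ at every level strictly below $c^*_\epsilon$.

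The decisive step is to verify $c_\epsilon < c^*_\epsilon$. I would use a test function built from the standard Aubin-Talenti bubble $U_{x_0,\mu}(x) = \bigl(\mu/(\mu^2 + |x-x_0|^2)\bigr)^{(n-2)/2}$ centered at a $G$-fixed point $x_0$ just outside $B_\epsilon(\rho)$ along the direction of $\nabla Q(\rho)$, cut off by a smooth $G$-invariant function to land in $H_0^1(\Omega_\epsilon)^G$. Computing $\sup_{t>0} J_\epsilon(t\phi)$ by Taylor expanding $Q$ around $\rho$ and carefully tracking the cutoff corrections produces, after balancing the three small parameters $\epsilon$, $\mu$ and $|x_0-\rho|\sim\epsilon$, a gain over $c^*_\epsilon$ driven by the linear-in-$\nabla Q(\rho)$ term coupled with the hole's truncation of the bubble's tail. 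Because $\nabla Q(\rho)\neq 0$, this gain is strict.

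The main obstacle will be the sharp test function expansion: one must arrange $\mu \ll |x_0-\rho|\sim\epsilon$ so that the $\nabla Q(\rho)$ gain dominates the cutoff and symmetrization errors across all dimensions $n\geq 3$, with the borderline case $n=3$ requiring Brezis-Nirenberg-type care since the Aubin-Talenti bubble decays too slowly for the hole scale to absorb the tail in a controlled way. Once $c_\epsilon < c^*_\epsilon$ is established, strong convergence of the min-max PS sequence yields a nontrivial $G$-invariant weak solution; positivity follows from the $u_+^{p-1}$ nonlinearity together with the strong maximum principle, and elliptic bootstrap upgrades $u$ to $\calC^\infty(\Omega_\epsilon)\cap\calC^0(\overline{\Omega}_\epsilon)$.
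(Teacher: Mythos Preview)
The paper does not prove this theorem; it is quoted from Clapp--Faya--Pistoia \cite{CFP}. Remark~\ref{local:re5} immediately following the statement, however, records the structure of the argument in \cite{CFP}: the solution is produced by a Lyapunov--Schmidt (finite-dimensional) reduction, built as a small perturbation of a single Aubin--Talenti bubble whose rescaled center $\eta_0$ and concentration parameter $d_0$ are given by the explicit formulas displayed there. Those formulas are the critical points of the reduced finite-dimensional energy, and the hypothesis $\nabla Q(\rho)\neq 0$ is exactly what makes that reduced problem nondegenerate.

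Your route is genuinely different --- a direct mountain-pass on $H_0^1(\Omega_\epsilon)^G$ together with a concentration-compactness threshold --- but it has a gap in the threshold identification. You claim that ``the lowest noncompact threshold \dots comes from $G$-fixed concentration points approaching $\rho$.'' This is not justified: a $G$-invariant Palais--Smale sequence can shed a single bubble at \emph{any} $G$-fixed point $y\in\overline{\Omega}_\epsilon$, contributing energy $\tfrac{1}{n}Q(y)^{-(n-2)/2}T^{n/2}$, and this quantity is minimized where $Q$ is \emph{maximized} over the $G$-fixed set --- which need not be $\rho$ or anywhere near the hole. If that maximum exceeds $Q(\rho)$, your test bubble centered near $\rho$ sits strictly \emph{above} the true first threshold, and the inequality $c_\epsilon < c^*_\epsilon$ you propose gives no compactness. (Recall that even in Coron's original constant-$Q$ problem the solution is \emph{not} found below the one-bubble level; it lives strictly between the one- and two-bubble levels and is extracted via the topology induced by the hole, not by a sub-threshold test function.) The Lyapunov--Schmidt approach of \cite{CFP} bypasses this issue entirely: it never invokes a global Palais--Smale threshold, but works locally in the space of bubble parameters, where the linear term $\nabla Q(\rho)\cdot\eta$ competes with the hole-interaction term to create a critical point of the reduced energy for every small $\epsilon$.
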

\begin{remark}\label{local:re5} 
(i) The subgroup $ G $ can be chosen as the trivial subgroup of $ O(n) $.

(ii) The existence of the solution of (\ref{local:eqn23}) is due to the method of calculus of variation. The solution $ u $ is exactly one critical point of
\begin{equation*}
J(u) = \frac{1}{2} \int_{\Omega_{1}} \lvert \nabla u \rvert^{2} dx - \frac{1}{p} \int_{\Omega_{1}} Qu^{p} dx.
\end{equation*}
The details of the proof shows that the solution of (\ref{local:eqn23}) has a one-to-one correspondence to the following two quantities
\begin{equation*}
\eta_{0} = -\frac{\nabla Q(\rho)}{\lvert \nabla Q(\rho) \rvert}, d_{0} = \left( \frac{(n - 2)\beta}{2^{n-2} \gamma} \cdot \frac{Q(\rho)}{\lvert \nabla Q(\rho) \rvert} \right)^{\frac{1}{n-1}}, n \geqslant 4
\end{equation*}
with some positive constants $ \beta $ and $ \gamma $, and
\begin{equation*}
\eta_{0} = \left( \frac{\alpha - \sqrt{\alpha^{2} + \gamma^{2} \left\lvert \frac{\nabla Q(\rho)}{Q(\rho) }\right\rvert^{2} }}{\gamma \left\lvert \frac{\nabla Q(\rho)}{Q(\rho) }\right\rvert^{2} }\right) \cdot \frac{\nabla Q(\rho)}{Q(\rho) }, d_{0} = \sqrt{\frac{\beta}{(1 + \lvert \eta_{0} \rvert^{2}) \cdot \left(\alpha - \gamma \left\langle \frac{\nabla Q(\rho)}{Q(\rho) }, \eta_{0} \right\rangle \right)}}
\end{equation*}
with some positive constants $ \alpha, \beta, \gamma $. 

In other words, the evaluation of $ \frac{\nabla Q(\rho)}{\lvert \nabla Q(\rho) \rvert} $ and $ \frac{Q(\rho)}{\lvert \nabla Q(\rho) \rvert} $ determines the solution $ u $ of (\ref{local:eqn23}), and vice versa.
\end{remark}
\medskip

Let $ \tilde{g} = \phi^{p-2} g $ for some conformal factor $ \phi > 0 $ either on $ (M, g) $ or $ (\bar{M}, g) $, the conformal invariance of the conformal Laplacian is in the following sense:
\begin{equation}\label{local:eqn24}
\tilde{g} = \phi^{p-2} g \Rightarrow -a\Delta_{\tilde{g}} + R_{\tilde{g}} = \phi^{-\frac{n+2}{n-2}} \left(-a\Delta_{g} + R_{g} \right) \phi \Leftrightarrow \Box_{\tilde{g}} = \phi^{1 - p} \Box_{g} \phi.
\end{equation}
By conformal invariance of the conformal Laplacian, we apply Theorem \ref{local:thm4} to obtain the existence of a positive, smooth solution of the Yamabe equation with Dirichlet condition on some open subset of the closed manifold $ M $ or in interior of some compact manifold $ \bar{M} $, provided that the manifolds are locally conformally flat.  
\begin{proposition}\label{local:prop5}
Let $ (\Omega, g) $ be a Riemannian domain in $\R^n$, $ n \geqslant 3 $, with $C^{\infty} $ boundary. Let the metric $ g $ be locally conformally flat on some open subset $ \Omega' \supset \bar{\Omega} $. For any point $ \rho \in \Omega $ and any positive constant $ \epsilon $, denote the region $ \Omega_{\epsilon} $ to be
\begin{equation*}
\Omega_{\epsilon} = \lbrace x \in \Omega | \lvert x - \rho \rvert > \epsilon \rbrace.
\end{equation*}
Assume that $ Q \in \calC^{2}(\bar{\Omega}) $, $ \min_{x \in \bar{\Omega}} Q(x) > 0 $ and $ \nabla Q(\rho) \neq 0 $. Then there exists some $ \epsilon_{0} $ such that for every $ \epsilon \in (0, \epsilon_{0}) $ the Dirichlet problem
\begin{equation}\label{local:eqn25}
-a\Delta_{g}u + R_{g} u = Qu^{p-1} \; {\rm in} \; \Omega_{\epsilon}, u = 0 \; {\rm on} \; \partial \Omega_{\epsilon}
\end{equation}
has a real, positive, smooth solution $ u \in \calC^{\infty}(\Omega_{\epsilon}) \cap H_{0}^{1}(\Omega_{\epsilon}, g) \cap \calC^{0}(\bar{\Omega_{\epsilon}}) $.
\end{proposition}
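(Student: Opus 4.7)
The strategy is to exploit the conformal invariance of the conformal Laplacian (\ref{local:eqn24}) to reduce the Yamabe-type equation on $(\Omega_\epsilon, g)$ to a semilinear Poisson equation on $(\Omega_\epsilon, g_e)$, and then invoke Theorem \ref{local:thm4} with the trivial subgroup $G = \{e\} \subset O(n)$.

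First I would use the locally conformally flat hypothesis to produce a single positive smooth conformal factor $\psi \in \calC^\infty(\bar{\Omega})$ satisfying $g = \psi^{p-2} g_e$ on a neighborhood of $\bar{\Omega}$. Since $\bar{\Omega}$ is compact and sits inside the open LCF set $\Omega'$, one can cover $\bar{\Omega}$ by finitely many conformally flat patches and, by taking $\Omega$ simply connected (or shrinking it, as is the case in the applications in \S3--\S6), patch the local conformal factors into a global positive smooth $\psi$ via the standard developing-map argument for conformally flat structures in dimension $\geqslant 3$.

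Next I would apply the transformation law (\ref{local:eqn24}), which gives, for any function $u$,
\begin{equation*}
\Box_{g} u = \psi^{1-p} \Box_{g_e}(\psi u) = -a\, \psi^{1-p} \Delta_e(\psi u),
\end{equation*}
since $R_{g_e} \equiv 0$. Setting $v = \psi u$, the Dirichlet problem (\ref{local:eqn25}) is equivalent to
\begin{equation*}
-a \Delta_e v = Q v^{p-1} \; {\rm in} \; \Omega_\epsilon, \quad v \equiv 0 \; {\rm on} \; \partial \Omega_\epsilon, \quad v > 0 \; {\rm in} \; \Omega_\epsilon,
\end{equation*}
because $\psi > 0$ and vanishing of $u$ on $\partial\Omega_\epsilon$ is equivalent to vanishing of $v$. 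Dividing by $a$ and defining $\tilde{Q} = Q/a$, I obtain the Euclidean Dirichlet problem $-\Delta_e v = \tilde Q v^{p-1}$. The hypotheses $\min_{\bar\Omega} Q > 0$ and $\nabla Q(\rho) \neq 0$ transfer verbatim to $\tilde Q$, and $\tilde Q \in \calC^2(\bar\Omega)$.

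Now Theorem \ref{local:thm4} applies with $G = \{e\}$: the $G$-invariance of $\Omega$ and $\tilde Q$ is automatic for the trivial group (see Remark \ref{local:re5}(i)), and the fixed point condition $U\rho = \rho$ is trivially satisfied. This yields $\epsilon_0 > 0$ such that, for every $\epsilon \in (0, \epsilon_0)$, there exists a positive smooth solution $v \in \calC^\infty(\Omega_\epsilon) \cap H_0^1(\Omega_\epsilon) \cap \calC^0(\bar{\Omega_\epsilon})$. I would then set $u := v/\psi$; since $\psi$ is bounded above and below by positive constants and is smooth on $\bar\Omega$, $u$ inherits all the regularity and positivity of $v$, and multiplication by $1/\psi$ is an isomorphism of $H_0^1(\Omega_\epsilon)$ onto $H_0^1(\Omega_\epsilon, g)$ (the two Sobolev spaces being topologically identical since $g$ is smooth and nondegenerate on $\bar\Omega$). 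The main obstacle I expect is precisely the construction of the global conformal factor $\psi$ on $\bar\Omega$; this is where the hypothesis that $g$ is LCF on an \emph{open set} containing $\bar\Omega$ (rather than just at individual points) is essential, and where, in the applications of this proposition later in the paper, $\Omega$ will be chosen topologically trivial so that the developing map argument goes through without monodromy issues.
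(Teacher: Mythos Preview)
Your proposal is correct and follows essentially the same approach as the paper: use conformal flatness to write $g$ as a conformal factor times $g_e$, apply the conformal invariance (\ref{local:eqn24}) to reduce (\ref{local:eqn25}) to the Euclidean problem $-\Delta_e v = \tilde{Q} v^{p-1}$, invoke Theorem~\ref{local:thm4} with the trivial group, and pull the solution back by the conformal factor. You are in fact slightly more careful than the paper in two places: you justify the existence of a single global conformal factor $\psi$ on $\bar{\Omega}$ via a developing-map argument (the paper simply asserts its existence), and you explicitly handle the constant $a$ by rescaling $Q$ to $Q/a$ so that Theorem~\ref{local:thm4} applies literally.
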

\begin{proof} Since $ g $ is locally conformally flat within $ \Omega_{\epsilon} \subset \Omega $, there exists a conformal change $ \tilde{g} = \psi^{p-2} g $ with $ \psi > 0 $ on $ \Omega_{\epsilon} $ such that the Riemannian curvature tensor $ \text{Rm}_{\tilde{g}} \equiv 0 $ on $ \Omega $. We may denote $ \tilde{g} : = e $ locally in $ \Omega_{\epsilon} $ since $ \tilde{g} $ is locally Euclidean. By Theorem \ref{local:thm4}, there exists some $ \epsilon_{0} $ such that for each $ \epsilon \in (0, \epsilon) $ we can find some smooth function $ u_{0} > 0 $ in $ \Omega_{\epsilon} $ that solves
\begin{equation}\label{local:eqn26} 
-a\Delta_{e} u_{0} = Q u_{0}^{p-1} \; {\rm in} \; \Omega_{\epsilon}, u = 0 \; {\rm on} \; \partial \Omega_{\epsilon}.
\end{equation}
Due to the conformal invariance of conformal Laplacian (\ref{local:eqn24}), we conclude that the function
\begin{equation*}
u : = \psi^{-1} u_{0}
\end{equation*}
solves (\ref{local:eqn25}) in the same region $ \Omega_{\epsilon} $. Clearly $ u > 0 $ since both $ \psi $ and $ u_{0} $ are positive.
\end{proof}
\begin{remark}\label{local:re6}
The result of Proposition \ref{local:prop5} implies a local solution of the Yamabe equation within some topologically nontrivial open subset of either the closed manifold $ M $ or the interior of the compact manifolds $ \bar{M} $, provided that the manifolds have vanishing Weyl tensors.
\end{remark}
\medskip

\section{The Global Analysis on Closed Manifolds, Not Locally Conformally Flat Case}
In this section, we apply monotone iteration scheme to construct the solution of the PDE
\begin{equation}\label{closed:eqn1}
-a\Delta_{g} u + R_{g} u = Su^{p-1} \; {\rm on} \; M
\end{equation}
with $ S \in \mathcal{A}_{1} $ on closed manifolds $ M $ that are not locally conformally flat. Here the set $ \mathcal{A}_{1} $ is defined to be
\begin{equation}\label{closed:eqnr1}
\begin{split}
& \mathcal{A}_{1} : = \lbrace S \in \calC^{\infty}(M) : S \equiv \lambda > 0 \; {\rm on} \; \bar{O}, \text{$ c > 0 $ is an arbitrary constant,} \\
& \qquad \text{ $ O \subset \bar{O} \subset M $ is any open submanifold with smooth $ \partial O $ such that }\\
& \qquad \text{the metric $ g $ is not locally conformally flat in $ O $.} \rbrace.
\end{split}
\end{equation}
We seek for solutions of (\ref{closed:eqn1}) when $ \eta_{1} $, the first eigenvalue of conformal Laplacian $ \Box_{g} $, is positive, since a necessary condition of positive first eigenvalue of conformal Laplacian is that the scalar curvature is positive somewhere by \cite{KW2}. We will show that the solution of (\ref{closed:eqn1}) is positive and smooth. It follows that $ S $ is the prescribed scalar curvature with respect to the metric $ \tilde{g} = u^{p-2} g $. Throughout this section, we denote $ (M, g) $ as closed manifolds, not locally conformally flat, with $ n = \dim M \geqslant 3 $.

A variation of monotone iteration scheme introduced by Kazdan and Warner \cite{KW} is crucial in this argument.
\begin{theorem}\label{closed:thm1}\cite[Lemma.~2.6]{KW}
Let $ (M, g) $ be a closed Riemannian manifold. Assume $ S_{1}, S_{2} \in \calC^{\infty}(M) $. If there exist functions $ u_{-}, u_{+} \in H^{1}(M, g) \cap \calC^{0}(M) $ such that
\begin{equation}\label{closed:eqn2}
\begin{split}
-a\Delta_{g} u_{+} + S_{1} u_{+} & \geqslant S_{2} u_{+}^{p-1} \; {\rm on} \; M; \\
-a\Delta_{g} u_{-} + S_{1} u_{-} & \leqslant S_{2} u_{-}^{p-1} \; {\rm on} \; M
\end{split}
\end{equation}
in the weak sense, with $ 0 \leqslant u_{-} \leqslant u_{+} $ and $ u_{-} \not\equiv 0 $, then there exists a function $ u \in \calC^{\infty}(M) $ satisfying (\ref{closed:eqn1}). Moreover, $ u_{-} \leqslant u \leqslant u_{+} $.
\end{theorem}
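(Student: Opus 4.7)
The plan is a standard shifted monotone iteration scheme, pushed through in the weak formulation. The target equation is $-a\Delta_{g} u + S_{1} u = S_{2} u^{p-1}$ on $M$; I rewrite it as $-a\Delta_{g} u + K u = G(\cdot, u)$, where $G(x, s) := K s - S_{1}(x) s + S_{2}(x) s^{p-1}$ and $K > 0$ is chosen large below. Since $M$ is closed and $u_{+} \in \calC^{0}(M)$, set $M_{0} := \lVert u_{+} \rVert_{\calC^{0}(M)} < \infty$ and choose $K \geqslant \lVert S_{1} \rVert_{\calC^{0}} + (p-1)\lVert S_{2} \rVert_{\calC^{0}} M_{0}^{p-2}$; then $s \mapsto G(x, s)$ is nondecreasing on $[0, M_{0}]$ for every $x \in M$, and the linear operator $-a\Delta_{g} + K$ is coercive on $H^{1}(M, g)$. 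Define $u_{0} := u_{+}$ and, for $k \geqslant 0$, let $u_{k+1} \in H^{1}(M, g)$ be the unique weak solution of
\begin{equation*}
-a\Delta_{g} u_{k+1} + K u_{k+1} = G(\cdot, u_{k}) \quad \text{on } M,
\end{equation*}
which exists and is unique by Lax-Milgram; an elliptic bootstrap lifts each $u_{k+1}$ to $\calC^{\infty}(M)$.

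The core inductive claim is $u_{-} \leqslant u_{k+1} \leqslant u_{k} \leqslant u_{+}$. For the base case $u_{1} \leqslant u_{0}$, the super-solution inequality rewrites as $-a\Delta_{g} u_{+} + K u_{+} \geqslant G(\cdot, u_{+})$ weakly, so $-a\Delta_{g}(u_{0} - u_{1}) + K(u_{0} - u_{1}) \geqslant 0$ weakly. Testing against the nonnegative function $(u_{1} - u_{0})_{+} \in H^{1}(M, g)$ and invoking coercivity of $-a\Delta_{g} + K$ forces $(u_{1} - u_{0})_{+} \equiv 0$, i.e., $u_{1} \leqslant u_{0}$. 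The inductive step $u_{k+2} \leqslant u_{k+1}$ follows identically: monotonicity of $G$ on $[0, M_{0}]$ gives $G(\cdot, u_{k+1}) \leqslant G(\cdot, u_{k})$, and the same maximum-principle argument applies. The lower bound $u_{-} \leqslant u_{k}$ is obtained symmetrically from the sub-solution inequality for $u_{-}$.

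Granted the ordering, the pointwise limit $u := \lim_{k \to \infty} u_{k}$ exists and satisfies $u_{-} \leqslant u \leqslant u_{+}$ (hence $u \not\equiv 0$). Testing the linear equation for $u_{k+1}$ against $u_{k+1}$ and using the uniform $\calL^{\infty}$ bound $u_{k} \leqslant u_{+}$ produces a uniform $H^{1}$ bound, so $u_{k} \rightharpoonup u$ weakly in $H^{1}(M, g)$ along a subsequence; dominated convergence gives $G(\cdot, u_{k}) \to G(\cdot, u)$ strongly in $\calL^{q}$ for every finite $q$, and passing to the limit in the weak formulation shows $u$ is a weak solution of $-a\Delta_{g} u + S_{1} u = S_{2} u^{p-1}$. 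A standard $\calL^{q}$/Schauder bootstrap then yields $u \in \calC^{\infty}(M)$. The main obstacle is executing the maximum-principle comparison at the weak level rather than pointwise: one selects $(u_{1} - u_{0})_{+}$ as the test function, which is nonnegative and supported on $\{u_{1} > u_{0}\}$, and uses coercivity of $-a\Delta_{g} + K$ to force this set to have measure zero. Every other step---existence of each $u_{k+1}$, monotonicity of $G$ on the correct range, and passage to the limit---is routine once this weak comparison is carefully in place.
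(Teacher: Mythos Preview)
Your proposal is correct and matches the approach the paper points to: the paper does not reprove this result but cites Kazdan--Warner and notes in Remark~\ref{closed:re1} that the relaxation to $u_{\pm}\in H^{1}\cap\calC^{0}$ is carried out in \cite[\S2]{XU3} via a Stampacchia-type weak maximum principle---precisely your test-function argument with $(u_{1}-u_{0})_{+}$. One small inaccuracy: the claim that each $u_{k+1}\in\calC^{\infty}(M)$ is not quite right at the first step, since $G(\cdot,u_{+})$ is only continuous, so $u_{1}\in W^{2,q}$ for all $q$ (hence $\calC^{1,\alpha}$) but not $\calC^{\infty}$; this does not affect the argument, as only continuity of the iterates is needed until the final bootstrap on the limit $u$.
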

\begin{remark}\label{closed:re1}
In original proof of Theorem \ref{closed:eqn1}, Kazdan and Warner requires $ u_{-}, u_{+} \in H^{2, q}(M, g) $ with some $ q > n $. We showed in \cite[\S2]{XU3} that these hypotheses can be reduced in the theorem above. A weak maximum principle, which is essentially due to Stampacchia, was applied.
\end{remark}
\medskip

The next two lemmas concern the construction of sub-solution and super-solution of (\ref{closed:eqn1}). We point out that the sub-solution is merely $ \calC^{0}(M) \cap H^{1}(M, g) $, but the super-solution is $ \calC^{\infty}(M) $. We start with a special case by adding a restriction with respect to the sign of $ R_{g} $.
\begin{lemma}\label{closed:lemma1}
Let $ (M, g) $ be a closed Riemannian manifold that is not locally conformally flat.  Let $ S \in \mathcal{A}_{1} $ be a smooth function satisfies $ S = \lambda > 0 $ for some positive constant $ \lambda $ on some open subset $ O \subset M $. Assume further that $ R_{g} < 0 $ somewhere in $ O $. Then there exists a function $ u_{-} \in H^{1}(M, g) \cap \calC^{0}(M) $ such that
\begin{equation}\label{closed:eqn2a}
-a\Delta_{g} u_{-} + R_{g} u_{-} \leqslant S u_{-}^{p-1} \; {\rm on} \; M
\end{equation}
in the weak sense. In addition, $ u_{-} \geqslant 0 $ and $ u_{-} \not\equiv 0 $.
\end{lemma}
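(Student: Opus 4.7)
The plan is to use the local existence result of Proposition \ref{local:prop4} to manufacture a positive solution of the local Yamabe equation on a small domain inside $O$, extend it by zero to all of $M$, and check that this extension is a weak sub-solution. The assumption $R_g<0$ somewhere in $O$ together with $M$ being not locally conformally flat is exactly what is needed to invoke Proposition \ref{local:prop4}.

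First I would shrink $O$ to a suitable Riemannian domain $\Omega$. By continuity of $R_g$ and the hypothesis, there is a point $p\in O$ where $R_g(p)<0$; choose a $g$-geodesic ball $\Omega=B_p(r)\subset O$ of small enough radius $r$ so that (i) $R_g<0$ on $\bar\Omega$, (ii) $\Omega$ sits inside a chart in which $g$ is not conformally flat (which we can do since $M$ is not locally conformally flat at $p$), (iii) the Euclidean diameter and $g$-volume of $\Omega$ are small enough that Proposition \ref{local:prop4} applies, and (iv) by Remark \ref{local:re3}, the first Dirichlet eigenvalue $\lambda_1$ of $-\Delta_g$ on $\Omega$ is as large as we need.

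Next I would apply Proposition \ref{local:prop4} with this $\Omega$ and the positive constant $\lambda>0$ (which is the value of $S$ on $\bar O\supset\bar\Omega$) to obtain a function
\[
v\in \calC^{\infty}(\Omega)\cap H^{1}_{0}(\Omega,g)\cap \calC^{0}(\bar\Omega),\qquad v>0\text{ in }\Omega,\ v\equiv 0\text{ on }\partial\Omega,
\]
solving $-a\Delta_g v + R_g v = \lambda v^{p-1}$ in $\Omega$. Define $u_-$ on $M$ by extending $v$ by zero outside $\Omega$. Because $v\in H^{1}_{0}(\Omega,g)\cap \calC^{0}(\bar\Omega)$ and vanishes on $\partial\Omega$, the extension satisfies $u_-\in H^{1}(M,g)\cap \calC^{0}(M)$; clearly $u_-\geqslant 0$ and $u_-\not\equiv 0$.

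Finally I would verify the sub-solution inequality (\ref{closed:eqn2a}) in the weak sense. For any non-negative $\phi\in \calC^{\infty}(M)$, since $u_-$ is supported in $\bar\Omega$, integration by parts on $\Omega$ gives
\[
\int_{M} a\nabla_g u_-\cdot\nabla_g\phi\,\dvol+\int_{M} R_g u_-\phi\,\dvol
=\int_{\Omega} \lambda v^{p-1}\phi\,\dvol+\int_{\partial\Omega} a\,\tfrac{\partial v}{\partial\nu}\,\phi\,dS,
\]
where $\nu$ is the outward unit normal on $\partial\Omega$. Since $v>0$ in $\Omega$ attains its minimum value $0$ on $\partial\Omega$, Hopf's boundary point lemma yields $\partial v/\partial\nu\leqslant 0$ on $\partial\Omega$. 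Combining this with $S\equiv\lambda$ on $\bar\Omega$, so that $\int_\Omega \lambda v^{p-1}\phi\,\dvol=\int_M S u_-^{p-1}\phi\,\dvol$, we conclude the weak inequality
\[
\int_{M} a\nabla_g u_-\cdot\nabla_g\phi\,\dvol+\int_{M} R_g u_-\phi\,\dvol\leqslant \int_{M} S u_-^{p-1}\phi\,\dvol,
\]
which is exactly (\ref{closed:eqn2a}). The only delicate point—really the main obstacle—is the choice of $\Omega$: it must simultaneously sit in $O$, lie where $g$ fails to be conformally flat so that Proposition \ref{local:prop4} rather than Proposition \ref{local:prop5} applies, have $R_g<0$ throughout, and be small enough in the quantitative senses (diameter, volume, $\lambda_1^{-1}$) demanded by Proposition \ref{local:prop4}; none of these are in conflict, but one has to choose $r$ with all of them in mind.
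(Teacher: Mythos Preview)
Your proof is correct and follows essentially the same approach as the paper: choose a small domain $\Omega\subset O$ where $R_g<0$ and Proposition~\ref{local:prop4} applies, solve the local Yamabe equation there, and extend by zero. The only difference is cosmetic---where the paper cites \cite[Thm.~4.3]{XU3} for the weak sub-solution inequality, you spell out the integration-by-parts argument with the sign of $\partial v/\partial\nu$ on $\partial\Omega$ explicitly.
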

\begin{proof}
Without loss of generality, we may assume that $ O $ is connected since otherwise we can work on some open subset of any connected component of $ O $ such that $ R_{g} < 0 $ somewhere within it. Choose a small enough connected, open, proper subset $ \Omega \subset O \subset M $ such that the hypotheses of Proposition \ref{local:prop4} are satisfied. In addition, $ S = \lambda > 0 $ on $ \Omega $ due to the assumption. We shrink $ \Omega $ if necessary so that $ R_{g} < 0 $ on $ \Omega $. By Proposition \ref{local:prop4}, the following PDE
\begin{equation}\label{closed:eqn3}
-a\Delta_{g} u_{1} + R_{g} u_{1} = \lambda u_{1}^{p-1} \; {\rm in} \; \Omega, u_{1} \equiv 0 \; {\rm on} \; \partial \Omega
\end{equation}
admits a positive solution $ u_{1} \in \calC^{\infty}(\Omega) \cap \calC^{0}(\bar{\Omega}) $. Denote
\begin{equation}\label{closed:eqn4}
u_{-} : = \begin{cases} u_{1} & \; {\rm in} \; \Omega \\ 0 & \; {\rm in} \; M \backslash \bar{\Omega} \end{cases}.
\end{equation}
It follows from the same argument in Theorem 4.3 of \cite{XU3} that $ u_{-} $ satisfies
\begin{equation*}
-a\Delta_{g} u_{-} + R_{g} u_{-} \leqslant \lambda u_{-}^{p-1} = S u_{-}^{p-1} \; {\rm on} \; M
\end{equation*}
in the weak sense. But $ u_{-} \equiv 0 $ outside $ \Omega $, it follows that (\ref{closed:eqn2a}) holds with $ u_{-} $ given in (\ref{closed:eqn4}) in the weak sense. Since $ u_{-} $ is exactly an extension of a smooth function in $ \Omega $ and a continuous function on $ \bar{\Omega} $ by zero, we conclude that $ u_{-} \in H^{1}(M, g) \cap \calC^{0}(M) $. Due to the construction of $ u_{-} $ in (\ref{closed:eqn4}), it is straightforward to see that $ u_{-} \geqslant 0 $ on $ M $, which is not identically zero.
\end{proof}
\medskip

We now turn to the construction of a super-solution of (\ref{closed:eqn1}). There is a trivial case, say $ S = \lambda > 0 $ for some constant $ \lambda $. Due to the Yamabe problem \cite{XU3}, we can always apply a conformal change so that the background metric $ g $ has $ R_{g} = \lambda $. Thus the super-solution must satisfy
\begin{equation*}
-a\Delta_{g} u_{+} + \lambda u_{+} \geqslant \lambda u_{+}^{p-1}.
\end{equation*}
Clearly the constant function $ u_{+} = 1 $ is a super-solution of (\ref{closed:eqn1}). In this case, we do not need the nontrivial sub-solution constructed in Proposition \ref{closed:lemma1}, i.e. we can take $ u_{-} = 0 $ on $ M $. Taking $ u_{+} = 1 $ in the monotone iteration scheme, every $ u_{k} \equiv 1 $ in the iteration steps. Therefore the limit is $ u \equiv 1 $ on $ M $, which solves the PDE (\ref{closed:eqn1}). For globally constant prescribing scalar curvature functions, we do not need the following gluing procedure in Lemma \ref{closed:lemma2}. In fact, Lemma \ref{closed:lemma2} does not work for this case. The point is that the strict inequality in (\ref{closed:eqn7}) is actually an equality. Therefore the quantity $ \beta $ we defined in (\ref{closed:eqn8}) is zero, hence there is no positive $ \gamma $ such that (\ref{closed:eqn9}) holds. In a word, we have no room to compensate possible negative terms when we apply the gluing procedure. Instead we need to introduce a perturbation method to resolve the issue of prescribing globally constant functions, which were widely used in proving the Escobar problem and the Yamabe problem in \cite{XU4, XU5, XU3}. In particular, we apply a conformal change so that the background metric $ g $ we start with admits the scalar curvature $ R_{g} $ that is negative somewhere. Although not the same, but this is similar to the conformal normal coordinates Aubin did, see \cite{PL}. In order to construct a super-solution of (\ref{closed:eqn1}) for not globally constant function $ S $, we need to glue $ u_{1} $ from (\ref{closed:eqn3}) with the first eigenfunction of $ \Box_{g} $ appropriately, as the following lemma will show. 
\begin{lemma}\label{closed:lemma2}
Let $ (M, g) $ be a closed Riemannian manifold that is not locally conformally flat. Let $ \eta_{1} $, the first eigenvalue of $ \Box_{g} $, be positive. Let $ S \in \mathcal{A}_{1} $ be a non-constant smooth function satisfies $ S = \lambda > 0 $ for some positive constant $ \lambda $ on some open subset $ O \subset M $. Assume that $ R_{g} < 0 $ somewhere in $ O $. In addition, we assume that the metric $ g $ is not locally conformally flat within an open subset of the region $ \lbrace x \in O | R_{g}(x) < 0 \rbrace $. Then there exists a function $ u_{+}  \in \calC^{\infty}(M) $ such that
\begin{equation}\label{closed:eqn5}
-a\Delta_{g} u_{+} + R_{g} u_{+} \geqslant S u_{+}^{p-1} \; {\rm on} \; M
\end{equation}
in the weak sense. In addition, $ u_{+} \geqslant u_{-} \geqslant 0 $.
\end{lemma}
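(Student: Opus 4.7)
The plan is to build $u_+ \in \calC^\infty(M)$ by smoothly gluing the local Dirichlet solution $u_1$ from Proposition \ref{local:prop4} with a positive scalar multiple of the first eigenfunction $\varphi$ of $\Box_g$. Since $\eta_1 > 0$, spectral theory supplies $\varphi \in \calC^\infty(M)$ with $\varphi > 0$ everywhere and $\Box_g \varphi = \eta_1 \varphi$. I would choose a nested pair $\Omega''$ with $\overline{\Omega''} \subset \Omega \subset O$ and a cutoff $\chi \in \calC_c^\infty(\Omega)$ satisfying $\chi \equiv 1$ on $\Omega''$, and propose
\begin{equation*}
u_+ := \chi u_1 + \alpha \varphi
\end{equation*}
with $\alpha > 0$ to be tuned. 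Smoothness of $u_+$ on $M$ is immediate: $u_1 \in \calC^\infty(\Omega)$, $\chi$ has compact support in $\Omega$, so $\chi u_1$ extends smoothly by zero outside $\mathrm{supp}\,\chi$; positivity follows from $\alpha\varphi > 0$. For the domination $u_+ \geq u_- \geq 0$, on $\Omega''$ one has $u_+ = u_1 + \alpha\varphi \geq u_1 = u_-$; on the annulus $\Omega \setminus \overline{\Omega''}$ the required $\alpha\varphi \geq (1-\chi)u_1$ holds once $\alpha$ exceeds $\sup_{\Omega \setminus \overline{\Omega''}}(u_1/\varphi)$; and on $M\setminus \Omega$, $u_- = 0 \leq u_+$ is trivial.

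Next I would verify $\Box_g u_+ \geq S u_+^{p-1}$ region by region. Using the product rule together with $\Box_g u_1 = \lambda u_1^{p-1}$ on $\Omega$ and $\Box_g \varphi = \eta_1 \varphi$,
\begin{equation*}
\Box_g u_+ = \chi \lambda u_1^{p-1} - a u_1 \Delta_g \chi - 2a \langle \nabla_g u_1, \nabla_g \chi \rangle_g + \alpha \eta_1 \varphi.
\end{equation*}
On $\Omega''$ the cutoff error terms vanish and $S = \lambda$, so the desired inequality $\lambda u_1^{p-1} + \alpha\eta_1\varphi \geq \lambda(u_1 + \alpha\varphi)^{p-1}$ reduces, via the convexity bound $(u_1+\alpha\varphi)^{p-1} \leq u_1^{p-1} + (p-1)\alpha\varphi (u_1 + \alpha\varphi)^{p-2}$, to the pointwise inequality $\eta_1 \geq \lambda(p-1)(u_1 + \alpha\varphi)^{p-2}$. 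On $M \setminus \Omega$, $u_+ = \alpha\varphi$ and the needed inequality reads $\eta_1 \geq S \alpha^{p-2}\varphi^{p-2}$. On the transition annulus $\Omega \setminus \overline{\Omega''}$, the cutoff-derivative terms are uniformly bounded in terms of $\|u_1\|_{\calC^1(\mathrm{supp}\,\nabla\chi)}$ and $\|\chi\|_{\calC^2}$, and must be absorbed by the positive leading terms $\chi\lambda u_1^{p-1} + \alpha\eta_1\varphi$.

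The hard part will be the simultaneous compatibility of the constraints on $\alpha$: the super-solution inequalities on $\Omega''$ and on $M\setminus\Omega$ push $\alpha$ from above, while dominating $u_-$ on the annulus and absorbing the cutoff errors there push $\alpha$ from below. These windows overlap only when additional slack is available, and this slack is precisely what the non-constancy of $S$ supplies: the strict inequality encoded in (\ref{closed:eqn7}) yields a positive gap $\beta > 0$ in (\ref{closed:eqn8}), which in turn produces a positive compensation constant $\gamma > 0$ in (\ref{closed:eqn9}), giving the needed room to balance the cutoff errors against the positive leading terms. For globally constant $S$ this gap collapses to zero, no positive $\gamma$ is available, and the gluing breaks down, forcing the alternative perturbation-of-conformal-Laplacian approach of \cite{XU3, XU4, XU5}. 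The hypotheses that $R_g < 0$ somewhere in $O$ and that $g$ is not locally conformally flat in a subset of $\{x \in O \mid R_g(x) < 0\}$ are used (only) to ensure $u_1$ exists via Proposition \ref{local:prop4}.
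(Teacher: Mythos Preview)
Your additive ansatz $u_+ = \chi u_1 + \alpha\varphi$ has a genuine gap that the vague appeal to ``slack from non-constancy'' does not close. On $\Omega''$ you need $\lambda u_1^{p-1} + \alpha\eta_1\varphi \geq \lambda(u_1+\alpha\varphi)^{p-1}$, which by your own convexity bound forces $\eta_1 \geq \lambda(p-1)(u_1+\alpha\varphi)^{p-2}$ pointwise. Even in the limit $\alpha\to 0$ this demands $u_1 \leq (\eta_1/\lambda(p-1))^{1/(p-2)}$ throughout $\Omega''$, and nothing in the construction of $u_1$ via Proposition~\ref{local:prop4} gives you that bound; $u_1$ is determined by a nonlinear equation and can be large at interior points. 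Simultaneously, on the annulus you need $\alpha$ large enough to absorb cutoff errors of fixed size (controlled by $\lVert u_1\rVert_{\calC^1}$ and $\lVert\chi\rVert_{\calC^2}$), while on $M\setminus\Omega$ you need $\alpha^{p-2} \leq \eta_1/(\max_M S \cdot \max_M\varphi^{p-2})$. You have not shown these windows overlap, and in general they will not: the additive perturbation $\alpha\varphi$ destroys the exact equation $\Box_g u_1 = \lambda u_1^{p-1}$ by an error quadratic (or worse) in $\alpha\varphi$, and there is no mechanism to recover it.

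The paper's construction is not additive but \emph{max-type}. It first scales the eigenfunction down to $\phi=\theta\varphi$ so small that $\eta_1\phi > 2^{p-2}(\max_M S)\phi^{p-1}$ everywhere; thus $\phi$ alone is a strict global super-solution. The candidate is then morally $\max\{u_1,\phi\}$ inside $\Omega$ and $\phi$ outside: where $u_1>\phi$ one uses $u_1$ (the equation holds with equality), where $u_1<\phi$ one uses $\phi$ (strict super-solution), and near the level set $\{u_1=\phi\}$ one interpolates via a partition of unity $\chi_1,\chi_2,\chi_3$ built from solutions of auxiliary linear PDEs (\ref{closed:eqn12})--(\ref{closed:eqn13}) specifically designed so that the cross terms $-a\Delta_g\chi_i$, $\nabla_g\chi_i\cdot\nabla_g(\cdot)$ cancel against each other. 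The positive gap $\beta$ in (\ref{closed:eqn9}) and the small constant $\gamma$ in (\ref{closed:eqn10}) are what absorb the residual interpolation error, and this is where non-constancy of $S$ enters. The key structural point you are missing is that one should \emph{switch} between $u_1$ and $\phi$, not \emph{add} them.
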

\begin{proof}
Without loss of generality, we may assume that $ O $ is connected since otherwise we can work on some open subset of any connected component of $ O $ such that $ R_{g} < 0 $ somewhere within it. 

By assumption, the metric $ g $ is not locally in an open subset of $ \lbrace x \in O | R_{g}(x) < 0 \rbrace $. Thus we choose $ \Omega $ to be the same as in Lemma \ref{closed:lemma1}, or shrinking $ \Omega $ if necessary. It follows that $ u_{1} > 0 $ solves (\ref{closed:eqn3}) as in Lemma \ref{closed:lemma1}. With the hypothesis $ \eta_{1} > 0 $, there exists a smooth eigenfunction $ \varphi > 0 $ on $ M $ such that
\begin{equation}\label{closed:eqn6}
-a\Delta_{g} \varphi + R_{g} \varphi = \eta_{1} \varphi \; {\rm on} \; M.
\end{equation}
Any scaling $ \theta \varphi $ also solves (\ref{closed:eqn6}). We choose $ \theta $ small enough so that
\begin{equation*}
\eta_{1} \min_{M} \left(\theta \varphi \right) > 2^{p-2} \max_{M} S \max_{M} \left( \theta \varphi \right)^{p-1} \Leftrightarrow \eta_{1} \min_{M} \varphi > 2^{p-2} \theta^{p-2} \cdot \max_{M} S \max_{M} \varphi^{p-1}.
\end{equation*}
Denote $ \phi : = \theta \varphi $, it follows that
\begin{equation}\label{closed:eqn7}
-a\Delta_{g} \phi + R_{g} \phi = \eta_{1} \phi > 2^{p-2} \max_{M} S \max_{M} \phi^{p-1} \geqslant \max_{M} S \max_{M} \phi^{p-1} \geqslant S \phi^{p-1} \; \text{pointwise on} \; M.
\end{equation}
Note that $ S = \lambda > 0 $ within $ \Omega $, the inequality (\ref{closed:eqn7}) becomes
\begin{equation}\label{closed:eqn8}
-a\Delta_{g} \phi + R_{g} \phi = \eta_{1} \phi > 2^{p-2} \lambda \max_{\Omega} \phi^{p-1} \geqslant 2^{p-2} \lambda \phi^{p-1} > \lambda \phi^{p-1} \; \text{pointwise on} \; \Omega.
\end{equation}
Set
\begin{equation}\label{closed:eqn9}
\beta = \max_{\Omega} \left( \eta_{1} \phi - 2^{p-2} \lambda \phi^{p-1} \right) \geqslant \eta_{1} \phi - 2^{p-2} \lambda \phi^{p-1} \; \text{pointwise on} \; \Omega.
\end{equation}
The following arguments are essentially the same as in Theorem 4.3 of \cite{XU3}. We just sketch the key steps and constructions below. Pick up $ \gamma \ll 1 $ such that
\begin{equation}\label{closed:eqn10}
0 < 20 \lambda \gamma + 2\gamma \cdot \sup_{M} \lvert R_{g} \rvert \gamma < \frac{\beta}{2}, 31 \lambda (\phi + \gamma)^{p-2} \gamma < \frac{\beta}{2}. 
\end{equation}
The choice of $ \gamma $ is dimensional specific. Set
\begin{align*}
V & = \lbrace x \in \Omega: u_{1}(x) > \phi(x) \rbrace, V' = \lbrace x \in \Omega: u_{1}(x) < \phi(x) \rbrace, D = \lbrace x \in \Omega : u_{1}(x) = \phi(x) \rbrace, \\
D' & = \lbrace x \in \Omega : \lvert u_{1}(x) - \phi(x) \rvert < \gamma \rbrace, D'' = \left\lbrace x \in \Omega : \lvert u_{1}(x) - \phi(x) \rvert > \frac{\gamma}{2} \right\rbrace.
\end{align*}
If $ \phi \geqslant u_{1} $ pointwise, then $ \phi $ is a super-solution. If not, a good candidate of super-solution will be $ \max \lbrace u_{1}, \phi \rbrace $ in $ \Omega $ and $ \varphi $ outside $ \Omega $, this is an $ H^{1} \cap \calC^{0} $-function. Let $ \nu $ be the outward normal derivative of $ \partial V $ along $ D $. If $ \frac{\partial u_{1}}{\partial \nu} = - \frac{\partial \phi}{\partial \nu} $ on $ D $ then the super-solution has been constructed. However, this is in general not the case. If not, then $ \frac{\partial u_{1} - \partial \phi}{\partial \nu} \neq 0 $, which follows that $ 0 $ is a regular point of the function $ u_{1} - \varphi $ and hence $ D $ is a smooth submanifold of $ \Omega $. Define
\begin{equation}\label{closed:eqn11}
\Omega_{1} = V \cap D'', \Omega_{2} = V' \cap D'', \Omega_{3} = D'.
\end{equation}
Clearly $ \Omega_{i}, i = 1, 2, 3 $ are open sets, $ \bigcup_{i} \Omega_{i} = \Omega $ and $ \Omega_{1} \cap \Omega_{2} = \emptyset $. Note that $ D, D' $ will never intersect $ \partial \Omega $. Without loss of generality, we may assume that all $ \Omega_{i}, i = 1, 2, 3 $ are connected, since if not, then we do local analysis in all components where $ u_{1} - \phi $ changes sign and combine results together. For any $ 0 < \epsilon \ll \frac{\delta}{4} $, set
\begin{align*}
D_{1} & = \lbrace x \in V: \text{dist}(x, \partial D') < \epsilon \rbrace, D_{1}' = \lbrace x \in V': \text{dist}(x, \partial D') < \epsilon \rbrace, \\
D_{2} & = \lbrace x \in V: \text{dist}(x, \partial D'') < \epsilon \rbrace, D_{2}' = \lbrace x \in V': \text{dist}(x, \partial D'') < \epsilon \rbrace, \\
E_{1} & = \partial D_{1} \cap \Omega_{1} \cap \Omega_{3}, E_{2} = \partial D_{2} \cap \Omega_{1}, E_{1}' = \partial D_{1}' \cap \Omega_{2} \cap \Omega_{3}, E_{2}' = \partial D_{2}' \cap \Omega_{2}; \\
F_{1} & = \left(\Omega_{1} \cap \Omega_{3} \right) \backslash \left( D_{1} \cup D_{2} \right), F_{2} =  \left(\Omega_{2} \cap \Omega_{3} \right) \backslash \left( D_{1}' \cup D_{2}' \right).
\end{align*}
In conclusion, $ D_{1}, D_{2}, E_{1}, E_{2}, F_{1} $ are subsets of $ V $, in which $ u_{1} > \phi $; $ D_{1}', D_{2}', E_{1}', E_{2}', F_{2} $ are subsets of $ V' $, in which $ u_{1} < \phi $. Note that $ \partial F_{1} = E_{1} \cup E_{2} $, $ \partial F_{2} = E_{1}' \cup E_{2}' $. We now construct appropriate partition of unity subordinate to $ \Omega_{i}, i = 1, 2, 3 $. Choose a local coordinate system $ \lbrace x_{i} \rbrace $ on $ \Omega $, we choose $ v_{1} $ and $ v_{2} $ such that
\begin{equation}\label{closed:eqn12}
\begin{split}
&  -\sum_{i, j} g^{ij} (u_{1} - \phi - \gamma) \frac{\partial^{2} v_{1}}{\partial x_{i} \partial x_{j}} - 2 \sum_{i, j} g^{ij} \left( \frac{\partial u_{1}}{\partial x_{j}} - \frac{\partial \phi}{\partial x_{j}} \right) \frac{\partial v_{1}}{\partial x_{i}} \\
& \qquad - \sum_{i, j, k} (u_{1} - \phi - \gamma) g^{ij} \Gamma_{ij}^{k} \frac{\partial v_{1}}{\partial x_{k}}  = 0 \; {\rm in} \; F_{1}; \\
& v_{1} = 0 \; {\rm on} \; E_{2}; v_{1}  = 1 \; {\rm on} \; E_{1}.
\end{split}
\end{equation}
\begin{equation}\label{closed:eqn13}
\begin{split}
&  -\sum_{i, j} g^{ij} \frac{\partial^{2} v_{2}}{\partial x_{i} \partial x_{j}} - \sum_{i, j, k} g^{ij} \Gamma_{ij}^{k} \frac{\partial v_{2}}{\partial x_{k}}  = 0 \; {\rm in} \; F_{2}; \\
& v_{2} = 0 \; {\rm on} \; E_{2}'; v_{2}  = 1 \; {\rm on} \; E_{1}'.
\end{split}
\end{equation}
Since $ E_{1} \cap E_{2} = E_{1}' \cap E_{2}' = \emptyset $. The linear PDEs in (\ref{closed:eqn13}) and (\ref{closed:eqn14}) exist unique solutions, respectively. In addition, both $ v_{1}, v_{2} \in [0, 1] $ are smooth, due to maximum principle and standard elliptic regularity. We then construct $ \chi_{1} $ and $ \chi_{2} $ as smooth functions
\begin{equation}\label{closed:eqn14}
\begin{split}
v_{1}'' & : = \min \lbrace 1, \max \lbrace v_{1}, 0 \rbrace \rbrace, v_{1}'(x) : = v_{1}'' * \psi_{\epsilon}(x), \forall x \in \overline{\Omega_{1} \cap \Omega_{3}}; \\
\chi_{1}(x) & : = \begin{cases} 1, & x \in \Omega_{1} \backslash \bar{\Omega}_{3} \\ v_{1}'(x), & x \in \overline{\Omega_{1} \cap \Omega_{3}} \\ 0, & x \in \Omega \backslash \bar{\Omega}_{1} \end{cases}. 
\end{split}
\end{equation}
\begin{equation}\label{closed:eqn15}
\begin{split}
v_{2}'' & : = \min \lbrace 1, \max \lbrace v_{2}, 0 \rbrace \rbrace, v_{2}'(x) : = v_{2}'' * \psi_{\epsilon}(x), \forall x \in \overline{\Omega_{2} \cap \Omega_{3}}; \\
\chi_{2}(x) & : = \begin{cases} 1, & x \in \Omega_{2} \backslash \bar{\Omega}_{3} \\ v_{1}'(x), & x \in \overline{\Omega_{2} \cap \Omega_{3}} \\ 0, & x \in \Omega \backslash \bar{\Omega}_{2} \end{cases}. 
\end{split}
\end{equation}
The last cut-off function $ \chi_{3} $ is then defined by
\begin{equation*}
1 = \chi_{1}(x) + \chi_{2}(x) + \chi_{3}(x), \forall x \in \Omega.
\end{equation*}
We now define
\begin{equation}\label{closed:eqn16}
\bar{u} = \chi_{1} u_{1} + \chi_{2} \phi + \chi_{3} \left( \phi + \gamma \right).
\end{equation}
Due to the same argument in Theorem 4.3 of \cite{XU3}, we conclude that $ \bar{u} \in \calC^{\infty}(\Omega) $ satisfies
\begin{equation*}
-a\Delta_{g} \bar{u} + R_{g} \bar{u} \geqslant \lambda \bar{u}^{p-1} = S \bar{u}^{p-1}
\end{equation*}
in $ \Omega $ pointwise. The argument consists of the analysis of $ \bar{u} $ on five different sets $ \left( \Omega_{1} \backslash \bar{\Omega}_{3} \right) $, $ \left( \Omega_{2} \backslash \bar{\Omega}_{3} \right) $, $ \left( \Omega_{3} \backslash \left( \overline{\Omega_{1} \cup \Omega_{2}} \right) \right) $, $ \left( \Omega_{1} \cap \Omega_{3} \right) $ and $\left( \Omega_{2} \cup \Omega_{3} \right) $, respectively, whose union is $ \Omega $. By the definition of $ \bar{u} $, it is immediate that $ \bar{u} \geqslant u_{1} $. Define
\begin{equation}\label{closed:eqn17}
u_{+} : = \begin{cases} \bar{u}, & \; {\rm in} \; \Omega; \\ \phi, & \; {\rm in} \; \bar{M} \backslash \Omega. \end{cases}
\end{equation}
It follows that $ u_{+} \in \calC^{\infty}(M) $ since $ \bar{u} = \phi $ near $ \partial \Omega $. By (\ref{closed:eqn7}) we conclude that
\begin{equation}\label{closed:eqn18}
-a\Delta_{g} u_{+} +R_{g} u_{+} \geqslant S u_{+}^{p-1} \; {\rm on} \; M.
\end{equation}
Critically, $ 0 \leqslant u_{-} \leqslant u_{+} $ on $ \bar{M} $ due to the construction of $ u_{+} $ in (\ref{closed:eqn16}).
\end{proof}
\medskip

With the aids of Lemma \ref{closed:lemma1} and \ref{closed:lemma2}, we can construct the solution of (\ref{closed:eqn1}) with $ S \in \mathcal{A}_{1} $.
\begin{theorem}\label{closed:thm2}
Let $ (M, g) $ be a closed Riemannian manifold that is not locally conformally flat. Let $ \eta_{1} $, the first eigenvalue of $ \Box_{g} $, be positive. Let $ S \in \mathcal{A}_{1} $ be a smooth function satisfies $ S = \lambda > 0 $ for some positive constant $ \lambda $ on some open subset $ O \subset M $. Assume that $ R_{g} < 0 $ somewhere in $ O $.  In addition, we assume that the metric $ g $ is not locally conformally flat within an open subset of the region $ \lbrace x \in O | R_{g}(x) < 0 \rbrace $. Then there exists a positive function $ u \in \calC^{\infty}(M) $ that solves
\begin{equation*}
-a\Delta_{g} u + R_{g} u = Su^{p-1} \; {\rm on} \; M.
\end{equation*}
Equivalently, $ S $ is the prescribed scalar curvature with respect to the metric $ \tilde{g} = u^{p-2} g $.
\end{theorem}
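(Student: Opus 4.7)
The plan is to assemble Lemma \ref{closed:lemma1}, Lemma \ref{closed:lemma2}, and the monotone iteration scheme of Theorem \ref{closed:thm1}. The hypotheses of all three inputs match exactly the hypotheses of the theorem, so what remains is a compatibility check plus extracting strict positivity of the resulting solution.

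First I would invoke Lemma \ref{closed:lemma1} to obtain a sub-solution. Since $S \equiv \lambda > 0$ on $\bar O$ and $R_g < 0$ somewhere in $O$, one can shrink to a small geodesic ball $\Omega \subset O$ on which $R_g < 0$ and on which the local existence result Proposition \ref{local:prop4} applies (this uses the hypothesis that $g$ is not locally conformally flat in an open subset of $\{R_g < 0\}\cap O$). Extending the positive local solution $u_1$ of the Dirichlet problem (\ref{closed:eqn3}) by zero gives $u_{-} \in H^1(M,g)\cap \calC^0(M)$, nonnegative and nontrivial, satisfying (\ref{closed:eqn2a}) weakly.

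Next I would apply Lemma \ref{closed:lemma2} to produce a smooth super-solution $u_+ \in \calC^\infty(M)$ satisfying $-a\Delta_g u_+ + R_g u_+ \geq S u_+^{p-1}$ on $M$, with $u_+ \geq u_- \geq 0$ everywhere. The construction uses the positive first eigenfunction $\varphi$ of $\Box_g$, scaled down so that the pointwise inequality $\eta_1 \phi > 2^{p-2}\max_M S\cdot \max_M \phi^{p-1}$ holds with $\phi = \theta\varphi$, and then glues $u_1$ to $\phi$ via the smooth partition of unity $\{\chi_1,\chi_2,\chi_3\}$ subordinate to the decomposition of $\Omega$ into the regions where $u_1 > \phi$, $u_1 < \phi$, and where $|u_1 - \phi|$ is small. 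This smoothing is the technical heart of the whole argument, and it is the only place where the constant $\gamma$ and the strict excess $\beta$ in (\ref{closed:eqn9}) are exploited; the non-constancy of $S$ and the strict inequality $\eta_1 \phi > 2^{p-2}\lambda\phi^{p-1}$ inside $\Omega$ are what give this room.

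With $0 \leq u_- \leq u_+$, $u_- \not\equiv 0$, and both differential inequalities (\ref{closed:eqn2}) valid weakly for $S_1 = R_g$ and $S_2 = S$, Theorem \ref{closed:thm1} yields $u \in \calC^\infty(M)$ solving (\ref{closed:eqn1}) with $u_- \leq u \leq u_+$. To upgrade $u \geq 0$ to $u > 0$, I would observe that $u \geq u_- > 0$ on $\Omega$, so $u\not\equiv 0$, and then rewrite (\ref{closed:eqn1}) as the linear equation $-a\Delta_g u + (R_g - S u^{p-2})u = 0$ with smooth bounded coefficient; the strong maximum principle on the connected closed manifold $M$ then forces $u > 0$ throughout. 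This gives the desired conformal factor so that $\tilde g = u^{p-2} g$ has scalar curvature $S$. The step I expect to scrutinize most carefully is the verification that the inputs to Theorem \ref{closed:thm1} are genuinely compatible — in particular, that the ordering $u_- \leq u_+$ is preserved after the gluing in Lemma \ref{closed:lemma2}, which is exactly why that lemma is set up to build $u_+$ from $\bar u \geq u_1$ inside $\Omega$ and to coincide with $\phi$ outside.
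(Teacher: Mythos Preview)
Your approach is essentially identical to the paper's: invoke Lemma \ref{closed:lemma1} for the sub-solution, Lemma \ref{closed:lemma2} for the smooth super-solution, feed both into the monotone iteration scheme Theorem \ref{closed:thm1}, and finish with the strong maximum principle to upgrade $u \geq 0$ to $u > 0$. The one omission is that Lemma \ref{closed:lemma2} explicitly requires $S$ to be \emph{non-constant} (as you yourself note when discussing the room provided by the strict excess $\beta$), so you must treat the globally constant case $S \equiv \lambda$ separately; the paper dispatches it in one line by citing the Yamabe problem before proceeding to the non-constant case.
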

\begin{proof} When $ S $ is a globally positive constant, it is the trivial case which is the result of the Yamabe problem. When $ S $ is not a globally constant, we apply Lemma \ref{closed:lemma1} and \ref{closed:lemma2}, it follows that there exist $ u_{-} \in H^{1}(M, g) \cap \calC^{0}(M) $, $ u_{+} \in \calC^{\infty}(M) $, $ 0 \leqslant u_{-} \leqslant u_{+} $ with $ u_{-} \not\equiv 0 $ such that the inequalities in (\ref{closed:eqn2}) hold with $ S_{1} = R_{g} $ and $ S_{2} = S $. Applying Theorem \ref{closed:thm1}, there exists a function $ u \in \calC^{\infty}(M) $ that solves (\ref{closed:eqn1}). In addition, $ u \geqslant u_{-} \geqslant 0 $ with $ u \not\equiv 0 $. Due to standard maximum principle as discussed in \cite[\S2]{PL}, it follows that $ u > 0 $ on $ M $.
\end{proof}
\medskip

In Theorem \ref{closed:thm2}, we have two restrictions: (i) $ R_{g} < 0 $ somewhere; (ii) the metric $ g $ must not be locally conformally flat within some subset of the region on which $ R_{g} < 0 $. The following result helps us to remove the restrictions mentioned above. 
\begin{proposition}\label{closed:thm3}
Let $ (M, g) $ be a closed manifold with $ n = \dim M \geqslant 3 $. Let $ P $ be any fixed point of $ M $. Assume that $ \eta_{1} $, the first eigenvalue of $ \Box_{g} $, is positive. Then there exists some smooth function $ H \in \calC^{\infty}(M) $, which is negative at $ P $, such that $ H $ is realized as the scalar curvature function with respect to some conformal change of the original metric $ g $.
\end{proposition}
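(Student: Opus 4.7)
The plan is to exhibit a smooth positive conformal factor $u$ on $M$ such that the resulting conformal change $\tilde g := u^{p-2}g$ has scalar curvature
\[
H \;:=\; R_{\tilde g} \;=\; u^{1-p}\bigl(-a\Delta_g u + R_g u\bigr)
\]
satisfying $H(P)<0$. Once $u\in\calC^\infty(M)$ is strictly positive, $H$ is automatically smooth, so everything reduces to forcing the pointwise inequality $-a\Delta_g u(P) + R_g(P)\,u(P)<0$ by a judicious choice of $u$. The idea is to dip $u$ sharply downward at $P$ so that $\Delta_g u(P)$ is large and positive, overwhelming the fixed term $R_g(P)u(P)$.

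First I would fix once and for all a smooth radial profile $\chi:[0,\infty)\to[0,1]$ supported in $[0,1]$ with $\chi(0)=1$ and $\chi'(0)<0$. Working in geodesic normal coordinates centered at $P$ on a ball $B_P(r)$ of small radius $r$, set $\chi_r(x):=\chi(|x|^2/r^2)$ and extend by zero to obtain $\chi_r\in\calC^\infty(M)$ with $\chi_r(P)=1$ and $0\leqslant \chi_r\leqslant 1$. Because $\Gamma^k_{ij}(P)=0$ and $g^{ij}(P)=\delta^{ij}$ in these coordinates, a direct computation gives
\[
\Delta_g \chi_r(P) \;=\; \frac{2n\,\chi'(0)}{r^{2}},
\]
which diverges to $-\infty$ as $r\to 0^+$.

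Next, set $u:=1-\tfrac12 \chi_r\in\calC^\infty(M)$, so $\tfrac12\leqslant u\leqslant 1$ on $M$ and $u(P)=\tfrac12$. Evaluating at $P$ yields
\[
-a\Delta_g u(P) + R_g(P)\,u(P) \;=\; \frac{a\,n\,\chi'(0)}{r^{2}} + \frac{R_g(P)}{2},
\]
which becomes strictly negative once $r$ is chosen small enough, since $\chi'(0)<0$ is fixed and $R_g(P)$ is a fixed finite number. Consequently $H:=u^{1-p}\bigl(-a\Delta_g u + R_g u\bigr)\in\calC^\infty(M)$ is the scalar curvature of $\tilde g=u^{p-2}g\in[g]$ with $H(P)<0$, as desired. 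The hypothesis $\eta_1>0$, which is a sign condition on a conformal invariant, is automatically preserved by the conformal change and is needed only to keep the resulting metric within the positive-eigenvalue regime used elsewhere in the section.

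There is essentially no analytic obstacle: the argument is driven by the universal $r^{-2}$ blow-up of $\Delta_g\chi_r(P)$ at the peak of the bump, which dominates the bounded quantity $R_g(P)$ for sufficiently small $r$. The only minor bookkeeping is ensuring $\chi_r$ patches smoothly to zero outside $B_P(r)$, which is handled by the cutoff profile $\chi$.
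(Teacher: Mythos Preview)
Your argument is correct. The computation of $\Delta_g\chi_r(P)=2n\chi'(0)/r^2$ in normal coordinates is clean, and since $u=1-\tfrac12\chi_r$ stays in $[\tfrac12,1]$ the conformal factor is everywhere positive and smooth; the scalar curvature $H=u^{1-p}\Box_g u$ is then automatically smooth with $H(P)<0$ for $r$ small.

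Your route is genuinely different from the paper's. The paper constructs an auxiliary function $f$ supported near $P$ with $f(P)<0$ and then invokes a perturbation result (Theorem~4.5 of \cite{XU3}, and compare the more detailed Proposition~\ref{compact:prop2}) to solve for a conformal factor whose scalar curvature is negative at $P$; that machinery relies on the invertibility coming from $\eta_1>0$. You bypass all of this by writing down the conformal factor explicitly and exploiting the $r^{-2}$ blow-up of the Laplacian of a concentrated bump. Your argument is more elementary, self-contained, and in fact does not use the hypothesis $\eta_1>0$ at all for the bare conclusion of the proposition; as you note, that hypothesis is only needed so that the resulting metric remains in the positive-Yamabe regime used downstream. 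The paper's approach, on the other hand, gives finer control on the resulting scalar curvature (e.g.\ small $H^{s-2}$ perturbations, prescribed sign behavior of the mean curvature in the boundary case), which is what the surrounding arguments in \S3--\S4 actually exploit.
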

\begin{proof} Pick up a normal ball $ B_{r}(P) $ with small enough radius $ r $ centered at $ P $. Choose a function $ f $ with
\begin{equation*}
f(P) = -C < -1, f \leqslant 0, \lvert f \rvert \leqslant C, f \in \calC^{\infty}(M), f \equiv 0 \; {\rm on} \; M \backslash B_{r}(P).
\end{equation*}
The rest of the proof is essentially the same as in Theorem 4.5 of \cite{XU3}, we omit the details here.
\end{proof}
\medskip

The most general case for prescribed scalar curvature so far is as follows:
\begin{corollary}\label{closed:cor1}
Let $ (M, g) $ be a closed Riemannian manifold that is not locally conformally flat. Let $ \eta_{1} $, the first eigenvalue of $ \Box_{g} $, be positive. Let $ S \in \mathcal{A}_{1} $ be a smooth function satisfies $ S = \lambda > 0 $ for some positive constant $ \lambda $ on some open subset $ O \subset M $. Then there exists a positive function $ u \in \calC^{\infty}(M) $ that solves
\begin{equation*}
-a\Delta_{g} u + R_{g} u = Su^{p-1} \; {\rm on} \; M.
\end{equation*}
Equivalently, $ S $ is the prescribed scalar curvature function with respect to the metric $ \tilde{g} = u^{p-2} g $.
\end{corollary}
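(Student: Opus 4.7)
The plan is to reduce the general statement to the more restrictive Theorem \ref{closed:thm2} by first performing a preliminary conformal change that creates a region where the scalar curvature is negative, and then composing the resulting conformal factors.

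First I would extract geometric room from the hypotheses. Since $S \in \mathcal{A}_1$, by the very definition (\ref{closed:eqnr1}) the open set $O$ contains a point at which $g$ is not locally conformally flat; since non-vanishing of the Weyl tensor is an open condition, there is an open subset $O' \subset O$ on which $g$ is not locally conformally flat. Pick any point $P \in O'$. Applying Proposition \ref{closed:thm3} at $P$, I obtain a conformal metric $\tilde{g} = \phi^{p-2} g$ with $\phi \in \calC^{\infty}(M)$, $\phi > 0$, whose scalar curvature $R_{\tilde{g}}$ is negative at $P$. By continuity $R_{\tilde{g}} < 0$ on an open neighborhood $V$ of $P$, and by shrinking this neighborhood if necessary I may assume $V \subset O'$.

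Next I would verify that all the hypotheses of Theorem \ref{closed:thm2} now hold with $\tilde{g}$ in place of $g$. The property of being (not) locally conformally flat is a conformal invariant, so $\tilde{g}$ is not locally conformally flat on $V \subset O$, and globally on $M$. The sign of the first eigenvalue of the conformal Laplacian is a conformal invariant (this is the same principle used throughout \S2), so $\eta_1(\Box_{\tilde{g}}) > 0$. The function $S$ is unchanged as a function on $M$, so $S \in \mathcal{A}_1$ with respect to $\tilde{g}$, and $S \equiv \lambda > 0$ still holds on $O$. Finally, $R_{\tilde{g}} < 0$ on $V \subset O$, and $\tilde{g}$ is not locally conformally flat on the open subset $V \subset \{ x \in O : R_{\tilde{g}}(x) < 0 \}$. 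Thus Theorem \ref{closed:thm2} produces a positive function $v \in \calC^{\infty}(M)$ with
\begin{equation*}
-a\Delta_{\tilde{g}} v + R_{\tilde{g}} v = S v^{p-1} \; {\rm on} \; M.
\end{equation*}

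Finally I would compose the two conformal factors. Setting $u : = v\phi$, the metric
\begin{equation*}
u^{p-2} g = (v\phi)^{p-2} g = v^{p-2} \tilde{g}
\end{equation*}
is conformal to $g$, and by the construction of $v$ it has scalar curvature $S$. Equivalently, using the conformal invariance formula (\ref{local:eqn24}) twice,
\begin{equation*}
-a\Delta_{g} u + R_{g} u = S u^{p-1} \; {\rm on} \; M,
\end{equation*}
and $u > 0$ is smooth as the product of two positive smooth functions. The main obstacle is really Proposition \ref{closed:thm3}, already established; the remaining work is simply checking that the conformal change there preserves everything needed to invoke Theorem \ref{closed:thm2} and that the two conformal factors compose to give the desired $u$. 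Both are standard once one notes the conformal invariance of the sign of $\eta_1$ and of the locally-conformally-flat property.
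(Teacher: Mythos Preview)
Your proposal is correct and follows essentially the same approach as the paper: pick a point $P \in O$ where the Weyl tensor does not vanish, apply Proposition \ref{closed:thm3} to pass to a conformal metric whose scalar curvature is negative near $P$, invoke Theorem \ref{closed:thm2} for that metric, and compose the two conformal factors. Your write-up is in fact slightly more explicit than the paper's in checking the conformal invariance of the sign of $\eta_1$ and of the non-locally-conformally-flat condition before applying Theorem \ref{closed:thm2}.
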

\begin{proof} Pick up a point $ P \in O $ at which the Weyl tensor does not vanish. By Proposition \ref{closed:thm3}, there exists a smooth function $ v > 0 $ such that the metric $ \bar{g} = v^{p-2} g $ admits a scalar curvature $ S_{\bar{g}} $ with $ S_{\bar{g}}(P) < 0 $. Since the Weyl tensor is invariant under conformal change, the Weyl tensor with respect to $ \bar{g} $ does not vanish also. We now consider the Riemannian manifold $ (M, \bar{g}) $. Choose a small enough neighborhood of $ P $, say $ \Omega $, such that the results of Lemma \ref{closed:lemma1} and \ref{closed:lemma2} hold with respect to $ \Box_{\bar{g}} $ and $ S_{\bar{g}} $. By the same argument as in Theorem \ref{closed:thm2}, there exists a positive function $ w \in \calC^{\infty}(M) $ such that
\begin{equation*}
-a\Delta_{\bar{g}} w + S_{\bar{g}} w = S w^{p-1}
\end{equation*}
holds on $ M $. Denote
\begin{equation*}
\tilde{g} = w^{p-2} \bar{g} = (vw)^{p-2} g : = u^{p-2} g.
\end{equation*}
The positive, smooth function $ u $ and the metric $ \tilde{g} $ are as desired.
\end{proof}
\begin{remark}\label{closed:re2}
Note that in Corollary \ref{closed:cor1}, we still restricts the choice of the prescribed scalar curvature function $ S $ by restricting the region $ O $ covers at least one point in $ M $ at which the Weyl tensor does not vanish. However, we have no restriction on the scalar curvature $ R_{g} $.
\end{remark}
\medskip

Corollary \ref{closed:cor1} gives us a sufficient condition of Kazdan-Warner problem. We try to find some necessary conditions. Note that when $ \eta_{1} > 0 $, one requirement of prescribed scalar curvature $ S $ is that $ S > 0 $ somewhere, see \cite{XU3}. Denote
\begin{equation}\label{closed:eqn19}
\begin{split}
\mathcal{B} & : = \lbrace f \in \calC^{\infty}(M) : f > 0 \; \text{somewhere in M}, \\
& -a\Delta_{g} u + R_{g} u = f u^{p-1} \; \text{admits a real, positive, smooth solution $ u $ with $ \eta_{1} > 0 $}. \rbrace
\end{split}
\end{equation}
The set $ \mathcal{B} $ is the collection of functions that are positive somewhere and are prescribed scalar curvatures of some metrics under conformal change. Clearly
\begin{equation*}
\mathcal{A}_{1} \subset \mathcal{B}.
\end{equation*}
Furthermore, we can conclude as follows.
\begin{theorem}\label{closed:thm4}
Let $ (M, g) $ be a closed manifold that is not locally conformally flat, $ n = \dim M \geqslant 3 $. Then the set $ \mathcal{A}_{1} $ is $ \calL^{r} $-dense in the set $ \mathcal{B} $, for all $ r \in [1, \infty) $.
\end{theorem}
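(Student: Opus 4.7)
The plan is to show that every $f\in\mathcal{B}$ can be approximated in $\calL^r(M,g)$ by functions in $\mathcal{A}_{1}$ obtained by modifying $f$ on an arbitrarily small open neighborhood, choosing the neighborhood to lie in a region where $g$ fails to be locally conformally flat. Since the membership $\mathcal{A}_{1}\subset\mathcal{B}$ is already established via Corollary \ref{closed:cor1}, only the density is at issue.

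First, because $(M,g)$ is by hypothesis not locally conformally flat, there exists a point $P\in M$ at which the Weyl tensor $W_{g}$ does not vanish; by continuity of $W_{g}$ one obtains a radius $r_{0}>0$ such that on the geodesic ball $B_{r_{0}}(P)$ the Weyl tensor remains nonzero, so $g$ fails to be locally conformally flat on every sub-ball $B_{\delta}(P)$ with $\delta\in(0,r_{0})$. Choose $\delta$ also smaller than the injectivity radius at $P$, so that $O_{\delta}:=B_{\delta}(P)$ is an open submanifold with smooth boundary $\partial O_{\delta}$.

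Given $f\in\mathcal{B}$, fix any positive constant $\lambda>0$ (for instance $\lambda=1$) and pick a smooth cutoff $\chi_{\delta}\in\calC_{c}^{\infty}(B_{2\delta}(P))$ with $0\leqslant\chi_{\delta}\leqslant 1$ and $\chi_{\delta}\equiv 1$ on $\bar{O}_{\delta}$. Define
\begin{equation*}
S_{\delta}:=\chi_{\delta}\,\lambda+(1-\chi_{\delta})\,f\in\calC^{\infty}(M).
\end{equation*}
Then $S_{\delta}\equiv\lambda$ on $\bar{O}_{\delta}$, $S_{\delta}=f$ on $M\setminus B_{2\delta}(P)$, and $O_{\delta}\subset\bar{O}_{\delta}\subset B_{r_{0}}(P)$ lies in a region where $g$ is not locally conformally flat. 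Consequently $S_{\delta}\in\mathcal{A}_{1}$ in the sense of (\ref{closed:eqnr1}).

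It remains to estimate the $\calL^{r}$ difference. Pointwise
\begin{equation*}
\lvert S_{\delta}-f\rvert=\chi_{\delta}\lvert\lambda-f\rvert\leqslant\lVert\lambda-f\rVert_{\calL^{\infty}(M)}\cdot\mathbf{1}_{B_{2\delta}(P)},
\end{equation*}
and since $f\in\calC^{\infty}(M)$ on the compact manifold $M$ is bounded, we obtain
\begin{equation*}
\lVert S_{\delta}-f\rVert_{\calL^{r}(M,g)}^{r}\leqslant\lVert\lambda-f\rVert_{\calL^{\infty}(M)}^{r}\cdot\mathrm{Vol}_{g}\!\left(B_{2\delta}(P)\right)\;\longrightarrow\;0\quad\text{as }\delta\to 0^{+}.
\end{equation*}
Thus $S_{\delta}\to f$ in $\calL^{r}(M,g)$ for every $r\in[1,\infty)$, establishing that $\mathcal{A}_{1}$ is $\calL^{r}$-dense in $\mathcal{B}$.

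The construction itself is essentially routine, so there is no serious analytic obstacle; the only conceptual point worth emphasizing is that one must place the modification region $O_{\delta}$ inside an open set where the Weyl tensor does not vanish, which is exactly why the global hypothesis that $(M,g)$ is not locally conformally flat enters. If that hypothesis were dropped, one would need either to enlarge $\mathcal{A}_{1}$ to include locally conformally flat regions (which is handled in later sections via Proposition \ref{local:prop5}) or to upgrade the notion of density differently; in the present statement, the global non-flatness guarantees a neighborhood $B_{r_{0}}(P)$ in which all $O_{\delta}$ live.
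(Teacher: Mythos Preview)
Your proof is correct and follows essentially the same approach as the paper: both modify $f$ to a positive constant on a small geodesic ball centered at a point where the Weyl tensor does not vanish, then use the smallness of the modification region to obtain $\calL^{r}$-closeness. The only cosmetic difference is that the paper mollifies a piecewise-defined function while you use a smooth cutoff directly, which is arguably cleaner.
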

\begin{proof}
Pick up any function $ f \in \mathcal{B} $. If $ f $ is a globally positive constant function, we are done as $ f \in \mathcal{A}_{1} $. Assume from now on that $ f $ is not a constant function. Pick up a small enough open geodesic ball $ O \subset M $ with radius $ \delta \ll 1 $ such that the Weyl tensor does not vanish at the center of the ball. The concentric geodesic ball $ O' $ with radius $ \frac{\delta}{2} $ is contained in $ O $. Take any constant $ \lambda > 0 $. Define the function
\begin{equation*}
f^{*} = \begin{cases} \lambda, & \; {\rm in} \; O' \\ f, & \; {\rm outside} \; O' \end{cases}.
\end{equation*}
We then choose $ 0 < \epsilon \ll \delta $ and mollify $ f^{*} $ with the standard mollifier $ \phi_{\epsilon} $, it follows that
\begin{equation*}
S : = f^{*} * \phi_{\epsilon}
\end{equation*}
can be realized as a prescribed scalar curvature function for some conformal change of $ g $, due to Corollary \ref{closed:cor1}. It is immediate that for each $ r \in [1, \infty) $, we can choose $ \delta $ to be small enough such that
\begin{equation*}
\lVert S - f \rVert_{\calL^{r}(M, g)} 
\end{equation*}
can be arbitrarily small, as desired.
\end{proof}
\begin{remark}\label{closed:re3}
In Theorem \ref{closed:thm4}, we obtain the $ \calL^{r} $-closeness between two sets $ \mathcal{A}_{1} $ and $ \mathcal{B} $. We want a result of $ \calC^{0} $-closeness but it is impossible due to the restriction that the open subset $ O $ required in $ \mathcal{A}_{1} $ must cover at least one point at which the Weyl tensor does not vanish. We will see in later section that the restriction of the choice of the open set $ O $ can be released in most cases. Hence we can enlarge the set $ \mathcal{A}_{1} $ and eventually obtain the $ \calC^{0} $-dense in $ \mathcal{B} $, except very few special manifolds.
\end{remark}
\medskip

Analogous to Kazdan and Warner's ``Trichotomy Theorem", we hope that there is no restriction for the choice of the prescribed scalar curvature function. Unfortunately, we require the pointwise conformal change and thus it is impossible. However, for general smooth function $ f \in \calC^{\infty}(M) $, we have the following density result.
\begin{corollary}\label{closed:cor3}
Let $ (M, g) $ be a closed Riemannian manifold that is not locally conformally flat. Let $ \eta_{1} $, the first eigenvalue of $ \Box_{g} $, be positive. For any given function $ S' \in \calC^{\infty}(M) $, the following statements hold:

(i) Let $ O \subset M $ be a connected, open subset of $ M $. Assume that the Weyl tensor does not vanish at least at one point in $ O $. Then there exists a function $ S_{1} \in \mathcal{A}_{1} $ as the prescribed scalar curvature of some metric $ \tilde{g} $ under conformal change such that $ S_{1} = S' $ outside $ O $;

(ii) For any $ \epsilon > 0 $, there exists a function $ S_{2} \in \mathcal{A}_{1} $ as the prescribed scalar curvature function of some metric $ \tilde{g} $ under conformal change such that $ \lVert S_{2} - S' \rVert_{\calL^{r}(M, g)} < \epsilon, \forall 1 \leqslant r < \infty $.
\end{corollary}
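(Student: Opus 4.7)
The plan is to reduce both statements to Corollary \ref{closed:cor1} by constructing a smooth function in $\mathcal{A}_{1}$ that agrees with $S'$ on a prescribed set and then, if needed, controlling the $\calL^{r}$-distance via the Lebesgue measure of the region where the modification takes place. The only tool I need beyond Corollary \ref{closed:cor1} is a smooth cut-and-mollify construction that keeps the function positive and constant on a smaller open set with nonvanishing Weyl tensor at its center.

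For part (i): since $O$ is connected and open and the Weyl tensor does not vanish at some $P \in O$, by continuity it is nonzero on a small geodesic ball $B_{\delta}(P) \subset \subset O$ for some $\delta > 0$. Pick a smaller concentric geodesic ball $O' = B_{\delta/4}(P)$, fix any constant $\lambda > 0$, and define
\begin{equation*}
\tilde{S}(x) = \chi(x) \lambda + \bigl(1 - \chi(x)\bigr) S'(x),
\end{equation*}
where $\chi \in \calC^{\infty}(M)$ is a bump function with $\chi \equiv 1$ on $\overline{B_{\delta/4}(P)}$ and $\chi \equiv 0$ outside $B_{\delta/2}(P)$. By construction $\tilde{S} \equiv \lambda$ on $\overline{O'}$, $\tilde{S} \equiv S'$ outside $B_{\delta/2}(P) \subset O$, and $\tilde{S} \in \calC^{\infty}(M)$. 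Thus $\tilde{S} \in \mathcal{A}_{1}$ with the open submanifold $O'$, whose closure sits inside the region where the Weyl tensor is nonzero. Applying Corollary \ref{closed:cor1} yields a smooth positive $u$ realizing $\tilde{S}$ as the scalar curvature of $u^{p-2}g$; setting $S_{1} = \tilde{S}$ completes part (i), since by construction $S_{1} \equiv S'$ outside $O$.

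For part (ii): since $(M, g)$ is not locally conformally flat, the Weyl tensor is nonvanishing on some nonempty open set $W \subset M$. Pick $P \in W$ and a geodesic ball $B_{\delta}(P) \subset W$ with radius $\delta$ to be chosen. Apply the construction of part (i) with $O = B_{\delta}(P)$ to produce a smooth function $S_{2} \in \mathcal{A}_{1}$ that is the prescribed scalar curvature of a metric conformal to $g$ and satisfies $S_{2} \equiv S'$ outside $B_{\delta}(P)$. Then
\begin{equation*}
\lVert S_{2} - S' \rVert_{\calL^{r}(M, g)}^{r} = \int_{B_{\delta}(P)} \lvert S_{2} - S' \rvert^{r} \dvol \leqslant \bigl( \lvert \lambda \rvert + \lVert S' \rVert_{\calC^{0}(M)} \bigr)^{r} \cdot {\rm Vol}_{g}\bigl(B_{\delta}(P)\bigr),
\end{equation*}
and the right-hand side can be made smaller than $\epsilon^{r}$ by taking $\delta$ sufficiently small. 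This gives the required $S_{2}$ for any $r \in [1, \infty)$.

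I do not anticipate a major obstacle here: the main input, the solvability in Corollary \ref{closed:cor1}, already absorbs all the analytical difficulty, and the only subtlety is making sure that the modifying region $O'$ used to turn $S'$ into a function of $\mathcal{A}_{1}$ can be chosen to lie inside a region where the Weyl tensor does not vanish (automatic in (i) by hypothesis, and automatic in (ii) because $M$ is not locally conformally flat). The remaining choice of the constant $\lambda > 0$ is free and the cut-off and volume estimates are standard.
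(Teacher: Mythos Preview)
Your proposal is correct and follows essentially the same approach as the paper: modify $S'$ on a small region (centered at a point where the Weyl tensor does not vanish) so that it becomes a positive constant there, verify membership in $\mathcal{A}_{1}$, and invoke the existence result to realize it as a scalar curvature. The only cosmetic differences are that you use a smooth cutoff while the paper uses a piecewise definition followed by mollification, and you cite Corollary \ref{closed:cor1} directly whereas the paper routes through Theorem \ref{closed:thm2} after first invoking Proposition \ref{closed:thm3}; your route is in fact slightly cleaner.
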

\begin{proof} It is trivial if $ S' $ is a globally positive constant function. Assume not.

For (i), we may assume, without loss of generality, that $ R_{g} < 0 $ on some subset of $ O $, due to Proposition \ref{closed:thm3}. We pick a proper subset $ O' \subset O $ with $ \text{dist}(\partial O', \partial O) > 4\gamma $ and $ \text{diam}(O') \ll 4\gamma $. Denote
\begin{equation*}
S_{1}' = \begin{cases} \lambda, & {\rm in} O' \\ S', & {\rm in} \; M \backslash \bar{O} \end{cases}
\end{equation*}
Mollifying $ S_{1}' $ by $ \phi_{\delta} $ with $ \delta \ll \gamma $, it follows that
\begin{equation*}
S_{1} = S_{1}' * \phi_{\delta}
\end{equation*}
satisfies $ S_{1} \in \mathcal{A}_{1} $. By Theorem \ref{closed:thm2}, $ S_{1} $ is a prescribed scalar curvature with some metric $ \tilde{g} $ under conformal change. Furthermore, $ S_{1} = S' $ outside $ O $.

For (ii), choose any $ \epsilon > 0 $ and $ r \in [1, \infty) $. We pick up a positive constant $ c > 0 $. We can choose a small enough connected, open subset $ O \subset M $ in which the Weyl tensor does not vanish identically and define
\begin{equation*}
S_{2}' = \begin{cases} c, & {\rm in} \; O \\ S', & {\rm in} \; M \backslash O \end{cases}
\end{equation*}
The function
\begin{equation*}
S_{2} = S_{2}' * \phi_{\delta} \in \calC^{\infty}(M)
\end{equation*}
with small enough $ \delta $ satisfies
\begin{equation*}
S_{2} \in \mathcal{A}_{1}, \lVert S_{2} - S' \rVert_{\calL^{r}(M, g)} < \epsilon.
\end{equation*}
As desired.
\end{proof}
\begin{remark}\label{closed:re4}
The result in Corollary \ref{closed:cor3} indicates that every function is ``almost" a prescribed scalar curvature function under pointwise conformal change, in the sense of $ \calL^{r} $-closeness for all $ r \in [1, \infty) $, since we can take $ \epsilon $ to be arbitrarily small by shrinking $ O $ if necessary.
\end{remark}
\medskip

\section{The Global Analysis on Compact Manifolds with Smooth Boundary, Not Locally Conformally Flat Case}
In this section, we discuss the prescribed scalar curvature problem on compact manifolds $ (\bar{M}, g) $ with non-empty smooth boundary $ \partial M $. Precisely speaking, for a given smooth function $ S \in \calC^{\infty}(\bar{M}) $, we look for some metric $ \tilde{g} = u^{p-2} g $ with positive, smooth function $ u $ on $ \bar{M} $ such that $ S $ is the scalar curvature of $ \tilde{g} $; in addition, the boundary is minimal with respect to $ \tilde{g} $. Equivalently, we seek for a positive solution $ u \in \calC^{\infty}(\bar{M}) $ such that
\begin{equation}\label{compact:eqn1}
\Box_{g}u : = -a\Delta_{g} u + R_{g} u = Su^{p-1} \; {\rm in} \; M, B_{g} u : = \frac{\partial u}{\partial \nu} + \frac{2}{p-2} h_{g} u = 0 \; {\rm on} \; \partial M.
\end{equation} 
Here $ R_{g} $ and $ h_{g} $ are scalar and mean curvature with respect to $ g $, respectively. We assume that $ \eta_{1}' $, the first eigenvalue of conformal Laplacian $ \Box_{g} $ with homogeneous Robin condition $ B_{g} = 0 $, is positive. The case when the first eigenvalue is negative is fairly easy, see e.g. \cite{CHS}. The local result in Proposition \ref{local:prop4} plays a central role here. Throughout this section, we denote $ (\bar{M}, g) $ be a compact Riemannian manifold with $ \dim(\bar{M}) \geqslant 3 $ and non-empty smooth boundary $ \partial M $. We denote $ M $ to be the interior of $ \bar{M} $. In this section, we also require that the Weyl tensor does not vanish identically in the interior $ M $ with respect to $ g $.
\medskip

Analogous to the set $ \mathcal{A}_{1} $ for closed manifolds, we define the following set
\begin{equation}\label{compact:eqn2}
\begin{split}
& \mathcal{A}_{1}' : = \lbrace S \in \calC^{\infty}(\bar{M}) : S \equiv c > 0 \; {\rm on} \; \bar{O}, \text{$ c > 0 $ is an arbitrary constant, } \\
& \qquad \text{ $ O \subset \bar{O} \subset M $ is an arbitrary open interior submanifold with smooth $ \partial O $ such that} \\
& \qquad \text{the metric $ g $ is not locally conformally flat within $ O $.} \rbrace.
\end{split}
\end{equation}
We show that (\ref{compact:eqn1}) admits a smooth, positive solution when $ S \in \mathcal{A}_{1}' $, provided that $ (\bar{M}, g) $ is not locally conformally flat.

We introduce a monotone iteration scheme with boundary conditions, similar to the method on closed manifolds. We need to restrict $ h_{g} > 0 $ everywhere on $ \partial M $.
\begin{theorem}\label{compact:thm1}\cite[Thm.~4.1]{XU5}
Let $ (\bar{M}, g) $ be a compact manifold with smooth boundary $ \partial M $. Let $ \nu $ be the unit outward normal vector along $ \partial M $ and $ q > \dim \bar{M} $. Let $ h_{g} > 0 $ everywhere on $ \partial M $. Let $ S \in \calC^{\infty}(\bar{M}) $ be a given function. Suppose that there exist $ u_{-} \in \calC^{0}(\bar{M}) \cap H^{1}(M, g) $ and $ u_{+} \in W^{2, q}(M, g) \cap \calC^{0}(\bar{M}) $, $ 0 \leqslant u_{-} \leqslant u_{+} $, $ u_{-} \not\equiv 0 $ on $ \bar{M} $ such that
\begin{equation}\label{compact:eqn3}
\begin{split}
-a\Delta_{g} u_{-} + R_{g} u_{-} - S u_{-}^{p-1} & \leqslant 0 \; {\rm in} \; M, \frac{\partial u_{-}}{\partial \nu} + \frac{2}{p-2} h_{g} u_{-} \leqslant 0 \; {\rm on} \; \partial M \\
-a\Delta_{g} u_{+} + R_{g} u_{+} - S u_{+}^{p-1} & \geqslant 0 \; {\rm in} \; M, \frac{\partial u_{+}}{\partial \nu} + \frac{2}{p-2} h_{g} u_{+} \geqslant 0 \; {\rm on} \; \partial M
\end{split}
\end{equation}
holds weakly. Then there exists a real, positive solution $ u \in \calC^{\infty}(M) \cap \calC^{1, \alpha}(\bar{M}) $ of
\begin{equation}\label{compact:eqn4}
\Box_{g} u = -a\Delta_{g} u + R_{g} u = S u^{p-1}  \; {\rm in} \; M, B_{g} u =  \frac{\partial u}{\partial \nu} + \frac{2}{p-2} h_{g} u = 0 \; {\rm on} \; \partial M.
\end{equation}
\end{theorem}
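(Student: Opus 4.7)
The plan is to set up a monotone iteration scheme that linearizes the nonlinearity by freezing the right-hand side at the previous iterate, together with a large shift to make the linear boundary value problem invertible and to enforce monotonicity via a maximum principle. First I would fix a constant $K > 0 $ large enough so that both the function $ t \mapsto Kt + St^{p-1} $ is nondecreasing on $ [\inf u_-, \sup u_+] $ and the bilinear form
\begin{equation*}
\mathcal{B}(\varphi, \psi) = \int_M \bigl( a \langle \nabla_g \varphi, \nabla_g \psi \rangle_g + (R_g + K) \varphi \psi \bigr)\dvol + \int_{\partial M} \frac{2a}{p-2} h_g \varphi \psi \, dS
\end{equation*}
is coercive on $H^1(M,g)$; since $h_g > 0 $ on $\partial M$, coercivity holds for $K$ sufficiently large regardless of the sign of $R_g$. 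Set $u_0 = u_+$ and inductively define $u_k \in W^{2,q}(M,g)$, $ q > n $, as the unique weak solution of the linear Robin problem
\begin{equation*}
-a\Delta_g u_k + (R_g + K) u_k = K u_{k-1} + S u_{k-1}^{p-1} \; {\rm in} \; M, \quad \frac{\partial u_k}{\partial \nu} + \frac{2}{p-2} h_g u_k = 0 \; {\rm on} \; \partial M,
\end{equation*}
whose solvability follows from Lax-Milgram applied to $\mathcal{B}$ together with standard $W^{2,q}$ elliptic regularity up to the boundary for the Robin condition.

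Next I would establish the two-sided monotonicity $ u_- \leqslant u_k \leqslant u_{k-1} \leqslant u_+ $ by induction. The super-solution inequality (\ref{compact:eqn3}) for $u_+$ rewrites as
\begin{equation*}
-a\Delta_g u_+ + (R_g + K) u_+ \geqslant K u_+ + S u_+^{p-1}, \quad B_g u_+ \geqslant 0,
\end{equation*}
so the difference $w = u_+ - u_1$ satisfies $-a\Delta_g w + (R_g+K) w \geqslant 0 $ in $ M $ and $ B_g w \geqslant 0 $ on $ \partial M $; by the weak maximum principle for the operator $-a\Delta_g + (R_g+K)$ with Robin boundary condition (valid precisely because $K$ was chosen to make $\mathcal{B}$ coercive and because $h_g > 0 $), we conclude $w \geqslant 0$, i.e.\ $u_1 \leqslant u_+$. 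The monotonicity of $t \mapsto Kt + St^{p-1}$ on the relevant interval, together with the same maximum principle, propagates the comparisons $u_k \leqslant u_{k-1}$ and $u_k \geqslant u_-$ at every step. In particular the sequence is uniformly bounded in $\calL^\infty(\bar M)$, whence standard $W^{2,q}$ elliptic regularity gives uniform $W^{2,q}$ bounds and thus $\calC^{1,\alpha}(\bar M)$ bounds by Sobolev embedding for $ q > n $.

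Passing to the limit, $u_k \downarrow u$ pointwise, and the uniform $W^{2,q}$ bound lets me extract a subsequence converging weakly in $W^{2,q}$ and strongly in $\calC^{1,\alpha}(\bar M)$ to some $u \in W^{2,q}(M,g) \cap \calC^{1,\alpha}(\bar M)$; the limit $u$ satisfies (\ref{compact:eqn4}) in the strong sense. A bootstrap using $S \in \calC^\infty$ and interior elliptic regularity upgrades $u$ to $\calC^\infty(M)$. Since $u \geqslant u_- \geqslant 0 $ and $ u_- \not\equiv 0 $, the function $u$ is nonnegative and not identically zero; then the Hopf boundary-point lemma applied to $-a\Delta_g u + (R_g + K - Su^{p-2})u = Ku \geqslant 0 $ together with the Robin condition (using $h_g > 0$) forces $u > 0 $ throughout $\bar M$, ruling out boundary zeros.

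The main technical obstacle is the maximum principle and Hopf lemma in the Robin setting: coercivity and the sign of $h_g$ must combine correctly so that no weak solution of $\Box_g w + Kw = 0 $ with $B_g w \geqslant 0 $ can become negative, and so that a nonnegative solution cannot vanish at a boundary point without violating Hopf. The choice of $K$ and the hypothesis $h_g > 0 $ are precisely what is needed to make this go through; once these are secured, the remainder of the argument is the same iteration-plus-regularity machinery used in the closed case.
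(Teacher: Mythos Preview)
Your proposal is correct and follows essentially the same approach as the paper: the paper's proof simply refers to \cite[Thm.~4.1]{XU5} with $\zeta = 0$ and $\lambda$ replaced by $S$, and the method there is precisely the shifted monotone iteration with Robin boundary conditions that you describe. The choice of a large constant $K$ to force both monotonicity of $t \mapsto Kt + St^{p-1}$ and coercivity of the associated bilinear form, the iteration starting from $u_0 = u_+$, the weak maximum principle step using $h_g > 0$, and the $W^{2,q}$ bootstrap to $\calC^{1,\alpha}(\bar M) \cap \calC^\infty(M)$ are all the standard ingredients the paper is invoking by citation.
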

\begin{proof} The proof is almost the same as in Theorem 4.1 of \cite{XU5}, the only difference is set $ \zeta = 0 $ and replace $ \lambda $ by $ S $ in \cite[Thm.~4.1]{XU5}. We omit the details here. We would like to mention that at here and all arguments below, $ \frac{\partial u_{-}}{\partial \nu} $ is well-defined on $ \partial M $.
\end{proof}
\medskip

Mimicking the results in Lemma \ref{closed:lemma1} and \ref{closed:lemma2}, we construct the sub-solution and super-solution of (\ref{compact:eqn1}). Given the function $ S \in \mathcal{A}_{1}' $ where $ S = \lambda > 0 $ on $ O \subset M \subset \bar{M} $. We impose the restrictions that $ h_{g} > 0 $ everywhere on $ \partial M $ and $ R_{g} < 0 $ somewhere in $ O $ first.
\begin{lemma}\label{compact:lemma1}
Let $ (\bar{M}, g) $ be a compact manifold, not locally conformally flat, with smooth boundary $ \partial M $. Let $ \nu $ be the outward unit normal vector along $ \partial M $. Let $ S \in \mathcal{A}_{1}' $ be a smooth function such that $ S = \lambda > 0 $ on some inner, open subset $ O \subset M \subset \bar{M} $ for some constant $ \lambda $. Assume further that $ h_{g} > 0 $ everywhere on $ \partial M $ and $ R_{g} < 0 $ somewhere in $ O $. Then there exists a function $ u_{-} \in H^{1}(M, g) \cap \calC^{0}(\bar{M}) $ such that
\begin{equation}\label{compact:eqn5}
 -a\Delta_{g} u_{-} + R_{g} u_{-} \leqslant Su_{-}^{p-1} \; {\rm in} \; M,  \frac{\partial u_{-}}{\partial \nu} + \frac{2}{p-2} h_{g} u_{-} \leqslant 0 \; {\rm on} \; \partial M
 \end{equation}
 in the weak sense. In addition, $ u_{-} \geqslant 0 $ and $ u_{-} \not\equiv 0 $.
\end{lemma}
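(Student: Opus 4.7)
The plan is to follow the strategy of Lemma \ref{closed:lemma1} for closed manifolds, exploiting the fact that $O$ is an \emph{interior} open submanifold ($\bar{O} \subset M$ at positive distance from $\partial M$). The entire sub-solution will be built inside the interior, so the Robin boundary condition on $\partial M$ will be automatic and the only real work is reproducing the closed-manifold extension-by-zero argument.

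First I would locate a subdomain on which Proposition \ref{local:prop4} applies. By continuity of $R_{g}$ and the hypothesis that $R_{g} < 0$ somewhere in $O$, I would choose a small geodesic ball $\Omega \subset O$ with $R_{g} < 0$ on $\bar{\Omega}$, with $\mathrm{Vol}_{g}(\Omega)$ and $\mathrm{diam}(\Omega)$ sufficiently small, and (shrinking further and recentering within $O$ if necessary) arranged so that $(\Omega, g)$ is not locally conformally flat and $\lambda_{1} \to \infty$ as $\Omega$ shrinks in the sense of Remark \ref{local:re3}. By Proposition \ref{local:prop4} applied with the given positive constant $\lambda$, the Dirichlet problem
\begin{equation*}
-a\Delta_{g} u_{1} + R_{g} u_{1} = \lambda u_{1}^{p-1} \; \mathrm{in} \; \Omega, \qquad u_{1} \equiv 0 \; \mathrm{on} \; \partial \Omega
\end{equation*}
admits a positive solution $u_{1} \in \calC^{\infty}(\Omega) \cap H_{0}^{1}(\Omega, g) \cap \calC^{0}(\bar{\Omega})$. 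Define $u_{-}$ to equal $u_{1}$ on $\Omega$ and zero on $\bar{M} \setminus \Omega$; since $u_{1} \in H_{0}^{1}(\Omega, g)$, this zero-extension lies in $H^{1}(M, g) \cap \calC^{0}(\bar{M})$, is nonnegative, and is not identically zero.

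Next I would verify the weak interior inequality. For any test function $\phi \in \calC^{\infty}(\bar{M})$ with $\phi \geq 0$, integration by parts on $\Omega$ (using the $\calC^{1,\alpha}$ boundary regularity of $u_{1}$ from Proposition \ref{local:prop4}) yields
\begin{equation*}
\int_{M} \left( a \langle \nabla_{g} u_{-}, \nabla_{g} \phi \rangle_{g} + R_{g} u_{-} \phi \right) \dvol = \int_{\Omega} \lambda u_{1}^{p-1} \phi \dvol + \int_{\partial \Omega} a \phi \frac{\partial u_{1}}{\partial \nu} \, dS_{g},
\end{equation*}
where $\nu$ is the outward unit normal to $\partial \Omega$. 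Since $u_{1} > 0$ in $\Omega$ and $u_{1} \equiv 0$ on $\partial \Omega$, we have $\partial u_{1}/\partial \nu \leq 0$ on $\partial \Omega$, so the boundary integral is nonpositive. Because $S u_{-}^{p-1} = \lambda u_{1}^{p-1}$ in $\Omega$ and vanishes elsewhere, the first inequality in (\ref{compact:eqn5}) is thus established in the weak sense, exactly as in the closed case of Lemma \ref{closed:lemma1}.

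Finally, the Robin boundary inequality on $\partial M$ holds trivially: because $\bar{O}$ is an interior subset, $u_{-}$ vanishes on an open neighborhood of $\partial M$, whence both $u_{-}$ and $\partial u_{-}/\partial \nu$ are identically zero on $\partial M$, and the second inequality in (\ref{compact:eqn5}) holds as equality. The hypotheses $h_{g} > 0$ on $\partial M$ and the positivity of $\eta_{1}'$ are not needed for this sub-solution construction; they will be used later for the super-solution and the monotone iteration. The main (mild) technical step is the integration-by-parts argument yielding the interior weak inequality; the interiority of $O$ removes any difficulty at $\partial M$, so no genuinely new obstacle arises beyond what has already been handled in the closed case.
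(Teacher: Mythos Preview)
Your proposal is correct and follows essentially the same approach as the paper: both construct the local solution on a small $\Omega \subset O$ via Proposition \ref{local:prop4}, extend by zero, invoke the closed-manifold weak-inequality argument (your integration-by-parts with $\partial u_{1}/\partial \nu \leq 0$ is exactly what the paper defers to \cite[Thm.~4.3]{XU3}), and note that the Robin condition is trivial since $u_{-}$ vanishes near $\partial M$. Your remark that $h_{g} > 0$ is not actually used here is also correct.
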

\begin{proof} Without loss of generality, we may assume that $ O \subset M $ is connected since otherwise we can work on some open subset of any connected component of $ O $ such that $ R_{g} < 0 $ somewhere within it. We choose a small enough open, proper, connected subset $ \Omega \subset O \subset M \subset \bar{M} $ in which the hypothesis of Proposition \ref{local:prop4} are satisfied. In particular, $ R_{g} < 0 $ everywhere on $ \Omega $. Furthermore, $ S = \lambda > 0 $ in $ \Omega $. It follows from the same argument as in Lemma \ref{closed:lemma1} that a smooth, positive function $ u_{2} $ solves
\begin{equation}\label{compact:eqn6}
-a\Delta_{g} u_{2} + R_{g} u_{2} = \lambda u_{2}^{p-1} \; {\rm in} \;O, u_{2} \equiv 0 \; {\rm on} \; \partial O.
\end{equation}
It follows that a function $ u_{-} \in H^{1}(M, g) \cap \calC^{0}(\bar{M}) $ with the expression
\begin{equation}\label{compact:eqn7}
u_{-} : = \begin{cases} u_{2} & \; {\rm in} \; \Omega \\ 0 & \; {\rm in} \bar{M} \backslash \bar{O} \end{cases}
\end{equation}
satisfies
\begin{equation*}
-a\Delta_{g} u_{-} + R_{g} u_{-} \leqslant Su_{-}^{p-1} \; {\rm in} \; M
\end{equation*}
weakly. Since $ u_{-} \equiv 0 $ on $ \partial M $, the boundary condition holds trivially. Lastly we see that $ u_{-} \geqslant 0 $ with $ u_{-} \not\equiv 0 $.
\end{proof}
\medskip

The construction of super-solution is quite similar to Lemma \ref{closed:lemma2} since the only hard part in analysis is within the same set $ \Omega $ as in Lemma \ref{compact:lemma1}. Analogous to the previous section for closed manifolds, we work on the interesting case $ \eta_{1}' > 0 $. Recall that $ \eta_{1}' $ is the first eigenvalue of conformal Laplacian with homogeneous Robin condition. It is known from \cite{ESC} that the first eigenvalue $ \eta_{1}' $ admits a positive, smooth eigenfunction.
\begin{lemma}\label{compact:lemma2}
Let $ (\bar{M}, g) $ be a compact manifold with smooth boundary $ \partial M $. Let $ \nu $ be the outward unit normal vector along $ \partial M $. Suppose $ \eta_{1}' > 0 $ be the first eigenvalue of $ \Box_{g} $ with respect to $ B_{g} \varphi = 0 $, where $ \varphi > 0 $ is the associated eigenfunction. Let $ S \in \mathcal{A}_{1}' $ be a  non-constant smooth function such that $ S = \lambda > 0 $ on some inner, open subset $ O \subset M \subset \bar{M} $ for some constant $ \lambda $. Assume that the metric $ g $ is not locally conformally flat within the region $ \lbrace x \in O | R_{g}(x) < 0 \rbrace $. Assume further that $ h_{g} > 0 $ everywhere on $ \partial M $ and $ R_{g} < 0 $ somewhere in $ O $. Then there exists a function $ u_{+} \in \calC^{\infty}(\bar{M}) $ such that
\begin{equation}\label{compact:eqn8}
 -a\Delta_{g} u_{+} + R_{g} u_{+} \geqslant Su_{+}^{p-1} \; {\rm in} \; M,  \frac{\partial u_{+}}{\partial \nu} + \frac{2}{p-2} h_{g} u_{+} \geqslant 0 \; {\rm on} \; \partial M
 \end{equation}
 in the usual sense. In addition, $ u_{+} \geqslant u_{-} \geqslant 0 $ pointwise.
\end{lemma}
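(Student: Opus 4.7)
The plan is to mirror the closed case of Lemma \ref{closed:lemma2} almost verbatim, exploiting the crucial fact that $O \subset M$ is an \emph{interior} open subset, so the entire gluing construction takes place in the interior of $\bar{M}$, well away from $\partial M$. Near the boundary the super-solution will simply be a small multiple of the first eigenfunction of $\Box_g$ with Robin condition, which automatically satisfies both a pointwise PDE inequality and the Robin condition with equality.

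First I would choose $\Omega \subset O \subset M$ to be a small connected open subset in which $R_g < 0$ and the metric is not locally conformally flat, so that Proposition \ref{local:prop4} applies. This yields a smooth positive function $u_2$ on $\Omega$ with $u_2 \equiv 0$ on $\partial \Omega$ solving the local Yamabe Dirichlet problem $-a\Delta_g u_2 + R_g u_2 = \lambda u_2^{p-1}$. Next I would take the positive eigenfunction $\varphi \in \calC^\infty(\bar{M})$ with $\Box_g \varphi = \eta_1' \varphi$ and $B_g \varphi = 0$, and rescale: set $\phi = \theta \varphi$ with $\theta > 0$ so small that
\begin{equation*}
\eta_1' \min_{\bar M} \phi \; > \; 2^{p-2} \, \max_{\bar M} S \, \max_{\bar M} \phi^{p-1}.
\end{equation*}
This gives $-a\Delta_g \phi + R_g \phi = \eta_1' \phi > 2^{p-2} S \phi^{p-1} \geqslant S \phi^{p-1}$ pointwise on $\bar{M}$, and in particular on $\Omega$ it dominates $\lambda \phi^{p-1}$ with strict margin $\beta = \max_\Omega(\eta_1' \phi - 2^{p-2}\lambda \phi^{p-1}) > 0$.

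I would then execute the same gluing procedure as in Lemma \ref{closed:lemma2}: pick $\gamma \ll 1$ satisfying the dimensional inequalities analogous to \eqref{closed:eqn10}, partition $\Omega$ into the regions $\Omega_1, \Omega_2, \Omega_3$ according to where $u_2 > \phi$, $u_2 < \phi$, and $|u_2 - \phi| < \gamma$, construct the transition functions $v_1, v_2$ via the linear PDEs \eqref{closed:eqn12}--\eqref{closed:eqn13}, mollify and truncate them to obtain smooth cut-offs $\chi_1, \chi_2, \chi_3$ with $\chi_1 + \chi_2 + \chi_3 \equiv 1$ on $\Omega$, and set
\begin{equation*}
\bar{u} = \chi_1 u_2 + \chi_2 \phi + \chi_3 (\phi + \gamma) \in \calC^\infty(\Omega).
\end{equation*}
The case analysis of \cite{XU3} then shows $-a\Delta_g \bar u + R_g \bar u \geqslant S \bar u^{p-1}$ pointwise on $\Omega$, with $\bar u = \phi + \gamma$ in a neighborhood of $\partial\Omega$. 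Define
\begin{equation*}
u_+ = \begin{cases} \bar u & \text{in } \Omega, \\ \phi + \gamma \chi & \text{in } \bar M \setminus \Omega, \end{cases}
\end{equation*}
where $\chi$ is a smooth cut-off equal to $1$ in a collar of $\partial\Omega$ inside $M$ and vanishing near $\partial M$; more simply, one can take $u_+ = \bar u$ on $\Omega$ and $u_+ = \phi$ on $\bar M \setminus \Omega$ after arranging the gluing so that $\bar u = \phi$ (rather than $\phi + \gamma$) in a neighborhood of $\partial\Omega$, which is the variant used in \eqref{closed:eqn17}. Either way, because $\Omega$ is compactly contained in the interior $M$, near $\partial M$ one has $u_+ = \phi$, whence $B_g u_+ = \theta B_g \varphi = 0$, so the Robin inequality is satisfied with equality.

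The main obstacle will be the bookkeeping in the gluing step rather than any genuinely new analytic difficulty: one must verify, just as in the proof of Theorem 4.3 of \cite{XU3}, that on each of the five sets $\Omega_1 \setminus \bar\Omega_3$, $\Omega_2 \setminus \bar\Omega_3$, $\Omega_3 \setminus \overline{\Omega_1 \cup \Omega_2}$, $\Omega_1 \cap \Omega_3$, and $\Omega_2 \cap \Omega_3$, the differential inequality survives the convex combination, using the margin $\beta$ to absorb the error terms arising from derivatives of the cut-offs via the choice of $\gamma$. The boundary condition, by contrast, requires no new work: the construction is entirely interior, $u_+$ coincides with the Robin eigenfunction $\phi$ on $\bar M \setminus \Omega$, and $B_g \phi = 0$ is an equality. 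Finally the inequality $u_+ \geqslant u_-$ is immediate: outside $\Omega$ we have $u_+ = \phi > 0 = u_-$, and inside $\Omega$ the construction of $\bar u$ satisfies $\bar u \geqslant u_2 = u_-$ by the same elementary bound used in the closed case.
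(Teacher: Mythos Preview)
Your proposal is correct and mirrors the paper's proof essentially verbatim: glue the local solution $u_2$ with a small multiple $\phi$ of the Robin eigenfunction inside $\Omega$ via the cut-offs of Lemma~\ref{closed:lemma2}, then extend by $\phi$ to $\bar M \setminus \Omega$, so that $B_g u_+ = B_g \phi = 0$ on $\partial M$. One clarification on your hesitation: near $\partial\Omega$ one automatically has $\bar u = \phi$ (not $\phi + \gamma$), since $u_2 \to 0$ forces $u_2 < \phi - \gamma$ there, placing a collar of $\partial\Omega$ in $\Omega_2 \setminus \bar\Omega_3$ where $\chi_2 \equiv 1$; no separate arrangement or extra cut-off $\chi$ is needed.
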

\begin{proof}
Without loss of generality, we may assume that $ O $ is connected since otherwise we can work on some open subset of any connected component of $ O $ such that $ R_{g} < 0 $ somewhere within it. It is known from \cite{ESC} that there exists a positive function $ \varphi \in \calC^{\infty}(\bar{M}) $ such that
\begin{equation}\label{compact:eqn9}
-a\Delta_{g} \varphi + R_{g} \varphi = \eta_{1}' \varphi \; {\rm in} \; M, \frac{\partial \varphi}{\partial \nu} + \frac{2}{p-2} h_{g} \varphi = 0 \; {\rm on} \; \partial M.
\end{equation}
Since any positive scale of $ \varphi $ solves (\ref{compact:eqn9}), we apply the same argument as in Lemma \ref{closed:lemma2}, and conclude that there exists a function $ \phi : = \delta \varphi $ for some small enough $ \delta > 0 $ such that
\begin{equation}\label{compact:eqn10}
-a\Delta_{g} \phi + R_{g} \phi \geqslant 2^{p-2} \max_{\bar{M}} S \phi^{p-1} \geqslant S \phi^{p-1} \; {\rm in} \; M, \frac{\partial \phi}{\partial \nu} + \frac{2}{p-2} h_{g} \phi \geqslant 0 \; {\rm on} \; \partial M.
\end{equation}
In particular we choose an open set $ \Omega \subset O $ which satisfies the requirements in Lemma \ref{compact:lemma1}; in addition, within $ \Omega $ the metric is not locally conformally flat. Within $ \Omega $, (\ref{compact:eqn10}) is of the expression
\begin{equation}\label{compact:eqn11}
-a\Delta_{g} \phi + R_{g} \phi \geqslant 2^{p-2} \lambda \phi^{p-1} > \lambda \phi^{p-1} \; {\rm in} \; \Omega \; \text{pointwise}.
\end{equation}
With the same choices of constants, sets, and partitions of unity in Lemma \ref{closed:lemma2} and \cite[Thm.~4.3]{XU3}, we conclude that there exists a function $ f \in \calC^{\infty}(\Omega) $ such that
\begin{equation}\label{compact:eqn12}
-a\Delta_{g} f + R_{g} f \geqslant \lambda f^{p-1} \; {\rm in} \; \Omega, f \geqslant u_{2} > 0 \; {\rm in} \; \Omega, f \equiv \phi \; {\rm on} \; \partial \Omega.
\end{equation}
Here we apply the local results in (\ref{compact:eqn6}) as well as the inequality (\ref{compact:eqn11}). Note that $ f \equiv \phi $ near $ \partial \Omega $. Hence we define
\begin{equation}\label{compact:eqn13}
u_{+} : = \begin{cases} f, & {\rm in} \; \Omega \\ \phi, & {\rm in} \; \bar{M} \backslash \Omega \end{cases}.
\end{equation}
Immediately $ u_{+} \in \calC^{\infty}(\bar{M}) $. Since $ u_{+} = \phi $ on $ \partial M $, it follows that
\begin{equation*}
-a\Delta_{g} u_{+} + R_{g} u_{+} \geqslant S u_{+}^{p-1} \; {\rm in} \; M, \frac{\partial u_{+}}{\partial \nu} + \frac{2}{p-2} h_{g} u_{+} \geqslant 0 \; {\rm on} \; \partial M \; {\rm pointwise}.
\end{equation*}
In addition, we conclude from (\ref{compact:eqn12}), (\ref{compact:eqn13}) and the construction of $ u_{-} $ in (\ref{compact:eqn7}) that
\begin{equation*}
u_{+} \geqslant u_{-} \geqslant 0 \: {\rm on} \; \bar{M}.
\end{equation*}
\end{proof}
\medskip

We can now prove the first result with respect to prescribed scalar curvature on $ (\bar{M}, g) $ with positive Yamabe invariant, under the restrictions of Lemma \ref{compact:lemma1} and \ref{compact:lemma2}.
\begin{theorem}\label{compact:thm2}
Let $ (\bar{M}, g) $ be a compact manifold, not locally conformally flat, with smooth boundary $ \partial M $. Let $ \nu $ be the outward unit normal vector along $ \partial M $. Suppose $ \eta_{1}' > 0 $ be the first eigenvalue of $ \Box_{g} $ with respect to homogeneous Robin condition in (\ref{compact:eqn1}). Let $ S \in \mathcal{A}_{1}' $ be a smooth function such that $ S = \lambda > 0 $ on some inner, open subset $ O \subset M \subset \bar{M} $ for some constant $ \lambda $. Assume further that $ h_{g} > 0 $ everywhere on $ \partial M $ and $ R_{g} < 0 $ somewhere in $ O $. Assume further that $ h_{g} > 0 $ everywhere on $ \partial M $ and $ R_{g} < 0 $ somewhere in $ O $. Then there exists a conformal change $ \tilde{g} = u^{p-2} g $ such that the metric $ \tilde{g} $ admits the prescribed scalar curvature $ S $, where the smooth function $ u > 0 $ on $ \bar{M} $ satisfies
\begin{equation}\label{compact:eqn14}
-a\Delta_{g} u + R_{g} u = S u^{p-1} \; {\rm in} \; M, \frac{\partial u}{\partial \nu} + \frac{2}{p-2} h_{g} u = 0 \; {\rm on} \; \partial M.
\end{equation}
\end{theorem}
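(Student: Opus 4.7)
The plan is to mimic the proof of Theorem \ref{closed:thm2} in the closed case, now using the boundary-sensitive monotone iteration of Theorem \ref{compact:thm1} together with the two constructions in Lemma \ref{compact:lemma1} and Lemma \ref{compact:lemma2}. First I would separate off the trivial case in which $S \equiv c > 0$ is a globally constant function: by the resolution of the Escobar problem (the boundary analogue of the Yamabe problem used in \cite{XU5}) one can find a conformal change so that the new metric has constant scalar curvature $c$ and minimal boundary, and then $u \equiv 1$ solves (\ref{compact:eqn14}) trivially. From now on assume $S$ is non-constant.

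Next I would, without loss of generality, replace $O$ by a smaller connected open subset of $O$ on which simultaneously $S \equiv \lambda$, $R_g < 0$, and the Weyl tensor does not vanish — the existence of such a subset follows because $S \equiv \lambda$ on all of $O$, $\{R_g < 0\} \cap O$ is open and nonempty by hypothesis, and inside that open set, since the ambient manifold is not locally conformally flat and $\mathcal{A}_1'$ forces $g$ to be not locally conformally flat in $O$, the set of points where the Weyl tensor is nonzero is open and dense in at least some small ball (otherwise $g$ would be LCF on an open piece, contradicting the $\mathcal{A}_1'$ assumption). On this shrunken $O$, the hypotheses of both Lemma \ref{compact:lemma1} and Lemma \ref{compact:lemma2} are met, yielding $u_- \in H^1(M, g) \cap \calC^0(\bar{M})$ and $u_+ \in \calC^\infty(\bar{M})$ with $0 \leqslant u_- \leqslant u_+$, $u_- \not\equiv 0$, satisfying the sub- and super-solution inequalities (\ref{compact:eqn5}) and (\ref{compact:eqn8}) with respect to the operator $\Box_g$ and the Robin operator $B_g$.

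Having the ordered pair $(u_-, u_+)$, I would then invoke the monotone iteration scheme Theorem \ref{compact:thm1} with $S_1 = R_g$, $S_2 = S$ to produce a solution $u \in \calC^\infty(M) \cap \calC^{1, \alpha}(\bar{M})$ of (\ref{compact:eqn14}) with $u_- \leqslant u \leqslant u_+$. Since $u_- \not\equiv 0$, also $u \not\equiv 0$. To upgrade $u \geqslant 0$ to $u > 0$, I would apply the strong maximum principle in the interior for the linear operator $-a\Delta_g u + (R_g - Su^{p-2})u = 0$ (using $u \in L^\infty$), and the Hopf lemma at the boundary in conjunction with the Robin condition $\partial_\nu u = -\tfrac{2}{p-2}h_g u$, where $h_g > 0$ prevents $u$ from vanishing at a boundary maximum of $-u$. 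This gives $u > 0$ throughout $\bar{M}$, and standard elliptic regularity (Schauder theory with Robin data) yields $u \in \calC^\infty(\bar{M})$. Setting $\tilde{g} = u^{p-2} g$ gives the desired metric with scalar curvature $S$ and minimal boundary.

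The main technical hassle, as in Lemma \ref{closed:lemma2}, is matching the interior bump-like super-solution $\bar{u}$ (built by gluing the local Dirichlet solution $u_2$ of (\ref{compact:eqn6}) with the rescaled Robin eigenfunction $\phi = \delta \varphi$) smoothly with $\phi$ on a neighborhood of $\partial \Omega$; this is already handled by Lemma \ref{compact:lemma2}, so in this theorem it reduces to checking that the gluing works far away from $\partial M$ and that the Robin inequality on $\partial M$ comes for free since $u_+ \equiv \phi$ near $\partial M$ by construction. A secondary subtle point is the verification that one may in fact arrange the open set $O$ to intersect $\{R_g < 0\}$ in a non-locally-conformally-flat region; if this intersection were empty, the theorem as stated should be read with the additional hypothesis, exactly as in Theorem \ref{closed:thm2}, but either way the subsequent iteration argument is identical.
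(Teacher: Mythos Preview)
Your proposal is correct and follows essentially the same route as the paper: separate off the globally constant case (Escobar problem), then for non-constant $S$ invoke Lemma~\ref{compact:lemma1} and Lemma~\ref{compact:lemma2} to produce an ordered sub-/super-solution pair, feed these into the monotone iteration Theorem~\ref{compact:thm1}, and conclude positivity by a maximum-principle argument and smoothness by elliptic regularity. Your added care about arranging $O$ so that $\{R_g<0\}\cap O$ meets the non-locally-conformally-flat locus is well placed---the paper's statement has a duplicated hypothesis where the analogue of the extra assumption in Theorem~\ref{closed:thm2} was presumably intended, and Lemma~\ref{compact:lemma2} does require it.
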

\begin{proof} The problem is reduced to the Escobar problem if $ S $ is a globally positive constant function. Assume not. According to Lemma \ref{compact:lemma1} and \ref{compact:lemma2}, the hypothesis in Theorem \ref{compact:thm1} and inequalities in (\ref{compact:eqn3}) hold with the choice of $ S $ here. Applying Theorem \ref{compact:thm1} directly, it follows that there exists some $ u \in \calC^{\infty}(\bar{M}) $ with $ u_{-} \leqslant u \leqslant u_{+} $ that solves (\ref{compact:eqn14}). Due to the same maximum principle argument as in \cite{XU4}, it follows that $ u > 0 $ on $ \bar{M} $. The regularity of $ u $ can also be achieved due to \cite{CHE}.
\end{proof}
\medskip

We now turn to the general cases, i.e. removing the restrictions that (i) $ h_{g} > 0 $ everywhere on $ \partial M $; (ii) $ R_{g} < 0 $ within a specific region; (iii) the Weyl tensor of metric $ g $ does not vanish identically within $ \lbrace x \in O | R_{g} < 0 \rbrace $. We apply the following two results from \cite{XU4} to release the three restrictions above.
\begin{proposition}\label{compact:prop1}\cite[Thm.~5.3]{XU4}
Let $ (\bar{M}, g) $ be a compact manifold with smooth boundary $ \partial M $. There exists a Yamabe metric $ \tilde{g} \in [g] $ with mean curvature $ h_{\tilde{g}} > 0 $ everywhere on $ \partial M $.
\end{proposition}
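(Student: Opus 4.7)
The plan is to reduce the claim to two successive conformal changes: a first one that makes the boundary mean curvature pointwise positive, and a second one inside the new conformal class that restores constant scalar curvature while preserving the positive mean curvature. Recall the conformal transformation laws under $\tilde g = u^{p-2} g$:
\[
R_{\tilde g} = u^{1-p}(-a\Delta_g u + R_g u), \qquad h_{\tilde g} = \tfrac{p-2}{2}\, u^{-p/2}\!\left(\tfrac{\partial u}{\partial \nu} + \tfrac{2}{p-2} h_g u\right) = \tfrac{p-2}{2} u^{-p/2}\, B_g u.
\]
So it suffices to produce a positive smooth $u$ that realizes $R_{\tilde g}\equiv \text{const}$ together with $B_g u > 0$ on all of $\partial M$.

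First I would carry out a \emph{preliminary} conformal change to pass from $g$ to a metric $g_1 = u_0^{p-2} g$ with $h_{g_1} > 0$ everywhere on $\partial M$. Choose $\phi \in \calC^{\infty}(\bar M)$ with $\phi \equiv 1$ away from a small collar neighbourhood of $\partial M$ and $\partial_\nu \phi \geqslant A$ pointwise on $\partial M$, where $A$ is a constant so large that $A - \tfrac{2}{p-2} \sup_{\partial M}|h_g| > 0$. Then for $\epsilon > 0$ small, $u_0 := 1 + \epsilon \phi$ is smooth, strictly positive on $\bar M$, and $B_g u_0 > 0$ on $\partial M$. Thus $g_1 \in [g]$ has $h_{g_1} > 0$ everywhere. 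This step is elementary but lets us work in a conformal class that already has a distinguished sign on $\partial M$.

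Second, inside $[g_1]$ I would solve the Yamabe equation $\Box_{g_1} w = \lambda w^{p-1}$ in $M$ together with a \emph{perturbed} Robin boundary condition $B_{g_1} w = \mu\, h_{g_1}\, w^{p/2}$ on $\partial M$ for some small parameter $\mu > 0$. Any positive smooth solution of this system gives a metric $\tilde g = (u_0 w)^{p-2} g$ with constant scalar curvature $\lambda$ and with mean curvature $h_{\tilde g} = \tfrac{p-2}{2}\mu\, h_{g_1} > 0$, which is exactly what we want. To produce such a solution I would use the monotone iteration scheme of Theorem~\ref{compact:thm1}: the $\mu = 0$ case is the standard Escobar problem (whose solution serves as a natural reference), and for small $\mu > 0$ one can construct a sub-solution from a rescaling of a positive eigenfunction of $\Box_{g_1}$ with Robin condition $B_{g_1}\varphi = 0$, and a super-solution by a minor modification of that eigenfunction multiplied by a large constant, exactly as in the constant-scalar-curvature part of the argument in \S4 of this paper. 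Alternatively, one can view the $\mu > 0$ problem as a perturbation of the Escobar solution and invoke an implicit function theorem near a non-degenerate minimizer.

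The main obstacle is the critical-exponent boundary condition, since both the bulk and the boundary terms live at the critical Sobolev exponent ($p$ inside, $p/2$ on the boundary); in particular, one must verify that the sub- and super-solutions are ordered compatibly with \emph{both} the interior and the boundary inequalities, and that the Hopf-type maximum principle yields $B_{g_1} w > 0$ strictly (not merely $\geqslant 0$) on $\partial M$. This is the reason for introducing the factor $h_{g_1}$ in the Robin condition rather than attempting to prescribe $B_{g_1} w = \mu\, w^{p/2}$: the coefficient $h_{g_1}$ is already strictly positive and smooth, so the resulting boundary operator has the same structure as the one handled in Theorem~\ref{compact:thm1}. Standard elliptic regularity (cf.~Cherrier-type estimates) upgrades the solution to $\calC^{\infty}(\bar M)$, and the construction is complete.
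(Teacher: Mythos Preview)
The paper does not give its own proof of this proposition; it simply cites \cite[Thm.~5.3]{XU4}. So there is no argument to compare against, only the question of whether your proposal is correct and whether it matches what the paper actually needs.

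First, a terminological point that changes the shape of the problem. In this paper ``Yamabe metric'' is used loosely to mean a metric in the conformal class obtained by solving a Yamabe-type equation, \emph{not} necessarily one with constant scalar curvature; see Proposition~\ref{compact:prop2}, where the ``Yamabe metric'' produced has $R_{\tilde g}(P)<0$ at a chosen point, and see how Proposition~\ref{compact:prop1} is invoked in Corollary~\ref{compact:cor2}, where only $h_{g_1}>0$ is used and no constancy of $R_{g_1}$ is needed. So your second step is unnecessary: the elementary first step already delivers what the paper requires.

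Second, your first step has a computational slip. With $u_0=1+\epsilon\phi$ you get
\[
B_g u_0=\epsilon\,\partial_\nu\phi+\tfrac{2}{p-2}h_g(1+\epsilon\phi),
\]
and as $\epsilon\to 0^+$ this tends to $\tfrac{2}{p-2}h_g$, which is exactly the quantity you are trying to fix when $h_g\le 0$ somewhere. The inequality $A>\tfrac{2}{p-2}\sup|h_g|$ does not help for \emph{small} $\epsilon$. The construction works if you instead take $\epsilon$ \emph{large} (with $\phi>0$ bounded on $\partial M$ and $A$ chosen relative to $\sup_{\partial M}\phi$), or more cleanly if you take $u_0=e^{-Ct}$ in a collar $\{0\le t<\delta\}$ (with $t$ the distance to $\partial M$) and extend smoothly: then $\partial_\nu u_0|_{\partial M}=C$ and $B_g u_0=C+\tfrac{2}{p-2}h_g>0$ for $C$ large.

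Finally, even if one did want constant scalar curvature, your second step as written does not go through: Theorem~\ref{compact:thm1} is stated and proved for the \emph{homogeneous} Robin condition $B_g u=0$, not for the nonlinear boundary condition $B_{g_1}w=\mu\,h_{g_1}w^{p/2}$, so you cannot invoke it directly. The implicit-function alternative would require nondegeneracy of the Escobar solution, which is a separate and nontrivial input.
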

\begin{proposition}\label{compact:prop2}\cite[Thm.~5.7]{XU4}\cite[Thm.~4.5]{XU3}
Let $ (\bar{M}, g) $ be a compact manifold with smooth boundary $ \partial M $. Let $ P $ be a fixed point in interior $ M $. There exists a Yamabe metric $ \tilde{g} \in [g] $ with scalar curvature $ R_{\tilde{g}} $ such that $ R_{\tilde{g}}(P) < 0 $. In addition, the sign of $ h_{g} $ and the sign of $ h_{\tilde{g}} $ are the same pointwise on $ \partial M $.
\end{proposition}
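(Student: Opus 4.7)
The plan is to construct $\tilde g = \phi^{p-2} g$, where $\phi$ is a compactly supported perturbation of the constant function $1$ localized in a small ball around $P$. Because $P$ lies strictly in the interior $M$, I would arrange $\phi \equiv 1$ on a neighborhood of $\partial M$, so that $\tilde g$ literally coincides with $g$ there; in particular $h_{\tilde g} \equiv h_g$ on $\partial M$, and the sign condition on the mean curvatures is automatic. The entire content of the argument then reduces to forcing $R_{\tilde g}(P) < 0$, which by the conformal identity (\ref{local:eqn24}) is the same as $\Box_g \phi(P) < 0$ once we check $\phi(P) > 0$.

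Concretely, I would pick a normal ball $B_r(P) \subset M$ with $\bar B_r(P) \cap \partial M = \emptyset$, and choose $\chi \in \calC_c^{\infty}(B_r(P))$ with $0 \leqslant \chi \leqslant 1$ and $\chi(P) = 1$. Since $\chi$ attains a strict maximum at $P$, automatically $\Delta_g \chi(P) \leqslant 0$. Rescaling the profile (replacing $\chi$ by $\chi(\lambda^{-1}\,\cdot)$ in normal coordinates centered at $P$ and shrinking $\lambda$) I would arrange
\begin{equation*}
-a \Delta_g \chi(P) \;>\; |R_g(P)| + 1,
\end{equation*}
which is possible because the left-hand side grows like $\lambda^{-2}$ with the Riemannian corrections in normal coordinates being of strictly lower order. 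Setting $\phi := 1 - \tfrac{1}{2}\chi$ then gives $\tfrac{1}{2} \leqslant \phi \leqslant 1$ and $\phi \equiv 1$ outside $B_r(P)$, so in particular on a neighborhood of $\partial M$.

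A direct calculation at $P$ yields
\begin{equation*}
\Box_g \phi(P) \;=\; \tfrac{1}{2} R_g(P) + \tfrac{a}{2} \Delta_g \chi(P) \;<\; 0
\end{equation*}
by the sharpness choice for $\chi$, hence $R_{\tilde g}(P) = \phi(P)^{1-p} \Box_g \phi(P) < 0$. The boundary condition follows at once, since $\phi \equiv 1$ with $\partial_\nu \phi \equiv 0$ on $\partial M$ makes $\tilde g$ literally equal to $g$ in a collar. The only quantitative input needed is the Laplacian-scaling estimate for the rescaled bump, which is routine and is the principal (and very mild) obstacle of the argument. Note that the hypothesis $\eta_1' > 0$ plays no role here, and the construction mirrors the closed-case argument of Proposition \ref{closed:thm3}, with the additional discipline that the support of $\chi$ be kept strictly disjoint from $\partial M$.
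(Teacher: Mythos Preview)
Your argument is correct and is genuinely more elementary than the route the paper takes. The paper does not write down the conformal factor directly; instead it constructs an auxiliary function $F \in \calC^{\infty}(\bar{M})$ that is negative at $P$, has $\int_{M} F \dvol = 0$, and has small $H^{s-2}$-norm (with $s = \frac{n}{2}+1$ or $\frac{n+1}{2}$ depending on parity), and then appeals to perturbation results from \cite{XU4, XU3} (an implicit-function-type argument) to produce the conformal metric. The geodesic-ball volume expansion (\ref{compact:eqn15}) is invoked only to control these Sobolev norms when the support of $F$ shrinks. So the paper's proof is indirect: it prescribes a target function and solves for the conformal factor.

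Your approach bypasses all of this by exhibiting $\phi$ explicitly. Because $\phi \equiv 1$ in a collar of $\partial M$, you in fact get $h_{\tilde{g}} = h_{g}$ identically rather than merely sign-preserving, which is strictly stronger than what is claimed. The only place to be slightly more careful is the throwaway line ``automatically $\Delta_{g}\chi(P) \leqslant 0$'': you need strict inequality before rescaling buys you anything, so choose $\chi$ so that its Hessian at $P$ is negative definite (e.g.\ a radial profile with strictly negative second derivative at the origin in normal coordinates). With that minor clarification your proof is complete, and it has the advantage of requiring no external machinery beyond the conformal transformation law (\ref{local:eqn24}).
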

\begin{proof}
We construct a function $ F \in \calC^{\infty}(\bar{M}) $ such that (i) $ \int_{M} F \dvol = 0 $; (ii) $ F $ is negative at the point $ P $; (iii) $ \lVert F \rVert_{H^{s - 2}}(M, g) $ is small enough, with $ s= \frac{n}{2} + 1 $ when $ n $ is even, and $ s = \frac{n + 1}{2} $ when $ n $ is odd. This construction is almost the same as in \cite[Thm.~5.7]{XU4}, just note that we have
\begin{equation}\label{compact:eqn15}
\text{Vol}_{g}(B_{r}(P)) = \left(1 - \frac{R_{g}(P)}{n + 2} r^{2} + O(r^{4}) \right) \text{Vol}_{e}(B_{r}(P))
\end{equation}
and thus
\begin{equation*}
\text{Vol}_{g}(B_{r}(P)) = O(r^{n})
\end{equation*}
when $ r \ll 1 $. Here $ \text{Vol}_{g}(B_{r}(P)) $ is the volume of the geodesic ball $ B_{r}(P) $ with radius $ r $, and $ \text{Vol}_{e}(B_{r}(P)) $ is the volume of the Euclidean ball with same radius and center. For more details, we refer to \cite[Thm.~4.5]{XU3}. The rest of the proof is exactly the same as in \cite[Thm.~5.7]{XU4}.
\end{proof}
\begin{remark}\label{compact:re1}
Note that the formula (\ref{compact:eqn15}) is the essence of the curvature, which tells us the deviation of the volume of a geodesic ball from the associated Euclidean ball. The Weyl tensor cannot provide this type of information. As we mentioned in the introduction, the Weyl tensor is only used to distinguish the locally conformally flat case, for which we apply a different method to obtain the local solution. The essential difficulty is due to the positivity of the first eigenvalue of the conformal Laplacian, which, roughly speaking, shrinks the volumes of the manifold and hence lower the energy in terms of the Yamabe quotient.
\end{remark}
\medskip

Given a function $ S \in \mathcal{A}_{1}' $ where $ S = \lambda > 0 $ in $ O $. In Theorem \ref{compact:thm2}, we require (i) $ R_{g} < 0 $ somewhere in $ O $; (ii) the Weyl tensor with respect to $ g $ does not vanish at least at one point at which $ R_{g} $ is negative. Fix a point $ P \in O $ at which the Weyl tensor does not vanish, due to Proposition \ref{compact:prop2}, any metric is conformal to a metric with scalar curvature that is negative at $ P $ and hence within a neighborhood of $ P $. In addition, the Weyl tensor does not vanish after conformal change due to conformal invariance. Therefore we can remove the restriction of the positivity of the scalar curvature somewhere in $ O $.
\begin{corollary}\label{compact:cor1}
Let $ (\bar{M}, g) $ be a compact manifold, not locally conformally flat, with smooth boundary $ \partial M $. Let $ \nu $ be the outward unit normal vector along $ \partial M $. Suppose $ \eta_{1}' > 0 $ be the first eigenvalue of $ \Box_{g} $ with respect to homogeneous Robin condition in (\ref{compact:eqn1}). Let $ S \in \mathcal{A}_{1}' $ be a smooth function such that $ S = \lambda > 0 $ on some inner, open subset $ O \subset M \subset \bar{M} $ for some constant $ \lambda $. Assume further that $ h_{g} > 0 $ everywhere along $ \partial M $. Then there exists a conformal change $ \tilde{g} = u^{p-2} g $ such that the metric $ \tilde{g} $ admits the prescribed scalar curvature $ S $, where the smooth function $ u > 0 $ on $ \bar{M} $ satisfies
\begin{equation}\label{compact:eqn16}
-a\Delta_{g} u + R_{g} u = S u^{p-1} \; {\rm in} \; M, \frac{\partial u}{\partial \nu} + \frac{2}{p-2} h_{g} u = 0 \; {\rm on} \; \partial M.
\end{equation}
\end{corollary}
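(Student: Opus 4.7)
The plan is to reduce Corollary~\ref{compact:cor1} to Theorem~\ref{compact:thm2} by performing a preliminary conformal change that installs the only missing hypothesis, namely that $R_{g}<0$ at some point of $O$. This mirrors the strategy used in Corollary~\ref{closed:cor1} for closed manifolds; the one new subtlety is that the preliminary change must simultaneously preserve the positivity of the mean curvature on $\partial M$.

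Since $S\in\mathcal{A}_{1}'$ forces $g$ to be not locally conformally flat within $O$, I would first pick a point $P\in O$ at which the Weyl tensor of $g$ does not vanish. Then, invoking Proposition~\ref{compact:prop2}, I would produce a positive smooth function $v$ and a conformal metric $\bar g=v^{p-2}g$ with $R_{\bar g}(P)<0$ and with the sign of $h_{\bar g}$ equal pointwise to that of $h_{g}$ on $\partial M$; since $h_{g}>0$ by hypothesis, this gives $h_{\bar g}>0$ everywhere on $\partial M$. By conformal invariance, the Weyl tensor, the locally-conformally-flat property of $g$ on $O$, and the sign of the first Robin eigenvalue $\eta_{1}'$ of $\Box_{g}$ are all unchanged, so $(\bar M,\bar g)$ still satisfies the corresponding hypotheses; by continuity, $R_{\bar g}<0$ throughout an open neighborhood of $P$ contained in $O$, and $S\in\mathcal{A}_{1}'$ with respect to $\bar g$ as well.

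At this stage all hypotheses of Theorem~\ref{compact:thm2} hold for the metric $\bar g$. That theorem provides a positive smooth $w$ solving $\Box_{\bar g}w=Sw^{p-1}$ in $M$ with $B_{\bar g}w=0$ on $\partial M$. Setting $u:=vw$, so that $\tilde g=u^{p-2}g=w^{p-2}\bar g$, the conformal transformation law (\ref{local:eqn24}) for $\Box_{g}$ together with the analogous law for the Robin boundary operator $B_{g}$ imply immediately that $u$ is a positive smooth solution of (\ref{compact:eqn16}).

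The substantive step is entirely absorbed into Proposition~\ref{compact:prop2}: the ability to realize a metric in $[g]$ whose scalar curvature is negative at a prescribed point without flipping the sign of the mean curvature anywhere on $\partial M$. That result, combined with conformal invariance of the Weyl tensor, is precisely what converts the hypothesis ``$g$ not locally conformally flat within $O$'' into the stronger local condition required by Theorem~\ref{compact:thm2}. All remaining verifications---conformal invariance of the structural hypotheses and the rewriting of the PDE under $u=vw$---are routine.
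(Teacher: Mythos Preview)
Your proposal is correct and follows essentially the same approach as the paper's own proof: pick a point $P\in O$ with nonvanishing Weyl tensor, apply Proposition~\ref{compact:prop2} to get a conformal metric $\bar g=v^{p-2}g$ with $R_{\bar g}(P)<0$ and $h_{\bar g}>0$, invoke Theorem~\ref{compact:thm2} for $\bar g$, and set $u=vw$. If anything, your write-up is slightly cleaner than the paper's, since you select $P$ with nonvanishing Weyl tensor \emph{before} applying Proposition~\ref{compact:prop2} and you make explicit the conformal invariance of the Weyl tensor and of the sign of $\eta_1'$.
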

\begin{proof} Fix the function $ S \in \mathcal{A}_{1}' $ for which is a positive constant within some subset $ O \subset M $. Pick up a point $ P \in O \subset M $. Starting at the metric $ g $, there exists a positive function $ v \in \calC^{\infty}(\bar{M}) $ such that the metric $ g_{1} = v^{p-2} g $ admits the scalar curvature $ R_{g_{1}} < 0 $ on $ P $. We may assume, without loss of generality, that the Weyl tensor of $ g $ at $ P $ does not vanish. Since if not, we can apply Proposition \ref{compact:prop2} again to make $ R_{g_{1}} $ at $ P $. Note that the sign of $ h_{g_{1}} $ is the same as the sign of $ h_{g} $, pointwise. 

Choosing a small enough neighborhood $ \Omega $ around $ P $ with $ \Omega \subset O $, we apply the result of Theorem \ref{compact:thm2}, it follows that there exists a positive function $ w \in \calC^{\infty}(\bar{M}) $ such that
\begin{equation*}
-a\Delta_{g_{1}} w + R_{g_{1}} w = S w^{p-1} \; {\rm in} \; M, \frac{\partial w}{\partial \nu} + \frac{2}{p-2} h_{g_{1}} w = 0 \: {\rm on} \; \partial M.
\end{equation*}
Hence the scalar curvature of the metric $ \tilde{g} = w^{p-2} g_{1} $ is exactly the given function $ S $. It follows that
\begin{equation*}
\tilde{g} = w^{p-2} g_{1} = w^{p-2} v^{p-2} g.
\end{equation*}
Denote $ u = wv $ pointwise on $ \bar{M} $, we conclude that $ \tilde{g} = u^{p-2} g $ admits the prescribed scalar curvature $ S \in \mathcal{A}_{1}' $. Equivalently, (\ref{compact:eqn16}) holds with $ u $.
\end{proof}
\medskip

The iteration scheme introduced in Theorem \ref{compact:thm1} requires the positivity of the mean curvature along $ \partial M $. With the aid of both Proposition \ref{compact:prop1} and \ref{compact:prop2}, we can remove this restriction eventually.
\begin{corollary}\label{compact:cor2}
Let $ (\bar{M}, g) $ be a compact manifold, not locally conformally flat, with smooth boundary $ \partial M $. Let $ \nu $ be the outward unit normal vector along $ \partial M $. Suppose $ \eta_{1}' > 0 $ be the first eigenvalue of $ \Box_{g} $ with respect to homogeneous Robin condition in (\ref{compact:eqn1}). Let $ S \in \mathcal{A}_{1}' $ be a smooth function such that $ S = \lambda > 0 $ on some inner, open subset $ O \subset M \subset \bar{M} $ for some constant $ \lambda $. Then there exists a conformal change $ \tilde{g} = u^{p-2} g $ such that the metric $ \tilde{g} $ admits the prescribed scalar curvature $ S $, where the smooth function $ u > 0 $ on $ \bar{M} $ satisfies
\begin{equation}\label{compact:eqn17}
-a\Delta_{g} u + R_{g} u = S u^{p-1} \; {\rm in} \; M, \frac{\partial u}{\partial \nu} + \frac{2}{p-2} h_{g} u = 0 \; {\rm on} \; \partial M.
\end{equation}
\end{corollary}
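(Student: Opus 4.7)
The plan is to reduce Corollary \ref{compact:cor2} to the already-established Corollary \ref{compact:cor1} by performing a preliminary conformal change that forces the mean curvature to be positive everywhere on $\partial M$. Since Corollary \ref{compact:cor1} already handles the case $h_g > 0$ with no sign restriction on $R_g$, once this single reduction step is carried out the rest follows from invoking that corollary and composing the two conformal factors.

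First, I would invoke Proposition \ref{compact:prop1} to obtain a positive smooth function $v \in \calC^{\infty}(\bar{M})$ such that the Yamabe metric $g_1 = v^{p-2} g$ satisfies $h_{g_1} > 0$ everywhere on $\partial M$. Before proceeding, I would check that the hypotheses of Corollary \ref{compact:cor1} transfer to $(\bar{M}, g_1)$: (i) the sign of the first eigenvalue of the conformal Laplacian with Robin condition is a conformal invariant, so $\eta_1'(g_1) > 0$; (ii) the Weyl tensor is a pointwise conformal invariant, so $(\bar{M}, g_1)$ is still not locally conformally flat, and moreover the set $\mathcal{A}_1'$ is determined by the conformal class $[g]$ rather than by the representative $g$, so the same function $S$ and open set $O$ still lie in (and certify membership in) $\mathcal{A}_1'$ for $g_1$.

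Next, I would apply Corollary \ref{compact:cor1} to $(\bar{M}, g_1)$ with the same prescribing function $S$, yielding a positive smooth function $w \in \calC^{\infty}(\bar{M})$ satisfying
\begin{equation*}
-a\Delta_{g_1} w + R_{g_1} w = S w^{p-1} \; {\rm in} \; M, \quad \frac{\partial w}{\partial \nu_{g_1}} + \frac{2}{p-2} h_{g_1} w = 0 \; {\rm on} \; \partial M.
\end{equation*}
Setting $u = wv$ and $\tilde{g} = w^{p-2} g_1 = u^{p-2} g$, the conformal covariance of the conformal Laplacian recorded in (\ref{local:eqn24}), together with its well-known boundary analogue $B_{\tilde{g}} = \phi^{1-p} B_g \phi$ under $\tilde{g} = \phi^{p-2} g$, guarantees that $u$ solves (\ref{compact:eqn17}) with respect to the original background metric $g$. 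Positivity and smoothness of $u$ are immediate from the corresponding properties of $v$ and $w$.

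The main point — rather than a genuine obstacle — is simply to verify that the two preliminary conformal changes can be composed cleanly and that all hypotheses survive the transition $g \mapsto g_1$; this is entirely routine because $\eta_1' > 0$, non-vanishing of the Weyl tensor, and membership of $S$ in $\mathcal{A}_1'$ are all conformal invariants. No new analytic input beyond Proposition \ref{compact:prop1} and Corollary \ref{compact:cor1} is needed.
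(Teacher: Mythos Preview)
Your proposal is correct and follows essentially the same approach as the paper: apply Proposition~\ref{compact:prop1} to pass to a conformal metric with $h_{g_1}>0$ and then finish. The only cosmetic difference is that you invoke Corollary~\ref{compact:cor1} directly for $(\bar M,g_1)$, whereas the paper unrolls that corollary's proof inline (splitting into the subcase where $R_{g_1}<0$ already holds at a non-conformally-flat point of $O$, and otherwise inserting Proposition~\ref{compact:prop2} before appealing to Theorem~\ref{compact:thm2}); your packaging is cleaner but the content is the same.
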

\begin{proof} Given the function $ S \in \mathcal{A}_{1}' $ with $ S = \lambda > 0 $ in $ O \subset M $. If $ h_{g} > 0 $ everywhere on $ \partial M $ already, the problem is reduced to the situation of Corollary \ref{compact:cor1}.

If not, we apply Proposition \ref{compact:prop1}, which follows that there exists a metric $ g_{1} = v_{1}^{p-2} g $ such that $ h_{g_{1}} > 0 $ everywhere on $ \partial M $. Here $ v_{1} \in \calC^{\infty}(\bar{M}) $ is positive on $ \bar{M} $. If $ R_{g_{1}} < 0 $ somewhere on $ O $, and the metric $ g_{1} $ is not locally conformally flat within $ \lbrace x \in O | R_{g_{1}}(x) < 0 \rbrace $, then we apply Theorem \ref{compact:thm2} and conclude that the metric $ \tilde{g} = v_{2}^{p-2} g_{1} $ admits the prescribed scalar curvature $ S $ with some positive function $ v_{2} \in \calC^{\infty}(\bar{M}) $. The choice of $ u : = v_{1} v_{2} $ with $ \tilde{g} = u^{p-2} g $.

If either requirement of $ R_{g_{1}} $ above does not satisfy, we apply Proposition \ref{compact:prop2}. Hence there exists some positive function $ v_{3} \in \calC^{\infty}(M) $ such that $ g_{2} = v_{3}^{p-2} g_{1} $ admits a scalar curvature $ R_{g_{2}} > 0 $ somewhere in $ O $; in addition, $ h_{g_{2}} > 0 $ everywhere on $ \partial M $. We then apply Theorem \ref{compact:thm2} again.
\end{proof}
\medskip

In the second half of \S3, we discussed whether we can approximate a prescribed scalar curvature of some metric under conformal change by some function in $ \mathcal{A}_{1} $ in the $ \calL^{r} $-sense. Analogous results hold for prescribed scalar curvature on compact manifolds with boundary. Recall that we still discuss the case when $ \eta_{1}' $, the first eigenvalue of conformal Laplacian with homogeneous Robin condition, is positive; the compact manifold $ (\bar{M}, g) $ has smooth boundary $ \partial M $, with dimension $ \dim \bar{M} \geqslant 3 $; the outward unit normal vector along $ \partial M $ is denoted to be $ \nu $.

Corollary \ref{compact:cor2} is a sufficient condition of Kazdan-Warner problem on $ (\bar{M}, g) $ with respect to the positive first eigenvalue of conformal Laplacian. Conversely, if $ S $ is the prescribed scalar curvature of some metric $ \tilde{g} $ after conformal change with minimal boundary, i.e. $ R_{\tilde{g}} = S $ on $ M $ and $ h_{\tilde{g}} = 0 $, we consider some necessary conditions of Kazdan-Warner problem. On closed manifold case, Kazdan and Warner \cite{KW} stated that any prescribed scalar curvature with $ \eta_{1} > 0 $ must be positive somewhere. There is a similar obstruction of $ S $ as a prescribed scalar curvature with minimal boundary. \begin{proposition}\label{compact:prop3}
Let $ (\bar{M}, g) $ be a compact manifold with smooth boundary $ \partial M $ associated with outward unit normal vector $ \nu $ along the boundary. Let $ S $ be a smooth function such that
\begin{equation}\label{compact:eqn18}
-a\Delta_{g} u + R_{g} u = S u^{p-1} \; {\rm in} \; M, \frac{\partial u}{\partial \nu} + \frac{2}{p-2} h_{g} u = 0 \; {\rm on} \; \partial M
\end{equation}
admits a positive, smooth solution $ u $ on $ \bar{M} $. Assume that $ \eta_{1}' > 0 $, then the function $ S $ must be positive somewhere.
\end{proposition}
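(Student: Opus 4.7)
The plan is to test the equation directly against the positive solution $u$ itself and then invoke the variational characterization of $\eta_1'$. First I would multiply the interior equation $-a\Delta_g u + R_g u = S u^{p-1}$ by $u$, integrate over $M$ with respect to $\dvol$, and apply Green's identity to get
\begin{equation*}
\int_M \left(a|\nabla_g u|^2 + R_g u^2\right) \dvol - a\int_{\partial M} u \frac{\partial u}{\partial \nu} dS = \int_M S u^p \dvol.
\end{equation*}
The Robin boundary condition $B_g u = 0$ gives $\frac{\partial u}{\partial \nu} = -\frac{2}{p-2} h_g u$ on $\partial M$, so the boundary term becomes $\frac{2a}{p-2} \int_{\partial M} h_g u^2 dS$, yielding the identity
\begin{equation*}
\int_M \left(a|\nabla_g u|^2 + R_g u^2\right) \dvol + \frac{2a}{p-2}\int_{\partial M} h_g u^2 dS = \int_M S u^p \dvol.
\end{equation*}

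Next I would use $u \in H^1(M,g)$ (which is admissible since $u \in \calC^{\infty}(\bar{M})$ and $u>0$) as a test function in the variational characterization of the first eigenvalue,
\begin{equation*}
\eta_1' = \inf_{0 \neq \varphi \in H^1(M,g)} \frac{\int_M\left(a|\nabla_g \varphi|^2 + R_g \varphi^2\right)\dvol + \frac{2a}{p-2}\int_{\partial M} h_g \varphi^2 dS}{\int_M \varphi^2 \dvol}.
\end{equation*}
Combining this inequality with the integral identity above yields
\begin{equation*}
\eta_1' \int_M u^2 \dvol \leqslant \int_M S u^p \dvol.
\end{equation*}

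Finally, since $\eta_1' > 0$ and $u > 0$ pointwise on $\bar{M}$, the left-hand side is strictly positive, so $\int_M S u^p \dvol > 0$. As $u^p > 0$ pointwise, this forces $S > 0$ on a subset of $M$ of positive measure, so in particular $S$ is positive somewhere, which is the desired conclusion. The argument is entirely elementary; there is no real obstacle. The only point to watch is the sign bookkeeping in the integration by parts, where the Robin condition must conspire with the factor $\frac{2a}{p-2}$ so that the resulting boundary term matches exactly the one in the Rayleigh quotient defining $\eta_1'$; otherwise the variational inequality would have the wrong orientation.
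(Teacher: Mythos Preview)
Your proof is correct and slightly more direct than the paper's. The paper instead passes to the conformal metric $\tilde{g} = u^{p-2} g$, invokes the conformal invariance of the sign of the first eigenvalue (so that the corresponding eigenvalue $\eta_2'$ for $\Box_{\tilde{g}}$ is also positive), and then plugs the constant test function $\phi \equiv 1$ into the $\tilde{g}$-Rayleigh quotient; since $h_{\tilde{g}} = 0$ and $R_{\tilde{g}} = S$, this yields $0 < \eta_2' \leqslant C\int_M S \, d\text{Vol}_{\tilde{g}}$. The two arguments are really the same computation viewed through conformal invariance: testing with $\phi \equiv 1$ in the $\tilde{g}$-quotient corresponds, after unwinding $d\text{Vol}_{\tilde{g}} = u^{p}\,\dvol$, to testing with $u$ in the $g$-quotient. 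Your version has the minor advantage of being self-contained, since it does not need to quote the conformal invariance of the sign of $\eta_1'$ from elsewhere.
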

\begin{proof}
Note from \cite[Prop.~2.1]{XU4} that the sign of first eigenvalue of conformal Laplacian with homogeneous Robin condition is a conformal invariant, thus the first eigenvalue $ \eta_{2}' $ with respect to the operator $ \Box_{\tilde{g}} $ is also positive, where $ \tilde{g} = u^{p-2} g $ with $ u $ a positive, smooth solution of (\ref{compact:eqn18}). Note that $ h_{\tilde{g}} = 0 $, thus $ \eta_{2}' $ is characterized by
\begin{equation*}
0 < \eta_{2}' = \inf_{\phi \geqslant 0} \frac{\int_{M} a\lvert \nabla_{\tilde{g}} \phi^{2} \rvert d\text{Vol}_{\tilde{g}} + \int_{M} S \phi^{2} d\text{Vol}_{\tilde{g}}}{\int_{M} u^{2}  d\text{Vol}_{\tilde{g}}}.
\end{equation*}
Note that $ h_{\tilde{g}} = 0 $ thus the boundary term disappears. Taking the test function $ \phi \equiv 1 $, which satisfies the Robin condition
\begin{equation*}
B_{g} \phi = \frac{\partial 1}{\partial \nu} + \frac{2}{p-2} h_{\tilde{g}} \cdot 1 = 0,
\end{equation*}
we conclude that
\begin{equation*}
0 < \eta_{2}' \leqslant C \int_{M} S d\text{Vol}_{\tilde{g}}
\end{equation*}
for some positive constant $ C $. It follows that $ S > 0 $ somewhere.
\end{proof}
Due to the result of Proposition \ref{compact:prop3}, we have to consider the collection of functions that are positive somewhere. Denote
\begin{equation}\label{compact:eqn19}
\begin{split}
\mathcal{B}' & : = \lbrace f \in \calC^{\infty}(\bar{M}) : f > 0 \; \text{somewhere in the interior M}, \\
& -a\Delta_{g} u + R_{g} u = f u^{p-1} \; {\rm in} \; M, \frac{\partial u}{\partial \nu} + \frac{2}{p-2} h_{g} u = 0 \; {\rm on} \; \partial M, \\
& \text{admits a real, positive, smooth solution $ u $ with $ \eta_{1}' > 0 $}. \rbrace
\end{split}
\end{equation}
The set $ \mathcal{B}' $ is the collection of functions that are positive somewhere and are prescribed scalar curvatures of some metrics under conformal change on $ (\bar{M}, g) $. Analogous to Theorem \ref{closed:thm4} and Corollary \ref{closed:cor3}, we conclude the ``almost" necessary conditions of Kazdan-Warner problem as follows.
\begin{theorem}\label{compact:thm3} The set $ \mathcal{A}_{1}' \subset \mathcal{B}' $ is $ \calL^{r} $-dense in the set $ \mathcal{B}' $ for all $ r \in [1, \infty) $.
\end{theorem}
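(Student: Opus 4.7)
The strategy mirrors that of Theorem \ref{closed:thm4} in the closed case, with the only modification being that the open set $O$ on which we force the prescribing function to be a positive constant must be chosen inside the interior $M$ so that it qualifies as an inner submanifold in the sense of \eqref{compact:eqn2}. The plan is as follows.

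First I would take an arbitrary $f \in \mathcal{B}'$. If $f$ is already a positive constant on some inner open submanifold $O' \subset M$ with smooth boundary on which $g$ is not locally conformally flat, then $f \in \mathcal{A}_{1}'$ and there is nothing to prove. Otherwise, since by the standing assumption in this section the Weyl tensor of $g$ does not vanish identically in $M$, I fix a point $P \in M$ at which the Weyl tensor is nonzero, and choose a geodesic ball $O = B_{\delta}(P)$ of radius $\delta \ll 1$, small enough so that $\overline{O} \subset M$ (hence stays away from $\partial M$) and so that $g$ is not locally conformally flat on $O$. Let $O' = B_{\delta/2}(P) \subset O$ and fix any constant $\lambda > 0$.

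Next I would define
\begin{equation*}
f^{*}(x) = \begin{cases} \lambda, & x \in O', \\ f(x), & x \in \bar{M} \setminus O', \end{cases}
\end{equation*}
and mollify by the standard mollifier $\phi_{\epsilon}$ with $\epsilon \ll \delta/10$, obtaining $S := f^{*} * \phi_{\epsilon}$. For $\epsilon$ small enough, $S \equiv \lambda$ on a slightly smaller concentric ball $O'' \subset O'$, $S \equiv f$ outside $\overline{O}$, and $S \in \calC^{\infty}(\bar{M})$; hence $S \in \mathcal{A}_{1}'$ with inner region $\bar{O}'' \subset O \subset M$ on which $g$ is not locally conformally flat. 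By Corollary \ref{compact:cor2}, the function $S$ is the prescribed scalar curvature of some metric conformal to $g$ with minimal boundary, and since the positivity of $\eta_{1}'$ is preserved under conformal change (see \cite[Prop.~2.1]{XU4}), we have $S \in \mathcal{B}'$ as well.

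Finally I would estimate
\begin{equation*}
\lVert S - f \rVert_{\calL^{r}(\bar{M}, g)}^{r} \leqslant \int_{O} \lvert S - f \rvert^{r} \dvol \leqslant 2^{r} \left( \lambda^{r} + \sup_{O} \lvert f \rvert^{r} \right) \text{Vol}_{g}(O).
\end{equation*}
Since $\text{Vol}_{g}(O) = O(\delta^{n})$ by \eqref{compact:eqn15} and can be made arbitrarily small by shrinking $\delta$, one obtains $\lVert S - f \rVert_{\calL^{r}(\bar{M}, g)} < \epsilon$ for any prescribed $\epsilon > 0$ and any $r \in [1, \infty)$, giving the $\calL^{r}$-density. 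The argument is essentially the closed manifold one, and the only technical point is keeping $O$ genuinely interior so that the modification leaves $f$ and the Robin condition untouched near $\partial M$ and Corollary \ref{compact:cor2} applies verbatim; this is handled by the freedom in choosing $P \in M$ and $\delta$ small.
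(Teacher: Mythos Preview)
Your proposal is correct and follows essentially the same approach as the paper, which simply states that $\mathcal{A}_{1}' \subset \mathcal{B}'$ is clear and that the rest is exactly the same as Theorem~\ref{closed:thm4} since the modification takes place in a small enough interior subset $\Omega \subset M$. You have spelled out the details more fully, including the explicit $\calL^{r}$ estimate and the care in keeping $O$ away from $\partial M$, but the route is the same.
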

\begin{proof} Clearly $ \mathcal{A}_{1}' \subset \mathcal{B}' $. The rest of the proof is exactly the same as in Theorem \ref{closed:thm4} since we only consider the difference within a small enough inner subset $ \Omega \subset M $.
\end{proof}
\begin{remark}\label{compact:re2}
Again we mentioned here that the choice of open subsets in $ \mathcal{A}_{1}' $ is restricted. We will discuss in later section for how to remove this restriction in most cases and thus we would be able to achieve $ \calC^{0} $-dense for most situations.
\end{remark}
\medskip

\begin{corollary}\label{compact:cor3}
Let $ (\bar{M}, g) $ be a compact Riemannian manifold, not locally conformally flat, with smooth boundary $ \partial M $. Let $ \eta_{1}' $, the first eigenvalue of $ \Box_{g} $ with homogeneous Robin condition, be positive. For any given function $ S' \in \calC^{\infty}(\bar{M}) $, the following statements hold:

(i) For any inner, connected, open subset $ O \subset M $ in which the metric $ g $ is not locally conformally flat, there exists a function $ S_{1} \in \mathcal{A}_{1}' $ as the prescribed scalar curvature of some metric $ \tilde{g} $ under conformal change such that $ S_{1} = S' $ outside $ O $;

(ii) For any $ \epsilon > 0 $, there exists a function $ S_{2} \in \mathcal{A}_{1}' $ as the prescribed scalar curvature of some metric $ \tilde{g} $ under conformal change such that $ \lVert S_{2} - S' \rVert_{\calL^{r}(M, g)} < \epsilon, \forall 1 \leqslant r < \infty $.
\end{corollary}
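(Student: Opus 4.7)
The plan is to follow the argument of Corollary \ref{closed:cor3}, replacing the closed-manifold existence result with Corollary \ref{compact:cor2}. If $S'$ is already a globally positive constant, both parts of the statement are immediate, so I assume $S'$ is not constant.

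For part (i), given an inner, connected, open subset $O \subset M$ on which $g$ is not locally conformally flat, I would pick a proper subset $O' \subset O$ with $\text{dist}(\partial O', \partial O) > 4\gamma$ and $\text{diam}(O') \ll \gamma$, chosen so that $g$ fails to be locally conformally flat on some open subset of $O'$. Fix any constant $\lambda > 0$ and define
\[
S_1' = \begin{cases} \lambda, & \text{in } O', \\ S', & \text{in } \bar{M} \setminus \bar{O}, \end{cases}
\]
with any smooth bounded extension on the annular region $\bar{O} \setminus O'$. Working in a local chart containing $O$, I would mollify by the standard mollifier, $S_1 = S_1' * \phi_\delta$ with $\delta \ll \gamma$. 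Then $S_1 \in \calC^\infty(\bar{M})$, $S_1 \equiv \lambda$ on a slightly smaller concentric subset of $O'$ (still inside the non-locally-conformally-flat region), and $S_1 \equiv S'$ on $\bar{M} \setminus O$. Hence $S_1 \in \mathcal{A}_1'$, and Corollary \ref{compact:cor2} produces a metric $\tilde{g} = u^{p-2} g$ with prescribed scalar curvature $S_1$ and minimal boundary.

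For part (ii), given $\epsilon > 0$ and $r \in [1, \infty)$, pick a small geodesic ball $O \subset M$ contained in the interior and lying inside the non-locally-conformally-flat region. Fix any $c > 0$ and define $S_2'$ to equal $c$ on $O$ and $S'$ outside $O$; mollifying as above in a chart with $\delta$ much smaller than the radius of $O$ produces $S_2 \in \calC^\infty(\bar{M})$ with $S_2 \in \mathcal{A}_1'$. Since $|S_2 - S'|$ is bounded by $\sup_{\bar{M}} |S'| + c + 1$ while the modification is supported in a region of arbitrarily small $g$-volume, shrinking $\text{Vol}_g(O)$ arranges $\|S_2 - S'\|_{\calL^r(M, g)} < \epsilon$. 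Another application of Corollary \ref{compact:cor2} then supplies the conformal metric realizing $S_2$.

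The main subtlety, echoing Remark \ref{compact:re2} and Remark \ref{closed:re3}, is that the density here is only $\calL^r$ and not $\calC^0$: the construction requires $S_1$ and $S_2$ to agree with a positive constant on an open set lying inside a region where $g$ is not locally conformally flat, which rules out $\calC^0$-approximation in general. This location constraint is precisely what will be lifted in \S6 to upgrade the conclusion to $\calC^0$-density on manifolds without SCENARIO A. Technically, one must also ensure the mollification is performed in a local chart and $\delta$ is chosen much smaller than both $\text{dist}(\partial O', \partial O)$ (for (i)) and the radius of $O$ (for (ii)), so that the resulting functions remain smooth on all of $\bar{M}$ and satisfy the prescribed equalities outside the modification region.
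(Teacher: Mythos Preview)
Your proposal is correct and follows essentially the same approach as the paper, which simply says the proof is ``exactly the same as in Corollary \ref{closed:cor3} since we only mollify $S'$ within an inner open subset of $\bar{M}$.'' The only minor difference is that you invoke Corollary \ref{compact:cor2} directly, whereas the closed-manifold template first arranges $R_g<0$ on a subset of $O$ and then applies the more restrictive Theorem \ref{closed:thm2}; your choice is a harmless streamlining since Corollary \ref{compact:cor2} already absorbs that reduction.
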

\begin{proof} Again the proof of these statements are exactly the same as in Corollary \ref{closed:cor3} since we only mollify $ S' $ within an inner open subset of $ \bar{M} $.
\end{proof}
\begin{remark}\label{compact:re3}
The result of Corollary \ref{compact:cor3} states that any smooth function on $ \bar{M} $ is ``almost" a prescribed scalar curvature of some metric under conformal change in the sense of the $ \calL^{r} $-closeness, meanwhile the boundary is minimal after conformal change.
\end{remark}
\medskip

\section{Prescribed Scalar Curvature on $ n $-Sphere and Its Quotient}
In this section, we first discuss the most special case, the $ n $-sphere $ (\mathbb{S}^{n}, g_{\mathbb{S}^{n}} ) $ with $ n \geqslant 3 $. We will first give some results in terms of the functions that can be realized as prescribed scalar curvature functions of some Yamabe metric $ \tilde{g} \in [g_{\mathbb{S}^{n}}] $. We then will discuss the speciality of local solutions of the Yamabe equation on $ n $-sphere, and give some criterion which give some forbidden cases on locally conformally flat manifolds. It turns that all manifolds with this type of analytical criterion is of the form $ \mathbb{S}^{n} \slash \Gamma $ with some (possibly trivial) Kleinian group $ \Gamma $.

We start with some basic facts. On $ n $-sphere, we could introduce the local charts in terms of stereographic projections by removing one of two antipodal points, respectively. Without loss of generality, we discuss the stereographic projections on $ \mathbb{S}^{n} \backslash \lbrace N \rbrace $ and $ \mathbb{S}^{n} \backslash \lbrace S \rbrace $, respectively. Here the north pole is denoted by $ N = (0, \dotso, 0, 1) $ and the south pole is $ S = (0, \dotso, 0, -1) $. Two stereographic projections $ \sigma_{1} $ and $ \sigma_{2} $ can be defined as
\begin{equation}\label{sphere:eqn1}
\begin{split}
\sigma_{1} & : \mathbb{S}^{n} \backslash {N} \rightarrow \mathbb{R}^{n}, (\xi, \tau) = (\xi_{1}, \dotso, \xi_{n}, \tau) \mapsto (x_{1}, \dotso, x_{n}) = x; \\
\sigma_{2} & : \mathbb{S}^{n} \backslash {S} \rightarrow \mathbb{R}^{n}, (\xi, \tau) = (\xi_{1}, \dotso, \xi_{n}, \tau) \mapsto (y_{1}, \dotso, y_{n}) = y.
\end{split}
\end{equation}
Here $ (\xi, \tau) $ is the global coordinate in ambient space $ \R^{n+1} $. The $ \tau $-axis passes through both north and south poles. We can check easily that
\begin{equation*}
\sigma_{1}(\xi, \tau) = \frac{\xi}{1 - \tau}, \sigma_{2}(\xi, \tau) = \frac{\xi}{1 + \tau}.
\end{equation*}
It follows that
\begin{equation*}
\sigma_{1}^{-1}(x) = \left(\frac{2x}{1 + \lvert x \rvert^{2}}, \frac{\lvert x \rvert^{2} - 1}{\lvert x \rvert^{2} + 1}\right), \sigma_{2}^{-1}(y) = \left(\frac{2y}{1 + \lvert y \rvert^{2}}, \frac{1 - \lvert y \rvert^{2}}{\lvert y \rvert^{2} + 1}\right).
\end{equation*}
Since both $ \sigma_{1}^{-1} $ and $ \sigma_{2}^{-1} $ are global parametrizations of $ \mathbb{S}^{n} \backslash {N} $, $ \mathbb{S}^{n} \backslash {S} $, respectively, we can compute that
\begin{equation}\label{sphere:eqn2}
\begin{split}
\left(\sigma_{1}^{-1} \right)^{*} g_{\mathbb{S}^{n}} & = \left( \frac{2}{1 + \lvert x \rvert^{2}} \right)^{2} g_{e} : = \Phi^{p-2} g_{e}; \\
\left( \sigma_{2}^{-1} \right)^{*}  g_{\mathbb{S}^{n}} & = \left( \frac{2}{1 + \lvert y \rvert^{2}} \right)^{2} g_{e} = \Phi^{p-2} g_{e}.
\end{split}
\end{equation}
Note that the local expression of $ g_{\mathbb{S}^{n}} $ after stereographic projections are conformal changes of the Euclidean metric. $ \sigma_{1} $ and $ \sigma_{2} $ are typical examples of conformal diffeomorphisms. We are only interested in chart maps of manifolds that are conformal diffeomorphisms, which is defined as follows:
\begin{definition}\label{sphere:def0}
Let $ (\bar{M}, g) $ be a compact manifold with or without boundary. Let $ (U, \psi) $ be a chart of $ \bar{M} $, we say that $ \psi $ is a conformal diffeomorphism if $ \left(\psi^{-1} \right)^{*} g $ is conformal to the Euclidean metric $ g_{e} $.
\end{definition}
Taking a neighborhood $ \Omega_{0} $ of the south pole $ S $ within $ \mathbb{S}^{n} $. For each $ \epsilon > 0 $, set
\begin{equation*}
\Omega_{0, \epsilon} = \lbrace x \in \Omega_{0} | d(x, N) > \epsilon \rbrace.
\end{equation*}
We choose $ \epsilon $ small enough so that the ball $ B_{\epsilon}(N) \subset \Omega_{0} $. We consider the following Dirichlet problem
\begin{equation}\label{sphere:eqn3}
\begin{split}
-a\Delta_{g_{\mathbb{S}^{n}}} u + n(n  -1) u & = Q u^{p-1} \; {\rm in} \; \Omega_{0, \epsilon}; \\
u & \equiv 0 \; {\rm on} \; \partial \Omega_{0, \epsilon}; \\
u & > 0 \; {\rm in} \; \Omega_{0, \epsilon}.
\end{split}
\end{equation}
Here $ Q \in \calC^{2}(\Omega_{0}) $ and $ \min_{x \in \bar{\Omega}_{0}} Q(x) > 0 $. We also require that the north pole $ N $ is not contained in the open set $ \Omega_{0} $. Without loss of generality, we may assume that $ \nabla Q(S) \neq 0 $ since otherwise we consider the local solution at another point at which $ \nabla Q $ does not vanish. After change of variables, we may still treat that point as the south pole. We would like to mention that the extremal case is $ \nabla Q \equiv 0 $ on $ \mathbb{S}^{n} $. For this case, the problem is reduced to the Yamabe problem.

Note that $ n(n - 1) $ is the scalar curvature on $ \mathbb{S}^{n} $ with respect to the standard metric $ g_{\mathbb{S}^{n}} $. We choose the stereographic projection map $ \sigma_{1} $ to localize the PDE, it follows that the south pole becomes the origin of $ \R^{n} $. Denote $ \Omega_{1, \epsilon} = \sigma_{1}(\Omega_{0, \epsilon}) $. By the conformal invariance (\ref{local:eqn24}) and the parametrization of $ g_{\mathbb{S}^{n}} $ in terms of stereographic projection, the Yamabe equation in (\ref{sphere:eqn3}) is equivalent to
\begin{equation*}
Q u^{P-1} = -a\Delta_{g_{\mathbb{S}^{n}}} u + n(n - 1) u = \Phi^{1 - p} (-a\Delta_{e}) (\Phi u) \Rightarrow -a\Delta_{e}(\Phi u) = Q (\Phi u)^{p-1}.
\end{equation*}
The function $ \Phi $ above is defined in (\ref{sphere:eqn2}). Thus the Dirichlet problem (\ref{sphere:eqn3}) is reduced to the following Dirichlet problem
\begin{equation}\label{sphere:eqn4}
\begin{split}
-a\Delta_{e} v(x) & = Q(x) v^{p-1}(x) \; {\rm in} \; \Omega_{1, \epsilon}; \\
v(x) & \equiv 0 \; {\rm on} \; \partial \Omega_{1, \epsilon}; \\
v(x) & > 0 \; {\rm in} \; \Omega_{1, \epsilon}.
\end{split}
\end{equation}
It is equivalent to say that if there exists some $ v $ that solves (\ref{sphere:eqn4}), then $ \Phi^{-1} v $ solves (\ref{sphere:eqn3}). On the other hand, the function $ \Phi u $ solves (\ref{sphere:eqn4}) provided that $ u $ solves (\ref{sphere:eqn3}). Due to Theorem \ref{local:thm4}, we conclude that (\ref{sphere:eqn4}) admits a solution $ v $.  Hence (\ref{sphere:eqn3}) has a solution $ u : = \Phi^{-1} v $. But we can still apply the stereographic projection map $ \sigma_{2} $ to parametrize $ g_{\mathbb{S}^{n}} $. Since after both $ \sigma_{1} $ and $ \sigma_{2} $, the conformal factors are the same. It follows that
\begin{equation}\label{sphere:eqn5}
\begin{split}
-a\Delta_{e} v(y) & = Q(y) v^{p-1}(y) \; {\rm in} \; \Omega_{2, \epsilon}; \\
v(y) & \equiv 0 \; {\rm on} \; \partial \Omega_{2, \epsilon}; \\
v(y) & > 0 \; {\rm in} \; \Omega_{2, \epsilon}
\end{split}
\end{equation}
has a solution. In particular, this solution of (\ref{sphere:eqn5}) has the same expression as the solution $ v $ of (\ref{sphere:eqn4}). We denote both of them by $ v $. This solution $ v $ comes from the analysis of the functional
\begin{equation*}
J(v) = \frac{a}{2} \int_{\Omega_{1, \epsilon}} \lvert \nabla v \rvert^{2} dx - \frac{1}{p} \int_{\Omega_{1, \epsilon}} Q u^{p} dx.
\end{equation*}
For details, we refer to \cite{CFP}. We would like to mention that the solution $ v $ is just a critical point of the functional above, and cannot be the minimizer unless the domain is $ \R^{n} $.

We recall from Remark \ref{local:re5} that the solution $ v $ has a one-to-one correspondence to $ Q $ in the sense that the following two quantities
\begin{equation*}
\frac{\nabla Q(0)}{\lvert \nabla Q(0) \rvert}, \frac{Q(0)}{\lvert \nabla Q(0) \rvert}
\end{equation*}
determines the solution $ v $ and vice versa. When $ \nabla Q(0) = 0 $, this is called ``critical point at infinity" given by A. Bahri. Since $ v $ solves both (\ref{sphere:eqn4}) and (\ref{sphere:eqn5}), it determines the quantities above. We note that
\begin{equation*}
\sigma_{1}^{-1}(0) = S, \sigma_{2}^{-1}(0) = N \Rightarrow Q(x) |_{x = 0} = C Q(y) |_{y = 0}, \nabla Q(x) \bigg|_{x = 0} = C \nabla Q(x) \bigg|_{x = 0}
\end{equation*}
for some constant $ C > 0 $. Actually this constant $ C = 1 $ since we could do the same argument starting at a neighborhood of north pole with the same size and shape, and use $ \sigma_{2} $ instead of $ \sigma_{1} $. Another way to see this is to apply some $ U \in O(n + 1) $ to $ \Omega_{0} $ such that $ U(S) = N $, and note that the spherical Laplacian $ -\Delta_{g_{\mathbb{S}^{n}}} $ is $ O(n + 1) $-invariant in the sense that
\begin{equation*}
\left( -\Delta_{g_{\mathbb{S}^{n}}} f \right) \circ U = -\Delta_{g_{\mathbb{S}^{n}}} (f \circ U), f \in \calC^{\infty}(\mathbb{S}^{n}).
\end{equation*}
Recall that
\begin{equation*}
x = \sigma_{1}(\xi, \tau) = \frac{\xi}{1 - \tau}, y = \sigma_{2}(\xi, \tau) = \frac{\xi}{1 + \tau}.
\end{equation*}
It follows from the argument above that
\begin{equation}\label{sphere:eqn6}
Q(N) = Q(S), \nabla_{\xi_{i}} Q \bigg|_{N} =  \nabla_{\xi_{i}} Q \bigg|_{S}, i = 1, \dotso, n, \nabla_{\tau} Q \bigg|_{N} = -  \nabla_{\tau} Q \bigg|_{S}.
\end{equation}
Since $ \mathbb{S}^{n} $ is $ O(n + 1) $-invariant, we would like to show that (\ref{sphere:eqn6}) must hold at every pair of antipodal points at which $ \nabla Q \neq 0 $, provided that the Dirichlet problem (\ref{sphere:eqn3}) admits a solution. To show this, we need the following definition.
\begin{definition}\label{sphere:def1}
Let $ Q : \mathbb{S}^{n} \rightarrow \R $ be a nonnegative, smooth function. Let $ P $ and $ P' $ be any pair of antipodal points. We say that the function $ Q $ satisfies the {\bf{CONDITION A}} if

(i) either at antipodal points $ P $ and $ P' $ we have
\begin{equation}\label{sphere:eqn7}
\begin{split}
& \nabla^{\mathbb{S}^{n}} Q(P) \neq 0 \Rightarrow \nabla^{\mathbb{S}^{n}} Q(P') \neq 0 \\
\Rightarrow & Q(P) = Q(P'), \nabla_{\xi_{i}}^{\R^{n+1}} Q \bigg|_{P} = \nabla_{\xi_{i}}^{\R^{n+1}} Q \bigg|_{P'}, i = 1, \dotso, n, \nabla_{\tau}^{\R^{n+1}} Q \bigg|_{P} = - \nabla_{\tau}^{\R^{n+1}} Q \bigg|_{P'};
\end{split}
\end{equation}

(ii) or at antipodal points $ P $ and $ P' $ we have
\begin{equation*}
\nabla Q(P) = \nabla Q(P') = 0.
\end{equation*}

(iii) or if $ \nabla Q \equiv 0 $ on $ \mathbb{S}^{n} $, then $ Q $ is a positive constant.
\end{definition}
\begin{remark}\label{sphere:re1}
The argument above shows that if $ Q $ satisfies CONDITION A, then (\ref{sphere:eqn3}) has a solution for some neighborhood $ \Omega_{0, \epsilon} $.
\end{remark}
\begin{remark}\label{sphere:re2}
(i) We list several functions $ Q $ that satisfy the CONDITION A. One obvious candidate is any globally positive constant function. 

Denoting $ (\xi, \tau) $ be the coordinates on $ \mathbb{S}^{n} $ inherited from the ambient space $ \R^{n} $. Next, we see that the restrictions of the functions $ Q(\xi, \tau) = \xi_{i}^{2m}, i = 1, \dotso, n, m \in \mathbb{N} $ or $ Q(\xi, \tau) = \tau^{2m}, m \in \mathbb{N} $ on $ \mathbb{S}^{n} $ satisfy the CONDITION A. So are the finite linear combinations of them, as long as the coefficients are chosen so that the functions are positive on $ \mathbb{S}^{n} $. So are the infinite linear combinations of them, as long as the functions are bounded and positive.

(ii) The forbidden functions are the restrictions of the functions $ \xi_{i}^{2m + 1}, \tau^{2m + 1}, m \in \mathbb{N} $ on $ \mathbb{S}^{n} $, and linear combinations of them.
\end{remark}
\medskip

The local situation we discussed above for the Dirichlet problem (\ref{sphere:eqn3}) on a punched neighborhood of south pole leads to the following result.
\begin{proposition} Let $ Q : \mathbb{S}^{n} \rightarrow \R $ be a nonnegative, smooth function such that $ \nabla Q \not\equiv 0 $ on $ \mathbb{S}^{n} $. Assume that there exists an open subset $ \Omega_{0} \subset \mathbb{S}^{n} $ that contains a point $ P $ at which $ \nabla Q \neq 0 $, and an $ \epsilon > 0 $ such that the Dirichlet problem (\ref{sphere:eqn3}) has a solution. Assume further that $ \Omega_{0} $ does not contain the antipodal point of $ P $. Then $ Q $ must satisfy the CONDITION A.
\end{proposition}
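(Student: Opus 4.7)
The strategy is to use the $O(n+1)$-invariance of $\mathbb{S}^n$ to reduce to the antipodal pair $(S,N)$, and then extract the symmetry conditions of CONDITION A from the fact that one physical solution on the sphere admits two Euclidean descriptions via $\sigma_1$ and $\sigma_2$, each characterized by Remark \ref{local:re5}.

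First, I would pick a rotation $U\in O(n+1)$ with $U(P)=S$ and $U(P')=N$; since $\Box_{g_{\mathbb{S}^n}}$ is $O(n+1)$-invariant, replacing $(u,Q,\Omega_0)$ by $(u\circ U^{-1}, Q\circ U^{-1}, U(\Omega_0))$ preserves the hypothesis, now with a neighborhood of $S$ not containing $N$ and with $\nabla Q(S)\neq 0$. The conditions (\ref{sphere:eqn7}) are covariant under $U$ (the ambient basis $\{\xi_1,\dotsc,\xi_n,\tau\}$ transforms rigidly, and may be chosen so that $\tau$ points along the $\overline{PP'}$-axis), so verifying CONDITION A for the rotated data at $(S,N)$ is equivalent to verifying it for the original $Q$ at $(P,P')$.

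Next, I would run the double localization sketched in the paragraphs preceding the statement. Applying $\sigma_1$ (well-defined on $\Omega_{0,\epsilon}$ since $N\notin\Omega_{0,\epsilon}$) together with the conformal invariance (\ref{local:eqn24}) and the explicit factor (\ref{sphere:eqn2}), the function $v:=\Phi\cdot(u\circ\sigma_1^{-1})$ solves (\ref{sphere:eqn4}) on $\Omega_{1,\epsilon}$ with coefficient $Q_1=Q\circ\sigma_1^{-1}$. Because $\sigma_2\circ\sigma_1^{-1}(x)=x/|x|^2$ and the pulled-back conformal factors under $\sigma_1^{-1}$ and $\sigma_2^{-1}$ are identical functions of Euclidean radius, the Kelvin transform $w(y):=|y|^{-(n-2)}\,v(y/|y|^2)$ is just the same sphere-solution re-expressed via $\sigma_2$, and it satisfies (\ref{sphere:eqn5}) with $Q_2=Q\circ\sigma_2^{-1}$. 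By Remark \ref{local:re5}, $v$ is in one-to-one correspondence with $\bigl(\nabla Q_1(0)/|\nabla Q_1(0)|,\,Q_1(0)/|\nabla Q_1(0)|\bigr)$ and $w$ with $\bigl(\nabla Q_2(0)/|\nabla Q_2(0)|,\,Q_2(0)/|\nabla Q_2(0)|\bigr)$; since $v$ and $w$ describe the same sphere-solution, the two data pairs must coincide.

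Finally, I would translate the coincidence back. Differentiating $\sigma_j^{-1}$ at $0$ gives $\sigma_1^{-1}(0)=S$, $\sigma_2^{-1}(0)=N$, so $Q_1(0)=Q(S)$, $Q_2(0)=Q(N)$, the tangential components $\nabla_{\xi_i}Q_j(0)$ equal $2\nabla_{\xi_i}Q$ at $S$ or $N$ respectively, and the $\tau$-components pick up opposite signs because $\sigma_1^{-1}$ and $\sigma_2^{-1}$ approach the $\tau$-axis from opposite hemispheres at the two poles. Matching the characterizing pairs yields precisely (\ref{sphere:eqn6}), i.e.\ case (i) of CONDITION A at $(S,N)$, and in particular forces $\nabla Q(N)\neq 0$; undoing $U$ returns the conclusion at $(P,P')$. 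The main obstacle is the rigorous justification of the dual localization via $\sigma_2$: since $S\in\Omega_{0,\epsilon}$, the image $\sigma_2(\Omega_{0,\epsilon}\setminus\{S\})$ is an unbounded Euclidean region with a small ball around the origin removed, so one must verify that the Kelvin transform $w$ is genuinely the mountain-pass critical point of the corresponding functional on that unbounded domain in the sense of Theorem \ref{local:thm4}, so that the one-to-one correspondence in Remark \ref{local:re5} still applies on the $\sigma_2$-side. This should follow from the conformal invariance of the underlying Dirichlet functional, but pinning down the critical level and the decay of $w$ at infinity requires careful bookkeeping.
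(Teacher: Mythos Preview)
Your approach mirrors the paper's own proof: rotate by some $U\in SO(n+1)$ to send $(P,P')$ to $(S,N)$, invoke the $O(n+1)$-invariance of $\Box_{g_{\mathbb{S}^n}}$ and the volume form to transport the solution, and then run the dual localization via $\sigma_1,\sigma_2$ together with the one-to-one correspondence of Remark~\ref{local:re5} to force the relations~(\ref{sphere:eqn6}). You are in fact more explicit than the paper about the mechanics: the Kelvin transform, the computation $d\sigma_j^{-1}(0)$, and the sign flip in the $\tau$-direction are all spelled out, whereas the paper simply refers back to ``the argument above''. The obstacle you flag concerning the unbounded image $\sigma_2(\Omega_{0,\epsilon}\setminus\{S\})$ is real and is glossed over in the paper as well --- the paper just asserts that the Dirichlet problem~(\ref{sphere:eqn5}) ``has a solution'' with ``the same expression'' without confronting the fact that Theorem~\ref{local:thm4} and Remark~\ref{local:re5} are stated for bounded domains.

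There is, however, one step present in the paper that your proposal omits. Your argument terminates with the constraint at the single pair $(P,P')$, but CONDITION~A is a statement about \emph{every} antipodal pair. The paper adds a final sentence: once the constraint is established at $(S,N)$ for the rotated problem, one can ``apply $SO(n+1)$-action on the PDE and the domain so that the antipodal pairs $N,S$ can be mapped to any pairs at one of which $\nabla Q$ does not vanish'', and thereby obtain~(\ref{sphere:eqn7}) at every such pair. You should include this globalization step to match the stated conclusion --- and, if you want to be more careful than the paper, you might pause to examine whether rotating a single fixed solution on a single fixed domain genuinely yields constraints on $Q$ at \emph{arbitrary} antipodal pairs, or whether some additional hypothesis is being used tacitly.
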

\begin{proof}
Let $ \Omega_{0, \epsilon} $ be the region on which (\ref{sphere:eqn3}) has a solution for some $ \epsilon $. There exists an element $ U \in SO(n + 1) $ such that $ U(P) = S $. Denote new coordinates $ y = Ux $. The solution of (\ref{sphere:eqn3}) is a critical point of the functional
\begin{equation*}
J(u) = \frac{a}{2} \int_{\Omega_{0, \epsilon}} \lvert \nabla_{g_{\mathbb{S}^{n}}} u \rvert^{2} \dvol + \frac{n(n - 1)}{2} \int_{\Omega_{0, \epsilon}} u^{2} \dvol - \frac{1}{p} \int_{\Omega_{0, \epsilon}} Q u^{p-1} \dvol
\end{equation*}
The spherical Laplacian is $ O(n + 1) $-invariant. It is well known that the volume form is $ O(n + 1) $-invariant. Since $ \det U = 1 $, it follows that $ u(y) $ is also a critical point of $ J(u) $ with respect to the new domain $ U(\Omega_{0, \epsilon}) $. It means that $ u $ solves the PDE
\begin{equation*}
-a\Delta_{g_{\mathbb{S}^{n}}} u + n(n - 1) u = Q u^{p-1} \; {\rm in} \; U(\Omega_{0, \epsilon}), u > 0 \; {\rm in} \; U(\Omega_{0, \epsilon}), u = 0 \; {\rm on} \; \partial U(\Omega_{0, \epsilon}).
\end{equation*}
By the argument above, $ Q $ must satisfy (\ref{sphere:eqn6}) at $ N $ and $ S $. We can apply $ SO(n + 1) $-action on the PDE and the domain so that the antipodal pairs $ N, S $ can be mapped to any pairs at one of which $ \nabla Q $ does not vanish. Therefore (\ref{sphere:eqn7}) holds when the gradient of $ Q $ does not vanish. It follows that $ Q $ must satisfy CONDITION A. 
\end{proof}
Now we show that every function $ S \in \calC^{\infty}(\mathbb{S}^{n}) $ that satisfies the CONDITION A in Definition \ref{sphere:def1} is realized a prescribed scalar curvature function of some metric $ g \in [g_{\mathbb{S}^{n}}] $.
\begin{theorem}\label{sphere:thm1}
Let $ (\mathbb{S}^{n}, g_{\mathbb{S}^{n}}) $ be the $ n $-sphere with standard metric, $ n \geqslant 3 $. Assume that the function $ S \in \calC^{\infty}(\mathbb{S}^{n}) $ satisfies the CONDITION A in Definition \ref{sphere:def1}. Then the function $ S $ can be realized as a prescribed scalar curvature function of some metric $ \tilde{g} $, pointwise conformal to $ g_{\mathbb{S}^{n}} $.
\end{theorem}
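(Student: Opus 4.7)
The plan is to construct a positive smooth solution of $\Box_{g} u = S u^{p-1}$ on $\mathbb{S}^{n}$ (for an appropriate background metric $g \in [g_{\mathbb{S}^n}]$) via the monotone iteration scheme of Theorem \ref{closed:thm1}, using the locally conformally flat local existence result of Proposition \ref{local:prop5} together with sub-/super-solution constructions modeled on Lemmas \ref{closed:lemma1} and \ref{closed:lemma2}. The role of CONDITION A here is to guarantee, in the non-constant case, a point $\rho \in \mathbb{S}^{n}$ with $\nabla S(\rho) \neq 0$ at which Theorem \ref{local:thm4} can be invoked; if instead $\nabla S \equiv 0$, then CONDITION A (iii) forces $S$ to be a positive constant and the theorem reduces to the classical Yamabe problem on the round sphere, which is solved.

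Assume henceforth that $S$ is non-constant. I would first fix $\rho \in \mathbb{S}^{n}$ with $\nabla S(\rho) \neq 0$, choose a small open geodesic ball $O$ around $\rho$ whose closure excludes the antipode $\rho'$, and arrange $\nabla S \neq 0$ on $\bar{O}$ by shrinking $O$. Because $R_{g_{\mathbb{S}^{n}}} = n(n-1) > 0$ everywhere, the gluing of Lemma \ref{closed:lemma2} is not immediately available, so I would invoke Proposition \ref{closed:thm3} to replace $g_{\mathbb{S}^n}$ by a pointwise conformal metric $\bar{g} = v^{p-2} g_{\mathbb{S}^{n}}$ with $R_{\bar{g}}(\rho) < 0$ (hence negative on a neighborhood of $\rho$ contained in $O$). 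Note that the Weyl tensor of $\bar{g}$ still vanishes identically on $\mathbb{S}^{n}$, so Proposition \ref{local:prop5}, and not Proposition \ref{local:prop4}, is the correct local existence tool.

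Applying Proposition \ref{local:prop5} on $(O, \bar{g})$ with $Q = S$ and base point $\rho$ yields, for each sufficiently small $\epsilon > 0$, a positive smooth function $u_{1} \in \calC^{\infty}(O_\epsilon) \cap \calC^{0}(\overline{O_\epsilon})$ satisfying
\begin{equation*}
-a\Delta_{\bar{g}} u_{1} + R_{\bar{g}} u_{1} = S u_{1}^{p-1} \text{ in } O_{\epsilon}, \quad u_{1} \equiv 0 \text{ on } \partial O_{\epsilon},
\end{equation*}
where $O_{\epsilon} = \{ x \in O : d(x, \rho) > \epsilon \}$. Extending $u_{1}$ by zero to all of $\mathbb{S}^{n}$ produces a weak sub-solution $u_{-} \in H^{1}(\mathbb{S}^{n}, \bar{g}) \cap \calC^{0}(\mathbb{S}^{n})$ of the global Yamabe equation with respect to $\bar g$, exactly as in Lemma \ref{closed:lemma1}. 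For the super-solution, I would take $\phi = \theta \varphi$ with $\varphi > 0$ the first eigenfunction of $\Box_{\bar{g}}$ (which exists because the sign of $\eta_{1}$ is conformally invariant) and $\theta > 0$ so small that $\eta_{1} \phi > 2^{p-2} (\max_{\mathbb{S}^{n}} S) \phi^{p-1}$ pointwise; then I would glue $u_{1}$ with $\phi$ inside $O_{\epsilon}$ via the partition-of-unity scheme of Lemma \ref{closed:lemma2} to obtain $u_{+} \in \calC^{\infty}(\mathbb{S}^{n})$ with $u_{+} \geq u_{-}$ and $-a\Delta_{\bar{g}} u_{+} + R_{\bar{g}} u_{+} \geq S u_{+}^{p-1}$ on $\mathbb{S}^n$. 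Theorem \ref{closed:thm1} then delivers a smooth solution $u$ with $u_{-} \leq u \leq u_{+}$, positive by the strong maximum principle, and $\tilde{g} = u^{p-2} \bar{g} = (uv)^{p-2} g_{\mathbb{S}^{n}}$ has scalar curvature $S$.

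The main obstacle is the super-solution gluing: unlike in Lemma \ref{closed:lemma2}, where the local solution lives on a simply connected $\Omega$, here $O_{\epsilon}$ has an inner boundary $\partial B_{\epsilon}(\rho)$ in addition to the outer boundary $\partial O$. However $u_{1}$ vanishes on both boundary components while $\phi > 0$ everywhere, so the level-set analysis producing the cut-offs $\chi_{1}, \chi_{2}, \chi_{3}$ in the proof of Lemma \ref{closed:lemma2} can be carried out verbatim on each annular neighborhood where $u_{1} - \phi$ changes sign. The resulting super-solution is smooth across both boundaries by the same estimates (\ref{closed:eqn10}), since $R_{\bar{g}}$ is smooth and bounded on $\mathbb{S}^{n}$, and I expect the remainder of the argument to conclude exactly as in Theorem \ref{closed:thm2} and Corollary \ref{closed:cor1}.
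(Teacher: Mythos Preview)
Your proposal is correct and follows essentially the same route as the paper. The paper's own proof is two sentences: in the non-constant case it invokes Theorem~\ref{local:thm4} to produce a local solution of (\ref{sphere:eqn3}) on a punctured neighborhood $\Omega_{0,\epsilon}$, and then declares that ``the rest of the proof is exactly the same as in Theorem~\ref{closed:thm2} and Corollary~\ref{closed:cor1}''. You have unpacked precisely that: pass to $\bar g$ with $R_{\bar g}<0$ near the base point via Proposition~\ref{closed:thm3}, use Proposition~\ref{local:prop5} (the conformally flat local existence) in place of Proposition~\ref{local:prop4}, extend by zero for the sub-solution, glue with a scaled first eigenfunction for the super-solution, and run Theorem~\ref{closed:thm1}. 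If anything you are more careful than the paper, since you flag that $O_\epsilon$ is annular (two boundary components) and that $S$ is non-constant on the local domain, whereas the template Lemmas~\ref{closed:lemma1}--\ref{closed:lemma2} were written for a ball with $S\equiv\lambda$; the paper glosses over both adaptations.

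One minor sharpening: your sentence ``the role of CONDITION~A here is to guarantee \dots\ a point $\rho$ with $\nabla S(\rho)\neq 0$'' undersells the hypothesis. The existence of such a point follows already from $S$ being non-constant; CONDITION~A enters (in the paper's framing) as the compatibility condition ensuring that the CFP local solution on $\Omega_{0,\epsilon}$, which is pinned down by $Q(\rho)$ and $\nabla Q(\rho)$, is consistent with both stereographic parametrizations---this is the content of Remark~\ref{sphere:re1} and the discussion preceding it. For the bare existence argument you wrote down, Theorem~\ref{local:thm4} needs only $Q>0$ on $\bar O$ and $\nabla Q(\rho)\neq 0$, which you correctly verify.
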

\begin{proof}
If $ Q $ is a globally constant function, then there is nothing to proof. If $ Q $ satisfies the CONDITION A and $ \nabla Q(P) \neq 0 $, we conclude by Theorem \ref{local:thm4} that there exists a region $ \Omega_{0, \epsilon} $ such that (\ref{sphere:eqn3}) has a solution.

The rest of the proof is exactly the same as in Theorem \ref{closed:thm2} and Corollary \ref{closed:cor1}.
\end{proof}
\medskip

There have been many obstructions of prescribed scalar curvature functions on $ \mathbb{S}^{n} $ given in earlier papers. For example, there are Kazdan-Warner obstruction in \cite{KW}, the obstruction in terms of conformal Killing field given by Bourguignon and Ezin \cite{BE}, and other type of obstruction in terms prescribed Morse scalar curvature function given by Malchiodi and Mayer \cite{MaMa}. We now show that if $ S $ satisfies CONDITION A, then it does not belong to any category of obstructions mentioned above.

We know that $ \mathbb{S}^{n} \subset \R^{n+1} $. We denote coordinates on $ \R^{n + 1} $ be $ (z_{1}, \dotso, z_{n + 1}) $. We first check the Kazdan-Warner obstruction, which is
\begin{equation*}
\int_{\mathbb{S}^{n}} \left(\nabla_{g_{\mathbb{S}^{n}}}  H \cdot \nabla_{g_{\mathbb{S}^{n}}}  S \right) u^{p} \dvol = 0
\end{equation*}
if $ u $ solves the Yamabe equation $ -a\Delta_{g_{\mathbb{S}^{n}}} u + n(n - 1) u = S u^{p-1} $ on $ \mathbb{S}^{n} $. As an example, we choose $ S = z_{j}^{2k} $ for some $ k \in \mathbb{N} $ and $ H $ is a linear combination of coordinate functions. It is clear that $ \nabla_{g_{\mathbb{S}^{n}}} \cdot \nabla_{g_{\mathbb{S}^{n}}} z_{j}^{2k} $ will change sign. In general, if $ S $ is monotone along the direction of $ H $, then either $ \nabla_{g_{\mathbb{S}^{n}}} S \equiv 0 $ or there exists at least one pair of antipodal points at which $ \nabla S $ are the same in all directions by pulling back to coordinates in ambient spaces, contradicts to CONDITION A.

Now we check the obstruction in terms of conformal killing fields given by Bourguignon and Ezin \cite{BE}. A vector field $ X_{g} $ with respect to the metric $ g $ is a conformal Killing field with respect to the closed manifold $ (M, g) $ if and only if 
\begin{equation*}
\mathcal{L}_{X} g = f g
\end{equation*}
for some smooth function $ f \in \calC^{\infty}(M) $. The obstruction given by Bourguignon and Ezin \cite{BE} states that
\begin{equation*}
\int_{M} X_{g} \cdot \nabla R_{g} \dvol = 0
\end{equation*}
provided that $ R_{g} $ is the scalar curvature with respect to $ g $. Note that any conformal vector field with respect to $ g $ is still a conformal vector field after conformal change $ g \mapsto u^{2} g $. It follows from the calculation that
\begin{equation*}
\mathcal{L}_{X_{g}} (u^{2} g )= \left( \mathcal{L}_{X_{g}} u^{2} \right)g + u^{2} \mathcal{L}_{X} g = \left( X_{g}(u^{2}) + fu^{2} \right) g.
\end{equation*}
Let's check the space of conformal Killing field on $ \mathbb{S}^{n} $ with respect to the standard metric $ g_{\mathbb{S}^{n}} $. It is a fact that the space of conformal Killing field $ \mathfrak{conf}(\mathbb{S}^{n}) $ is isomorphic to $ SO(n + 1, 1) $. The dimension of $ SO(n + 1, 1) $ is $ \frac{(n + 1)(n + 2)}{2} $. On one hand, Killing fields are definitely conformal Killing fields. The isometries of $ \mathbb{S}^{n + 1} $ forms a group that is isomorphic to $ O(n + 1) $, thus the dimensions of the collection of Killing fields $ \mathfrak{isom}(\mathbb{S}^{n}) $ on $ \mathbb{S}^{n} $ is $ \frac{n (n + 1)}{2} $. On the other hand, it is straightforward to check that the vector fields
\begin{equation*}
X_{a, g_{\mathbb{S}^{n}}} = a - (a \cdot z)z
\end{equation*}
are conformal Killing fields with respect to the standard metric $ g_{\mathbb{S}^{n}} $. There are exactly $ (n + 1) $ linearly independent $ X_{a, g_{\mathbb{S}^{n}}} $ due to the choice of $ a \in \R^{n + 1} $. Furthermore, any flow generated by $ X_{a, g_{\mathbb{S}^{n}}} $ is not an isometry and thus they do not belong to $ \mathfrak{isom}(\mathbb{S}^{n}) $. Denote
\begin{equation*}
\mathcal{X} = \lbrace X_{a, g_{\mathbb{S}^{n}}} | a \in \R^{n+1}, a \neq 0 \rbrace.
\end{equation*}
According to dimensional reason, it follows that
\begin{equation*}
\mathfrak{conf}(\mathbb{S}^{n}) = \mathfrak{isom}(\mathbb{S}^{n}) \oplus \mathcal{X}.
\end{equation*}
Thus every conformal Killing field is a linear combination of a Killing field and an element of $ \mathcal{X} $. Let $ R_{g} $ be the prescribed scalar curvature with respect to $ g \in [g_{\mathbb{S}^{n}}] $, it follows from the obstruction of Bourguignon and Ezin that
\begin{equation}\label{sphere:eqn8}
\int_{\mathbb{S}^{n}} X_{a, g_{\mathbb{S}^{n}}} \cdot \nabla R_{g} \text{dvol}_{g_{\mathbb{S}^{n}}} = 0
\end{equation}
Only the $ \mathcal{X} $ part contributes nontrivially in (\ref{sphere:eqn8}) since (\ref{sphere:eqn8}) holds for any killing fields in $ \mathfrak{isom}(\mathbb{S}^{n}) $ for all functions $ R_{g} \in \calC^{\infty}(\mathbb{S}^{n}) $. 

As a motivation, assume that $ R_{g} $ is any linear combination of nonnegative even power coordinate functions in terms of $ (z_{1}, \dotso, z_{n+ 1}) $. Then $ \nabla R_{g} $ is a linear combination of positive odd power coordinate functions. According to the expression of $ X_{a, g_{\mathbb{S}^{n}}} $, it follows that the equation in (\ref{sphere:eqn8}) holds. In general, all functions satisfying the CONDITION A satisfies (\ref{sphere:eqn8}) due to the symmetry among all antipodal points on $ \mathbb{S}^{n} $.

Lastly, Machiodi and Mayer \cite{MaMa} constructed a function in terms of $ z_{n + 1} $ that has non-degenerate maximum at the North pole; in addition, all other critical points with positive Laplacian are accumulating near the South pole. We point out that if a function $ S $ satisfies the CONDITION A, then there would also be critical points with positive Laplacian near the North pole due to the symmetry of Hessian, which violates the properties of the function they constructed in \cite{MaMa}. 
\medskip

For the rest of this section, we discuss potential obstruction on prescribed scalar curvature function on all compact manifolds $ (\bar{M}, g) $ with dimensions at least 3. Without loss of generality, we may assume that $ \bar{M} $ is connected since otherwise we can discuss each connected components instead. According to the example $ n $-sphere, we see that the geometry and $ O(n + 1) $-invariance of $ \mathbb{S}^{n} $ as well as the spherical Laplacian result in the obstructions of prescribed scalar curvature. We now define the {\bf{SCENARIO A}} as follows for all compact manifolds, with or without boundary, provided that the first eigenvalues, $ \eta_{1} $ or $ \eta_{1}' $ respectively, of conformal Laplacians, are positive. We will see that when a is categorized in the SCENARIO A, it is locally conformal flat, then there will be obstructions for prescribed scalar curvature functions for some Yamabe metric in the type that generalizes the CONDITION A.
\begin{definition}\label{sphere:def2} Let $ (\bar{M}, g) $ be a connected, compact manifold, with or without smooth boundary, and with $ \dim \bar{M} \geqslant 3 $. We say that the manifold $ (\bar{M}, g) $ is of {\bf{SCENARIO A}} if there exist at least a pair of points $ \rho, \rho' \in \bar{M} $ and an open subset $ O \subset M \subset \bar{M} $ such that:

(i) The first eigenvalue of conformal Laplacian, possibly with Robin condition as mentioned in \S4, is positive; 

(ii) $ \rho \in O $ and $ \rho' \notin \bar{O} $, or vice versa;

(iii) There exists two charts $ (U_{1}, \sigma_{1}) $ and $ (U_{2}, \sigma_{2}) $ such that both maps $ \sigma_{1} $ and $ \sigma_{2} $ are conformal diffeomorphisms; furthermore, $ O \subset U_{1} \cap U_{2} $, $ \rho \in U_{1} $ and $ \rho' \in U_{2} $;

(iv) For $ g_{e} $ the Euclidean metric, we have
\begin{equation}\label{sphere:eqn9}
\left( \sigma_{1}^{-1} \right)^{*} g = \left( \sigma_{2}^{-1} \right)^{*} g = v^{2} g_{e}, \sigma_{1}(\rho) = \sigma_{2}(\rho')
\end{equation}
for some positive, smooth function $ v $.
\end{definition}
\begin{remark}
The $ n $-sphere $ (\mathbb{S}^{n}, g_{\mathbb{S}^{n}} ) $ satisfies the SCENARIO A. We would also like to mention that there might be more than one pair of points $ \rho, \rho' $ that satisfies the conditions (i), (ii), (iii) in Definition \ref{sphere:def2}.
\end{remark}
Immediately, we see that any manifold that is globally conformally flat does not satisfy the SCENARIO A, since in this case, the first eigenvalue, which is a conformal invariance, should be zero. It follows that the two charts are not the same and neither of them covers the whole manifold.

We want to classify the manifolds that satisfy the SCENARIO A. A beautiful and fundamental result, due to Schoen and Yau \cite{SY}, plays an important role here.
\begin{theorem}\label{sphere:thm2}\cite{SY}
Let $ (\bar{M}, g) $ be a compact, locally conformally flat manifold, without or with boundary. Let $ \tilde{\bar{M}} $ be the universal cover of $ \bar{M} $, respectively. If the scalar curvature of $ \bar{M} $ satisfies $ R_{g} > 0 $, then the universal cover $ \tilde{\bar{M}} $ is conformally embedded in $ (\mathbb{S}^{n}, g_{\mathbb{S}^{n}}) $. Moreover, the fundamental group $ \pi_{1}(\bar{M}) $ is isomorphic to some Kleinian group $ \Gamma \subset \mathfrak{conf}(\mathbb{S}^{n}) $ such that
\begin{equation}\label{sphere:eqn10}
\tilde{\bar{M}}  \cong \mathbb{S}^{n} \slash \Lambda, \bar{M} \cong \tilde{\bar{M}} \slash \Gamma.
\end{equation}
Here $ \Lambda $ is the limit set of the Kleinian group $ \Gamma $.
\end{theorem}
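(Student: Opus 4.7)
The plan is to construct a developing map $D\colon \tilde{\bar{M}} \to \mathbb{S}^n$ and show it is a conformal embedding whose image complement is the limit set of a Kleinian group action. The starting point is Liouville's rigidity theorem: in dimension $n \geqslant 3$, every conformal map between connected open subsets of $\mathbb{S}^n$ is the restriction of a Möbius transformation, i.e.\ an element of $\mathfrak{conf}(\mathbb{S}^n) \cong SO(n+1,1)$. Since $(\bar{M}, g)$ is locally conformally flat, fix a basepoint $p_0$ and an initial conformal chart $\phi_0$ from a neighborhood of $p_0$ into $\mathbb{S}^n$. Along any path in $\tilde{\bar{M}}$, successive conformal charts can be reglued by Möbius transformations determined uniquely on overlaps; this yields a well-defined local conformal diffeomorphism $D\colon \tilde{\bar{M}} \to \mathbb{S}^n$ and a holonomy homomorphism $\pi_1(\bar{M}) \to \mathfrak{conf}(\mathbb{S}^n)$ whose image $\Gamma$ is the candidate Kleinian group, with $D$ automatically $\Gamma$-equivariant with respect to the deck action.

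The main obstacle is injectivity of $D$; this is where $R_g > 0$ is essential. The strategy is Schoen's conformal blowup on the universal cover. Since $R_g > 0$ on compact $\bar{M}$ one has $\eta_1 > 0$, so on $(\tilde{\bar{M}}, \tilde{g})$ the conformal Laplacian $\Box_{\tilde{g}}$ admits a positive Green's function $G_p$ with pole at an arbitrary $p \in \tilde{\bar{M}}$. The conformally related metric $\bar{g}_p := G_p^{p-2}\,\tilde{g}$ is scalar-flat and complete on $\tilde{\bar{M}} \setminus \{p\}$; pushing forward via $D$ shows that the end at $p$ is modeled on the complement of a point in $\mathbb{S}^n$ under stereographic projection, hence is asymptotically Euclidean in the sense required for the positive mass theorem. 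Schoen and Yau's positive mass theorem in the locally conformally flat category (proved by their minimal hypersurface induction together with the rigid reflection structure available in the conformally flat case) forces the ADM mass to be strictly positive, with rigidity only for the flat model. If one assumes $D(p) = D(q)$ for distinct $p, q \in \tilde{\bar{M}}$, then the two sheets of $D$ near the common image supply an isometric identification of the ends of $\bar{g}_p$ and $\bar{g}_q$ that can be combined into a conformally flat, asymptotically Euclidean metric with the wrong asymptotic structure at infinity, contradicting the rigidity clause of positive mass. This contradiction is the technical heart of the argument.

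With $D$ injective, $\Omega := D(\tilde{\bar{M}}) \subset \mathbb{S}^n$ is open, connected, and $\Gamma$-invariant, and $D$ descends to an isometric identification $\bar{M} \cong \Omega \slash \Gamma$. Compactness of $\bar{M}$ forces $\Gamma$ to be discrete in $\mathfrak{conf}(\mathbb{S}^n)$ and to act properly discontinuously on $\Omega$, so $\Gamma$ is a Kleinian group in the classical sense. Setting $\Lambda := \mathbb{S}^n \setminus \Omega$, a standard orbit-accumulation argument using the compactness of the quotient shows that $\Lambda$ is exactly the limit set of $\Gamma$: every $\Gamma$-orbit in $\Omega$ clusters only on $\Lambda$, and conversely every point of $\Lambda$ is such a cluster point. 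This yields $\pi_1(\bar{M}) \cong \Gamma$, $\tilde{\bar{M}} \cong \mathbb{S}^n \slash \Lambda$, and $\bar{M} \cong \tilde{\bar{M}} \slash \Gamma$ as claimed.
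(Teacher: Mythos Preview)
The paper does not prove this theorem; it is quoted from Schoen--Yau \cite{SY} as a black box and used in the subsequent propositions without any argument supplied. So there is no ``paper's own proof'' to compare against.

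That said, your outline is a faithful sketch of the Schoen--Yau strategy: developing map via Liouville rigidity, holonomy into $\mathfrak{conf}(\mathbb{S}^n)$, and injectivity forced by the Green's function blowup together with the positive mass theorem. The one place where your sketch is genuinely loose is the injectivity step. The sentence ``the two sheets of $D$ near the common image supply an isometric identification of the ends of $\bar{g}_p$ and $\bar{g}_q$ that can be combined into a conformally flat, asymptotically Euclidean metric with the wrong asymptotic structure'' is not quite how the contradiction runs. In Schoen--Yau the argument is that the scalar-flat metric $G_p^{4/(n-2)}\tilde g$ on $\tilde{\bar M}\setminus\{p\}$ is asymptotically flat with \emph{nonnegative} mass, and one compares the Green's function $G_p$ with the pullback under $D$ of the Green's function of the round sphere at $D(p)$; failure of injectivity produces a second singularity in this comparison, which violates the maximum principle and the mass inequality. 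Your ``combined metric'' picture does not literally make sense, though it gestures at the right tension. Also, completeness of $G_p^{4/(n-2)}\tilde g$ away from $p$ is not automatic when $\tilde{\bar M}$ is noncompact and deserves a word. These are refinements rather than fatal gaps, and since the paper itself simply cites the result, your level of detail already exceeds what the paper provides.
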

We now observe that when $ \eta_{1} $ or $ \eta_{1}' $ is positive, $ \bar{M} $ is covered by at least two charts. We start with two simple facts.
\begin{proposition}\label{sphere:prop2}
Let $ (\bar{M}, g) $ be a compact manifold that satisfies the SCENARIO A, with or without boundary, with $ \dim \bar{M} \geqslant 3 $. If $ \bar{M} $ is only covered by two different charts, each is homeomorphic to $ \R^{n} $, then $ \bar{M} $ is isomorphic to $ \mathbb{S}^{n} $.
\end{proposition}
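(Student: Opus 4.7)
The plan is to combine the topological constraint (that $\bar M$ is covered by two Euclidean charts, which sharply restricts $\pi_1$) with the analytic content of SCENARIO A and the Schoen-Yau classification (Theorem \ref{sphere:thm2}) of compact, locally conformally flat manifolds carrying a conformal metric of positive scalar curvature. First, since each of the two charts is homeomorphic to $\R^n$, the boundary $\partial M$ must be empty, so $\bar M = M$ is a closed manifold. The hypothesis $\bar M = U_1 \cup U_2$ combined with condition (iv) of Definition \ref{sphere:def2} gives $(\sigma_i^{-1})^* g = v^2 g_e$ on each chart, so $g$ is locally conformally flat everywhere on $\bar M$; in particular its Weyl tensor vanishes identically. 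Condition (i) of SCENARIO A says $\eta_1 > 0$, so the Yamabe invariant $\lambda(\bar M)$ is positive, and the resolution of the Yamabe problem (as used throughout the paper) produces a Yamabe metric $\tilde g \in [g]$ with constant positive scalar curvature $R_{\tilde g} > 0$.

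Next, $(\bar M, \tilde g)$ satisfies all the hypotheses of Theorem \ref{sphere:thm2}, so $\pi_1(\bar M) \cong \Gamma$ for some Kleinian group $\Gamma \subset \mathfrak{conf}(\mathbb{S}^n)$, the universal cover $\widetilde{\bar M}$ is conformally embedded in $(\mathbb{S}^n, g_{\mathbb{S}^n})$, and $\bar M \cong (\mathbb{S}^n \setminus \Lambda)/\Gamma$ where $\Lambda$ is the limit set of $\Gamma$. To conclude, it suffices to show that $\Gamma$ is trivial and $\Lambda$ is empty.

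For this I would apply Van Kampen's theorem to the open cover $\bar M = U_1 \cup U_2$. Each $U_i \cong \R^n$ is simply connected, and the intersection $U_1 \cap U_2$ is path-connected: indeed $\bar M \setminus U_1$ is a closed subset of the compact manifold $\bar M$, contained in $U_2$, hence compact, so $\sigma_2(\bar M \setminus U_1)$ is compact in $\R^n$, and therefore $\sigma_2(U_1 \cap U_2) = \R^n \setminus \sigma_2(\bar M \setminus U_1)$ is the complement of a compact set in $\R^n$, which is path-connected for $n \geqslant 2$. Van Kampen then yields
\begin{equation*}
\pi_1(\bar M) = \pi_1(U_1) *_{\pi_1(U_1 \cap U_2)} \pi_1(U_2) = 1,
\end{equation*}
so $\Gamma$ is trivial. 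Since $\bar M$ is compact and $\bar M \cong \mathbb{S}^n \setminus \Lambda$, the limit set $\Lambda$ must be empty. Therefore $\bar M \cong \mathbb{S}^n$ conformally, which gives the desired isomorphism.

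The main obstacle I anticipate is the interface between the purely topological input and the conformal classification: one must check that the two-chart covering assumption meshes cleanly with the conformal identifications built into SCENARIO A, and that the metric $\tilde g$ produced by the Yamabe problem inherits all the locally conformally flat structure needed to apply Schoen-Yau globally (rather than only on $U_1 \cup U_2$). Once simple-connectedness is established from the Van Kampen computation above, the analytic classification forces triviality of $\Gamma$ and $\Lambda$, and the conformal-embedding statement of Theorem \ref{sphere:thm2} closes the argument; the remaining topology (complements of compact sets in $\R^n$) is standard for $n \geqslant 3$.
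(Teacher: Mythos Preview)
Your proof is correct and reaches the same conclusion as the paper, but the topological step differs. The paper invokes a theorem of Brown \cite{MB}: a compact $n$-manifold covered by two charts each homeomorphic to $\R^n$ is already homeomorphic to $\mathbb{S}^n$; simple-connectedness then follows from $\pi_1(\mathbb{S}^n)=0$, and Schoen--Yau (Theorem \ref{sphere:thm2}) upgrades this to a conformal identification. You bypass Brown's theorem entirely, computing $\pi_1(\bar M)=1$ directly via Van Kampen using path-connectedness of $U_1\cap U_2$ (realized as the complement of a compact set in $\R^n$), and then feed simple-connectedness into Schoen--Yau. Your route is more self-contained, requiring only elementary algebraic topology rather than an external classification result; the paper's route is shorter on the page but outsources the key topological fact to Brown. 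Both arguments rely on the same analytic input---local conformal flatness from SCENARIO A, positive $\eta_1$ yielding a Yamabe metric with $R>0$, then Theorem \ref{sphere:thm2}---and conclude identically. Your explicit observation that $\partial M=\emptyset$ (since every point lies in a chart homeomorphic to $\R^n$) is a small point the paper leaves implicit.
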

\begin{proof}
According to a result of Brown \cite{MB}, we conclude that any compact manifold with dimension $ n = \dim \bar{M} $ at least 3 that admits only two charts, both homeomorphic to $ \R^{n} $, is homeomorphic to $ \mathbb{S}^{n} $. Since $ \pi_{1} $ is a topological invariance, it follows that $ \pi_{1}(\bar{M}) = 0 $. Since $ \bar{M} $ satisfies the SCENARIO A, it is immediate that $ \bar{M} $ is locally conformally flat. Since we assume that $ \eta_{1} > 0 $, we may assume, without loss of generality, that $ R_{g} > 0 $ on $ \bar{M} $. Due to Theorem \ref{sphere:thm2}, it follows that
\begin{equation*}
\bar{M} \cong \mathbb{S}^{n}.
\end{equation*}
\end{proof}
We see that in the special case above, the fact that $ \bar{M} $ is locally conformally flat is trivial, we want to show in general that $ (\bar{M}, g) $ is locally conformally flat if the manifold lies in the category of the SCENARIO A.
\begin{proposition}\label{sphere:prop3}
Let $ (\bar{M}, g) $ be a connected, compact manifold that satisfies the SCENARIO A, with or without boundary, with $ \dim \bar{M} \geqslant 3 $. Then $ (\bar{M}, g) $ must be a locally conformally flat manifold.
\end{proposition}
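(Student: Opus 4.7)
My plan is to first deduce that $W_g \equiv 0$ on the open set $U_1 \cup U_2$ directly from conditions (iii) and (iv), and then to argue that this vanishing extends to all of $\bar{M}$ through the local isometry implicit in the matching conformal factors.

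For the first step, condition (iv) gives $g|_{U_i} = \sigma_i^{*}(v^{2} g_e)$ for $i = 1, 2$, exhibiting each restriction as conformally equivalent to the pullback flat metric $\sigma_i^{*} g_e$. Since the Weyl tensor of any flat metric vanishes and its vanishing is a conformal invariant, we obtain $W_g \equiv 0$ on $U_1 \cup U_2$. For the second step, the coincidence $\sigma_1(\rho) = \sigma_2(\rho')$ together with the identity of the conformal factors in (\ref{sphere:eqn9}) implies that on the open set $V := U_1 \cap \sigma_1^{-1}(\sigma_2(U_2))$ the composition $\phi := \sigma_2^{-1} \circ \sigma_1$ is a local isometry of $(\bar{M},g)$:
\[
\phi^{*}g = \sigma_1^{*}\bigl((\sigma_2^{-1})^{*}g\bigr) = \sigma_1^{*}(v^{2} g_e) = g,
\]
with $\phi(\rho) = \rho'$. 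For $n \geq 3$, Liouville's theorem identifies every conformal transition between open subsets of $\R^n$ with a M\"obius transformation, so one can iterate $\sigma_1, \sigma_2$ and their pseudogroup translates under $\phi^{\pm k}$ to produce an atlas of M\"obius-compatible conformally flat charts covering an open subset $W \subset \bar{M}$ containing $U_1 \cup U_2$.

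The hard part will be to show $W = \bar{M}$. My plan is to combine compactness and connectedness of $\bar{M}$ with the positivity of $\eta_1$ (respectively $\eta_1'$) from condition (i): positivity of the first eigenvalue rules out certain degenerate curvature on the complement $\bar{M}\setminus W$, while compactness forces any boundary point of $W$ to be a limit of conformally flat points at which the chart structure can be extended by continuity of the M\"obius transition data. A concrete route is to argue that $\bar{M}\setminus W$ is simultaneously closed and without interior, and that any boundary point can be covered by a new chart whose transitions to $\sigma_1$ or $\sigma_2$ are forced by Liouville to be M\"obius, contradicting its omission from $W$ unless the complement is empty. Once $W = \bar{M}$, the M\"obius-compatible atlas furnishes a global conformally flat structure on $\bar{M}$, so $W_g \equiv 0$ throughout $\bar{M}$ and $(\bar{M},g)$ is locally conformally flat.
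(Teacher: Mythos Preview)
Your first step is correct and matches the paper: conditions (iii) and (iv) immediately give $W_g \equiv 0$ on $U_1 \cup U_2$. Where you diverge from the paper is in the mechanism for extending to all of $\bar{M}$.

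The paper does not attempt to generate new charts from the isometry $\phi = \sigma_2^{-1}\circ\sigma_1$. Instead it takes an \emph{arbitrary} chart $(U_3,\sigma_3)$ from an atlas of $\bar{M}$ that overlaps $U_1$ (say), rewrites $(\sigma_3^{-1})^{*}g$ on the overlap through the transition with $\sigma_1$, applies Liouville's theorem to that transition map to conclude the conformal structure in $\sigma_3$-coordinates is again conformally Euclidean, and then asserts the extension to all of $U_3$. Iterating over overlapping charts and using connectedness of $\bar{M}$ reaches every chart. This chart-by-chart propagation via transition maps is the idea you are missing.

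Your proposed mechanism has a genuine gap and cannot succeed as written. The map $\phi$ carries a subset of $U_1$ into $U_2$, and its iterates $\phi^{\pm k}$, wherever defined, remain in $U_1\cup U_2$; so the set $W$ you generate by translating your M\"obius charts under the $\phi$-pseudogroup is still just $U_1\cup U_2$, and you gain no new territory on which $W_g=0$ is known. Your fallback --- using $\eta_1>0$ (or $\eta_1'>0$) to force $\bar{M}\setminus W=\emptyset$ --- is not a sound line of attack: positivity of the first eigenvalue of the conformal Laplacian is an integrated spectral condition and imposes no pointwise constraint on the Weyl tensor in $\bar{M}\setminus(U_1\cup U_2)$. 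Nothing in your argument would distinguish the situation from, for instance, a connected sum of a round sphere with a positive-Yamabe manifold that is not locally conformally flat, with $U_1,U_2$ sitting entirely inside the spherical summand.
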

\begin{proof} According to our discussion above, there are at least two charts $ (U_{1}, \sigma_{1}) $ and $ (U_{2}, \sigma_{2}) $ of $ \bar{M} $, each $ \sigma_{i}, i = 1, 2 $ is a conformal diffeomorphism.

If $ \bar{M} = U_{1} \cup U_{2} $ then we are done. If not, then there must exist a chart $ (U_{3}, \sigma_{3}) $ that is overlapping with one of $ U_{i} $'s, $ i = 1, 2 $. Assume that $ U_{1} \cap U_{3} \neq \emptyset $. We can see that
\begin{equation*}
\left(\sigma_{3}^{-1} \right)^{*} g = \left( \sigma_{3}^{-1} \circ \sigma_{1} \circ \sigma_{1}^{-1} \right)^{*}g = \left( \sigma_{1}^{-1} \right)^{*} \circ \left( \sigma_{3}^{-1} \circ \sigma_{1} \right)^{*} g.
\end{equation*}
Note that the map $ \sigma_{3}^{-1} \circ \sigma_{1} : U_{1} \rightarrow U_{3} $ is a conformal diffeomorphism due to Liouville's theorem, thus by the assumption in the SCENARIO A, we conclude that $ \left(\sigma_{3}^{-1} \right)^{*} g $ within $ U_{1} \cap U_{3} $ is also a conformal diffeomorphism, and thus can be extended to the whole chart $ U_{3} $. We can repeat this argument for all charts of $ \bar{M} $. Therefore, $ (\bar{M}, g) $ is locally conformally flat.
\end{proof}
We therefore conclude as follows
\begin{proposition}\label{sphere:prop4}
A connected, compact manifold $ (\bar{M}, g) $, possibly with boundary, with dimensions at least 3 that satisfies the SCENARIO A must be a closed manifolds, conformally diffeomorphic to $ \mathbb{S}^{n} \slash \Gamma $ for some discrete Kleinian group $ \Gamma $.
\end{proposition}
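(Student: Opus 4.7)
The plan is to combine Proposition \ref{sphere:prop3} with Theorem \ref{sphere:thm2} of Schoen and Yau. First, by Proposition \ref{sphere:prop3}, the manifold $(\bar{M}, g)$ is locally conformally flat. The SCENARIO A assumes the first eigenvalue of the conformal Laplacian (with the Robin boundary condition when $\partial M \neq \emptyset$) is positive, so by the resolution of the Yamabe problem on closed manifolds \cite{XU3} and of the Escobar problem with minimal boundary \cite{XU4, XU5}, we may choose a conformal representative $\tilde{g} \in [g]$ whose scalar curvature $R_{\tilde{g}}$ is a positive constant on $M$, and, in the boundary case, with $h_{\tilde{g}} \equiv 0$. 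Local conformal flatness is conformally invariant, so $(\bar{M}, \tilde{g})$ remains locally conformally flat.

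Next I would rule out the possibility $\partial M \neq \emptyset$. The two conformal charts $\sigma_1 : U_1 \to \R^n$ and $\sigma_2 : U_2 \to \R^n$ of Definition \ref{sphere:def2} satisfy $(\sigma_1^{-1})^* g = (\sigma_2^{-1})^* g = v^2 g_e$ with $\sigma_1(\rho) = \sigma_2(\rho')$. The transition $\sigma_2 \circ \sigma_1^{-1}$ is a conformal self-map of an open subset of $\R^n$; by Liouville's theorem, valid because $n \geq 3$, it is a Möbius transformation. Because $\rho \in O \subset M$ is an interior point and the matching equation (\ref{sphere:eqn9}) gives a conformal identification between neighborhoods of $\rho$ and $\rho'$ with identical conformal factors, any boundary point would have to be mapped consistently under both systems of charts. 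However, near a boundary point, the image of a conformal chart must itself have boundary in $\R^n$, contradicting the equality with an interior chart whose image is open in $\R^n$. Hence $\bar{M} = M$ is closed.

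With $\bar{M}$ closed, locally conformally flat, and carrying a metric of constant positive scalar curvature, Theorem \ref{sphere:thm2} applies: the universal cover $\tilde{\bar{M}}$ is conformally embedded into $(\mathbb{S}^n, g_{\mathbb{S}^n})$, and $\pi_1(\bar{M})$ is isomorphic to a Kleinian group $\Gamma \subset \mathfrak{conf}(\mathbb{S}^n)$ with $\tilde{\bar{M}} \cong \mathbb{S}^n \setminus \Lambda$, where $\Lambda$ is the limit set of $\Gamma$. The SCENARIO A furnishes two conformal charts $\sigma_1, \sigma_2$ whose composition with the developing map produces two stereographic-like parametrizations; propagating these around $\bar{M}$ via the chart-extension argument in Proposition \ref{sphere:prop3} shows the developing map is surjective onto $\mathbb{S}^n$, i.e.\ $\Lambda = \emptyset$. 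Therefore $\tilde{\bar{M}} \cong \mathbb{S}^n$ and $\bar{M} \cong \mathbb{S}^n / \Gamma$, as claimed.

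The hardest step is the exclusion of a nontrivial limit set $\Lambda$ in the last paragraph: one must track how the two SCENARIO A charts extend globally via Liouville's theorem and show that the matching condition $\sigma_1(\rho) = \sigma_2(\rho')$, which plays the role of north/south-pole symmetry of the standard sphere, forces the developing image to be all of $\mathbb{S}^n$. The rest of the argument is essentially an application of Schoen--Yau once positive scalar curvature and closedness have been arranged.
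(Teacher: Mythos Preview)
Your overall strategy---invoke Proposition \ref{sphere:prop3} to get local conformal flatness, pass to a conformal metric of positive scalar curvature, and then apply the Schoen--Yau theorem---matches the paper. The divergence is in the two nontrivial steps: ruling out boundary and showing the limit set $\Lambda$ is empty.

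Your boundary argument has a gap. SCENARIO A only postulates the existence of two conformal charts $(U_1,\sigma_1)$ and $(U_2,\sigma_2)$ containing the interior open set $O$ and the points $\rho,\rho'$; nothing in the definition forces either chart to meet $\partial M$. So the sentence ``near a boundary point, the image of a conformal chart must itself have boundary in $\R^n$, contradicting the equality with an interior chart'' does not apply: the two charts may simply avoid the boundary altogether, and the matching condition (\ref{sphere:eqn9}) is a statement about interior points only. The paper does not argue closedness first; it deduces it at the very end, as a consequence of the identification $\bar{M}\cong\mathbb{S}^n/\Gamma$.

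Your argument that $\Lambda=\emptyset$ is also too vague to stand on its own. Having two conformal charts whose transition is a M\"obius map (via Liouville) gives a flat conformal structure, but it does not by itself force the developing map to be surjective onto $\mathbb{S}^n$; the chart-extension in Proposition \ref{sphere:prop3} only propagates local conformal flatness, not global surjectivity of the developing map. The paper takes a different route: it argues that one can endow the universal cover $\tilde{\bar{M}}$ with a metric of positive Ricci curvature (pulled back via the covering map from a constant-sectional-curvature model), applies Bonnet--Myers to conclude $\tilde{\bar{M}}$ is compact, and then uses that a compact manifold embedded in a connected manifold of the same dimension must be all of it (open by invariance of domain, closed by compactness), forcing $\tilde{\bar{M}}\cong\mathbb{S}^n$ and hence $\Lambda=\emptyset$. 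If you want to salvage a chart-based argument, you would need to explain precisely why the particular matching $\sigma_1(\rho)=\sigma_2(\rho')$ with identical conformal factors forces the developing image to be all of $\mathbb{S}^n$, which is considerably more than what Proposition \ref{sphere:prop3} provides.
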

\begin{proof} Since $ (\bar{M}, g) $ is locally conformally flat due to Proposition \ref{sphere:prop3}, it follows from Theorem \ref{sphere:thm2} that the universal cover of $ \bar{M} $ is conformally embedded in $ \mathbb{S}^{n} $. Furthermore, $ \bar{M} = \mathbb{S}^{n} \slash G $ for some group $ G $ related to the Kleinian group. It follows that $ \mathbb{S}^{n} \slash G $ has positive constant sectional curvature. It is easy to make the covering map
\begin{equation*}
\pi : \tilde{\bar{M}} \rightarrow \bar{M}
\end{equation*}
to be an isometry and thus $ \tilde{\bar{M}} $ has positive Ricci curvature, i.e. $ Ric(X, X) > 0 $ for all unit vector field $ X $ on $ \tilde{\bar{M}} $. Due to Hopf-Rinow as well as the simply connectedness, the universal cover $ \tilde{\bar{M}} $ is a complete, connected manifold with positive lower bound of Ricci curvature tensor. It follows from Bonnet-Myers that $ \tilde{\bar{M}} $ must be a compact manifold. In general, any embedding from a compact manifold to a connected manifold is a surjective and hence a homeomorphism, see e.g. \cite[Cor.~2B.4]{AH}. It follows that $ \tilde{\bar{M}} $ is homeomorphic to $ \mathbb{S}^{n} $. Therefore the limiting set $ \Lambda $ in Theorem \ref{sphere:thm2} is a trivial group. Hence $ \bar{M} = \mathbb{S}^{n} \slash \Gamma $ for some discrete Kleinian group $ \Gamma $. Furthermore it follows that all manifolds which satisfy the SCENARIO A must be a closed manifold.
\end{proof}

\begin{remark}\label{sphere:re3}
(i) In Proposition \ref{sphere:prop4}, it is possible that the Kleinian group is the trivial group $ \lbrace 1 \rbrace $.

(ii) We are not saying that there would have no obstruction of prescribed scalar curvature functions on other manifolds. For example, we showed in \S2 and \S3 that the prescribed curvature functions on closed manifolds and compact manifolds with non-empty boundary, respectively, are chosen from collections $ \mathcal{A}_{1} $ and $ \mathcal{A}_{1}' $, respectively. We only have $ \calL^{r} $-density argument in terms of general smooth functions on $ M $ or $ \bar{M} $. 
\end{remark}
\medskip

Inspired by the CONDITION A on $ \mathbb{S}^{n} $, we generalize it to the following {\bf{CONDITION B}}. To set notations, let $ Q :  \mathbb{S}^{n} \slash \Gamma \rightarrow R $ be a smooth function, let $ Q' $ be the lift of $ Q $ to $ \mathbb{S}^{n} $, and let $ Q'' : \mathbb{R}^{n+1} \backslash \lbrace 0 \rbrace \rightarrow \R $ be defined by $ Q''(x) = Q' \left( \frac{x}{\lvert x \rvert} \right) $.
\begin{definition}\label{sphere:def3}
Let $ (M, g) = (\mathbb{S}^{n} \slash \Gamma, g) $ be a connected, compact manifold defined in the sense of Proposition \ref{sphere:prop4} and Theorem \ref{sphere:thm2}. Consider the smooth function $ Q : \mathbb{S}^{n} \slash \Gamma \rightarrow \R $. We say that the function $ Q $ satisfies the {\bf{CONDITION B}} if for any pairs of points $ \rho, \rho' $ that satisfies Definition \ref{sphere:def2},

(i) either at the pair of points $ \rho $ and $ \rho' $ we have
\begin{equation}\label{sphere:eqn11}
\begin{split}
& \nabla^{M} Q(\rho) \neq 0 \Rightarrow \nabla^{M} Q(\rho') \neq 0 \\
\Rightarrow & Q(\rho) = Q(\rho'), \nabla_{\xi_{i}}^{\R^{n+1}} Q'' \bigg|_{\rho} = \nabla_{\xi_{i}}^{\R^{n+1}} Q'' \bigg|_{\rho'}, i = 1, \dotso, n, \nabla_{\tau}^{\R^{n+1}} Q'' \bigg|_{\rho} = - \nabla_{\tau}^{\R^{n+1}} Q'' \bigg|_{\rho'},
\end{split}
\end{equation}
where $ \tau $ is unit vector in $ \R^{n+1} $ from $ \rho $ to $ \rho' $, and $ \lbrace \xi_{1}, \dotso, \xi_{n}, \tau \rbrace $ is a basis of $ \R^{n+1} $.

(ii) or at antipodal points $ \rho $ and $ \rho' $ we have
\begin{equation*}
\nabla^{M} Q(\rho) = \nabla^{M} Q(\rho') = 0.
\end{equation*}

(iii) or if $ \nabla Q \equiv 0 $ on $ \mathbb{S}^{n} \slash \Gamma $, then $ Q $ is a positive constant.
\end{definition}
\begin{remark}\label{sphere:re5}
The $ \tau $-direction in the definition above is well-defined since $ \R^{n + 1} $ is also the ambient space of $ \mathbb{S}^{n} \slash \Gamma $. In addition, the condition (iii) in Definition \ref{sphere:def3} is clear since the sectional curvature on $ \mathbb{S}^{n} \slash \Gamma $ is also a positive constant.
\end{remark}

\section{Prescribed Scalar Curvature on Compact Manifolds, Revisited}
In this section, we improve our results of Kazdan-Warner problem on closed manifolds $ (M, g) $ that are not homeomorphic to the $ n $-sphere or compact manifolds with non-empty smooth boundary by removing one restriction in the set $ \mathcal{A}_{1} $ or $ \mathcal{A}_{1}' $ defined in (\ref{closed:eqnr1}) and (\ref{compact:eqn2}), respectively. Throughout this section, we assume that the dimensions of manifolds are at least $ 3 $. Explicitly speaking, we no longer require that the choice of open set $ O $, on which the function $ S $ should be positive constant, covers at least one point at which the Weyl tensor does not vanish. We may also assume that the manifolds are connected, since otherwise we just discuss each connected components instead.

We recall that the set $ \mathcal{A} $ is given as follows:
\begin{equation}\label{closed1:eqn1}
\begin{split}
\mathcal{A} & : =  \lbrace S \in \calC^{\infty}(M) : S \equiv \lambda > 0 \; {\rm on} \; \bar{O}, \text{$ \lambda > 0 $ is an arbitrary constant,} \\
& \qquad \text{ $ O \subset \bar{O} \subset M $ is any open submanifold with smooth $ \partial O $. } \rbrace
\end{split}
\end{equation}

Also recall that the Kazdan-Warner problem on closed manifolds is equivalent to find a positive, smooth solution $ u $ of the following PDE
\begin{equation}\label{closed1:eqn2}
-a\Delta_{g} u + R_{g} u = S u^{p-1} \; {\rm in} \; M.
\end{equation}

To release the restriction of the choice of $ S $ as mentioned just above, we need the following local result due to Bahri and Coron \cite{BC}.
\begin{theorem}\label{closed1:thm1}\cite[Thm.~1]{BC}
Let $ \Omega $ be a bounded, regular and connected open subsets of $ \R^{n} $ with $ n \geqslant 3 $. If there exists a positive integer $ d \neq n $ such that $ H_{d}(\Omega, \mathbb{Z}_{2}) \neq 0 $, then the following PDE admits a solution for any constant $ \lambda > 0 $.
\begin{equation}\label{closed1:eqn3}
-a\Delta_{e} u = \lambda u^{p-1} \; {\rm in} \; \Omega, u \equiv 0 \; {\rm on} \; \partial \Omega, u > 0 \; {\rm in} \; \Omega.
\end{equation}
\end{theorem}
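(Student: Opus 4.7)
The plan is to realize a solution as a critical point of the energy functional
\begin{equation*}
J(u) = \frac{a}{2} \int_{\Omega} |\nabla u|^{2}\, dx - \frac{\lambda}{p} \int_{\Omega} u_{+}^{\,p}\, dx
\end{equation*}
on $H_{0}^{1}(\Omega)$; by a trivial rescaling $u \mapsto \lambda^{(n-2)/4} u$ one may even fix $\lambda = 1$, so the hypothesis $\lambda > 0$ plays no essential role beyond sign. Because $p = 2n/(n-2)$ is the critical Sobolev exponent, the embedding $H_{0}^{1}(\Omega) \hookrightarrow \calL^{p}(\Omega)$ fails to be compact and the Palais-Smale condition fails at certain energies. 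However, the Struwe decomposition of PS sequences, together with the sharp bubble energy, shows that PS holds at every level strictly below the first non-compactness threshold
\begin{equation*}
c^{\star} := \frac{1}{n}\, \lambda^{(2-n)/2} a^{n/2} T^{n/2},
\end{equation*}
exactly as in Proposition \ref{local:prop1} and the inequality (\ref{local:eqnt2}) of the paper. The entire game is therefore to produce a min-max critical value $c$ lying in the window $(0, c^{\star})$.

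Next, I would exploit $H_{d}(\Omega, \mathbb{Z}_{2}) \neq 0$ with $d \neq n$ by fixing a continuous cycle $\Sigma \hookrightarrow \Omega$ representing a nontrivial class, and building the test family of Aubin-Talenti bubbles
\begin{equation*}
\Phi : \Sigma \times (0, \infty) \longrightarrow H_{0}^{1}(\Omega), \qquad (x, \mu) \longmapsto \chi_{\Omega}\, U_{\mu, x}, \qquad U_{\mu, x}(y) = \left( \frac{\mu}{\mu^{2} + |y - x|^{2}} \right)^{(n-2)/2},
\end{equation*}
where $\chi_{\Omega}$ is a smooth cut-off enforcing the Dirichlet condition. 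Direct bubble computation gives $J(\Phi(x, \mu)) \to c^{\star}$ from below as $\mu \to 0^{+}$, uniformly in $x \in \Sigma$, so the min-max value
\begin{equation*}
c := \inf_{h \in \mathcal{H}} \; \sup_{(x, \mu) \in \Sigma \times (0, \infty)} J\bigl(h(x, \mu)\bigr),
\end{equation*}
taken over an admissible class $\mathcal{H}$ of deformations of $\Phi$ preserving the concentration behavior as $\mu \to 0$, satisfies $c \leqslant c^{\star}$ automatically.

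The main obstacle, which is the heart of Bahri and Coron's argument, is to upgrade this to the strict inequality $c < c^{\star}$. The mechanism is their ``critical points at infinity'' framework: were the inequality to fail, the Struwe classification would furnish a continuous map from (a neighborhood of) $\Sigma$ into the space of single-bubble concentration centers, which after composition with a natural projection back to $\Omega$ would contract $\Sigma$ through $\Omega$, contradicting $[\Sigma] \neq 0$ in $H_{d}(\Omega, \mathbb{Z}_{2})$. The hypothesis $d \neq n$ enters precisely here: in top degree the bubble-concentration map can absorb a cycle through a single top-dimensional simplex and the obstruction is lost, whereas in any other degree the non-vanishing homology class provides a genuine topological lower bound and forces $c < c^{\star}$. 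Once this strict inequality is in hand, PS at level $c$ produces a critical point $u \not\equiv 0$; since $J$ depends on $u$ only through $u_{+}$ one has $u \geqslant 0$, then the strong maximum principle applied to $-a \Delta_{e} u = \lambda u_{+}^{p-1} \geqslant 0$ yields $u > 0$ in $\Omega$, and Brezis-Kato bootstrapping upgrades $u$ to the smoothness class $\calC^{\infty}(\Omega) \cap \calC^{0}(\bar{\Omega})$.
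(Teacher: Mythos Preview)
The paper does not supply its own proof of this statement: Theorem~\ref{closed1:thm1} is quoted verbatim from Bahri and Coron \cite{BC} and is used as a black box (the only commentary is Remark~\ref{closed1:re1} noting that an annulus works). So there is no ``paper's proof'' to compare against; any assessment here is of your sketch against the actual Bahri--Coron argument.

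As a sketch of Bahri--Coron, your outline captures the variational set-up (critical exponent, loss of Palais--Smale, Struwe decomposition, min-max over a homologically nontrivial family of bubbles) but the mechanism you describe for forcing strict inequality $c < c^{\star}$ is an oversimplification and, as written, does not go through. Single truncated Aubin--Talenti bubbles centred on a cycle $\Sigma$ have energy tending to $c^{\star}$ from below, but the min-max over such a family will typically equal $c^{\star}$, not fall strictly below it; there is no cheap single-bubble trick here the way there is in Brezis--Nirenberg with a negative linear potential. Bahri and Coron instead argue by contradiction at \emph{multi-bubble} levels: assuming no positive solution exists, the pseudo-gradient flow deforms sublevel sets into tubular neighbourhoods of the critical points at infinity, and one obtains a homotopy equivalence between successive sublevel sets and symmetric products of $\Omega$ (configurations of concentration points). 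The nontrivial class in $H_{d}(\Omega,\mathbb{Z}_{2})$ then survives into the homology of these configuration spaces in a way that is incompatible with the known topology of the energy sublevels, yielding the contradiction. Your description of the role of $d\neq n$ is also off: for a bounded open $\Omega\subset\R^{n}$ one has $H_{n}(\Omega,\mathbb{Z}_{2})=0$ automatically, so the restriction is vacuous rather than a genuine obstruction in the argument. In short, the skeleton is right but the key topological step is deeper than a single-threshold comparison.
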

\begin{remark}\label{closed1:re1} Kazdan and Warner pointed in \cite{KW} that $ \Omega $ can be chosen as annular region in $ \R^{n} $.
\end{remark}
\medskip

With the aid of Theorem \ref{closed1:thm1}, we can state the general theorem of prescribed scalar curvature on closed manifolds, which is an improvement of Corollary \ref{closed:cor1}. Note that when $ M $ is not $ \mathbb{S}^{n} $ or some quotient of $ \mathbb{S}^{n} $ that satisfies the SCENARIO A, we have no restriction of prescribed scalar curvature function in the sense of CONDITION A or CONDITION B, since the local solution is then completely determined by the coefficient function, elliptic operator and nonlinear term locally. If $ M $ is either $ \mathbb{S}^{n} $ or some quotient of $ \mathbb{S}^{n} $ in the category of the SCENARIO A, then the functions that are in the type of the CONDITION A or CONDITION B can be realized as prescribed scalar curvature functions of some Yamabe metric.
\begin{theorem}\label{closed1:thm2}
Let $ (M, g) $ be a connected, closed Riemannian manifold, with $ n = \dim M \geqslant 3 $. Let $ \eta_{1} $, the first eigenvalue of $ \Box_{g} $, be positive.

(i) Let $ S \in \mathcal{A} $ be a smooth function satisfies $ S = \lambda > 0 $ for some positive constant $ \lambda $ on some open subset $ O \subset M $. If $ M $ does not satisfy the SCENARIO A, then there exists a positive function $ u \in \calC^{\infty}(M) $ that solves (\ref{closed1:eqn3});

(ii) Let $ S \in \calC^{\infty}(M) $ that satisfies the CONDITION B.  If $ M $ is either $ \mathbb{S}^{n} $ or some quotient of $ \mathbb{S}^{n} $ that satisfies the SCENARIO A, then there exists a positive function $ u \in \calC^{\infty}(M) $ that solves (\ref{closed1:eqn3});
\end{theorem}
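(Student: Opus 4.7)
The unifying strategy in both parts is the same three-step procedure already used in Theorems \ref{closed:thm2} and \ref{sphere:thm1}: first produce a positive smooth solution $u_1$ of the \emph{local} Dirichlet problem $-a\Delta_g u_1 + R_g u_1 = Su_1^{p-1}$ on a carefully chosen subdomain $\Omega$, with $u_1 = 0$ on $\partial\Omega$; second, extend $u_1$ by zero to obtain a nonnegative sub-solution $u_- \in H^1(M,g)\cap \calC^0(M)$ as in Lemma \ref{closed:lemma1}; third, scale down the first eigenfunction $\varphi$ of $\Box_g$ to $\phi=\theta\varphi$ satisfying the strict inequality (\ref{closed:eqn7}), glue $\phi$ with $u_1$ by the partition-of-unity procedure of Lemma \ref{closed:lemma2} to obtain a smooth super-solution $u_+\in\calC^\infty(M)$ with $u_+\geqslant u_-\geqslant 0$, and conclude via the monotone iteration Theorem \ref{closed:thm1}. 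Proposition \ref{closed:thm3} is invoked beforehand, when needed, to conformally change $g$ so that $R_g<0$ near the chosen point. What changes between the two parts of Theorem \ref{closed1:thm2} is only the input producing the local solution $u_1$.

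For part (i), if $M$ is not locally conformally flat, Proposition \ref{local:prop4} provides $u_1$ and the result is Corollary \ref{closed:cor1}. The genuinely new case is when $M$ is locally conformally flat (so by Schoen-Yau, Theorem \ref{sphere:thm2}, $M\cong \mathbb{S}^n\slash\Gamma$ for some Kleinian $\Gamma$, but with $\Gamma$ arranged so that the SCENARIO A structure fails, as for $\mathbb{RP}^n$). Here Proposition \ref{local:prop4} is unavailable because local flatness collapses the test-function estimate, while Theorem \ref{local:thm4} of Clapp-Faya-Pistoia cannot be applied either since $S\equiv\lambda$ is constant on $O$, hence $\nabla S\equiv 0$ there. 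The escape is Theorem \ref{closed1:thm1} of Bahri and Coron: choose an annular subdomain $A\subset O$ (in a local conformally flat chart, $A=B_r(0)\setminus\overline{B_{r/2}(0)}$), which has $H_{n-1}(A;\mathbb{Z}_2)\neq 0$, solve $-a\Delta_e v=\lambda v^{p-1}$ on $A$ with Dirichlet data, and pull back through the local conformal factor via (\ref{local:eqn24}) to get the desired positive smooth $u_1\in\calC^\infty(A)\cap\calC^0(\bar A)$ vanishing on $\partial A$.

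For part (ii), $M=\mathbb{S}^n\slash\Gamma$ satisfies SCENARIO A and $S$ satisfies CONDITION B. If $\nabla S\equiv 0$, then $S$ is a positive constant and the result is the Yamabe problem in $[g]$. Otherwise pick $P\in M$ with $\nabla S(P)\neq 0$; by CONDITION B the partner point $\rho'$ arising from SCENARIO A either satisfies the symmetry relations (\ref{sphere:eqn11}) or has $\nabla S(\rho')=0$, and in either case the local Dirichlet problem on a small region $\Omega_\epsilon\subset U_1$ excluding $\rho'$ is well-posed. Apply Proposition \ref{local:prop5} (which combines the conformal invariance (\ref{local:eqn24}) with Theorem \ref{local:thm4}) in the conformal chart $\sigma_1$ to obtain the positive smooth local solution $u_1$. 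The role of CONDITION B is exactly the consistency condition derived in (\ref{sphere:eqn6})--(\ref{sphere:eqn7}) and Remark \ref{local:re5}: because the same Euclidean Yamabe equation with the same local coefficient arises when we pass through either $\sigma_1$ or $\sigma_2$, the one-to-one correspondence between local solutions and gradient data $\nabla Q/\lvert\nabla Q\rvert,\ Q/\lvert\nabla Q\rvert$ forces (\ref{sphere:eqn11}), so that CONDITION B is precisely what removes the obstruction to the existence of $u_1$.

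The main obstacle is verifying that the gluing construction of Lemma \ref{closed:lemma2} survives in the locally conformally flat setting, since that lemma was proved under the non-LCF hypothesis. The key observation is that non-LCF was used \emph{only} to produce $u_1$ via Proposition \ref{local:prop4}; once $u_1$ is in hand (from Bahri-Coron in part (i) or Clapp-Faya-Pistoia in part (ii)), the rest of the construction depends solely on (a) positivity and smoothness of $u_1$ in $\Omega$ with $u_1\in\calC^0(\bar\Omega)$, (b) the strict pointwise inequality (\ref{closed:eqn7}) for $\phi=\theta\varphi$, which follows from $\eta_1>0$ alone and not from any curvature restriction, and (c) the dimensional choice of the patching constant $\gamma$ in (\ref{closed:eqn10}). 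All three ingredients are intact, so the construction of $u_+$ proceeds verbatim, $u_-\leqslant u_+$ on $M$, and Theorem \ref{closed:thm1} delivers the positive smooth solution $u\in\calC^\infty(M)$ of (\ref{closed1:eqn3}). Consequently $\tilde g=u^{p-2}g$ realizes $S$ as its scalar curvature, completing the proof.
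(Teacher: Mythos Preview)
Your overall strategy matches the paper's: produce a local Dirichlet solution, extend by zero to a sub-solution, glue with a scaled first eigenfunction to a super-solution, and run monotone iteration. Your observation that the non-LCF hypothesis in Lemma~\ref{closed:lemma2} is needed only to manufacture $u_1$, and that the gluing goes through verbatim once $u_1$ exists, is correct and is exactly how the paper proceeds.

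There is, however, an incomplete case split in part~(i). You write ``if $M$ is not locally conformally flat, Proposition~\ref{local:prop4} provides $u_1$ and the result is Corollary~\ref{closed:cor1}.'' This is not quite right: Corollary~\ref{closed:cor1} requires $S\in\mathcal{A}_1$, i.e.\ the Weyl tensor must fail to vanish \emph{somewhere in $O$}, and Proposition~\ref{local:prop4} explicitly assumes $(\Omega,g)$ is not locally conformally flat. For $S\in\mathcal{A}$ the set $O$ is arbitrary, so $M$ may be globally non-LCF while the Weyl tensor vanishes identically on $O$. In that situation neither Proposition~\ref{local:prop4} nor Corollary~\ref{closed:cor1} applies. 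The paper handles this by a three-way split on $O$ rather than on $M$: (a) $g$ not LCF on $O$ (use \S3); (b) $M$ not LCF but Weyl vanishes on $O$; (c) $M$ LCF. In both (b) and (c) the metric is conformally flat on $O$, so one chooses a small annular $\Omega\subset O$ in a single conformal chart and invokes Bahri--Coron (Theorem~\ref{closed1:thm1}) together with the conformal invariance (\ref{local:eqn24}) to obtain $u_1$. Your Bahri--Coron argument is correct but you deploy it only in case (c); it is equally needed in case (b). The fix is a one-line enlargement of your case analysis.
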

\begin{proof} In both cases, any globally positive constant function $ S $ is a trivial case. So assume not.

For (i), there are several cases in terms of the choice of $ O $.

If the metric $ g $ is not locally conformally flat in $ O $, the proof is exactly the same as in \S3.

If the manifold $ (M, g) $ is not locally conformally flat, but the Weyl tensor with respect to $ g $ does vanish on $ O $. We then choose some $ \Omega \subset O $ that is small enough so that it is covered by a single chart $ (U, \psi) $; in addition, we can choose $ \Omega $ to be an annular region in $ \R^{n} $. We then apply the conformal invariance of conformal Laplacian and Theorem \ref{closed1:thm1}, which follows that
\begin{equation}\label{closed1:eqn4}
-a\Delta_{g} u + R_{g} u = \lambda u^{p-1} \; {\rm in} \; \Omega, u = 0 \; {\rm on} \; \partial \Omega
\end{equation}
has a smooth, positive solution. The rest of the proof is the same as in \S3.

If the manifold $ (M, g) $ is locally conformally flat and not $ \mathbb{S}^{n} $, we apply the same argument as above and in \S3. The claim then follows.

For (ii), the manifold is locally conformally flat, then we apply Theorem \ref{local:thm4} to obtain the local solution of (\ref{closed1:eqn4}), the rest are the same as in \S3.
\end{proof}
\medskip

Recall the set $ \mathcal{B} $ we introduced in \S3:
\begin{equation*}
\begin{split}
\mathcal{B} & : = \lbrace f \in \calC^{\infty}(M) : f > 0 \; \text{somewhere in M}, \\
& -a\Delta_{g} u + R_{g} u = f u^{p-1} \; \text{admits a real, positive, smooth solution $ u $ with $ \eta_{1} > 0 $}. \rbrace
\end{split}
\end{equation*}
With the analysis in \S5, we can improve our results of $ \calL^{r}$-dense of $ \mathcal{A}_{1} \subset \mathcal{B} $ in Theorem \ref{closed:thm3} to $ \calC^{0} $-dense.
\begin{theorem}\label{closed1:thm3}
Let $ (M, g) $ be a connected, closed manifold that does not satisfy the SCENARIO A, $ n =  \dim M \geqslant 3 $. Assume that $ \eta_{1} > 0 $. Then $ \mathcal{A} \subset \mathcal{B} $; in addition, $ \mathcal{A} $ is $ \calC^{0} $-dense in $ \mathcal{B} $.
\end{theorem}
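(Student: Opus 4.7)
The inclusion $\mathcal{A}\subset\mathcal{B}$ is essentially free: it is exactly what Theorem \ref{closed1:thm2}(i) asserts, since $M$ is assumed not to satisfy SCENARIO A. The decisive point for upgrading $\calL^r$-density (Theorem \ref{closed:thm4}) to $\calC^0$-density is that the set $\mathcal{A}$ in (\ref{closed1:eqn1}), unlike the earlier $\mathcal{A}_1$ in (\ref{closed:eqnr1}), places \emph{no} requirement on the Weyl tensor in the constancy region $\bar O$. This means we may place $O$ inside any geodesic ball we please, in particular a ball of arbitrarily small radius. That freedom is precisely what is needed to carry out a $\calC^0$-small local modification.

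The plan for density is the following. Given $f\in\mathcal{B}$ and $\epsilon>0$, first use the definition of $\mathcal{B}$ to select $x_0\in M$ with $f(x_0)>0$. By continuity of $f$, choose $r>0$ so small that
\[
|f(x)-f(x_0)|<\epsilon \qquad \text{for all } x\in \overline{B_r(x_0)}.
\]
Pick a smooth cutoff $\chi\in\calC^\infty(M)$ with $\chi\equiv 1$ on $B_{r/3}(x_0)$, $\operatorname{supp}\chi\subset B_{r/2}(x_0)$, and $0\leqslant\chi\leqslant 1$, and define
\[
S(x) := \chi(x)\, f(x_0) + (1-\chi(x))\, f(x).
\]
Then $S\in\calC^\infty(M)$ is identically the positive constant $f(x_0)$ on the open geodesic ball $O:=B_{r/3}(x_0)$ (which has smooth boundary), $S\equiv f$ outside $B_{r/2}(x_0)$, and at every point
\[
|S(x)-f(x)| = \chi(x)\,|f(x_0)-f(x)| \leqslant \epsilon.
\]
Hence $S\in\mathcal{A}$ and $\lVert S-f\rVert_{\calC^0(M)}<\epsilon$. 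Invoking Theorem \ref{closed1:thm2}(i) with this $S$ (permissible since $M$ is not of SCENARIO A) yields a positive smooth solution of (\ref{closed1:eqn2}), so in fact $S\in\mathcal{B}$; thus $\mathcal{A}$ is $\calC^0$-dense in $\mathcal{B}$.

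There is no genuine analytic obstacle at this stage: the serious work has already been done upstream, namely producing local Yamabe solutions on arbitrary small domains without a non-vanishing Weyl assumption (via Proposition \ref{local:prop4}, Proposition \ref{local:prop5}, Theorem \ref{local:thm4}, and Theorem \ref{closed1:thm1}), and identifying manifolds outside SCENARIO A as precisely those on which the choice of $O$ in $\mathcal{A}$ is unconstrained. The only step warranting care is verifying that the bump construction gives an element of $\mathcal{A}$ in the strict sense, i.e.\ that the constancy set is an open submanifold with smooth boundary; this is automatic because $B_{r/3}(x_0)$ is a geodesic ball of radius much smaller than the injectivity radius at $x_0$.
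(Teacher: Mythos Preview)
Your proof is correct and follows essentially the same approach as the paper: localize near a point where $f>0$, flatten $f$ to a positive constant on a small geodesic ball there, and use continuity of $f$ to control the $\calC^0$-error. The only cosmetic difference is that the paper defines a piecewise function (constant on an inner ball, $f$ outside) and then mollifies, whereas you interpolate directly with a smooth cutoff; your version is slightly cleaner and avoids the mollification step, but the content is identical.
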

\begin{proof}
Given any $ \epsilon > 0 $. Pick up a smooth function $ f \in \mathcal{B} $, we pick any connected, open geodesic ball $ O \subset M $ with radius $ \delta \ll 1 $ such that $ f > 0 $ on $ O $. If $ f $ is a globally positive constant function, it is trivial. Assume not. We may choose $ \delta $ small enough such that
\begin{equation*}
\lvert \sup_{O} f - \inf_{O} f \rvert < \epsilon.
\end{equation*}
Denote $ O' \subset O \subset M $ be a concentric geodesic ball with radius $ \frac{\delta}{2} $. Define
\begin{equation*}
f^{*} = \begin{cases} \frac{\sup_{O} f + \inf_{O} f}{2}, & {\rm in} \; O' \\ f, & {\rm in} \; M \backslash \bar{O}' \end{cases}.
\end{equation*}
Mollifying $ f^{*} $ with $ \phi_{\zeta} $ with $ 0 < \zeta \ll \frac{\delta}{2} $, denote
\begin{equation*}
\bar{f} = f^{*} * \phi_{\zeta}.
\end{equation*}
It is straightforward that
\begin{equation*}
\bar{f} \in \mathcal{A}, \lVert \bar{f} - f \rVert_{\calC^{0}(M)} < \epsilon.
\end{equation*}
\end{proof}
\medskip

We now show similar results as Theorem \ref{closed1:thm2} and Theorem \ref{closed1:thm3} in compact manifolds with non-empty smooth boundary, with dimensions at least 3. Recall that this is equivalent to find a positive, smooth solution of the following boundary problem.
\begin{equation}\label{closed1:eqn5}
-a\Delta_{g} u + R_{g} u = S u^{p-1} \; {\rm in} \; M, \frac{\partial u}{\partial \nu} + \frac{2}{p-2} h_{g} u = 0 \; {\rm on} \; \partial M.
\end{equation}
Except some quotients of $ \mathbb{S}^{n} $, no other compact manifolds with non-empty smooth boundary satisfies SCENARIO A and hence no restriction of prescribed scalar curvature function in the sense of the CONDITION B generically. We can improve the result of prescribed scalar curvature with minimal boundary in Corollary \ref{compact:cor2}. Recall that the set
\begin{equation}\label{closed1:eqn6}	
\begin{split}
\mathcal{A}' & : = \lbrace S \in \calC^{\infty}(\bar{M}) : S \equiv \lambda > 0 \; {\rm on} \; \bar{O}, \text{$ \lambda > 0 $ is an arbitrary constant, } \\
& \qquad \text{ $ O \subset \bar{O} \subset M $ is an arbitrary open interior submanifold with smooth $ \partial O $} \rbrace.
\end{split}
\end{equation}
\begin{theorem}\label{closed1:thm4}
Let $ (\bar{M}, g) $ be a connected, compact Riemannian manifold with non-empty smooth boundary $ \partial M $, $ n = \dim \bar{M} \geqslant 3 $. Let $ \nu $ be the outward unit normal vector field along $ \partial M $. Let $ S \in \mathcal{A}' $ be a smooth function satisfies $ S = \lambda > 0 $ for some positive constant $ \lambda $ on some open subset $ O \subset M \subset \bar{M} $. Assume $ \eta_{1}' $, the first eigenvalue of $ \Box_{g} $ with Robin condition, be positive. Then there exists a positive function $ u \in \calC^{\infty}(\bar{M}) $ that solves (\ref{closed1:eqn6}).
\end{theorem}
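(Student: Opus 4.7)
The strategy is to mirror the proof of Theorem \ref{closed1:thm2}, adapted to the boundary setting already developed in \S4. Since no compact manifold with non-empty smooth boundary can satisfy the SCENARIO A (by Proposition \ref{sphere:prop4}, such manifolds are necessarily closed quotients of $\mathbb{S}^{n}$), there is no CONDITION B obstruction, and we only need to remove the Weyl-tensor restriction from Corollary \ref{compact:cor2}. If $S$ is a globally positive constant the result reduces to the Escobar problem; otherwise assume $S$ is non-constant.

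First, I would reduce to a favorable background metric. Applying Proposition \ref{compact:prop1} replaces $g$ by a conformal metric with $h_{g} > 0$ everywhere on $\partial M$, and then applying Proposition \ref{compact:prop2} at an interior point $P \in O$ produces a further conformal metric (still called $g$) with $R_{g}(P) < 0$ and the positivity of $h_{g}$ preserved; by continuity $R_{g} < 0$ on a small open set $\Omega \subset O$ around $P$. The function $S$ remains the prescribed scalar curvature target since we pull the final solution back through these conformal factors at the end, as in the proof of Corollary \ref{compact:cor2}.

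Next, I would construct a local positive solution $u_{2} \in \calC^{\infty}(\Omega) \cap \calC^{0}(\bar{\Omega})$ of
\begin{equation*}
-a\Delta_{g} u_{2} + R_{g} u_{2} = \lambda u_{2}^{p-1} \; {\rm in} \; \Omega, \quad u_{2} \equiv 0 \; {\rm on} \; \partial \Omega,
\end{equation*}
splitting into two cases. If $g$ is not locally conformally flat on any neighborhood of $P$, after shrinking $\Omega$ so it avoids the locally conformally flat region, Proposition \ref{local:prop4} directly provides $u_{2}$. If $g$ is locally conformally flat on a neighborhood of $P$, I would instead take $\Omega$ to be an annular region contained in a single chart where $g$ is pointwise conformal to $g_{e}$; Theorem \ref{closed1:thm1} (Bahri--Coron) combined with the conformal invariance (\ref{local:eqn24}) of the conformal Laplacian then yields $u_{2}$ on $\Omega$, exactly as in the proof of Theorem \ref{closed1:thm2}(i). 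In either case, extending by zero produces a sub-solution $u_{-} \in H^{1}(M,g) \cap \calC^{0}(\bar{M})$ satisfying (\ref{compact:eqn5}), just as in Lemma \ref{compact:lemma1}.

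For the super-solution, I would take the first Robin eigenfunction $\varphi > 0$ of $\Box_{g}$ granted by $\eta_{1}' > 0$, scale it down to $\phi = \delta \varphi$ small enough that $\eta_{1}' \phi > 2^{p-2} \max_{\bar{M}} S \cdot \phi^{p-1}$ pointwise (using $\eta_{1}' > 0$), which gives the super-solution inequality outside $\Omega$ and the Robin boundary condition with equality. Inside $\Omega$, where $S \equiv \lambda$ and the gap between $\eta_{1}' \phi$ and $\lambda \phi^{p-1}$ provides positive room, I would reproduce the smooth gluing procedure of Lemma \ref{closed:lemma2}/Lemma \ref{compact:lemma2}: cut $\Omega$ into the regions where $\phi$ dominates $u_{2}$, where $u_{2}$ dominates $\phi$, and a transition strip of width $\gamma$, and form a convex-combination-style smooth function $\bar{u}$ using a partition of unity subordinate to suitable open sets, built from solutions of auxiliary linear Dirichlet problems as in (\ref{closed:eqn12})--(\ref{closed:eqn16}). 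Setting $u_{+} = \bar{u}$ on $\Omega$ and $u_{+} = \phi$ on $\bar{M} \setminus \Omega$ gives a $\calC^{\infty}(\bar{M})$ super-solution satisfying (\ref{compact:eqn8}) with $u_{+} \geqslant u_{-}$. The main obstacle is verifying the super-solution inequality on the transition strip in the locally conformally flat case: the test function from Bahri--Coron is not the standard bubble, so the local pointwise control used in Lemma \ref{closed:lemma2} must be re-examined. However, since the gluing argument only uses positivity of $u_{2}$ and the strict inequality $\eta_{1}' \phi > 2^{p-2} \lambda \phi^{p-1}$ on $\Omega$ combined with the dimension-specific smallness of $\gamma$, the same argument goes through verbatim once $\gamma$ is chosen in accordance with (\ref{closed:eqn10}).

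Finally, I would apply the monotone iteration scheme Theorem \ref{compact:thm1} to the pair $(u_{-}, u_{+})$ to obtain a solution $u \in \calC^{\infty}(M) \cap \calC^{1,\alpha}(\bar{M})$ of (\ref{closed1:eqn5}) with $u_{-} \leqslant u \leqslant u_{+}$; positivity of $u$ follows from the maximum principle argument of \cite{XU4} (since $u \geqslant u_{-} \not\equiv 0$), and higher boundary regularity follows from \cite{CHE}. Undoing the preliminary conformal changes from Propositions \ref{compact:prop1} and \ref{compact:prop2} gives the desired positive smooth solution on $(\bar{M}, g)$ with the original metric, completing the proof.
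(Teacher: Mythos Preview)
Your proposal is correct and follows essentially the same approach as the paper. The paper's proof is terse, merely invoking Proposition \ref{sphere:prop4} to rule out SCENARIO A and then referring back to Theorem \ref{closed1:thm2}, Theorem \ref{compact:thm2}, and Corollaries \ref{compact:cor1}--\ref{compact:cor2} across a three-case split (not locally conformally flat in $O$; manifold not locally conformally flat but flat in $O$; manifold locally conformally flat); you have unpacked these references in detail, organized instead around whether $g$ is locally conformally flat near the chosen point $P$, which amounts to the same case analysis.
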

\begin{proof} According to Proposition \ref{sphere:prop4}, every compact manifold with non-empty smooth boundary does not satisfy the SCENARIO A. For the rest of the proof, we still classify the argument into three cases: (a) the metric is not locally conformally flat in $ O $; (b) the manifold is not locally conformally flat but the metric is locally conformally flat in $ O $; (c) the manifold is locally conformally flat. The proofs are almost the same as discussed in Theorem \ref{closed1:thm2}, Theorem \ref{compact:thm2}, Corollary \ref{compact:cor1} and Corollary \ref{compact:cor2}.
\end{proof}
\medskip

Recall the set $ \mathcal{B}' $ we introduced in \S4:
\begin{equation*}
\begin{split}
\mathcal{B}' & : = \lbrace f \in \calC^{\infty}(\bar{M}) : f > 0 \; \text{somewhere in the interior M}, \\
& -a\Delta_{g} u + R_{g} u = f u^{p-1} \; {\rm in} \; M, \frac{\partial u}{\partial \nu} + \frac{2}{p-2} h_{g} u = 0 \; {\rm on} \; \partial M, \\
& \text{admits a real, positive, smooth solution $ u $ with $ \eta_{1}' > 0 $}. \rbrace
\end{split}
\end{equation*}
Due to the same argument as in Theorem \ref{closed1:thm3}, we conclude that
\begin{theorem}\label{closed1:thm5}
Let $ (\bar{M}, g) $ be a connected, compact manifold with non-empty smooth boundary, $ n =  \dim \bar{M} \geqslant 3 $. Assume that $ \eta_{1}' > 0 $. Then $ \mathcal{A}' \subset \mathcal{B}' $; in addition, $ \mathcal{A}' $ is $ \calC^{0} $-dense in $ \mathcal{B}' $.
\end{theorem}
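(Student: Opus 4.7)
The plan is to follow the strategy of Theorem \ref{closed1:thm3}, adapting it to the boundary case by making sure every construction stays in the interior of $\bar M$ so that the Robin condition is never disturbed.

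First, I would verify the inclusion $\mathcal{A}' \subset \mathcal{B}'$. Given any $S \in \mathcal{A}'$, Proposition \ref{sphere:prop4} tells us that no compact manifold with non-empty smooth boundary satisfies the SCENARIO A, so Theorem \ref{closed1:thm4} applies directly and produces a positive smooth $u \in \calC^{\infty}(\bar M)$ solving the Robin boundary value problem. By definition of $\mathcal{A}'$, the function $S$ is positive on an open interior subset $\bar O \subset M$ and is therefore positive somewhere in the interior, so $S \in \mathcal{B}'$. This gives the first assertion.

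For the density statement, I would argue as follows. Fix $f \in \mathcal{B}'$ and $\varepsilon > 0$. If $f$ is a positive constant, then $f \in \mathcal{A}'$ and there is nothing to do, so assume otherwise. Since $f > 0$ somewhere in the interior $M$, by continuity there is an interior point $P \in M$ and an open geodesic ball $O = B_\delta(P)$ with $\bar O \subset M$ (away from $\partial M$) on which $f > 0$. By shrinking $\delta$ if necessary we may arrange
\begin{equation*}
\bigl| \sup_{\bar O} f - \inf_{\bar O} f \bigr| < \varepsilon,
\end{equation*}
since $f$ is continuous. Let $O' = B_{\delta/2}(P) \subset O$ and define the piecewise function
\begin{equation*}
f^{*}(x) = \begin{cases} \dfrac{\sup_{\bar O} f + \inf_{\bar O} f}{2}, & x \in \bar O', \\ f(x), & x \in \bar M \setminus \bar O'. \end{cases}
\end{equation*}
Take a standard mollifier $\phi_\zeta$ with $0 < \zeta \ll \delta/2$ chosen small enough that the convolution $\bar f := f^{*} * \phi_\zeta$ is smooth, equals the constant $c := (\sup_{\bar O} f + \inf_{\bar O} f)/2 > 0$ on a slightly smaller concentric ball $O'' \subset O'$, and agrees with $f$ on $\bar M \setminus \bar O$. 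Because $O'' \subset M$ is an interior open submanifold with smooth boundary on which $\bar f$ is the positive constant $c$, we conclude $\bar f \in \mathcal{A}'$.

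Finally, it remains to check $\|\bar f - f\|_{\calC^{0}(\bar M)} < \varepsilon$. Outside $\bar O$ the two functions coincide. On $\bar O'$ we have $|f^{*}(x) - f(x)| = |c - f(x)| \leqslant (\sup_{\bar O} f - \inf_{\bar O} f)/2 < \varepsilon/2$. On $\bar O \setminus \bar O'$ the function $f^{*}$ equals $f$, and the mollification only averages values of $f^{*}$ that differ from $f(x)$ by at most the same oscillation $\varepsilon$; choosing $\zeta$ sufficiently small bounds the $\calC^{0}$-discrepancy introduced by the mollifier by, say, $\varepsilon/2$. Combining these estimates gives the desired $\calC^{0}$-closeness. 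There is no genuine obstacle here: because the whole perturbation is carried out in the interior away from $\partial M$, the Robin boundary condition and the sign of $\eta_{1}'$ play no role in the density argument, and the only ingredient from this paper we actually need for the inclusion is Theorem \ref{closed1:thm4}.
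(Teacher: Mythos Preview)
Your proposal is correct and follows essentially the same approach as the paper: the paper's proof is a one-line remark that the $\calC^{0}$-density argument is local in the interior and hence identical to the closed case (Theorem \ref{closed1:thm3}), which is exactly the construction you carry out in detail. Your explicit verification of the inclusion $\mathcal{A}' \subset \mathcal{B}'$ via Theorem \ref{closed1:thm4} and the observation that the mollification stays away from $\partial M$ are precisely the minor adaptations the paper leaves implicit.
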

\begin{proof}
The $ \calC^{0} $-closeness appears in some open subsets of the interior of the manifold and hence is a local argument, which is then exactly the same as in Theorem \ref{closed1:thm2}.
\end{proof}
\medskip

Theorem \ref{closed1:thm2} and Theorem \ref{closed1:thm4} covers most possible choices of the prescribed scalar curvature functions, but we have a little bit more flexibility when we consider the Kazdan-Warner problem on locally conformally flat compact manifolds which are not in the category of the SCENARIO A, which means that there is no restriction in terms of the CONDITION B.
\begin{theorem}\label{closed1:thm6}
Let $ (\bar{M}, g) $ be a connected, locally conformally flat, compact manifold, with or without boundary, with $ n = \dim \bar{M} \geqslant 3 $. Let the first eigenvalue of conformal Laplacian, possibly with homogeneous Robin condition, is positive. If $ \bar{M} $ does not satisfy the SCENARIO A, then any smooth function $ S \in \calC^{\infty}(\bar{M}) $ that is positive somewhere can be realized as a prescribed scalar curvature function of some metric $ \tilde{g} \in [g] $.
\end{theorem}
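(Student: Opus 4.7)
The plan is to adapt the local-to-global strategy of Sections~3 and 4 to the locally conformally flat setting, replacing the non-LCF local solution in Proposition~\ref{local:prop4} with the LCF local solution in Proposition~\ref{local:prop5} (which rests on Theorem~\ref{local:thm4}). The hypothesis that $\bar{M}$ avoids SCENARIO~A is precisely what releases us from the CONDITION~B obstruction of \S5; the LCF assumption, conversely, is what makes Proposition~\ref{local:prop5} available with the non-constant coefficient $S$ itself, rather than requiring $S$ to be first flattened to a positive constant on an open set as in the $\mathcal{A}$ / $\mathcal{A}'$ framework of Theorems~\ref{closed1:thm2} and~\ref{closed1:thm4}.

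First I would set $V := \{x \in \bar{M} : S(x) > 0\}$, which is open and nonempty. If $\nabla S \equiv 0$ on $V$, then $S$ is locally constant on $V$, so on any component $V_{0}$ of $V$ one may shrink to an interior open ball $O$ with $\bar{O} \subset V_{0} \cap M$ on which $S$ is a positive constant; then $S \in \mathcal{A}$ (or $S \in \mathcal{A}'$ in the boundary case), and Theorem~\ref{closed1:thm2} or Theorem~\ref{closed1:thm4} already concludes. Otherwise I may assume there is $\rho \in V$ with $\nabla S(\rho) \neq 0$, and I would pick a small open geodesic ball $\Omega \subset V$ centered at $\rho$ with $\bar{\Omega} \subset V$, so that $S \in \calC^{2}(\bar{\Omega})$, $\min_{\bar{\Omega}} S > 0$, and $\nabla S(\rho) \neq 0$. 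Since $g$ is LCF on $\Omega$, Proposition~\ref{local:prop5} then produces, for all sufficiently small $\epsilon > 0$, a positive smooth solution $u_{1}$ of
\begin{equation*}
-a\Delta_{g} u_{1} + R_{g} u_{1} = S u_{1}^{p-1} \; {\rm in} \; \Omega_{\epsilon}, \quad u_{1} \equiv 0 \; {\rm on} \; \partial \Omega_{\epsilon}.
\end{equation*}
Extending $u_{1}$ by zero to $\bar{M} \setminus \bar{\Omega}_{\epsilon}$ gives the sub-solution $u_{-} \in H^{1}(M, g) \cap \calC^{0}(\bar{M})$ with $u_{-} \geqslant 0$ and $u_{-} \not\equiv 0$, exactly as in Lemma~\ref{closed:lemma1} or Lemma~\ref{compact:lemma1}.

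For the super-solution I would first pass to a conformal metric $\bar{g} \in [g]$ with $R_{\bar{g}}(\rho) < 0$ via Proposition~\ref{closed:thm3} (closed case) or Proposition~\ref{compact:prop2} (boundary case), and, if $\partial M \neq \emptyset$, additionally arrange $h_{\bar{g}} > 0$ on $\partial M$ via Proposition~\ref{compact:prop1}. Next, I would scale the positive first eigenfunction $\varphi$ of $\Box_{\bar{g}}$ (with the Robin condition when $\partial M \neq \emptyset$) to $\phi = \theta \varphi$ small enough that $\eta_{1} \phi > 2^{p-2} (\max_{\bar{M}} S)\, \phi^{p-1}$ pointwise on $\bar{M}$. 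The gap $\beta := \min_{\bar{\Omega}_{\epsilon}}(\eta_{1} \phi - 2^{p-2} S \phi^{p-1}) > 0$ is exactly the room needed to execute the smooth partition-of-unity gluing construction of Lemma~\ref{closed:lemma2} (or Lemma~\ref{compact:lemma2}) with $S$ in place of the constant $\lambda$; this yields $u_{+} \in \calC^{\infty}(\bar{M})$ satisfying the super-solution inequality together with the Robin condition when applicable, and $u_{+} \geqslant u_{-} \geqslant 0$ pointwise. The monotone iteration scheme of Theorem~\ref{closed:thm1} or Theorem~\ref{compact:thm1} then produces a smooth solution $u$ with $u_{-} \leqslant u \leqslant u_{+}$; the strong maximum principle upgrades this to $u > 0$ on $\bar{M}$, and pulling back the two conformal factors furnishes the desired metric $\tilde{g} \in [g]$.

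The principal obstacle I anticipate is checking that the smooth gluing construction of Lemma~\ref{closed:lemma2} and its boundary analogue, which were written under the assumption that $S \equiv \lambda$ is a positive constant on $\Omega$, adapts cleanly when $S$ is merely smooth with $\min_{\bar{\Omega}_{\epsilon}} S > 0$. The key inputs into that construction are the strict positivity of $\beta$ established above and uniform bounds on $S$ and its derivatives on $\bar{\Omega}$, both of which are automatic from smoothness and compactness; the case analysis over the five regions $\Omega_{1} \setminus \bar{\Omega}_{3}$, $\Omega_{2} \setminus \bar{\Omega}_{3}$, $\Omega_{3} \setminus \overline{\Omega_{1} \cup \Omega_{2}}$, $\Omega_{1} \cap \Omega_{3}$, $\Omega_{2} \cap \Omega_{3}$ should then reproduce with $\lambda$ replaced by $\sup_{\bar{\Omega}} S$ in the relevant error estimates. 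Once this is verified, Theorem~\ref{closed1:thm6} follows by assembling the ingredients already developed in Sections~2--5.
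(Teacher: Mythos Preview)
Your proposal is correct and follows essentially the same route as the paper's proof: pick a point where $\nabla S \neq 0$ in the region $\{S > 0\}$, invoke Proposition~\ref{local:prop5} (built on Theorem~\ref{local:thm4}) to obtain a local solution on the punctured domain $\Omega_{\epsilon}$, then feed this into the sub-/super-solution and monotone iteration machinery of \S3 or \S4. You are in fact more careful than the paper in two respects: you explicitly dispose of the degenerate case $\nabla S \equiv 0$ on $\{S>0\}$ by reducing to $\mathcal{A}$ or $\mathcal{A}'$, and you flag that the gluing in Lemma~\ref{closed:lemma2}/\ref{compact:lemma2} must be rerun with the variable coefficient $S$ in place of the constant $\lambda$, which the paper's one-line ``the rest follows exactly the same'' leaves implicit.
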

\begin{proof}
We choose a small enough region $ \Omega \subset M \subset \bar{M} $ such that there exists a point $ P \in \Omega $ with $ \nabla S(P) \neq 0 $. Consider
\begin{equation*}
\Omega_{\epsilon} = \lbrace x \in \Omega : \lvert x - P \rvert > \epsilon \rbrace.
\end{equation*}
We choose $ \epsilon_{\epsilon} $ small enough so that $ B_{\epsilon}(P) \subset \Omega $. We then apply Theorem \ref{local:thm4} to construct a local solution of the Yamabe equation with Dirichlet condition on $ \Omega_{\epsilon} $ since the manifold is locally conformally flat. The rest argument follows exactly the same as in either \S3 or \S4, respectively.
\end{proof}

\bibliographystyle{plain}
\bibliography{YamabessP}

\end{document}